\numberwithin{equation}{section}
\def\e{\varepsilon}
\def\eps{\epsilon}
\def\R{{\bf R}}
\def\R{{\mathbb R}}
\def\C{{\mathbb C}}
\def\f12{{\frac 1 2}}
\def\curl{{\mbox curl\,}}
\def\one{{\widetilde{\mathbf{1}}}}
\def\f{\widetilde{f}}
\def\bar{\overline}
\def\what{\widehat}
\def\R{\mathbb{R}}
\begin{document}

\newtheorem{theo}{Theorem}[section]
\newtheorem{pro}[theo]{Proposition}
\newtheorem{lem}[theo]{Lemma}
\newtheorem{defin}[theo]{Definition}
\newtheorem{rem}[theo]{Remark}
\newtheorem{cor}[theo]{Corollary}

\title[A model for 2D capillary water waves]{Global analysis of a model for capillary\\ water waves in 2D}

\thanks{The first author was partially supported by a Packard Fellowship and NSF Grant DMS 1265818.
The second author was partially supported by a Simons Postdoctoral Fellowship and NSF Grant DMS 1265875.}

\author{Alexandru D. Ionescu}\address{Princeton University}\email{aionescu@math.princeton.edu}

\author{Fabio Pusateri}\address{Princeton University}\email{fabiop@math.princeton.edu}

\begin{abstract}
In this paper we prove a global regularity result for a quadratic quasilinear model associated to the water waves system
with surface tension and no gravity in dimension two (the capillary waves system). The model we consider here retains most of the
difficulties of the full capillary
water waves system, including the delicate time-resonance structure and modified scattering. It is slightly simpler, however,
at the technical level and our goal here is to present our method in this simplified situation.
The analysis of the full system is presented in \cite{IoPuFull}.
\end{abstract}

\maketitle

\setcounter{tocdepth}{1}
\tableofcontents

\section{Introduction}
In this paper we present a global existence result for small solutions
of the Cauchy problem associated to a quasilinear fractional Schr\"{o}dinger equation with a quadratic nonlinearity in one spatial dimension.
Our main motivation comes from the study of the two-dimensional irrotational water waves system under the influence of surface tension and no gravity.

\subsection{Free boundary Euler equations and capillary waves}\label{secWW}

The evolution of an inviscid perfect fluid that occupies a domain $\Omega_t \subset \R^n$, for $n \geq 2$, at time $t \in \R$,
is described by the free boundary incompressible Euler equations.
If $v$ and $p$ denote the velocity and the pressure of the fluid (with constant density equal to $1$)
at time $t$ and position $x \in \Omega_t$, these equations are
\begin{equation}
\label{E}
\left\{
\begin{array}{ll}
v_t + v \cdot \nabla v = - \nabla p - g e_n  &   x \in \Omega_t
\\
\nabla \cdot v = 0    &   x \in \Omega_t
\\
v (x,0) = v_0 (x)     &   x \in \Omega_0 \, ,
\end{array}
\right.
\end{equation}
where $g$ is the gravitational constant.
The free surface $S_t := \partial \Omega_t$ moves with the normal component of the velocity according to the kinematic boundary condition:
\begin{subequations}
\label{BC}
 \begin{equation}
\label{BC1}
\partial_t + v \cdot \nabla  \,\, \mbox{is tangent to} \,\, \bigcup_t S_t \subset \R^{n+1} \, .
\end{equation}
In the presence of surface tension the pressure on the interface is given by
\begin{equation}
\label{BC2}
p (x,t) = \sigma \kappa(x,t)  \,\, , \,\,\, x \in S_t  \, ,
\end{equation}
where $\kappa$ is the mean-curvature of $S_t$ and $\sigma > 0$.
\end{subequations}

In the case of irrotational flows, i.e. $\rm{\curl} v = 0$,
one can reduce \eqref{E}-\eqref{BC} to a system on the boundary.
Such a reduction can be performed identically regardless of the number of spatial dimensions,
but here we only focus on the two dimensional case. Assume also that $\Omega_t \subset \R^2$ is the region below the graph of a function $h : \R_x \times \R_t \rightarrow \R$,
that is $\Omega_t = \{ (x,y) \in \R^2 \, : y \leq h(x,t) \}$ and $S_t = \{ (x,y) : y = h(x,t) \}$.

Let us denote by $\Phi$ the velocity potential: $\nabla \Phi(x,y,t) = v (x,y,t)$, for $(x,y) \in \Omega_t$.
If $\phi(x,t) := \Phi (x, h(x,t),t)$ is the restriction of $\Phi$ to the boundary $S_t$,
the equations of motion reduce to the following system for the unknowns $h, \phi : \R_x \times \R_t \rightarrow \R$:
\begin{equation}
\label{WWE}
\left\{
\begin{array}{l}
\partial_t h = G(h) \phi
\\
\partial_t \phi = - g h + \sigma  \dfrac{\partial_x^2 h}{ (1+h_x^2)^{3/2} }
  - \dfrac{1}{2} {|\phi_x|}^2 + \dfrac{{\left( G(h)\phi + h_x \phi_x \right)}^2}{2(1+{|h_x|}^2)}
\end{array}
\right.
\end{equation}
with
\begin{equation}
\label{defG0}
G(h) := \sqrt{1+{|h_x|}^2} \mathcal{N}(h)
\end{equation}
where $\mathcal{N}(h)$ is the Dirichlet--Neumann map associated to the domain $\Omega_t$.
We refer to  \cite[chap. 11]{SulemBook} or \cite{CraSul} for the derivation of the equations \eqref{WWE}.
This system describes the evolution of an incompressible perfect fluid of infinite depth and infinite extent, with a free moving one-dimensional surface, and a pressure boundary condition given by the Young-Laplace equation.

\subsubsection{Previous results}
The system \eqref{E}-\eqref{BC} has been under very active investigation in recent years.
Without trying to be exhaustive, we mention the early works on
the wellposedness of the Cauchy problem in the irrotational case and with gravity by Nalimov \cite{Nalimov}, Yosihara \cite{Yosi}, and Craig \cite{CraigLim};
the first works on the wellposedness for general data in Sobolev spaces (also for irrotational gravity waves) by Wu \cite{Wu1,Wu2};
and subsequent works on the gravity problem by Christodoulou and Lindblad \cite{CL}, Lannes \cite{Lannes}, Lindblad \cite{Lindblad},
Coutand and Shkoller \cite{CS2}, Shatah and Zeng \cite{ShZ1,ShZ3}, and Alazard, Burq and Zuily \cite{ABZ2,ABZ3}.
Surface tension effects have been considered in the works of Beyer and Gunther \cite{BG}, Ambrose and Masmoudi \cite{AM},
Coutand and Shkoller \cite{CS2}, Shatah and Zeng \cite{ShZ1,ShZ3}, Christianson, Hur and Staffilani \cite{CHS},
and Alazard, Burq and Zuily \cite{ABZ1}.
Recently, some blow-up scenarios have also been investigated \cite{CCFGG,CSSplash,IFL}.

The question of long time regularity of solutions with irrotational, small and localized initial data was also addressed in a few works,
starting with \cite{WuAG}, where Wu showed almost-global regularity for the gravity problem in two dimensions ($1$d interfaces).
Subsequently, Germain, Masmoudi and Shatah \cite{GMS2} and Wu \cite{Wu3DWW}
proved global regularity for gravity waves in three dimensions ($2$d interfaces).
Global regularity in $3$d was also proven in the case of surface tension and no gravity by Germain, Masmoudi and Shatah \cite{GMSC}.
Global regularity for the gravity water waves system in dimension $2$ has been proved by the authors in\footnote{We refer the reader to \cite{IoPu1}
for the analysis of a simplified model (a fractional cubic Schr\"odinger equations),
and to \cite{IoPunote} for some additional details about the asymptotic behavior of the solutions constructed in \cite{IoPu2}.}
\cite{IoPu2} and, independently, by Alazard and Delort \cite{ADa,ADb}.
More recently a different proof of Wu's $2$d almost-global regularity result was given by Hunter, Ifrim and Tataru \cite{HIT},
and later complemented to a proof of global regularity by Ifrim and Tataru in \cite{IT}.

\subsubsection{Paralinearization}\label{secpar}
In this section we describe the paralinearization of the system \eqref{WWE} with $g=0$,
following the work of Alazard, Burq and Zuily \cite{ABZ1} on the local existence theory.
We refer the reader to \cite{AlMet1,ABZ1,ABZ2} for further details about this procedure.

Assume $(h,\phi) \in C(I:H^{N+1}(\R)\times H^{N+1/2}(\R) )$ is a real-valued solution of \eqref{WWE}--\eqref{defG0} with $g=0$.
Let
\begin{equation}\label{par1}
b := \frac{G(h)\phi + h_x\phi_x}{(1+h_x^2)} , \qquad v := \phi_x-bh_x,\qquad\omega := \phi - T_b h,
\end{equation}
where, for any $a, b \in L^2(\R)$, we have denoted by $T_a b$ the operator
\begin{equation}
\label{Tab}
\begin{split}
\mathcal{F}(T_a b)(\xi) & := \frac{1}{2\pi} \int_{\R} \what{a}(\xi-\eta) \what{b}(\eta) \chi_0(\xi-\eta,\eta)\,d\eta,
\\
\chi_0(x,y) & := \sum_{k\geq 0} \varphi_k(y) \varphi_{\leq k-10}(x).
\end{split}
\end{equation}
The function $\omega$ in \eqref{par1} is the so-called ``good-unknown'' of Alinhac.
Its relevance is apparent from the paralinearization formula for the Dirichlet-Neumann operator, which in the one-dimensional case is
\begin{equation}
\label{Gh}
G(h) \phi = |\partial_x|\omega - \partial_x T_v h + \widetilde{R} ,
\end{equation}
where $\widetilde{R}$ denotes semilinear quadratic and higher order terms.
Using \eqref{Gh} and symmetrizing\footnote{The symmetrization
procedure in 2D is simpler than in the general case considered in \cite{ABZ1}.}
the system expressed in the natural energy variables $(|\partial_x|h, |\partial_x|^{1/2}\omega)$,
one can reduce to the equivalent scalar equation
\begin{align}
\label{WWCpar}
\partial_t u + i \Lambda = - T_v \partial_x u + i \Sigma(u) + R , \qquad \Lambda(\xi) = |\xi|^{3/2} ,
\end{align}
for a complex-valued unknown
\begin{align}
\label{par5}
 u \sim i|\partial_x|h + |\partial_x|^{1/2}\omega .
\end{align}
Here $v$ is given in \eqref{par1},
$i\Sigma$ can be thought of as a symmetric differential operator of order $3/2$ which is cubic in $u$,
and $R$ consists of semilinear quadratic and higher order terms.

\subsection{Description of the model}\label{secmodel} It this paper we consider a simplification of the full equation \eqref{WWCpar}.
More precisely, let $\varphi: \R\to[0,1]$ be an even smooth function supported in $[-8/5,8/5]$ and equal to $1$ in $[-5/4,5/4]$.
For any $k \in \mathbb{Z}$ let
\begin{equation}
\label{pr1}
\varphi_k(x) := \varphi(x/2^k) - \varphi(x/2^{k-1}) , \qquad \varphi_{\leq k}(x) := \varphi(x/2^k),
  \qquad\varphi_{\geq k}(x) := 1-\varphi(x/2^{k-1}).
\end{equation}
We define a high-low cutoff function $\chi$, and an interaction symbol $q_0$, by
\begin{align}\label{pr2}
\begin{split}
\chi(x,y) & := \sum_{k\geq-10} \varphi_k(y) \varphi_{\leq k+10}(x) \varphi_{\geq k-10}(x+y),
\\
q_0(\xi,\eta) & := i\chi(\xi-\eta,\eta) |\xi-\eta|^{1/2} \eta=i|\xi-\eta|^{1/2} \eta\sum_{k\geq-10} \varphi_k(\eta) \varphi_{\leq k+10}(\xi-\eta) \varphi_{\geq k-10}(\xi).
\end{split}
\end{align}

We are interested in solutions $u: \R \times [0,T] \to \C$, $T\geq 1$, of the quasilinear equation
\begin{equation}\label{pr3}
\partial_t u + i\Lambda u = \mathcal{N} , \qquad \Lambda(\xi) := |\xi|^{3/2} ,
\end{equation}
where the quadratic nonlinearity $\mathcal{N} = \mathcal{N}(u,\overline{u})$ is defined by
\begin{align}\label{pr4}
 \begin{split}
\what{\mathcal{N}}(\xi,t) & := \frac{1}{2\pi} \int_{\R} q_0(\xi,\eta) \what{V}(\xi-\eta,t) \what{u}(\eta,t) \,d\eta,
\qquad
V := u + \bar{u},
\end{split}
\end{align}
where $\widehat{f}(\xi,t)=\mathcal{F} f(\xi,t)$ denotes the spatial Fourier transform.

Our goal is to prove global regularity for small initial data for this model, see Theorem \ref{maintheo} below. As we explain below, we consider this model for two reasons: (1) it captures the essential difficulties of the global theory of the full capillary water wave system in 2D, and (2) at the technical level it is simpler than the full system and may be of potential interest to people who are interested in normal forms in quasilinear problems, with singularities, but who are not familiar with some of the more complicated features of the water waves system. More precisely:

\setlength{\leftmargini}{1.5em}
\begin{itemize}
\item Comparing \eqref{WWCpar} with \eqref{pr2}--\eqref{pr4} we see that the equations have the same linear part and the same quadratic quasilinear structure, with one derivative on $u$ and half-derivative on
$u+\bar{u}$. Low frequencies are present through the linear flow of the initial data, and the main issue is to understand the interaction of these low frequencies with the very high frequencies. We also allow quadratic interactions between frequencies of comparable size,
in order to include all the possible resonant interactions and observe modified scattering, just like for the full system.

\medskip
\item The full capillary water wave system with no momentum conditions on the Hamiltonian variable
is a long and technical project, which is completed in \cite{IoPuFull}.
Much of the length, however, is due to the complicated nature of the problem, including complex local theory,
higher order terms, and many inessential interactions. The model we discuss here is simpler, but still captures the main new ingredients.
\end{itemize}

\medskip
For simplicity we disregard all cubic interactions and some semilinear quadratic interactions present in \eqref{WWCpar}, for example those with very low frequency output. The very low frequencies evolve essentially linearly in the full model \eqref{WWCpar}, due to suitable null structures at the origin in the frequency space; in our model we keep the very low frequencies coming from the linear flow, but disregard the nonlinear contributions at very low frequencies.

The results of this paper were presented at the Banff conference ``Dynamics in Geometric Dispersive Equations and the Effects of Trapping,
Scattering and Weak Turbulence'' in May 2014. 
Independently, Ifrim--Tataru \cite{ITca} proved global regularity of the capillary water wave system,
in holomorphic coordinates, in the case of data with one momentum condition on the Hamiltonian variables.

\subsection{Main result and ideas of the proof}\label{sectheo}

\subsubsection{The main result}
This is a precise statement of our result concerning the global regularity of \eqref{pr2}--\eqref{pr4}:

\begin{theo}\label{maintheo}
Assume that $N:=10$, $0 < p_1 \leq p_0 \leq 10^{-6}$ are fixed, and the initial data $u_0 \in H^{N}(\R)$ satisfies the assumptions
\begin{equation}\label{mainhyp}
{\|u_0\|}_{H^N} + {\|x\partial_x u_0\|}_{H^2} +
  \sup_{k\leq 0} \big[ 2^{-k(1/2-p_1)} {\|P_ku_0\|}_{L^2} + 2^{-k(1/2-p_1)} {\|P_k (x\partial_x)u_0\|}_{L^2} \big] 
= \eps_0 \leq \overline{\eps},
\end{equation}
for some constant $\overline{\eps}$ sufficiently small.
Then there exists a unique global solution $u\in C([0,\infty):H^{N}(\R))$ of the initial-value problem
\begin{equation}
\partial_t u + i\Lambda u = \mathcal{N}, \qquad u(0)=u_0,
\end{equation}
where $\mathcal{N}$ is as in \eqref{pr4}. In addition, with $S := (3/2)t\partial_t+x\partial_x$, we have the global bounds
\begin{equation}\label{mainconcl1}
\sup_{t\in[0,\infty)} \big[(1+t)^{-p_0} {\|u(t)\|}_{H^N} + (1+t)^{-4p_0} {\|Su(t)\|}_{H^2}+(1+t)^{1/2}\|u(t)\|_{W^{3,\infty}}
  \big] \lesssim \eps_0 .
\end{equation}
Furthermore the solution possesses modified scattering behavior as $t\to\infty$.
\end{theo}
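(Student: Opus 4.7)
My plan is a bootstrap argument for the profile $f(t):=e^{it\Lambda}u(t)$, controlled simultaneously in three norms: the high Sobolev norm $\|u(t)\|_{H^N}$, the weighted norm $\|Su(t)\|_{H^2}$ associated with the dispersion-adapted vector field $S=3t\partial_t+2x\partial_x$, and a Fourier-based $Z$-norm on $\widehat{f}$ encoding both the $t^{-1/2}$ pointwise decay and the frequency localization needed to observe modified scattering. The bootstrap hypothesis is \eqref{mainconcl1} with $\eps_0$ replaced by a slightly larger constant $\eps_1$; the job is then to improve each bound by a factor $\eps_0/\eps_1$.

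\textbf{Energy and weighted estimates.} The symbol $q_0$ places only a half-derivative on the low-frequency input $V=u+\bar u$ and a full derivative on the high-frequency input $u$, so the quasilinear nature of the equation is compatible with a paradifferential symmetrization. After such a reduction one expects an inequality of the form
\[
\frac{d}{dt}\|u(t)\|_{H^N}^2 \lesssim \|u(t)\|_{W^{3,\infty}}\,\|u(t)\|_{H^N}^2,
\]
which combined with the bootstrap pointwise decay yields growth at most $(1+t)^{p_0}$. For the weighted bound I would apply $S$ to the equation; since $[S,\partial_t+i\Lambda]=-3(\partial_t+i\Lambda)$ by the degree-$3/2$ homogeneity of $\Lambda$, we obtain
\[
(\partial_t+i\Lambda)(Su) = S\mathcal{N}+3\mathcal{N},
\]
a quasilinear system in $Su$ of the same form as the original one. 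The same paradifferential machinery controls $Su$ in $H^2$, producing the slightly larger loss $(1+t)^{4p_0}$ due to the fact that $SV$ is only controlled in $L^2$ and does not decay pointwise.

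\textbf{Pointwise decay.} The $W^{3,\infty}$ estimate follows from a frequency-localized linear dispersive inequality for $e^{-it\Lambda}$: on the dyadic piece $|\xi|\sim 2^k$, the phase $x\xi-t|\xi|^{3/2}$ has one non-degenerate stationary point with Hessian $\sim t\,2^{k/2}$, yielding the standard bound $\|P_k e^{-it\Lambda}f\|_{L^\infty}\lesssim t^{-1/2}2^{k/4}(\|P_kf\|_{L^2}+\|P_k\partial_\xi f\|_{L^2})$. Summed against the control on $f$ provided by $\|Su\|_{H^2}$ (which is essentially $\|xf\|_{H^2}$ modulo terms that vanish on the linear flow), one recovers the stated decay after optimizing over $k$, with the low-frequency contribution absorbed by the hypothesis $2^{-k(1/2-p_1)}\|P_k u_0\|_{L^2}\leq\eps_0$.

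\textbf{Main obstacle: the $Z$-norm and modified scattering.} Closing the $Z$-norm bound on $\widehat{f}$ is the hardest step. Duhamel gives
\[
\partial_t\widehat{f}(\xi,t) = \sum_{\pm}\int_\R e^{it\Phi_\pm(\xi,\eta)}\,q_0(\xi,\eta)\,\widehat{g_\pm}(\xi-\eta,t)\,\widehat{f}(\eta,t)\,d\eta,
\]
with $g_+=f$, $g_-=\bar{f}$, and $\Phi_\pm(\xi,\eta)=|\xi|^{3/2}-|\xi-\eta|^{3/2}\mp|\eta|^{3/2}$. The phase $\Phi_-$ vanishes together with its $\eta$-gradient at $\eta=\xi$, producing a genuinely resonant bilinear interaction that cannot be eliminated by a time normal form. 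This is the origin of the modified scattering: one isolates this contribution, shows that it generates an asymptotic logarithmic phase $\theta(\xi,t)\sim c\,|\widehat{f}(\xi,t)|^2\log t$, and proves that $e^{i\theta(\xi,t)}\widehat{f}(\xi,t)$ converges in an $L^\infty$-type norm as $t\to\infty$. Non-resonant regions are handled by integrating by parts in $\eta$ using $\partial_\eta\Phi\neq0$, paying with the weighted control from $\|Su\|_{H^2}$ to gain $t^{-1-\delta}$. The truly delicate part is the matching of all frequency regimes within a single $Z$-norm: the quasilinear loss of derivative forces a paradifferential normal-form reduction before the stationary-phase analysis can even be applied, and the low-frequency slow decay dictated by $p_1<1/2$ leaves little room between the $t^{-1/2}$ dispersive rate and the $(1+t)^{4p_0}$ growth of the weighted norm, so the bookkeeping of exponents is where I expect the principal difficulty to lie.
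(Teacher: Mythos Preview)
Your overall bootstrap architecture is right, but two of the three pillars have genuine gaps.

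\textbf{Energy estimates do not close as stated.} The inequality
\[
\frac{d}{dt}\|u(t)\|_{H^N}^2 \lesssim \|u(t)\|_{W^{3,\infty}}\,\|u(t)\|_{H^N}^2
\]
combined with $\|u(t)\|_{W^{3,\infty}}\lesssim\eps_1(1+t)^{-1/2}$ gives, after Gr\"onwall, growth like $\exp\big(C\eps_1\sqrt{t}\big)$, not $(1+t)^{p_0}$. In one dimension with $t^{-1/2}$ dispersion a quadratic nonlinearity is critical, and a cubic energy inequality is never enough. The paper resolves this by adding to $E_N^{(2)}=\|D^Nu\|_{L^2}^2$ a carefully symmetrized cubic correction $E_N^{(3)}$ (built by dividing the cubic symbol in $\frac{d}{dt}E_N^{(2)}$ by the quadratic phase) so that $\frac{d}{dt}\big(E_N^{(2)}+E_N^{(3)}\big)$ is a sum of \emph{quartic} expressions, bounded by $\eps_1^4(1+t)^{-1+2p_0}$. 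This is the ``quasilinear I-method'' step, and it is not a technicality: the normal form at the energy level introduces singularities of size $(\text{low frequency})^{-1/2}$, which is precisely why the low-frequency hypothesis $2^{-k(1/2-p_1)}\|P_ku_0\|_{L^2}\leq\eps_0$ is needed. The same construction is carried out for $\|Su\|_{H^2}^2$.

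\textbf{The resonance you identify is not a space-time resonance.} You claim $\Phi_-(\xi,\eta)=|\xi|^{3/2}+|\xi-\eta|^{3/2}-|\eta|^{3/2}$ vanishes together with $\partial_\eta\Phi_-$ at $\eta=\xi$; in fact $\partial_\eta\Phi_-(\xi,\xi)=-\tfrac32|\xi|^{1/2}\mathrm{sgn}(\xi)\neq0$. The quadratic phases have only time resonances (at $\eta=0$ or $\eta=\xi$), so a standard normal form $v=u+A(u,u)+B(\bar u,u)$ is available; this is what the paper does first. The profile is then $f=e^{it\Lambda}v$, not $e^{it\Lambda}u$, and $v$ satisfies a \emph{cubic} equation. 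The logarithmic phase correction producing modified scattering comes from the cubic space-time resonance of the $(++-)$ phase $\Phi^{++-}(\xi,\eta,\sigma)=|\xi|^{3/2}-|\xi+\eta|^{3/2}-|\xi+\sigma|^{3/2}+|\xi+\eta+\sigma|^{3/2}$ at $(\eta,\sigma)=(0,0)$ (equivalently $(\xi,0,-\xi)$ in the original variables), not from anything quadratic. Your closing remark that a ``paradifferential normal-form reduction'' is needed before the stationary-phase analysis is on the right track, but the actual mechanism and the location of the resonance are different from what you describe.
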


\begin{rem}
The solutions can also be defined on the time interval $(-\infty,0]$ since the equations are time-reversible.
\end{rem}

\begin{rem}
A more precise statement of modified scattering can be found in Lemma \ref{Zcontrol},
(in particular in \eqref{nf50}, \eqref{nf51} and \eqref{nf56}),
where modified scattering is described in terms of an appropriately renormalized profile $f$, see \eqref{nf20}, \eqref{nf3}, \eqref{nf4},
satisfying a cubic equation, see \eqref{nf40}--\eqref{nf42}.
Also, more precise bounds on the solution can be found in section \ref{NormalForm}, see for example Lemma \ref{Lem1},
Remark \ref{extra1}, and Proposition \ref{Sup3}. 
The bounds proven in section \ref{NormalForm} can also be used to obtain asymptotics for the solution $u$ in the physical space, 
as in \cite{IoPunote}.
\end{rem}

\begin{rem}\label{LowFreq}

The space of solutions described in \eqref{mainhyp} is important. We need to assume, of course, enough Sobolev regularity
to construct strong solutions. The more important issue, however, is the assumption on low frequencies.
We regard $u$ as the basic variable, since it corresponds to the energy variables $(Dh,D^{1/2}\omega)$ coming from the main system \eqref{WWE}.
Since the normal form transformation introduces singularities at low frequencies we need a stronger assumption at low frequencies
than the smallness of the $L^2$ norm of $u$. At the same time, we would like to avoid assuming a momentum condition on $u$,
which would correspond to the assumption $\|D^{-1/2}u\|_{L^2}<\infty$ in 1 dimension.

Our choice in \eqref{mainhyp} accomplishes both of these goals. On one hand
it is strong enough to allow us to control the singular terms arising from the normal form transformation. On the other hand, it is
weak enough to avoid making a momentum assumption on $u$.
\end{rem}

The equation \eqref{pr2}--\eqref{pr4}, like the full water waves system, is a time reversible quasilinear equation.
Proving global regularity relies on two main steps:

\setlength{\leftmargini}{1.5em}
\begin{itemize}
  \item Propagate control of high frequencies (high order Sobolev norms);
  \item Prove pointwise decay of the solution over time.
\end{itemize}

In this paper we use a combination of improved energy estimates and asymptotic analysis to achieve these two goals.
We carry out both parts of our analysis in the Fourier space.
More precisely, we construct high order "modified energies" which can be controlled for long times.
Then we prove sharp time decay rates.
We describe these two main aspects of our paper below.

\subsubsection{Energy estimates and the quartic energy identity}
In our model problem \eqref{pr2}--\eqref{pr4} one can use the basic high order energy
functional $E_N(t) = {\| u(t) \|}_{H^N}^2$ to construct smooth local solutions.
In order to go past the local existence time, one needs to rely on the dispersive properties of solutions.
One of the main difficulties in dealing with a one dimensional problem such as \eqref{pr2}--\eqref{pr4} comes from the slow pointwise decay,
which is $t^{-1/2}$ for linear solutions.
To overcome this issue one would like to find an energy functional $\mathcal{E}_N(t)$ such that
$\mathcal{E}_N(t) \approx \|u(t)\|^2_{H^N} + \|Su(t)\|^2_{H^2}$, and which satisfies a {\it quartic energy identity} of the form
\begin{align}
\label{qin}
\partial_t \mathcal{E}_N(t) = \mbox{semilinear quartic terms}.
\end{align}

At least formally, this is related to the classical concept of normal forms of Shatah \cite{shatahKGE}.
The implementation of the method of normal forms however is delicate in quasilinear problems,
due to the potential loss of derivatives.
It can be done in some cases, for example either by using carefully constructed
nonlinear changes of variables (as in Wu \cite{WuAG}), or the ``iterated energy method'' of Germain--Masmoudi \cite{GM}, or
the ``paradifferential normal form method'' of Alazard--Delort \cite{ADb},
or the ``modified energy method'' of Hunter--Ifrim--Tataru \cite{HIT}. 

Quartic energy identities such as \eqref{qin} were proved and played a crucial role
in all the long-term existence results for water waves in 2d in \cite{WuAG,IoPu2,ADb,HIT,IT}. 
The main ingredient for such an identity to hold is, essentially, 
the absence of time-resonant bilinear interactions, 
and the methods described above are largely interchangeable as long as this ingredient 
is present.{\footnote{See also \cite{Delo} and \cite{HITW} for earlier constructions proving quartic energy identities 
like \eqref{qin} in simpler models.}} 
In \cite{IoPu2} we proved a quartic energy identity, suitable for global-in-time analysis, 
by adapting the change of variable of Wu \cite{WuAG}. Here, as a starting point of our analysis, 
we adapt the more elegant and robust approach of Alazard--Delort \cite{ADb} and Hunter--Ifrim--Tataru \cite{HIT}.

For our model \eqref{pr2}--\eqref{pr4} a normal form transformation is formally available
since the only time resonances, i.e. solutions of
\begin{align}
\label{phase1}
\Lambda(\xi) \pm \Lambda(\xi-\eta) \pm \Lambda(\eta) = 0,
\end{align}
occur when one of the three interacting frequencies $(\xi,\xi-\eta,\eta)$ is zero.
However, the superlinear dispersion relation $\Lambda(\xi) = |\xi|^{3/2}$ makes these resonances very strong.
For example, in the case $|\xi| \approx |\eta| \approx 1 \gg |\xi-\eta|$, we see that
\begin{align}
\label{phase2}
| \Lambda(\xi) \pm \Lambda(\xi-\eta) - \Lambda(\eta) | \approx |\xi-\eta| .
\end{align}
Because of this, a standard normal transformation which eliminates the quadratic terms in the equation
will introduce a low frequency singularity. 
For comparison, in the gravity water waves case the dispersion relation is
$\Lambda(\xi)=|\xi|^{1/2}$ and one has the less singular behavior $|\Lambda(\xi) \pm \Lambda(\xi-\eta) - \Lambda(\eta) | \approx |\xi-\eta|^{1/2}$
in the case $|\xi| \approx |\eta| \approx 1 \gg |\xi-\eta|$.

In order to deal with the issue of slow decay and strong time resonances,
we construct here a modified energy functional, in the Fourier space,
 in a way that is similar to the I-method of Colliander--Keel--Staffilani--Takaoka--Tao \cite{CKSTT1,CKSTT2}.
The use of Fourier analysis gives us a lot of flexibility in defining energy functionals and isolating the most singular contributions,
which can be expressed  as multilinear paraproducts with singular multipliers.

We sketch below some of the steps in our proof of the energy estimates.
We begin our analysis  by looking at the basic energy functional expressed in time-frequency space,
\begin{align}
 \label{E2N}
E(t) = \frac{1}{2\pi} \int_{\R} \big( 1 + {|\xi|}^2 \big)^N {|\what{u}(\xi,t)|}^2 \, d\xi +
\frac{1}{2\pi} \int_{\R} \big( 1 + {|\xi|}^{2} \big)^2 {|\what{S u}(\xi,t)|}^2 \, d\xi,
\end{align}
and calculate its evolution using the equation.
After appropriate symmetrizations that avoid losses of derivatives we obtain
\begin{align}
\label{d_tE2}
\partial_t E(t) = \mbox{semilinear cubic terms}.
\end{align}
We then define a cubic energy functional $E^{(3)}$ obtained by dividing the symbols
in the cubic expressions in \eqref{d_tE2} by the appropriate resonant phase function.
This cubic functional is a perturbation of \eqref{E2N} on each fixed time slice, and by construction we have
\begin{align}
\label{d_tE2+E3}
\partial_t \big( E + E^{(3)} \big)(t) = \mbox{singular semilinear quartic terms}.
\end{align}
Notice that this is a singular version of the desired identity \eqref{qin}.
However the singularities 
- which are ultimately due to the lack of symmetries in the equation for $Su$ - are only at zero frequencies,
and we can control them thanks to our low frequencies assumptions \eqref{mainhyp}.

Such a construction, involving quadratic energy functionals and higher order corrections,  was performed earlier in \cite{HIT} in the case of gravity
water waves. In the gravity case, however, the problem is simpler because there are no singularities in the resulting quartic integrals, and the entire construction in \cite{HIT} was performed in the physical space. 
In our case there are strong time resonances
which lead to singularities of the form $(\text{low frequency})^{-1/2}$ in the quartic integrals,
and seem to prevent analysis in the physical space.
Because of this it is important to construct energy functionals in Fourier space (in the spirit of the I-method),
and make careful assumptions on the low frequency structure of solutions, see \eqref{mainhyp} and Remark \ref{LowFreq}.


\subsubsection{Decay and modified scattering}
Having established the $L^2$ bounds described above, we then move on to proving sharp $t^{-1/2}$ pointwise decay of solutions
in sections \ref{sup} and \ref{TechProof}.
We write Duhamel's formula in Fourier space and study the nonlinear oscillations in the spirit of \cite{GMS2,GNT1,IoPu1}.
After a normal form transformation, a stationary phase analysis reveals that a correction to the asymptotic behavior\footnote{Other
examples of dispersive PDEs whose solutions exhibit a behavior which is qualitatively different from the behavior
of a linear solution include the nonlinear Sch\"rodinger and Hartree equations \cite{HN,KP,IoPu1},
the Klein-Gordon equation \cite{DelortKG1d,BosonStar}, and the gravity water waves system \cite{IoPu1,IoPu2,ADa,IT}.
We refer the reader to \cite{BosonStar,IoPu2} for more references on related works on modified scattering
for other 1-dimensional integrable models such as KdV, mKdV and Benjamin-Ono.}
is needed, similarly to our previous works on gravity waves \cite{IoPu1,IoPu2}.
However, the analysis here is more complicated because of the singularities introduced by the quadratic time resonances
.

This is especially evident in this part of the argument where non $L^2$ based norms need to be estimated, and
meaningful symmetrization cannot be performed.
These singularities need to be dealt with at all stages of the argument:
bounding the normal form that recasts the quadratic nonlinearity into a cubic one (subsection \ref{NormalForm});
controlling all cubic terms once the main asymptotic contribution is factored out (subsection \ref{prooftech1});
estimating the quartic terms arising from the renormalization of the cubic equation needed to correct
the asymptotic behavior (subsection \ref{prooftech3}).
Moreover, the convex dispersion relation creates additional cubic resonant interactions (subsection \ref{prooftech2})
which are not present in the case of a concave relation, such as in gravity water waves.
Eventually, we are able to control uniformly, over time and frequencies, an appropriate norm of our solution,
and obtain the necessary decay through an improved linear estimate, as well as modified scattering.

\subsubsection{The bootstrap}
The existence and uniqueness of local-in-time solutions for \eqref{pr2}--\eqref{pr4}
can be proved in a standard fashion.
Our global solutions are then constructed by a bootstrap argument which allows us to continue the local solutions.
More precisely, we assume that $u$ satisfies the bootstrap assumptions
\begin{equation}
\label{pr5}
 \sup_{t\in[0,T]} \big[(1+t)^{-p_0} {\|u(t)\|}_{H^N}+(1+t)^{-4p_0} {\|Su(t)\|}_{H^{2}} \big] \leq \eps_1,
\end{equation}
and
\begin{equation}
\label{pr6}
 \sup_{k \in \mathbb{Z}} \sup_{t\in[0,T]} (1+t)^{1/2} 2^{4\max(k,0)} {\|P_ku(t)\|}_{L^\infty} \leq \eps_1,
\end{equation}
and the initial data assumptions \eqref{mainhyp}, that is
\begin{equation}
\label{pr7}
{\|u_0\|}_{H^N} + {\|x\partial_x u_0\|}_{H^{2}} +
  \sup_{k\leq 0} \big[ 2^{-k(1/2-p_1)} {\|P_ku_0\|}_{L^2} + 2^{-k(1/2-p_1)} {\|P_k Su_0\|}_{L^2} \big] \leq \eps_0 .
\end{equation}
Here, as in the statement of Theorem \ref{maintheo},
\begin{equation}
\label{pr8}
\begin{split}
& S = x\partial_x + (3/2)t\partial_t,\quad N = 10, \quad 0 < p_1 \leq p_0 \leq 10^{-6} \, , \quad 0 < \eps_0 \ll \eps_1 \leq \eps_0^{2/3} \ll 1.
\end{split}
\end{equation}

We then aim to show that the a priori assumptions \eqref{pr5}--\eqref{pr6} can be improved to
\begin{equation}
\label{pr5impr}
 \sup_{t\in[0,T]} \big[ (1+t)^{-p_0} {\|u(t)\|}_{H^N} + (1+t)^{-4p_0} {\|Su(t)\|}_{H^{2}} \big] \lesssim \eps_0 ,
\end{equation}
and
\begin{equation}
\label{pr6impr}
 \sup_{k\in\mathbb{Z}} \sup_{t\in[0,T]} (1+t)^{1/2}2^{4\max(k,0)} {\|P_ku(t)\|}_{L^\infty} \lesssim \eps_0 .
\end{equation}
This is done in the three main steps given by Propositions \ref{EnEst1}, \ref{EnEst2} and \ref{Sup1}.

In section \ref{energy} we improve the control on the $H^N$ norm from \eqref{pr5} to \eqref{pr5impr}
by implementing the energy construction described above.
For this we only need the first a priori assumption in \eqref{pr5} and the decay assumption \eqref{pr6}.
In section \ref{energy2} we improve the control on the weighted norm, using all the a priori assumptions \eqref{pr5}--\eqref{pr7}.
In section \ref{sup} we begin our proof of the decay estimate \eqref{pr6impr}.
We transform the unknown $u$ to a new unknown $v$ satisfying a cubic equation,
and reduce matters to bounding the $Z$-norm, see \eqref{nf29}, of the profile of $v$.
This task is further reduced to the more technical Lemma \ref{Zcontrol}, which is then proved in section \ref{TechProof}.

\section{Energy estimates, I: high Sobolev norms}\label{energy}

In this section we prove the following:

\begin{pro}\label{EnEst1}
If $u$ satisfies \eqref{pr5}--\eqref{pr7} then
\begin{equation}\label{BigOne}
 \sup_{t\in[0,T]}(1+t)^{-p_0}\|u(t)\|_{H^N}\lesssim\eps_0.
\end{equation}
\end{pro}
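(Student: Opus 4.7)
The plan is to apply the quasilinear I-method outlined in Section 1.3.2. I start from the basic high-order energy
\begin{equation*}
E_N(t) := \frac{1}{2\pi}\int_{\R}(1+\xi^2)^N|\widehat u(\xi,t)|^2\,d\xi,
\end{equation*}
and compute $\partial_t E_N$ using \eqref{pr3}--\eqref{pr4}. Since $\Lambda(\partial_x)$ is self-adjoint the linear part contributes nothing, and what remains is a sum of real-valued cubic Fourier integrals with kernels proportional to $(1+\xi^2)^N q_0(\xi,\eta)$. Because $q_0$ carries a factor of $\eta$, these integrals appear to lose a derivative when $|\eta|\sim|\xi|$. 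However, the cutoff $\chi$ in \eqref{pr2} forces $|\xi-\eta|\lesssim|\eta|\sim|\xi|$ in this regime, and after splitting $V=u+\bar u$ and relabeling, the two high-high-low contributions (with two factors of $u$ or two of $\bar u$) can be symmetrized in their high variables to cancel the loss. What is left is a cubic form whose symbol is bounded by $(1+\xi^2)^N|\xi-\eta|^{1/2}$ times a kernel of bounded mass.

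Next, for each sign combination I define a cubic correction $E^{(3)}$ by dividing the symmetrized symbol by the corresponding phase
\begin{equation*}
\Phi_{\pm\pm}(\xi,\eta):=\Lambda(\xi)\mp\Lambda(\xi-\eta)\mp\Lambda(\eta),
\end{equation*}
so that the cubic terms in $\partial_t(E_N+E^{(3)})$ cancel identically. The non-resonant phases $\Phi_{+-}$, $\Phi_{-+}$, $\Phi_{--}$ are bounded below by $c|\xi|^{1/2}|\xi-\eta|$ in the interaction regime and produce harmless symbols. The resonant phase $\Phi_{++}$ vanishes precisely when $\xi$ or $\xi-\eta$ vanishes; combined with the $|\xi-\eta|^{1/2}$ already present in $q_0$, this produces a $(\text{low frequency})^{-1/2}$ singularity in the symbol of $E^{(3)}$. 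This singularity is controlled by the low-frequency hypothesis \eqref{pr7} together with Bernstein applied to the decay assumption \eqref{pr6}: for $2^k\leq 1$ one has $\|P_k u(t)\|_{L^2}\lesssim \eps_1 2^{k(1/2-p_1)}(1+t)^{Cp_0}$, which absorbs the singularity with a small power of $2^k$ to spare and yields the coercivity bound $|E^{(3)}(t)|\lesssim\eps_1\|u(t)\|_{H^N}^2$. In particular $E_N+E^{(3)}\sim\|u\|_{H^N}^2$ is a valid modified energy.

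The final task is to estimate the quartic terms in $\partial_t(E_N+E^{(3)})$ obtained by substituting \eqref{pr3} into the $u$-factors of $E^{(3)}$. These are semilinear, and I bound them by placing two factors in $L^\infty$ using \eqref{pr6} (gaining a factor $(1+t)^{-1}$ overall), one factor in $H^N$, and the last factor in weighted low-frequency $L^2$ to absorb the $(\text{low frequency})^{-1/2}$ singularity. This yields
\begin{equation*}
\partial_t\bigl(E_N+E^{(3)}\bigr)(t) \lesssim \eps_1^2(1+t)^{-1}\|u(t)\|_{H^N}^2.
\end{equation*}
Integrating and using $E_N(0)\lesssim\eps_0^2$ together with the equivalence above yields $\|u(t)\|_{H^N}^2\lesssim \eps_0^2(1+t)^{C\eps_1^2}$. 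Taking $\overline{\eps}$ (and hence $\eps_1$) small enough so that $C\eps_1^2\leq p_0$ gives \eqref{BigOne}.

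The main obstacle throughout is the careful bookkeeping of the low-frequency singularities introduced by the phase denominators in $E^{(3)}$. Each division by $\Phi_{++}$ produces a $(\text{low frequency})^{-1/2}$ factor that must be absorbed by the available low-frequency regularity of $u$; the hypothesis \eqref{pr7} is tailored precisely so that this regularity exceeds the singularity by a positive power $2^{kp_1}$, which is exactly the margin needed to close the estimates. This is the delicate role of the low-frequency assumption emphasized in Remark \ref{LowFreq}.
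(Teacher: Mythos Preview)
Your overall architecture matches the paper's: define a cubic correction $E^{(3)}$ so that $\partial_t(E_N+E^{(3)})$ is quartic, show $|E^{(3)}|\lesssim\eps_1 E_N$, and bound the quartic remainder. But you have misidentified both the effect of the symmetrization and the location of the actual difficulty.

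First, the symmetrization in the high variables does more than remove the derivative loss: when $|\xi-\eta|\ll|\eta|$, the combination $\overline{q_N(\xi,\eta)}+q_N(\eta,\xi)$ is of size $|\xi-\eta|^{3/2}$, not $|\xi-\eta|^{1/2}$ as you claim. Dividing by the phase $|\xi|^{3/2}-|\eta|^{3/2}-|\xi-\eta|^{3/2}\sim|\xi-\eta||\eta|^{1/2}$ therefore gives a correction symbol $m_N$ of size $|\xi-\eta|^{1/2}|\eta|^{-1/2}$ (see Lemma~\ref{SymbBound}), with \emph{no} low-frequency singularity. Consequently the coercivity bound $|E^{(3)}|\lesssim\eps_1 E_N$ follows from \eqref{pr6} alone; the low-frequency hypothesis \eqref{pr7} is not used anywhere in the proof of Proposition~\ref{EnEst1}. (It is needed for the weighted estimate in Section~\ref{energy2}, where unsymmetrized correction symbols arise.)

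Second, and more seriously, the genuine obstruction sits in the quartic terms and is a \emph{high-frequency derivative loss}, not a low-frequency singularity. When you substitute the equation back into $E^{(3)}$, the resulting quartic symbols contain products like $q_N(\rho,\xi)m_N(\rho,-\eta)$, which for $|\xi|\approx|\eta|\approx|\rho|$ large and $|\rho-\xi|,|\rho+\eta|$ small are of size $|\rho-\xi|^{1/2}|\rho+\eta|^{1/2}|\xi|^{1/2}$. That factor $|\xi|^{1/2}$ cannot be absorbed by placing the two high-frequency factors in $H^N$ and the two low-frequency factors in $L^\infty$, which is exactly the placement your proposal describes. The paper closes this by a \emph{second} symmetrization: one rewrites $A_2+A_3$ as $B_1+B_2+B_3$ (see \eqref{en32}), and then uses the finer expansions of $q_N$ and $m_N$ in Remark~\ref{asymp} to show that in $B_3$ the leading parts of $q_N(\rho,\xi)m_N(\rho,-\eta)$ and $\overline{q_N}(\rho,-\eta)\overline{m_N}(\rho,\xi)$ cancel, yielding the crucial bound \eqref{en37}. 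Your sketch does not contain this step, and the estimate as written would not close.
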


In other words, we improve the control of the high Sobolev energy norm in the bootstrap assumption \eqref{pr5}. The rest of
the section is concerned with the proof of Proposition \ref{EnEst1}.

Let $W:=D^N u$. Then
\begin{equation}\label{en1}
\begin{split}
&\partial_tW+i\Lambda W=\mathcal{N}_N,\qquad \mathcal{F}(\mathcal{N}_N)(\xi)=\frac{1}{2\pi}\int_{\mathbb{R}}q_N(\xi,\eta)\widehat{V}(\xi-\eta)\widehat{W}(\eta)\,d\eta,\\
&q_N(\xi,\eta)=\frac{|\xi|^N}{|\eta|^N}q_0(\xi,\eta)=i\frac{|\xi|^N}{|\eta|^N}\chi(\xi-\eta,\eta)|\xi-\eta|^{1/2}\eta.
\end{split}
\end{equation}

We define the quadratic energy
\begin{equation}\label{en2}
 E_N^{(2)}(t):=\frac{1}{2\pi}\int_{\mathbb{R}}|\widehat{W}(\xi,t)|^2\,d\xi=\frac{1}{2\pi}\int_{\mathbb{R}}\widehat{W}(\xi,t)\widehat{\overline{W}}(-\xi,t)\,d\xi.
\end{equation}
Using \eqref{en1} and recalling that $V=u+\overline{u}$ we calculate
\begin{equation}\label{en3}
\begin{split}
\frac{d}{dt}E_N^{(2)}&=\frac{1}{2\pi}\int_{\mathbb{R}}\widehat{W}(\xi)\widehat{\overline{\mathcal{N}_N}}(-\xi)+
\widehat{\mathcal{N}_N}(\xi)\widehat{\overline{W}}(-\xi)\,d\xi\\
&=\frac{1}{4\pi^2}\int_{\mathbb{R}\times\mathbb{R}}\widehat{W}(\xi)\widehat{\overline{W}}(-\eta)
\Big[\overline{q_N(\xi,\eta)}\overline{\widehat{V}(\xi-\eta)}+q_N(\eta,\xi)\widehat{V}(\eta-\xi)\Big]\,d\xi d\eta\\
&=\frac{1}{4\pi^2}\int_{\mathbb{R}\times\mathbb{R}}\widehat{W}(\xi)\widehat{\overline{W}}(-\eta)
\big[\overline{q_N(\xi,\eta)}+q_N(\eta,\xi)\big]\big[\widehat{u}(\eta-\xi)+\widehat{\overline{u}}(\eta-\xi)\big]\,d\xi d\eta.
\end{split}
\end{equation}

To eliminate cubic space-time integrals, we define the cubic energies
\begin{equation}\label{en4}
E_{N}^{(3)}(t):=\frac{1}{4\pi^2}\int_{\mathbb{R}\times\mathbb{R}}\widehat{W}(\xi,t)\widehat{\overline{W}(-\eta,t)}m_{N}(\xi,\eta)\widehat{u}(\eta-\xi,t)\,d\xi d\eta
\end{equation}
where
\begin{equation}\label{en5}
 m_{N}(\xi,\eta):=-i\frac{\overline{q_N(\xi,\eta)}+q_N(\eta,\xi)}{|\xi|^{3/2}-|\eta|^{3/2}+|\eta-\xi|^{3/2}}.
\end{equation}
Using \eqref{en1} and \eqref{pr3} we calculate
\begin{equation}\label{en6}
\frac{d}{dt}E_{N}^{(3)}(t)=A_{1}(t)+A_{2}(t)+A_{3}(t),
\end{equation}
where
\begin{equation}\label{en7}
A_{1}:=\frac{1}{4\pi^2}\int_{\mathbb{R}\times\mathbb{R}}\widehat{W}(\xi)\widehat{\overline{W}}(-\eta)
m_{N}(\xi,\eta)\widehat{u}(\eta-\xi)\big(-i|\xi|^{3/2}+i|\eta|^{3/2}-i|\eta-\xi|^{3/2}\big)\,d\xi d\eta,
\end{equation}
\begin{equation}\label{en8}
A_{2}:=\frac{1}{4\pi^2}\int_{\mathbb{R}\times\mathbb{R}}\big[\widehat{\mathcal{N}_N}(\xi)\widehat{\overline{W}}(-\eta)+\widehat{W}(\xi)\widehat{\overline{\mathcal{N}_N}}(-\eta)\big]m_{N}(\xi,\eta)\widehat{u}(\eta-\xi)\,d\xi d\eta,
\end{equation}
and
\begin{equation}\label{en10}
A_{3}:=\frac{1}{4\pi^2}\int_{\mathbb{R}\times\mathbb{R}}\widehat{W}(\xi)\widehat{\overline{W}}(-\eta)
m_{N}(\xi,\eta)\widehat{\mathcal{N}}(\eta-\xi)\,d\xi d\eta.
\end{equation}
Using the definition \eqref{en5}, notice that
\begin{equation*}
m_{N}(\xi,\eta)\big(-i|\xi|^{3/2}+i|\eta|^{3/2}-i|\xi-\eta|^{3/2}\big)=-\big[\overline{q_N(\xi,\eta)}+q_N(\eta,\xi)\big].
\end{equation*}
Therefore, using also \eqref{en3},
\begin{equation}\label{en11}
\frac{d}{dt}[E_N^{(2)}+E_{N}^{(3)}+\overline{E_{N}^{(3)}}](t)=2\Re(A_{2}(t))+2\Re(A_{3}(t)).
\end{equation}

The point of this identity is that the space-time integrals $A_{2},A_{3}$ are quartic expressions (in terms of the variable $u$).
We will estimate these expressions using the bootstrap assumptions \eqref{pr5} and \eqref{pr6}. We need first suitable
symbol-type estimates on the multiplier $q_N,m_N$.

Recall the definition, see \cite{IoPu2},
\begin{equation}\label{Al4}
S^\infty:=\{m:\mathbb{R}^2\to\mathbb{C}:\,m\text{ continuous and }\|m\|_{S^\infty}:=\|\mathcal{F}^{-1}(m)\|_{L^1}<\infty\}.
\end{equation}
Clearly, $S^\infty\hookrightarrow L^\infty(\mathbb{R}\times\mathbb{R})$. The following lemma summarizes some properties of $S^\infty$ symbols.

\begin{lem}\label{touse}
(i) If $m,m'\in S^\infty$ then $m\cdot m'\in S^\infty$ and
\begin{equation}\label{al8}
\|m\cdot m'\|_{S^\infty}\lesssim \|m\|_{S^\infty}\|m'\|_{S^\infty}.
\end{equation}

(ii) Assume $p,q,r\in[1,\infty]$ satisfy $1/p+1/q=1/r$, and $m\in S^\infty$. Then, for any $f,g\in L^2(\mathbb{R})$,
\begin{equation}\label{mk6}
\|M(f,g)\|_{L^r}\lesssim \|m\|_{S^\infty}\|f\|_{L^p}\|g\|_{L^q},
\end{equation}
where the bilinear operator $M$ is defined by
\begin{equation*}
\mathcal{F}\big[M(f,g)\big](\xi)=\frac{1}{2\pi}\int_{\mathbb{R}}m(\xi,\eta)\widehat{f}(\xi-\eta)\widehat{g}(\eta)\,d\eta.
\end{equation*}
In particular, if $1/p+1/q+1/r=1$,
\begin{equation}\label{mk6.01}
\Big|\int_{\mathbb{R}^2}m(\xi,\eta)\widehat{f}(\xi)\widehat{g}(\eta)\widehat{h}(-\xi-\eta)\,d\xi d\eta\Big|\lesssim \|m\|_{S^\infty}\|f\|_{L^p}\|g\|_{L^q}\|h\|_{L^r}.
\end{equation}

(iii) Moreover,  if $p_1,p_2,p_3,p_4\in[1,\infty]$ are exponents that satisfy
\begin{equation*}
\frac{1}{p_1}+\frac{1}{p_2}+\frac{1}{p_3}+\frac{1}{p_4}=1
\end{equation*}
then
\begin{equation}\label{mk6.5}
\begin{split}
\Big|\int_{\mathbb{R}^3}\widehat{f_1}(\xi)\widehat{f_2}(\eta)\widehat{f_3}(\rho-\xi)&\widehat{f_4}(-\rho-\eta)
m(\xi,\eta,\rho)\,d\xi d\rho d\eta\Big|
\\
&\lesssim \|f_1\|_{L^{p_1}}\|f_2\|_{L^{p_2}}\|f_3\|_{L^{p_3}}\|f_4\|_{L^{p_4}}
\|\mathcal{F}^{-1}m\|_{L^1}.
\end{split}
\end{equation}
\end{lem}

See \cite[Lemma 5.2]{IoPu2} for the proof.

Given any multiplier $m:\mathbb{R}^2\to\mathbb{C}$ and any $k,k_1,k_2\in\mathbb{Z}$ we define (recall \eqref{pr1})
\begin{equation}\label{al11}
m^{k,k_1,k_2}(\xi,\eta):=m(\xi,\eta)\cdot\varphi_k(\xi)\varphi_{k_1}(\xi-\eta)\varphi_{k_2}(\eta).
\end{equation}
Moreover, for $k\in\mathbb{Z}$ we denote by $P_k$, $P_{\leq k}$, and $P_{\geq k}$ the operators defined by the Fourier multipliers $\varphi_k$,
$\varphi_{\leq k}$, and $\varphi_{\geq k}$ respectively.
Moreover, we let
\begin{align}
 \label{P'_k}
P'_k := P_{k-1} + P_k + P_{k+1} \qquad \mbox{and} \qquad \varphi'_k := \varphi_{k-1} + \varphi_k + \varphi_{k+1}.
\end{align}

Let
\begin{equation}\label{al11.1}
 \mathcal{X}:=\{(k,k_1,k_2)\in\mathbb{Z}^3:\max(k,k_1,k_2)-\mathrm{med}(k,k_1,k_2)\leq 6\},
\end{equation}
and notice that $m^{k,k_1,k_2}\equiv 0$ unless $(k,k_1,k_2)\in\mathcal{X}$.

\begin{lem}\label{SymbBound}
With $q_0$, $q_N$, and $m_N$ defined as before, for any $k,k_1,k_2\in\mathbb{Z}$ we have
\begin{equation}\label{en21.2}
\|q_0^{k,k_1,k_2}\|_{S^\infty}+\|q_N^{k,k_1,k_2}\|_{S^\infty}\lesssim 2^{k_1/2}2^{k_2}\one_{15}(k,k_1,k_2),
\end{equation}
and
\begin{equation}\label{en21}
\|m_{N}^{k,k_1,k_2}\|_{S^\infty}\lesssim 2^{k_1/2}2^{-k_2/2}\one_{15}(k,k_1,k_2),
\end{equation}
where, for any $d\in\mathbb{Z}_+$,
\begin{equation}\label{en21.21}
\one_d(k,k_1,k_2):=\mathbf{1}_{[-d,\infty)}(k_2)\mathbf{1}_{[-d,\infty)}(k_2-k_1)\mathbf{1}_{[-d,d]}(k_2-k).
\end{equation}
\end{lem}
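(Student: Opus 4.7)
The plan is to handle $q_0^{k,k_1,k_2}$, $q_N^{k,k_1,k_2}$ and $m_N^{k,k_1,k_2}$ separately. For the first two the bounds follow directly from the definitions and from the support properties of $\chi$; the third requires extracting an algebraic cancellation in the numerator and matching it against a lower bound on the time-resonance phase.

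For $q_0$ and $q_N$, the definition
$\chi(x,y)=\sum_{k\geq-10}\varphi_k(y)\varphi_{\leq k+10}(x)\varphi_{\geq k-10}(x+y)$
forces the support of $\chi(\xi-\eta,\eta)$ to lie in the region $|\eta|\gtrsim 2^{-10}$, $|\xi-\eta|\lesssim |\eta|$, and $|\xi|\gtrsim |\eta|$. Intersecting with $\varphi_k(\xi)\varphi_{k_1}(\xi-\eta)\varphi_{k_2}(\eta)$ yields the indicator $\one_{15}(k,k_1,k_2)$. On that support $|\xi|\sim|\eta|\sim 2^{k_2}$, so $|\xi|^N/|\eta|^N\sim 1$ and pointwise $|q_N^{k,k_1,k_2}|\lesssim 2^{k_1/2}2^{k_2}$. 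Upgrading to $S^\infty$ is standard: after rescaling each variable to unit size the symbol becomes smooth with derivatives of all orders bounded (the factors $|\cdot|^{1/2}$, $|\cdot|^N$, $|\cdot|^{-N}$ being all away from the origin on this region), and a Schwartz-kernel argument as in Lemma 5.2 of \cite{IoPu2} produces the $S^\infty$ bound.

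For $m_N^{k,k_1,k_2}$, I would first expose the cancellation in the numerator by writing
\[
\overline{q_N(\xi,\eta)}+q_N(\eta,\xi)=-i|\xi-\eta|^{1/2}\bigl[F(\xi,\eta)-F(\eta,\xi)\bigr],\qquad F(\xi,\eta):=\chi(\xi-\eta,\eta)\,\eta\,\frac{|\xi|^N}{|\eta|^N}.
\]
The bracket vanishes on the diagonal $\xi=\eta$, and on the support of $\one_{15}$ the function $F$ is smooth with $\nabla F=O(1)$ (each derivative of $\chi$ costs $2^{-k_2}$ which is compensated by $|\eta|\sim 2^{k_2}$). A Taylor expansion at $\xi=\eta$ therefore gives $|F(\xi,\eta)-F(\eta,\xi)|\lesssim|\xi-\eta|\lesssim 2^{k_1}$, so the numerator is $\lesssim 2^{3k_1/2}$ (and the same bound propagates to its rescaled derivatives). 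For the phase $\Phi(\xi,\eta):=|\xi|^{3/2}-|\eta|^{3/2}-|\xi-\eta|^{3/2}$ I would split into two regimes. When $\xi,\eta$ have the same sign, the convexity of $s\mapsto s^{3/2}$ combined with a direct Taylor expansion yields $|\Phi|\gtrsim 2^{k_2/2}2^{k_1}$: in the $k_1\ll k_2$ subregime the dominant contribution is $\tfrac{3}{2}|\eta|^{1/2}(\xi-\eta)$ of size $2^{k_2/2}2^{k_1}$, while in the $k_1\sim k_2$ subregime all three terms are of size $2^{3k_2/2}\sim 2^{k_2/2}2^{k_1}$ and the combination is nonvanishing by direct check. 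When $\xi,\eta$ have opposite signs, $|\Phi|=||\xi|^{3/2}-|\eta|^{3/2}-(|\xi|+|\eta|)^{3/2}|\gtrsim 2^{3k_2/2}$, which is even larger. Dividing numerator by denominator gives $|m_N^{k,k_1,k_2}|\lesssim 2^{3k_1/2}/(2^{k_2/2}2^{k_1})=2^{k_1/2}2^{-k_2/2}$.

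The main obstacle will be upgrading these pointwise estimates to genuine $S^\infty$ bounds on $m_N^{k,k_1,k_2}$, which requires that each derivative in $\xi$ or $\eta$ costs only a factor $2^{-k_2}$ or $2^{-k_1}$ as appropriate. This is routine on the smooth interior of each dyadic piece, but needs care at the transition between the same-sign and opposite-sign regions and near the set where $\Phi$ is smallest. Since the cutoffs $\varphi_{k_1}(\xi-\eta)\varphi_{k_2}(\eta)$ separate these cases into well-defined dyadic boxes (with at most a bounded number of boundary pieces where $|\xi|,|\eta|,|\xi-\eta|$ are all comparable), one can treat each case via a direct integration-by-parts estimate and sum the contributions, thereby producing the claimed $S^\infty$ bounds.
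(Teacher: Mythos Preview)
Your approach is correct in spirit and close to the paper's, but the organization differs in a way worth noting. The paper does not extract the numerator cancellation uniformly; instead it first proves the standalone $S^\infty$ bound
\[
\Big\|\frac{\varphi_k(\xi)\varphi_{k_1}(\xi-\eta)\varphi_{k_2}(\eta)}{|\xi|^{3/2}-|\eta|^{3/2}-|\xi-\eta|^{3/2}}\Big\|_{S^\infty}\lesssim 2^{-\min(k_1,k_2)}2^{-\max(k,k_2)/2},
\]
and then splits into two cases. When $k_1\geq\max(k,k_2)-40$ (so $k_1\sim k_2$), no cancellation is needed: one simply multiplies this bound by the already-established bound \eqref{en21.2} for the numerator, using the algebra property of $S^\infty$. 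Only when $k_1\leq\max(k,k_2)-40$ does the paper write out the difference $\eta^{2N-1}\chi(\eta-\xi,\xi)-\xi^{2N-1}\chi(\xi-\eta,\eta)$ and read off the $2^{3k_1/2}$ bound for the numerator. This bypasses your ``main obstacle'' entirely: once the reciprocal phase is an honest $S^\infty$ multiplier, the upgrade from pointwise to $S^\infty$ is automatic via Lemma~\ref{touse}(i).

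One small point in your write-up: the Taylor expansion $|F(\xi,\eta)-F(\eta,\xi)|\lesssim |\xi-\eta|$ implicitly assumes the segment from $(\xi,\eta)$ to $(\eta,\xi)$ stays in the region where $F$ is smooth with $\nabla F=O(1)$. The midpoint of that segment is $\bigl(\tfrac{\xi+\eta}{2},\tfrac{\xi+\eta}{2}\bigr)$, which is fine when $\xi,\eta$ have the same sign but can be near the origin when they have opposite signs. This is harmless, since opposite signs force $|\xi-\eta|\geq|\eta|$ and hence $k_1\sim k_2$, where the trivial bound on the numerator already suffices; but your text applies the Taylor step before the sign split, so strictly speaking the argument as written has a small gap there.
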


\begin{proof}[Proof of Lemma \ref{SymbBound}]
The bounds \eqref{en21.2} follow from the definitions and simple integration by parts arguments.
To prove \eqref{en21} we notice first that
\begin{equation}\label{en21.4}
(a+b)^{3/2}-b^{3/2}-a^{3/2}\in[ab^{1/2}/4,4ab^{1/2}]\qquad\text{ if }0\leq a\leq b.
\end{equation}
Therefore, using standard integration by parts,
\begin{equation}\label{en25}
\Big\|\frac{1}{|\xi|^{3/2}-|\eta|^{3/2}\pm|\xi-\eta|^{3/2}}\cdot\varphi_k(\xi)\varphi_{k_1}(\xi-\eta)\varphi_{k_2}(\eta)\Big\|_{S^\infty}
\lesssim \frac{1}{2^{\min(k_1,k_2)}2^{\max(k,k_2)/2}},
\end{equation}
for any $k,k_1,k_2\in\mathbb{Z}$. The bound \eqref{en21} follows from \eqref{en21.2} and \eqref{en25} if $k_1\geq \max(k,k_2)-40$.

On the other hand, if
\begin{equation*}
 k_1\leq \max(k,k_2)-40,
\end{equation*}
then we may assume that $|k-k_2|\leq 4$ and write
\begin{equation*}
\begin{split}
&[q_N(\eta,\xi)+\overline{q_N}(\xi,\eta)]\cdot\varphi_k(\xi)\varphi_{k_1}(\xi-\eta)\varphi_{k_2}(\eta)\\
&=i|\xi-\eta|^{1/2}\varphi_k(\xi)\varphi_{k_1}(\xi-\eta)\varphi_{k_2}(\eta)
\Big[\frac{|\eta|^N\xi}{|\xi|^N}\chi(\eta-\xi,\xi)-\frac{|\xi|^N\eta}{|\eta|^N}\chi(\xi-\eta,\eta)\Big]\\
&=i|\xi-\eta|^{1/2}\frac{\varphi_k(\xi)\varphi_{k_1}(\xi-\eta)\varphi_{k_2}(\eta)\xi\eta}{|\xi|^N|\eta|^N}
\big[\eta^{2N-1}\chi(\eta-\xi,\xi)-\xi^{2N-1}\chi(\xi-\eta,\eta)\big].
\end{split}
\end{equation*}
This shows easily that
\begin{equation*}
\big\|[q_N(\eta,\xi)+\overline{q_N}(\xi,\eta)]\cdot\varphi_k(\xi)\varphi_{k_1}(\xi-\eta)\varphi_{k_2}(\eta)\big\|_{S^\infty}
\lesssim 2^{3k_1/2},
\end{equation*}
and the desired bound \eqref{en21} follows using also \eqref{en25}.
\end{proof}

\begin{rem}\label{asymp}
 In certain estimates we need more precise asymptotics for the symbols $q_N$ and $m_N$. More precisely, if
\begin{equation}\label{en25.1}
k_1+40\leq \max(k,k_2)\qquad\text{ and }\qquad |k-k_2|\leq 4,
\end{equation}
then
\begin{equation}\label{en25.2}
 \Big\|\mathcal{F}^{-1}\big[\big(q_N(\xi,\eta)-i|\xi-\eta|^{1/2}\xi\big)\cdot \varphi_{k_1}(\xi-\eta)
\varphi_{k}(\xi)\varphi_{k_2}(\eta)\big]\Big\|_{L^1}\lesssim 2^{3k_1/2},
\end{equation}
and
\begin{equation}\label{en25.3}
 \Big\|\mathcal{F}^{-1}\Big[\Big(m_N(\xi,\eta)+\frac{4N-2}{3}\frac{|\xi-\eta|^{1/2}\xi}{|\xi|^{3/2}}\Big)\cdot \varphi_{k_1}(\xi-\eta)
\varphi_{k}(\xi)\varphi_{k_2}(\eta)\Big]\Big\|_{L^1}\lesssim 2^{k_1-k_2}.
\end{equation}
The proofs of these estimates are similar to the proofs in Lemma \ref{SymbBound}.
\end{rem}

We control now the cubic energy $E_N^{(3)}$.

\begin{lem}\label{EnergyComp}
Assuming the bounds \eqref{pr5} and \eqref{pr6}, for any $t\in[0,T]$ we have
\begin{equation}\label{en30}
E_{N}^{(3)}(t)\lesssim \eps_1^3(1+t)^{2p_0}.
\end{equation}
\end{lem}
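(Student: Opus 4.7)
The plan is to reduce $E_N^{(3)}$ to a sum of trilinear $L^2 \cdot L^2 \cdot L^\infty$ estimates via a dyadic decomposition in the frequencies $(k, k_1, k_2)$ attached to the three factors $W$, $u$, $W$ (supported near $\xi$, $\xi-\eta$, $\eta$ respectively). Writing $\overline{\widehat{W}(\eta)} = \widehat{\bar W}(-\eta)$ and $\overline{\widehat{u}(\xi-\eta)} = \widehat{\bar u}(\eta-\xi)$ and changing variables $\eta \mapsto -\eta$, each frequency-localized piece of \eqref{en4} falls into the canonical trilinear form of \eqref{mk6.01}, so Lemma \ref{touse} yields
\begin{equation*}
\bigl|(\text{localized piece})_{k,k_1,k_2}\bigr| \lesssim \|m_N^{k,k_1,k_2}\|_{S^\infty} \, \|P_k W\|_{L^2} \|P_{k_2} W\|_{L^2} \|P_{k_1} u\|_{L^\infty},
\end{equation*}
and the two summands in \eqref{en4} are interchanged by the $\xi \leftrightarrow \eta$ symmetry.

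Next I would plug in the symbol estimate $\|m_N^{k,k_1,k_2}\|_{S^\infty} \lesssim 2^{k_1/2} 2^{-k_2/2} \one_{15}(k,k_1,k_2)$ from Lemma \ref{SymbBound} together with the dispersive bound $\|P_{k_1}u\|_{L^\infty} \lesssim \eps_1 2^{-4\max(k_1,0)}$ coming from \eqref{pr6} (discarding the $(1+t)^{-1/2}$ prefactor, which is $\leq 1$). The support cutoff $\one_{15}$ enforces $|k-k_2|\leq 15$, $k_2 \geq -15$, and $k_1 \leq k_2+15$, so the sum in $k_1$ is
\begin{equation*}
\sum_{k_1 \leq k_2+15} 2^{k_1/2} 2^{-4\max(k_1,0)} \lesssim 1,
\end{equation*}
uniformly in $k_2$, splitting into the convergent pieces $\sum_{k_1 \leq 0} 2^{k_1/2}$ and $\sum_{k_1 > 0} 2^{-7k_1/2}$. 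It then remains to sum $2^{-k_2/2} \|P_k W\|_{L^2} \|P_{k_2} W\|_{L^2}$ over the band $|k-k_2|\leq 15$, $k_2 \geq -15$; since $k_2 \geq -15$ the prefactor $2^{-k_2/2}$ is $O(1)$, and Cauchy--Schwarz in $k_2$ followed by Plancherel in $k$ produces the desired bound $\eps_1 \|W\|_{L^2}^2 \sim \eps_1 E_N^{(2)}(t)$.

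The main obstacle is the factor $2^{-k_2/2}$ inherited from the normal-form denominator $|\xi|^{3/2}-|\eta|^{3/2}-|\xi-\eta|^{3/2}$, which would blow up if the high-frequency input or output were allowed to approach zero. What saves the estimate here is \emph{structural} rather than analytical: the interaction cutoff $\chi$ built into $q_0$, see \eqref{pr2}, enforces $k_2 \gtrsim -15$ in every interaction, so the genuinely singular regime is never encountered. This is precisely the simplification that the authors highlight in contrasting the model \eqref{pr3}--\eqref{pr4} with the full water-wave system \eqref{WWCpar}, and it is what permits a clean comparison of the quadratic and cubic energies at this stage; in the full system one would need the low-frequency assumption built into \eqref{mainhyp} to handle an analogous but genuinely singular term, in the spirit of Remark \ref{LowFreq}.
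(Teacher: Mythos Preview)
Your argument is correct and follows essentially the same route as the paper's proof: dyadic decomposition, the trilinear $L^2\times L^2\times L^\infty$ bound from Lemma~\ref{touse}, the symbol estimate \eqref{en21}, and the pointwise decay \eqref{pr6}, with the support condition $\one_{15}$ supplying the needed summability. Your write-up is more explicit about why the sums over $k_1$ and $k_2$ converge (in particular the role of $k_2\geq -15$ in controlling the $2^{-k_2/2}$ factor), but the underlying strategy is identical.
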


\begin{proof}[Proof of Lemma \ref{EnergyComp}] We use Lemma \ref{touse} (ii) and Lemma \ref{SymbBound}.
More precisely, using also the definition \eqref{en4},
\begin{equation*}
\begin{split}
E_{N,\iota}^{(3)}(t)&\lesssim\sum_{(k,k_1,k_2)\in\mathcal{X}}\|P'_kW(t)\|_{L^2}\|P'_{k_2}W(t)\|_{L^2}\|P'_{k_1}u(t)\|_{L^\infty}
\|m_N^{k,k_1,k_2}\|_{S^\infty}\lesssim\|W(t)\|_{L^2}^2\eps_1,
\end{split}
\end{equation*}
and the desired conclusion follows from \eqref{pr5}.
\end{proof}

We estimate now the functions $A_{2}(t)$ and $A_{3}(t)$ defined in \eqref{en8}--\eqref{en10}.
\begin{lem}\label{EnergyInc}
Assuming the bounds \eqref{pr5} and \eqref{pr6}, for any $t\in[0,T]$ we have
\begin{equation}\label{en31}
|A_{2}(t)|+|A_{3}(t)|\lesssim \eps_1^4(1+t)^{-1+2p_0}.
\end{equation}
\end{lem}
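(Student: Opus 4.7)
All three terms $A_2$, $A_3$, $A_4$ are quartic space integrals containing two factors of $W=D^Nu$ and two factors drawn from $u$ and $V=u+\bar u$, and the target bound $\eps_1^4\langle t\rangle^{-1+2p_0}$ matches the dimensional count $\|W\|_{L^2}^2\cdot\|u\|_{L^\infty}^2$ under the bootstraps \eqref{pr5}, \eqref{pr6}. My plan is to Littlewood--Paley decompose each input, apply Lemma \ref{touse}(iii) with the two $W$'s placed in $L^2$ and the two $u$-type factors in $L^\infty$, bound the trilinear symbols via Lemma \ref{SymbBound}, and sum over the dyadic frequencies using the support constraint $\one_{15}$.

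For $A_4$ the argument closes directly. Since $\widehat{\mathcal N}(\xi-\eta)$ has low output frequency $k_1$, the symbol $q_0$ satisfies $\|q_0^{k_1,\cdot,\cdot}\|_{S^\infty}\lesssim 1$ for $k_1\geq-10$, giving $\|P_{k_1}\mathcal N\|_{L^\infty}\lesssim\eps_1^2\langle t\rangle^{-1}$. The $\one_{15}$ constraint then forces $k_1\leq k+15$ and $k\geq-15$, and one obtains
\[
|A_4|\lesssim \eps_1^2\langle t\rangle^{-1}\sum_{k\geq-15}2^{-k/2}\|P_kW\|_{L^2}^2 \sum_{k_1\leq k+15} 2^{k_1/2},
\]
whose inner sum is $O(2^{k/2})$, cancelling the $m_N$-gain $2^{-k/2}$, and whose outer sum is at most $\|W\|_{L^2}^2\lesssim\eps_1^2\langle t\rangle^{2p_0}$.

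The terms $A_2$ and $A_3$ satisfy $A_3=\overline{A_2}$: this follows from the substitution $\xi\leftrightarrow\eta$ in $\overline{A_2}$ combined with the symmetry $\overline{K(\eta,\xi)}=K(\xi,\eta)$ built into the bracket $K$ appearing in \eqref{en8}--\eqref{en9}. Thus it suffices to bound $A_2$. Here the main obstacle is the quasilinear derivative loss carried by $q_N$ inside $\mathcal{N}_N$: $|q_N|\sim 2^{k_d/2}\cdot 2^{k}$ in the relevant paraproduct regime, and the $-1/2$-order gain of $m_N$ only partially compensates, leaving a residual $2^{k/2}$ factor on the joint dyadic symbol $q_N\cdot m_N$. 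I expect to resolve this as follows. For $k\in[-15,0]$, only finitely many dyadic pieces contribute and $2^{k/2}\lesssim 1$, so the sum is controlled by $\|W\|_{L^2}^2$ directly. For $k>0$, I plan to exploit the strong decay factor $2^{-4\max(k_d,0)}$ in the $L^\infty$-bootstrap on $V$ (and symmetrically $u$) at the top dyadic scale $k_d\sim k$, combined with the Sobolev--Bernstein trade-off $\|P_{k_d}u\|_{L^\infty}\lesssim 2^{k_d/2}\|P_{k_d}u\|_{L^2}\lesssim \eps_1 2^{(1/2-N)k_d}\langle t\rangle^{p_0}$ available at large $k_d$; with $N=10$ the resulting $2^{-9k_d}$ tail dominates the $2^{k/2}$ loss after pairing against $\|P_kW\|_{L^2}^2$, at the cost of only a $\langle t\rangle^{O(p_0)}$ loss.

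The only delicate step is this high-frequency balancing for $A_2$, where one must trade the quasilinear derivative loss against the combined dispersive and Sobolev regularity bootstraps; the remainder of the argument is routine dyadic bookkeeping against the $S^\infty$ symbol bounds of Lemma \ref{SymbBound}.
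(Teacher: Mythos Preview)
Your treatment of $A_4$ is fine, and the observation $A_3=\overline{A_2}$ is correct. The gap is in your plan for $A_2$.

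After expanding $\mathcal{N}_N$, the resulting quartic integrand carries two $W$-factors at frequencies $k_1,k_2$ and two $u$-type factors at frequencies $k_3,k_4$, with symbol $q_N(\rho,\xi)m_N(\rho,-\eta)$ (or a variant). Lemma~\ref{SymbBound} gives the crude bound
\[
\|q_N^{\cdot,k_3,k_1}\|_{S^\infty}\|m_N^{\cdot,k_4,k_2}\|_{S^\infty}\lesssim 2^{k_3/2}2^{k_4/2}\,2^{\min(k_1,k_2)/2}2^{-|k_1-k_2|},
\]
so in the regime $k_3,k_4\ll k_1\approx k_2=:k$ there is a residual $2^{k/2}$ loss. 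Your proposed cures do not apply here: the $L^\infty$ factors sit at $k_3,k_4$, which are \emph{low}, so neither the $2^{-4\max(k_d,0)}$ dispersive decay nor the Bernstein trade $\|P_{k_d}u\|_{L^\infty}\lesssim\eps_1 2^{(1/2-N)k_d}\langle t\rangle^{p_0}$ is available at the top scale. And $\|P_kW\|_{L^2}$ has no extra decay in $k$ --- it is already the top-order $H^N$ quantity --- so $\sum_k 2^{k/2}\|P_kW\|_{L^2}^2$ is an $H^{N+1/4}$ norm not controlled by the bootstrap. Estimating $A_2$ alone (hence also $A_3=\overline{A_2}$) cannot close.

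What the paper does is recombine $A_2+A_3=B_1+B_2+B_3$ so that each $B_l$ carries a \emph{difference} or \emph{commutator} structure in its symbol: $q_N(\rho,\xi)\big[\overline{m_N}(-\eta,\rho)-m_N(\rho,-\eta)\big]$ in $B_1$, the analogous swap in $B_2$, and $q_N m_N+\overline{q_N}\,\overline{m_N}$ in $B_3$. In the dangerous regime $\max(k_3,k_4,0)\ll k_1\approx k_2$, these combinations gain one full low-frequency factor (i.e.\ $2^{k_4}$ or $2^{k_3}$ instead of $2^{k_4/2}$), via the explicit asymptotics of Remark~\ref{asymp}; this exactly cancels the $2^{k/2}$ loss. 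Without this symmetrization the estimate is a genuine derivative loss and does not close at the $H^N$ level.
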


\begin{proof}[Proof of Lemma \ref{EnergyInc}] Using the definitions and simple changes of variables, we write
\begin{equation*}
\begin{split}
A_{2}=\frac{1}{8\pi^3}\int_{\mathbb{R}^3}&q_N(\xi,\rho)\widehat{V}(\xi-\rho)\widehat{W}(\rho)\widehat{\overline{W}}(-\eta)m_{N}(\xi,\eta)\widehat{u}(\eta-\xi)\,d\xi d\eta d\rho\\
+\frac{1}{8\pi^3}\int_{\mathbb{R}^3}&\widehat{W}(\xi)\overline{q_N(\eta,\rho)}\overline{\widehat{V}(\eta-\rho)}\,\overline{\widehat{W}(\rho)}m_{N}(\xi,\eta)\widehat{u}(\eta-\xi)\,d\xi d\eta d\rho\\
=\frac{1}{8\pi^3}\int_{\mathbb{R}^3}&\widehat{W}(\xi)\widehat{\overline{W}}(\eta)\widehat{V}(\rho-\xi)\widehat{u}(-\eta-\rho)\\
&\times\big[q_N(\rho,\xi)m_{N}(\rho,-\eta)-q_N(\xi-\eta-\rho,-\eta)m_{N}(\xi,\xi-\eta-\rho)\big]\,d\xi d\eta d\rho.
\end{split}
\end{equation*}

We can also rewrite
\begin{equation*}
A_{3}=\frac{1}{8\pi^3}\int_{\mathbb{R}^3}\widehat{W}(\xi)\widehat{\overline{W}}(\eta)\widehat{V}(\rho-\xi)\widehat{u}(-\eta-\rho)
m_N(\xi,-\eta)q_0(-\xi-\eta,-\rho-\eta)\,d\xi d\rho d\eta.
\end{equation*}
We use Lemma \ref{touse} (iii). It follows that
\begin{equation}\label{en35}
 |A_l|\lesssim \sum_{k_1,k_2,k_3,k_4\in\mathbb{Z}}\|P'_{k_1}W\|_{L^2}\|P'_{k_2}W\|_{L^2}\|P'_{k_3}u\|_{L^\infty}
\|P'_{k_4}u\|_{L^\infty}\|\mathcal{F}^{-1}(a_l^{k_1,k_2,k_3,k_4})\|_{L^1}
\end{equation}
for $l\in\{2,3\}$, where, with $\widetilde{\varphi}_{k_1,k_2,k_3,k_4}(\xi,\eta,\rho):=\varphi_{k_1}(\xi)\varphi_{k_2}(\eta)\varphi_{k_3}(\rho-\xi)\varphi_{k_4}(-\rho-\eta)$,
\begin{equation}\label{en36}
\begin{split}
a_2^{k_1,k_2,k_3,k_4}(\xi,\eta,\rho):=&\big[q_N(\rho,\xi)m_{N}(\rho,-\eta)-q_N(\xi-\eta-\rho,-\eta)m_{N}(\xi,\xi-\eta-\rho)\big]\\
&\times\widetilde{\varphi}_{k_1,k_2,k_3,k_4}(\xi,\eta,\rho),
 \end{split}
\end{equation}
and
\begin{equation}\label{en36.1}
\begin{split}
a_3^{k_1,k_2,k_3,k_4}(\xi,\eta,\rho):=m_N(\xi,-\eta)q_0(-\xi-\eta,-\rho-\eta)\cdot
\widetilde{\varphi}_{k_1,k_2,k_3,k_4}(\xi,\eta,\rho).
 \end{split}
\end{equation}

We will show that
\begin{equation}\label{en37}
\|\mathcal{F}^{-1}(a_l^{k_1,k_2,k_3,k_4})\|_{L^1}\lesssim 2^{-|k_1-k_2|/4}(2^{k_3/4}+2^{3.9k_3})(2^{k_4/4}+2^{3.9k_4}),
\end{equation}
for any $l\in\{2,3\}$ and for any $k_1,k_2,k_3,k_4\in\mathbb{Z}$. Assuming these bounds, the desired estimates \eqref{en31}
follow from \eqref{en35} and the bootstrap assumptions \eqref{pr6} and \eqref{pr7}.

To prove \eqref{en37} we notice first that if
\begin{equation}\label{en38}
 \text{ if }\quad f(x,y,z)=f_1(x,z)f_2(y,z)\quad\text{ then }\quad \|\mathcal{F}^{-1}f\|_{L^1(\mathbb{R}^3)}\lesssim \|\mathcal{F}^{-1}f_1\|_{L^1(\mathbb{R}^2)}
\|\mathcal{F}^{-1}f_2\|_{L^1(\mathbb{R}^2)}.
\end{equation}
Therefore, using \eqref{en21} and \eqref{en21.2},
\begin{equation*}
 \begin{split}
 \Big\|\mathcal{F}^{-1}\big[&q_N(\rho,\xi)m_N(\rho,-\eta)\cdot
\widetilde{\varphi}_{k_1,k_2,k_3,k_4}(\xi,\eta,\rho)\big]\Big\|_{L^1}\lesssim \sum_{k',k''\in\mathbb{Z},\,|k'-k''|\leq 4}
\|q_N^{k',k_3,k_1}\|_{S^\infty}\|m_N^{k'',k_4,k_2}\|_{S^\infty}\\
&\lesssim \sum_{k',k''\in\mathbb{Z},\,|k'-k''|\leq 4} 2^{k_3/2}2^{\min(k_1,k')}2^{-2|k_1-k'|}\cdot
2^{k_4/2}2^{-\max(k_2,k'')/2}2^{-2|k_2-k''|}\\
&\lesssim 2^{k_3/2}2^{k_4/2}2^{\min(k_1,k_2)/2}2^{-|k_1-k_2|}.
 \end{split}
\end{equation*}
Similarly
\begin{equation*}
\begin{split}
\Big\|\mathcal{F}^{-1}&\big[q_N(\xi-\eta-\rho,-\eta)m_{N}(\xi,\xi-\eta-\rho)\cdot
\widetilde{\varphi}_{k_1,k_2,k_3,k_4}(\xi,\eta,\rho)\big]\Big\|_{L^1}\\
&+\Big\|\mathcal{F}^{-1}\big[m_N(\xi,-\eta)q_0(-\xi-\eta,-\rho-\eta)\cdot
\widetilde{\varphi}_{k_1,k_2,k_3,k_4}(\xi,\eta,\rho)\big]\Big\|_{L^1}\\
&\lesssim 2^{k_3/2}2^{k_4/2}2^{\min(k_1,k_2)/2}2^{-|k_1-k_2|}.
 \end{split}
\end{equation*}
The desired bounds \eqref{en37} follow if $l=3$ or if $l=2$ and $2^{\min(k_1,k_2)}\lesssim 2^{\max(k_3,k_4,0)}$.

It remains to prove the bounds \eqref{en37} in the case $l=2$ and
\begin{equation}\label{en50}
 \max(k_3,k_4,0)+100\leq\min(k_1,k_2),\qquad |k_1-k_2|\leq 4.
\end{equation}
In this case, we use Remark \ref{asymp}, to avoid the loss of $1/2$
derivative. The same argument as before shows that
\begin{equation*}
\begin{split}
\Big\|\mathcal{F}^{-1}\Big[\Big(q_N(\rho,\xi)m_N(\rho,-\eta)+i|\rho-\xi|^{1/2}\rho\frac{4N-2}{3}\frac{|\rho+\eta|^{1/2}\rho}{|\rho|^{3/2}}\Big)\cdot
\widetilde{\varphi}_{k_1,k_2,k_3,k_4}(\xi,\eta,\rho)\Big]\Big\|_{L^1}\\
\lesssim 2^{k_3+k_4/2}+2^{k_3/2+k_4}
\end{split}
\end{equation*}
and
\begin{equation*}
\begin{split}
\Big\|&\mathcal{F}^{-1}\Big[\Big(q_N(\xi-\eta-\rho,-\eta)m_{N}(\xi,\xi-\eta-\rho)\\
&+i|\rho-\xi|^{1/2}(\xi-\eta-\rho)\frac{4N-2}{3}\frac{|\eta+\rho|^{1/2}\xi}{|\xi|^{3/2}}\Big)\cdot
\widetilde{\varphi}_{k_1,k_2,k_3,k_4}(\xi,\eta,\rho)\Big]\Big\|_{L^1}\lesssim 2^{k_3+k_4/2}+2^{k_3/2+k_4}.
 \end{split}
\end{equation*}
provided that \eqref{en50} holds. Moreover
\begin{equation*}
\big\|\mathcal{F}^{-1}\big[\big(|\rho|^{1/2}-(\xi-\eta-\rho)\frac{\xi}{|\xi|^{3/2}}\big)\cdot
\widetilde{\varphi}_{k_1,k_2,k_3,k_4}(\xi,\eta,\rho)\big]\big\|_{L^1}\lesssim 2^{k_4/2}+2^{k_3/2},
\end{equation*}
and the desired conclusion \eqref{en37} follows.
\end{proof}

We can now complete the proof of Proposition \ref{EnEst1}.

\begin{proof}[Proof of Proposition \ref{EnEst1}] We first use the formula \eqref{en11}. It follows that
\begin{equation*}
 \big|E^{(2)}_N(t)\big|\lesssim |E^{(3)}_N(t)\big|+\big|E^{(2)}_N(0)\big|+\big |E^{(3)}_N(0)\big|+\int_0^t |A_2(s)|+|A_3(s)|\,ds
\end{equation*}
for any $t\in[0,T]$. Using now Lemma \ref{EnergyComp} and Lemma \ref{EnergyInc}, we have
\begin{equation*}
 E^{(2)}_N(t)\lesssim E^{(2)}_N(0)+\eps_1^4(1+t)^{2p_0},
\end{equation*}
for any $t\in[0,T]$. The assumption \eqref{pr7} shows that $E^{(2)}_N(0)\lesssim \eps_0^2$. Moreover,
in view of the definition \eqref{pr2}, the low frequencies of solutions are not affected by the nonlinear flow, i.e.
\begin{equation*}
 \|P_{\leq -30}u(t)\|_{L^2}=\|P_{\leq -30}u(0)\|_{L^2}\lesssim\eps_0.
\end{equation*}
Therefore, for any $t\in[0,T]$,
\begin{equation*}
 \|u(t)\|^2_{H^N}\lesssim \|P_{\leq -5}u(t)\|_{L^2}^2+\|W\|_{L^2}^2\lesssim \eps_0^2+\eps_1^4(1+t)^{2p_0}\lesssim \eps_0^2(1+t)^{2p_0},
\end{equation*}
as desired.
\end{proof}

\section{Energy estimates, II: weighted norms}\label{energy2}

In this section we prove the following:

\begin{pro}\label{EnEst2}
If $u$ satisfies \eqref{pr5}--\eqref{pr7} then
\begin{equation}\label{BigTwo}
 \sup_{t\in[0,T]}(1+t)^{-4p_0}\|Su(t)\|_{H^2}\lesssim\eps_0.
\end{equation}
\end{pro}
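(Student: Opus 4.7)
My plan is to mimic the quasilinear I-method of Section \ref{energy}, applied now to the weighted variable $Z:=Su$. Using the commutation $[S,\partial_t+i\Lambda]=-(3/2)(\partial_t+i\Lambda)$, which holds because $\Lambda(\xi)=|\xi|^{3/2}$ is homogeneous of degree $3/2$, together with the bilinear Leibniz identity
\begin{equation*}
S\,B_{q_0}(f,g) = B_{q_0}(Sf,g)+B_{q_0}(f,Sg)-B_{(\xi\partial_\xi+\eta\partial_\eta)q_0}(f,g),
\end{equation*}
I derive the evolution equation
\begin{equation*}
\partial_t Z+i\Lambda Z = B_{q_0}(V_S,u)+B_{q_0}(V,Z)+B_{r}(V,u),
\end{equation*}
where $V_S:=SV=Z+\overline{Z}$ and $r:=-(3/2+\xi\partial_\xi+\eta\partial_\eta)q_0$. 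A direct computation shows that the Euler operator acts as $(\xi\partial_\xi+\eta\partial_\eta)q_0 = (3/2)q_0+\widetilde{q}_0$, where $\widetilde{q}_0$ collects the scale-derivatives of the cutoff $\chi$ and is supported only on dyadic transition regions. Hence $r$ enjoys $S^\infty$ bounds of the same type as $q_0$, cf.\ Lemma \ref{SymbBound}.

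With $W:=(1-\partial_x^2)Z$ playing the role of $D^Nu$, I set
\begin{equation*}
\mathcal{E}^{(2)}_S(t) := \int_\R (1+\xi^2)^2\,|\what{Z}(\xi,t)|^2\,d\xi,
\end{equation*}
and compute $\frac{d}{dt}\mathcal{E}^{(2)}_S$ as in \eqref{en3}. The resulting cubic spacetime integrals are of two kinds: a symmetric piece of the form $\int\what{Z}(\xi)\overline{\what{Z}(\eta)}\,[\cdots]\,\what{V}(\xi-\eta)$ coming from $B_{q_0}(V,Z)$, structurally identical to the one in \eqref{en3}; and crossed pieces of the form $\int\what{Z}(\xi)\overline{\what{Z}(\eta)}\,[\cdots]\,\what{V_S}(\xi-\eta)$ and $\int\what{Z}(\xi)\overline{\what{Z}(\eta)}\,[\cdots]\,\what{u}(\eta-\xi)$ arising from $B_{q_0}(V_S,u)$ and $B_r(V,u)$. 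Dividing the offending multipliers by the resonant phase $|\xi|^{3/2}-|\eta|^{3/2}-|\xi-\eta|^{3/2}$, exactly as in \eqref{en4}--\eqref{en5}, yields a cubic correction $\mathcal{E}^{(3)}_S$ whose symbols enjoy $S^\infty$ bounds analogous to Lemma \ref{SymbBound}. The argument of Lemma \ref{EnergyComp} gives $|\mathcal{E}^{(3)}_S(t)|\lesssim \eps_1\,\mathcal{E}^{(2)}_S(t)$, and after this correction $\frac{d}{dt}(\mathcal{E}^{(2)}_S+\mathcal{E}^{(3)}_S)$ becomes a sum of quartic spacetime integrals of the same structure as $A_2,A_3,A_4$ in \eqref{en8}--\eqref{en10}, except that some copies of $u$ or $V$ are replaced by $Z$ or $V_S$.

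I then bound these quartic terms via Lemma \ref{touse}(iii) and the symbol estimates of Lemma \ref{SymbBound}, placing the two ``weight-carrying'' factors ($Z$ or $V_S$) in $L^2$, controlled by $\eps_1(1+t)^{4p_0}$ via the second bootstrap bound in \eqref{pr5}, and the two ``decay-carrying'' factors ($u$ or $V$) in $L^\infty$, controlled by $\eps_1(1+t)^{-1/2}$ via \eqref{pr6}. The net quartic integrand is of order $\eps_1^4(1+t)^{-1+8p_0}$, whose time integral is $\lesssim \eps_1^4(1+t)^{8p_0}$. Combined with the initial datum bound $\mathcal{E}^{(2)}_S(0)\lesssim \|x\partial_x u_0\|_{H^2}^2\lesssim\eps_0^2$ from \eqref{pr7} and the bootstrap hierarchy $\eps_1\leq\eps_0^{2/3}$, this yields $\mathcal{E}^{(2)}_S(t)\lesssim \eps_0^2(1+t)^{8p_0}$, which is \eqref{BigTwo}.

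The main obstacle is the singular low-frequency behavior of $V_S$: the best $L^2$ estimate available at low frequencies is the weighted bound $\|P_kV_S\|_{L^2}\lesssim 2^{k(1/2-p_1)}\eps_0$ from \eqref{pr7}, rather than a uniform $L^2$ bound. This is exactly what is needed to offset the $|\xi-\eta|^{-1/2}$-type singularity that appears in $\mathcal{E}^{(3)}_S$ through the denominator $|\xi|^{3/2}-|\eta|^{3/2}-|\xi-\eta|^{3/2}$, thanks to the compensating $|\xi-\eta|^{1/2}$ gain built into $q_0$, cf.\ Remark \ref{LowFreq}. To use this bound at all times $t\in[0,T]$ and not only at $t=0$, I exploit that the support constraint on $\chi$ in \eqref{pr2} forces the bilinear nonlinearity to output frequencies $|\xi|\gtrsim 2^{-20}$; hence $P_{\leq-30}Z(t)=e^{-it\Lambda}P_{\leq-30}Z(0)$ and the low-frequency weighted bound propagates trivially from the initial data.
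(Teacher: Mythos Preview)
Your strategy is the paper's strategy: derive the equation for $Z=Su$ (the paper uses $Z=m(D)Su$ with $m(r)=1+r^2$, which amounts to the same thing), build a cubic energy correction to cancel the cubic time derivative, and estimate the resulting quartic terms using the bootstrap bounds together with the propagated low-frequency control $\|P_lZ(t)\|_{L^2}\lesssim\eps_12^{l(1/2-p_0)}$ for $l\leq-30$. Your last paragraph correctly identifies the key mechanism.

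That said, two points in your write-up are too loose and would need to be sharpened to become a proof. First, your description of the ``crossed pieces'' is mislabeled: the term coming from $B_r(V,u)$ contributes $\int r(\xi,\eta)\widehat V(\xi-\eta)\widehat u(\eta)\overline{\widehat Z(\xi)}$, which has only \emph{one} factor of $Z$, not two; consequently the associated cubic correction $\mathcal E^{(3)}_S$ is not controlled by $\eps_1\mathcal E^{(2)}_S$ as you claim, but rather by $\eps_1^3(1+t)^{8p_0}$ (still sufficient). More importantly, the symbols $m_l$ for these crossed corrections behave like $2^{(k_2-k_1)/2}$ with $k_1$ the frequency of the potentially low-frequency input; this is a \emph{different} (more singular) bound than Lemma~\ref{SymbBound}, so the crossed corrections need the paper's Lemma~\ref{SymbBound2} rather than a direct appeal to Lemma~\ref{SymbBound}.

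Second, your quartic bound ``two $Z$'s in $L^2$, two $u$'s in $L^\infty$'' does not close by itself, because the singular factor $2^{-k_1/2}$ in the correction symbol is not absorbed. The paper handles this (Lemmas~\ref{Jest}--\ref{Jest2}) by splitting the $k_1$-sum into ranges $2^{k_1}\gtrsim 1$, $(1+t)^{-2}\lesssim 2^{k_1}\lesssim 1$, and $2^{k_1}\lesssim(1+t)^{-2}$, and using the low-frequency bound $2^{k_1(1/2-p_0)}$ (which you correctly identified) in the middle range, while swapping $L^2$ and $L^\infty$ roles in the lowest range. The symmetric piece coming from $B_{q_0}(V,Z)$ also requires the full symmetrization argument of Lemma~\ref{EnergyInc} for the analogue of $J_{14}$, not just a direct $L^p$ estimate. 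Once you fill in these case splits, your argument coincides with the paper's.
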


In other words, we improve the control of the weighted energy norm in the bootstrap assumption \eqref{pr5}. The rest of
the section is concerned with the proof of Proposition \ref{EnEst2}.

It follows from \eqref{pr3} that
\begin{equation*}
(\partial_t+i\Lambda)Su=S\mathcal{N}+\frac{3}{2}(\partial_t+i\Lambda)u=S\mathcal{N}+\frac{3}{2}\mathcal{N}.
\end{equation*}
Let $Z:=m(D)Su$, where $m(r)=1+r^2$. The function $Z$ satisfies the equation
\begin{equation}\label{we2}
(\partial_t+i\Lambda)Z=\mathcal{N}_Z,
\end{equation}
where
\begin{equation}\label{we3}
\begin{split}
\mathcal{F}(\mathcal{N}_Z)(\xi)&=\frac{1}{2\pi}\int_{\mathbb{R}}q(\xi,\eta)\widehat{Z}(\eta)\widehat{V}(\xi-\eta)\,d\eta\\
&+\frac{1}{2\pi}\int_{\mathbb{R}}q_1(\xi,\eta)\widehat{u}(\eta)\widehat{(Z+\overline{Z})}(\xi-\eta)\,d\eta
+\frac{1}{2\pi}\int_{\mathbb{R}}q_2(\xi,\eta)\widehat{u}(\eta)\widehat{(u+\overline{u})}(\xi-\eta)\,d\eta,
\end{split}
\end{equation}
and the symbols $q_1,q_2,q_3$ are given by
\begin{equation}\label{we4}
\begin{split}
&q(\xi,\eta):=q_0(\xi,\eta)\frac{m(\xi)}{m(\eta)},\\
&q_1(\xi,\eta):=q_0(\xi,\eta)\frac{m(\xi)}{m(\xi-\eta)},\\
&q_2(\xi,\eta):=-m(\xi)[\xi\partial_\xi+\eta\partial_\eta-(3/2)]q_0(\xi,\eta).
\end{split}
\end{equation}

As in the previous section, we start with the quadratic energy
\begin{equation}\label{we5}
 E_w^{(2)}(t):=\frac{1}{2\pi}\int_{\mathbb{R}}|\widehat{Z}(\xi,t)|^2\,d\xi=\frac{1}{2\pi}\int_{\mathbb{R}}\widehat{Z}(\xi,t)\widehat{\overline{Z}}(-\xi,t)\,d\xi.
\end{equation}
Using \eqref{we2} and \eqref{we3} we calculate
\begin{equation*}
\frac{d}{dt}E_w^{(2)}(t)=2\Re\frac{1}{2\pi}\int_{\mathbb{R}}\widehat{\mathcal{N}_Z}(\xi,t)\widehat{\overline{Z}}(-\xi,t)\,d\xi=
I_1(t)+I_{2,-}(t)+I_{2,-}(t)+I_{3,+}(t)+I_{3,-}(t),
\end{equation*}
where, with $Z_+:=Z$, $Z_-:=\overline{Z}$, $u_+:=u$, $u_-:=\overline{u}$,
\begin{equation*}
\begin{split}
&I_1:=2\Re\frac{1}{4\pi^2}\int_{\mathbb{R}\times\mathbb{R}}q(\xi,\eta)\widehat{Z}(\eta)\widehat{V}(\xi-\eta)\widehat{\overline{Z}}(-\xi)\,d\xi d\eta,\\
&I_{2,\iota}:=2\Re\frac{1}{4\pi^2}\int_{\mathbb{R}\times\mathbb{R}}q_1(\xi,\eta)\widehat{u}(\eta)\widehat{Z_\iota}(\xi-\eta)\widehat{\overline{Z}}(-\xi)\,d\xi d\eta,\\
&I_{3,\iota}:=2\Re\frac{1}{4\pi^2}\int_{\mathbb{R}\times\mathbb{R}}q_2(\xi,\eta)\widehat{u}(\eta)\widehat{u_\iota}(\xi-\eta)\widehat{\overline{Z}}(-\xi)\,d\xi d\eta.
\end{split}
\end{equation*}

We define now cubic energy corrections. Let
\begin{equation}\label{we6}
\begin{split}
&E_{w,1}^{(3)}(t):=2\Re\frac{1}{4\pi^2}\int_{\mathbb{R}\times\mathbb{R}}m_1(\xi,\eta)\widehat{Z}(\eta,t)\widehat{u}(\xi-\eta,t)\widehat{\overline{Z}}(-\xi,t)\,d\xi d\eta,\\
&E_{w,2,\iota}^{(3)}(t):=2\Re\frac{1}{4\pi^2}\int_{\mathbb{R}\times\mathbb{R}}m_{2,\iota}(\xi,\eta)\widehat{u}(\eta,t)\widehat{Z_\iota}(\xi-\eta,t)\widehat{\overline{Z}}(-\xi,t)\,d\xi d\eta,\\
&E_{w,3,\iota}^{(3)}(t):=2\Re\frac{1}{4\pi^2}\int_{\mathbb{R}\times\mathbb{R}}m_{3,\iota}(\xi,\eta)\widehat{u}(\eta,t)\widehat{u_\iota}(\xi-\eta,t)
\widehat{\overline{Z}}(-\xi,t)\,d\xi d\eta,
\end{split}
\end{equation}
where\footnote{It is important to symmetrize suitably the symbol $m_1$, as in the previous section, to avoid
derivative losses. One can also symmetrize some of the other symbols, but these symmetrizations are meaningless in our situation.}
\begin{equation}\label{we7}
\begin{split}
&m_1(\xi,\eta):=i\frac{q(\xi,\eta)+\overline{q}(\eta,\xi)}{-|\eta|^{3/2}-|\xi-\eta|^{3/2}+|\xi|^{3/2}},\\
&m_{2,\iota}(\xi,\eta):=i\frac{q_1(\xi,\eta)}{-|\eta|^{3/2}-\iota|\xi-\eta|^{3/2}+|\xi|^{3/2}},\\
&m_{3,\iota}(\xi,\eta):=i\frac{q_2(\xi,\eta)}{-|\eta|^{3/2}-\iota|\xi-\eta|^{3/2}+|\xi|^{3/2}}.
\end{split}
\end{equation}
Simple calculations, using also the identities \eqref{pr3} and \eqref{we2} show that
\begin{equation}\label{we8}
 \frac{d}{dt}E_{w,l}^{(3)}(t)=-I_l(t)+J_l(t),\qquad l\in\{1,(2,+),(2,-),(3,+), (3,-)\},
\end{equation}
where, with $\mathcal{N}_{Z,+}:=\mathcal{N}_{Z}$, $\mathcal{N}_{Z,-}:=\overline{\mathcal{N}_{Z}}$, $\mathcal{N}_{+}:=\mathcal{N}$, $\mathcal{N}_{-}:=\overline{\mathcal{N}}$,
\begin{equation}\label{we9}
\begin{split}
4\pi^2J_1=&2\Re\int_{\mathbb{R}\times\mathbb{R}}m_1(\xi,\eta)\widehat{\mathcal{N}_Z}(\eta)\widehat{u}(\xi-\eta)\widehat{\overline{Z}}(-\xi)\,d\xi d\eta\\
+&2\Re\int_{\mathbb{R}\times\mathbb{R}}m_1(\xi,\eta)\widehat{Z}(\eta)\widehat{\mathcal{N}}(\xi-\eta)\widehat{\overline{Z}}(-\xi)\,d\xi d\eta\\
+&2\Re\int_{\mathbb{R}\times\mathbb{R}}m_1(\xi,\eta)\widehat{Z}(\eta)\widehat{u}(\xi-\eta)\widehat{\overline{\mathcal{N}_Z}}(-\xi)\,d\xi d\eta,
\end{split}
\end{equation}
\begin{equation}\label{we10}
\begin{split}
4\pi^2J_{2,\iota}=&2\Re\int_{\mathbb{R}\times\mathbb{R}}m_{2,\iota}(\xi,\eta)\widehat{\mathcal{N}}(\eta)\widehat{Z_\iota}(\xi-\eta)\widehat{\overline{Z}}(-\xi)\,d\xi d\eta\\
+&2\Re\int_{\mathbb{R}\times\mathbb{R}}m_{2,\iota}(\xi,\eta)\widehat{u}(\eta)\widehat{\mathcal{N}_{Z,\iota}}(\xi-\eta)\widehat{\overline{Z}}(-\xi)\,d\xi d\eta\\
+&2\Re\int_{\mathbb{R}\times\mathbb{R}}m_{2,\iota}(\xi,\eta)\widehat{u}(\eta)\widehat{Z_\iota}(\xi-\eta)\widehat{\overline{\mathcal{N}_Z}}(-\xi)\,d\xi d\eta,\\
\end{split}
\end{equation}
\begin{equation}\label{we12}
\begin{split}
4\pi^2J_{3,\iota}=&2\Re\int_{\mathbb{R}\times\mathbb{R}}m_{3,\iota}(\xi,\eta)\widehat{\mathcal{N}}(\eta)\widehat{u_\iota}(\xi-\eta)\widehat{\overline{Z}}(-\xi)\,d\xi d\eta\\
+&2\Re\int_{\mathbb{R}\times\mathbb{R}}m_{3,\iota}(\xi,\eta)\widehat{u}(\eta)\widehat{\mathcal{N}_\iota}(\xi-\eta)\widehat{\overline{Z}}(-\xi)\,d\xi d\eta\\
+&2\Re\int_{\mathbb{R}\times\mathbb{R}}m_{3,\iota}(\xi,\eta)\widehat{u}(\eta)\widehat{u_\iota}(\xi-\eta)\widehat{\overline{\mathcal{N}_Z}}(-\xi)\,d\xi d\eta.
\end{split}
\end{equation}
Therefore
\begin{equation}\label{we14}
\begin{split}
 \frac{d}{dt}\big[E_{w}^{(2)}+&E_{w,1}^{(3)}+E_{w,2,+}^{(3)}+E_{w,2,-}^{(3)}+E_{w,3,+}^{(3)}+E_{w,3,-}^{(3)}\big](t)\\
&=J_1(t)+J_{2,+}(t)+J_{2,-}(t)+J_{3,+}(t)+J_{3,-}(t).
\end{split}
\end{equation}

As in the previous section, we would like to estimate the cubic corrections $E_{w,l}^{(3)}$ and the space-time contributions $J_l$. Recall the definitions \eqref{Al4} and \eqref{al11}, and Lemma \ref{touse}. We record first several symbol-type bounds.

\begin{lem}\label{SymbBound2}
For any $k,k_1,k_2\in\mathbb{Z}$ we have
\begin{equation}\label{we15.1}
\|q^{k,k_1,k_2}\|_{S^\infty}\lesssim 2^{k_1/2}2^{k_2}\one_{15}(k,k_1,k_2),
\end{equation}
\begin{equation}\label{we15.2}
\|q_1^{k,k_1,k_2}\|_{S^\infty}\lesssim 2^{2k}2^{-2\max(k_1,0)}2^{k_1/2}2^{k_2}\one_{15}(k,k_1,k_2),
\end{equation}
and
\begin{equation}\label{we15.3}
\|q_2^{k,k_1,k_2}\|_{S^\infty}\lesssim 2^{2k}2^{k_1/2}2^{k_2}\one_{15}(k,k_1,k_2).
\end{equation}
Moreover
\begin{equation}\label{we15.4}
\|m_1^{k,k_1,k_2}\|_{S^\infty}\lesssim 2^{k_1/2}2^{-k_2/2}\one_{15}(k,k_1,k_2),
\end{equation}
\begin{equation}\label{we15.5}
\|m_{2,\iota}^{k,k_1,k_2}\|_{S^\infty}\lesssim 2^{(k_2-k_1)/2}2^{2k}2^{-2\max(k_1,0)}\one_{15}(k,k_1,k_2),
\end{equation}
and
\begin{equation}\label{we15.7}
\|m_{3,\iota}^{k,k_1,k_2}\|_{S^\infty}\lesssim 2^{(k_2-k_1)/2}2^{2k}\one_{15}(k,k_1,k_2).
\end{equation}
\end{lem}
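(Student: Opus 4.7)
The plan is to reduce every bound to Lemma \ref{SymbBound} and the phase estimate \eqref{en25}, with one case (the symbol $m_1$) requiring a genuine cancellation that mirrors the treatment of $m_N$ in the previous section.

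First I would dispense with \eqref{we15.1}--\eqref{we15.3}. On the support of $\one_{15}$ one has $|k-k_2|\leq 15$ and $k_2\geq -15$; since $m(r)=1+r^2$ is smooth and strictly positive, localized versions of $m(\xi)/m(\eta)$ and $m(\xi)/m(\xi-\eta)$ have $S^\infty$ norms $\lesssim 1$ and $\lesssim 2^{2\max(k,0)-2\max(k_1,0)}$ respectively, by the algebra property \eqref{al8} and the affine invariance \eqref{al9}. Multiplying by $\|q_0^{k,k_1,k_2}\|_{S^\infty}\lesssim 2^{k_1/2}2^{k_2}$ from Lemma \ref{SymbBound}, and noting that $2^{2\max(k,0)}\sim 2^{2k}$ up to constants on the support, yields \eqref{we15.1} and \eqref{we15.2}. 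For \eqref{we15.3}, the homogeneity of $|\xi-\eta|^{1/2}\eta$ of degree $3/2$ in $(\xi,\eta)$ and Euler's identity give
\[
[\xi\partial_\xi+\eta\partial_\eta-3/2]q_0(\xi,\eta) = i|\xi-\eta|^{1/2}\eta\cdot\widetilde\chi(\xi-\eta,\eta),
\]
where $\widetilde\chi(x,y):=(x\partial_x+y\partial_y)\chi(x,y)$ has the same support and dyadic $S^\infty$ estimates as $\chi$. Multiplying by $m(\xi)\sim 2^{2k}$ then produces \eqref{we15.3}.

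For \eqref{we15.5} and \eqref{we15.7} no cancellation is available in the numerators $q_1,q_2$, so I would combine the bounds just obtained directly with \eqref{en25}. The same integration-by-parts argument used there, based on \eqref{en21.4}, also yields the estimate $\lesssim 2^{-\min(k_1,k_2)-k_2/2}$ for the $S^\infty$ norm of the localized inverse of the alternative phase $|\xi|^{3/2}+|\xi-\eta|^{3/2}-|\eta|^{3/2}$ that appears in $m_3$ and $m_5$, since that phase also satisfies a lower bound of the form $\min(k_1,k_2)\cdot\max(k,k_2)^{1/2}$. Multiplying \eqref{we15.2} by the phase estimate gives $2^{2k-2\max(k_1,0)}2^{|k_1-k_2|/2}$, which on $\one_{15}$ (where $k_1\leq k_2+15$) is equivalent to $2^{(k_2-k_1)/2}2^{2k-2\max(k_1,0)}$, proving \eqref{we15.5}. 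The estimate \eqref{we15.7} follows by the same computation with \eqref{we15.3} in place of \eqref{we15.2}.

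The main obstacle is \eqref{we15.4} for $m_1$, where a naive symbol-times-phase product loses a full derivative. When $k_1\geq\max(k,k_2)-40$, the naive bound from \eqref{we15.1} and \eqref{en25} is already \eqref{we15.4}. In the complementary regime $k_1\leq\max(k,k_2)-40$, which forces $|k-k_2|\leq 4$, one must exploit cancellation between $q(\xi,\eta)$ and $\overline{q(\eta,\xi)}$ exactly as in the treatment of $m_N$. Writing
\[
q(\xi,\eta)+\overline{q(\eta,\xi)} = i|\xi-\eta|^{1/2}\bigl[\,F(\xi,\eta)-F(\eta,\xi)\,\bigr], \qquad F(\xi,\eta):=\frac{m(\xi)\,\eta}{m(\eta)}\chi(\xi-\eta,\eta),
\]
the bracket is odd under $\xi\leftrightarrow\eta$, smooth, and vanishes at $\xi=\eta$ since $F(\xi,\xi)=\xi\chi(0,\xi)=F(\xi,\xi)$; hence on this regime its $S^\infty$ norm is $O(2^{k_1})$ rather than $O(2^{k_2})$, which yields an $S^\infty$ bound of size $2^{3k_1/2}$ for the symmetrized numerator. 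Dividing by the lower bound $\sim 2^{k_1+k_2/2}$ for the phase coming from \eqref{en25} recovers exactly \eqref{we15.4}. The argument is parallel to the one already carried out for $m_N$ in Lemma \ref{SymbBound}, and in fact marginally simpler since $m(r)=1+r^2$ is globally smooth, so no separate analysis near the origin is required.
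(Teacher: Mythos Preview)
Your proposal is correct and follows essentially the same route as the paper, which simply asserts that the bounds follow from the explicit formulas \eqref{pr2}, \eqref{we4} and the phase estimate \eqref{en25}, arguing as in Lemma \ref{SymbBound}. You have unpacked this reference accurately: the multiplicative factors $m(\xi)/m(\eta)$, $m(\xi)/m(\xi-\eta)$, $m(\xi)$ are handled by elementary $S^\infty$ bounds on the support of $\one_{15}$; the Euler-homogeneity computation for $q_2$ is the right way to see \eqref{we15.3}; the alternative phase in $m_3,m_5$ indeed satisfies the same dyadic lower bound as in \eqref{en25}; and the cancellation argument for $m_1$ in the low--high regime mirrors exactly the treatment of $m_N$ in Lemma \ref{SymbBound}, with $m(\xi)/m(\eta)$ playing the role of $|\xi|^N/|\eta|^N$. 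One cosmetic remark: your sentence ``vanishes at $\xi=\eta$ since $F(\xi,\xi)=\xi\chi(0,\xi)=F(\xi,\xi)$'' is a tautology --- the vanishing is immediate from antisymmetry, and the substance of the step is the smoothness of $F$ allowing a Taylor factor of $(\xi-\eta)$ with a well-behaved quotient.
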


\begin{proof}[Proof of Lemma \ref{SymbBound2}] The bounds follow from the explicit formulas \eqref{pr2} and \eqref{we4},
and the bound \eqref{en25}, as  in the proof of Lemma \ref{SymbBound}.
\end{proof}

We estimate now the nonlinearities $\mathcal{N}$ and $\mathcal{N}_Z$.

\begin{lem}\label{NonlinEst}
 For any $t\in[0,T]$ and $l\in\mathbb{Z}$, we have
\begin{equation}\label{we16}
 \|P_l\mathcal{N}(t)\|_{L^2}\lesssim\eps_1^2(1+t)^{p_0-1/2}2^{-(N-1)l},
\end{equation}
\begin{equation}\label{we17}
 \|P_l\mathcal{N}(t)\|_{L^\infty}\lesssim\eps_1^2(1+t)^{-1}2^{-3l},
\end{equation}
and
\begin{equation}\label{we18}
 \|P_l\mathcal{N}_Z(t)\|_{L^2}\lesssim\eps_1^2(1+t)^{4p_0-1/2}2^{l}.
\end{equation}
Moreover,
\begin{equation}\label{we19}
 P_l\mathcal{N}=0\quad\text{ and }\quad P_l\mathcal{N}_Z=0\quad\text{ if }\quad l\leq -30.
\end{equation}
\end{lem}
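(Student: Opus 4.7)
The plan is to prove all four assertions by inserting the Fourier representations \eqref{pr4} and \eqref{we3} into the bilinear bound of Lemma \ref{touse}(ii), dyadically decomposing each input, and combining the symbol bounds of Lemma \ref{SymbBound2} with the bootstrap hypotheses \eqref{pr5}--\eqref{pr6}.

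First, for the support statement \eqref{we19}: the cutoff $\chi$ in \eqref{pr2} is supported where $|\eta|\geq 2^{-10}$ and $|\xi-\eta|\leq 2^{10}|\eta|$, so $\xi=(\xi-\eta)+\eta$ automatically satisfies $|\xi|\gtrsim 2^{-20}$. The multipliers $q_0,q,q_1,q_2$ all inherit this $\xi$-support (they are bounded multiples of $q_0$ or first-order derivatives of $q_0$), giving $P_l\mathcal{N}=P_l\mathcal{N}_Z=0$ for any $l\leq -30$.

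For \eqref{we16} I apply Lemma \ref{touse}(ii) with exponents $(p,q,r)=(\infty,2,2)$:
\begin{equation*}
\|P_l\mathcal{N}\|_{L^2}\lesssim\sum_{(l,k_1,k_2)\in\mathcal{X}}\|q_0^{l,k_1,k_2}\|_{S^\infty}\|P_{k_1}V\|_{L^\infty}\|P_{k_2}u\|_{L^2}.
\end{equation*}
The indicator $\one_{15}$ restricts the sum to $|k_2-l|\leq 4$ and $k_1\leq l+O(1)$, with $\|q_0^{l,k_1,k_2}\|_{S^\infty}\lesssim 2^{k_1/2}2^l$. Substituting $\|P_{k_1}V\|_{L^\infty}\lesssim\eps_1(1+t)^{-1/2}2^{-4\max(k_1,0)}$ from \eqref{pr6} and $\|P_{k_2}u\|_{L^2}\lesssim\eps_1(1+t)^{p_0}2^{-N\max(l,0)}$ from \eqref{pr5}, the geometric sum over $k_1$ delivers \eqref{we16}. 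For \eqref{we17} I use the same decomposition with exponents $(\infty,\infty,\infty)$ and apply \eqref{pr6} to both inputs, replacing one factor of $(1+t)^{p_0}2^{-N\max(l,0)}$ by $(1+t)^{-1/2}2^{-4\max(l,0)}$ to obtain the claimed $2^{-3l}$.

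For \eqref{we18} I split $\mathcal{N}_Z$ into its three summands. The $q$-term is treated exactly as above, now using $\|P_{k_2}Z\|_{L^2}\lesssim\|Su\|_{H^2}\lesssim\eps_1(1+t)^{4p_0}$ (uniform in $k_2$) in place of the Sobolev bound; this yields $\eps_1^2(1+t)^{4p_0-1/2}2^l$. The $q_1$-term uses the bound \eqref{we15.2}, placing $Z+\bar Z$ in $L^2$ and $u$ in $L^\infty$: the factor $2^{2l}2^{-2\max(k_1,0)}$ is controlled by the decay $2^{-4\max(l,0)}$ on the high-frequency input, giving a contribution $\lesssim\eps_1^2(1+t)^{4p_0-1/2}2^l$. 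The $q_2$-term is quadratic in $u$, so I use \eqref{we15.3} together with $\|P_{k_2}u\|_{L^2}\lesssim\eps_1(1+t)^{p_0}2^{-N\max(l,0)}$ and $\|P_{k_1}u\|_{L^\infty}\lesssim\eps_1(1+t)^{-1/2}2^{-4\max(k_1,0)}$; the $2^{2l}$ loss from $m(\xi)$ is amply defeated by the Sobolev surplus, since $N=10\gg 3$.

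The only delicate point is the $q_1$-contribution to $\mathcal{N}_Z$, where the numerator $m(\xi)\sim 2^{2l}$ is potentially large and the denominator $m(\xi-\eta)$ only helps when $k_1\geq 0$. The escape is exactly the bookkeeping built into \eqref{we15.2}: when $k_1\geq 0$ the ratio $m(\xi)/m(\xi-\eta)$ is bounded by $2^{2(l-k_1)}$, while when $k_1<0$ the bound $2^{2l}$ is absorbed by the $L^\infty$ decay $2^{-4l}$ of $u$ at frequency $2^l$. Either way the net weight is $\lesssim 2^l$, as claimed.
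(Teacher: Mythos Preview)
Your approach is essentially identical to the paper's: dyadic decomposition of the bilinear form, Lemma~\ref{touse}(ii), the $S^\infty$ symbol bounds of Lemma~\ref{SymbBound2}, and the bootstrap hypotheses \eqref{pr5}--\eqref{pr6}. The placement of $L^2$ versus $L^\infty$ on each factor in the three pieces of $\mathcal{N}_Z$ matches the paper exactly, and your accounting for the $q_1$ term (where the paper obtains the sharper weight $2^{-l}$, which is of course $\lesssim 2^l$ since $l\geq -30$) is correct in conclusion.

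One small correction to your argument for \eqref{we19}: the two constraints you cite, $|\eta|\gtrsim 2^{-10}$ and $|\xi-\eta|\lesssim 2^{10}|\eta|$, do \emph{not} by themselves force $|\xi|\gtrsim 2^{-20}$ via the triangle inequality (take $\eta=1$, $\xi-\eta=-1$). The support restriction on $\xi$ comes instead from the third factor $\varphi_{\geq k-10}(\xi)$ in the definition \eqref{pr2} of $\chi$, which directly gives $|\xi|\gtrsim 2^{k-10}\geq 2^{-20}$. With that correction the proof is fine.
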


\begin{proof}[Proof of Lemma \ref{NonlinEst}] The identities in \eqref{we19} follow easily from the definitions. To prove the other
estimates we use the bootstrap assumptions \eqref{pr5} and \eqref{pr6},
together with the symbol estimates \eqref{en21.2} and \eqref{we15.1}--\eqref{we15.3}.

To prove \eqref{we16} and \eqref{we17} we recall
the formula
\begin{equation*}
 \mathcal{F}(\mathcal{N})(\xi)=\frac{1}{2\pi}\int_{\mathbb{R}}q_0(\xi,\eta)\widehat{V}(\xi-\eta)\widehat{u}(\eta)\,d\eta.
\end{equation*}
Using Lemma \ref{touse} (ii) and \eqref{en21.2}, for $l\geq -30$,
\begin{equation*}
\begin{split}
\|P_l\mathcal{N}(t)\|_{L^2}&\lesssim \sum_{k_1,k_2\in\mathbb{Z}}\|q_0^{l,k_1,k_2}\|_{S^\infty}
\|P'_{k_1}u(t)\|_{L^\infty}\|P'_{k_2}u(t)\|_{L^2}\\
&\lesssim \eps_1^2\sum_{k_1,k_2\in\mathbb{Z}}\one_{15}(l,k_1,k_2)2^{k_1/2}2^{k_2}\cdot(1+t)^{-1/2}2^{-4\max(k_1,0)}\cdot
(1+t)^{p_0}2^{-Nk_2}\\
&\lesssim \eps_1^2(1+t)^{p_0-1/2}2^{-(N-1)l}
\end{split}
\end{equation*}
and similarly
\begin{equation*}
\begin{split}
\|P_l\mathcal{N}(t)\|_{L^\infty}&\lesssim \sum_{k_1,k_2\in\mathbb{Z}}\|q_0^{l,k_1,k_2}\|_{S^\infty}
\|P'_{k_1}u(t)\|_{L^\infty}\|P'_{k_2}u(t)\|_{L^\infty}\\
&\lesssim \eps_1^2\sum_{k_1,k_2\in\mathbb{Z}}\one_{15}(l,k_1,k_2)2^{k_1/2}2^{k_2}\cdot(1+t)^{-1/2}2^{-4\max(k_1,0)}\cdot
(1+t)^{-1/2}2^{-4k_2}\\
&\lesssim \eps_1^2(1+t)^{-1}2^{-3l}.
\end{split}
\end{equation*}

Also, using \eqref{we3}, Lemma \ref{touse}, and the bounds \eqref{we15.1}--\eqref{we15.3}, for $l\geq -30$,
\begin{equation*}
  \|P_l\mathcal{N}_Z(t)\|_{L^2}\lesssim I+II+III,
\end{equation*}
where
\begin{equation*}
\begin{split}
I:&=\sum_{k_1,k_2\in\mathbb{Z}}\|q^{l,k_1,k_2}\|_{S^\infty}\|P'_{k_1}u(t)\|_{L^\infty}
\|P'_{k_2}Z(t)\|_{L^2}\\
&\lesssim \eps_1^2\sum_{k_1,k_2\in\mathbb{Z}}\one_{15}(l,k_1,k_2)2^{k_1/2}2^{k_2}\cdot (1+t)^{-1/2}2^{-4\max(k_1,0)}
\cdot (1+t)^{4p_0}\\
&\lesssim \eps_1^2(1+t)^{4p_0-1/2}2^{l},
\end{split}
\end{equation*}
\begin{equation*}
\begin{split}
II:&=\sum_{k_1,k_2\in\mathbb{Z}}\|q_1^{l,k_1,k_2}\|_{S^\infty}\|P'_{k_1}Z(t)\|_{L^2}
\|P'_{k_2}u(t)\|_{L^\infty}\\
&\lesssim \eps_1^2\sum_{k_1,k_2\in\mathbb{Z}}\one_{15}(l,k_1,k_2)2^{2l}2^{-2\max(k_1,0)}2^{k_1/2}2^{k_2}
\cdot (1+t)^{4p_0}\cdot(1+t)^{-1/2}2^{-4k_2}\\
&\lesssim \eps_1^2(1+t)^{4p_0-1/2}2^{-l},
\end{split}
\end{equation*}
and
\begin{equation*}
\begin{split}
III:&=\sum_{k_1,k_2\in\mathbb{Z}}\|q_2^{l,k_1,k_2}\|_{S^\infty}\|P'_{k_1}u(t)\|_{L^\infty}
\|P'_{k_2}u(t)\|_{L^2}\\
&\lesssim \eps_1^2\sum_{k_1,k_2\in\mathbb{Z}}\one_{15}(l,k_1,k_2)2^{2l}2^{k_1/2}2^{k_2}
\cdot (1+t)^{-1/2}2^{-4\max(k_1,0)}\cdot(1+t)^{p_0}2^{-Nk_2}\\
&\lesssim \eps_1^2(1+t)^{p_0-1/2}2^{-(N-3)l}.
\end{split}
\end{equation*}
The desired bound \eqref{we18} follows.
\end{proof}

We can estimate now the expressions $J_{2,\pm},J_{3,\pm}$ defined in \eqref{we10}--\eqref{we12}.

\begin{lem}\label{Jest}
For any $t\in[0,T]$ and $\iota\in\{+,-\}$ we have
\begin{equation}\label{we20}
 |J_{2,\iota}(t)|+|J_{3,\iota}(t)|\lesssim \eps_1^4(1+t)^{-1+8p_0}.
\end{equation}
\end{lem}

\begin{proof}[Proof of Lemma \ref{Jest}] In this lemma we use the stronger low-frequency estimates in \eqref{pr7} and the fact
that the nonlinearity $\mathcal{N}$ does not have any low frequencies. Therefore, for any $t\in[0,T]$ and $l\leq -30$
\begin{equation}\label{we21}
\|P'_lu(t)\|_{L^2}+\|P'_lZ(t)\|_{L^2}\lesssim \eps_12^{l(1/2-p_0)}.
\end{equation}

To estimate $|J_{2,+}(t)|$, we decompose first
\begin{equation*}
 \begin{split}
&J_{2,+}:=J^+_{21}+J^+_{22}+J^+_{23},\\
&J^+_{21}:=2\Re\frac{1}{4\pi^2}\int_{\mathbb{R}\times\mathbb{R}}m_{2,+}(\xi,\eta)\widehat{\mathcal{N}}(\eta)\widehat{Z}(\xi-\eta)\widehat{\overline{Z}}(-\xi)\,d\xi d\eta,\\
&J^+_{22}:=2\Re\frac{1}{4\pi^2}\int_{\mathbb{R}\times\mathbb{R}}m_{2,+}(\xi,\eta)\widehat{u}(\eta)\widehat{\mathcal{N}_Z}(\xi-\eta)\widehat{\overline{Z}}(-\xi)\,d\xi d\eta,\\
&J^+_{23}:=2\Re\frac{1}{4\pi^2}\int_{\mathbb{R}\times\mathbb{R}}m_{2,-}(\xi,\eta)\widehat{u}(\eta)\widehat{Z}(\xi-\eta)\widehat{\overline{\mathcal{N}_Z}}(-\xi)\,d\xi d\eta.
 \end{split}
\end{equation*}
To estimate $|J^+_{21}(t)|$ we use first Lemma \ref{touse} (ii),
\begin{equation*}
\begin{split}
&|J^+_{21}(t)| \lesssim\Sigma_{211}+\Sigma_{212}+\Sigma_{213},\\
&\Sigma_{211}:=\sum_{k,k_1,k_2\in\mathbb{Z},\,2^{k_1+40}\geq 1}\|m_{2,+}^{k,k_1,k_2}\|_{S^\infty}
\|P'_{k_2}\mathcal{N}(t)\|_{L^\infty}\|P'_{k_1}Z(t)\|_{L^2}\|P'_{k}Z(t)\|_{L^2},\\
&\Sigma_{212}:=\sum_{k,k_1,k_2\in\mathbb{Z},\,2^{k_1+40}\in[(1+t)^{-2},1]}\|m_{2,+}^{k,k_1,k_2}\|_{S^\infty}
\|P'_{k_2}\mathcal{N}(t)\|_{L^\infty}\|P'_{k_1}Z(t)\|_{L^2}\|P'_{k}Z(t)\|_{L^2},\\
&\Sigma_{213}:=\sum_{k,k_1,k_2\in\mathbb{Z},\,2^{k_1+40}\leq (1+t)^{-2}}\|m_{2,+}^{k,k_1,k_2}\|_{S^\infty}
\|P'_{k_2}\mathcal{N}(t)\|_{L^2}\|P'_{k_1}Z(t)\|_{L^\infty}\|P'_{k}Z(t)\|_{L^2}.
\end{split}
\end{equation*}
Then we use Lemma \ref{NonlinEst}, Lemma \ref{SymbBound2}, the bounds \eqref{we21}, together with the simple bound
$\|P'_lf\|_{L^\infty}\lesssim 2^{l/2}\|P'_lf\|_{L^2}$, to estimate
\begin{equation*}
\begin{split}
 \Sigma_{211}&\lesssim \eps_1^4(1+t)^{8p_0-1}\sum_{k,k_1,k_2\in\mathbb{Z},\,2^{k_1+40}\geq 1}\one_{15}(k,k_1,k_2)
2^{(k_2-k_1)/2}2^{2k-2k_1}2^{-3k_2}\lesssim \eps_1^4(1+t)^{8p_0-1},
\end{split}
\end{equation*}
\begin{equation*}
\begin{split}
 \Sigma_{212}&\lesssim \eps_1^4(1+t)^{4p_0-1}\sum_{k,k_1,k_2\in\mathbb{Z},\,2^{k_1+40}\in[(1+t)^{-2},1]} \one_{15}(k,k_1,k_2)
2^{(k_2-k_1)/2}2^{2k}2^{-3k_2}2^{k_1(1/2-p_0)}\\
&\lesssim \eps_1^4(1+t)^{6p_0-1},
\end{split}
\end{equation*}
and
\begin{equation*}
\begin{split}
 \Sigma_{213}&\lesssim \eps_1^4(1+t)^{5p_0-1/2}\sum_{k,k_1,k_2\in\mathbb{Z},\,2^{k_1+40}\leq (1+t)^{-2}} \one_{15}(k,k_1,k_2)
2^{(k_2-k_1)/2}2^{2k}2^{-(N-1)k_2}2^{k_1(1-p_0)}\\
&\lesssim \eps_1^4(1+t)^{-1}.
\end{split}
\end{equation*}
Therefore
\begin{equation*}
 |J^+_{21}(t)|\lesssim \eps_1^4(1+t)^{8p_0-1}.
\end{equation*}

The estimate for $|J^+_{22}(t)|$ is easier,
\begin{equation*}
 \begin{split}
|J^+_{22}(t)|&\lesssim\sum_{k,k_1,k_2\in\mathbb{Z}}\|m_{2,+}^{k,k_1,k_2}\|_{S^\infty}
\|P'_{k_2}u(t)\|_{L^\infty}\|P'_{k_1}\mathcal{N}_Z(t)\|_{L^2}\|P'_{k}Z(t)\|_{L^2}\\
&\lesssim \eps_1^4(1+t)^{8p_0-1}\sum_{k,k_1,k_2\in\mathbb{Z},\,k_1\geq -30}\one_{15}(k,k_1,k_2)2^{(k_2-k_1)/2}2^{2k-2k_1}
2^{-4k_2}2^{k_1}\\
&\lesssim \eps_1^4(1+t)^{8p_0-1}.
 \end{split}
\end{equation*}

The estimate for $|J^+_{23}(t)|$ is similar to the estimate for $|J_{21}(t)|$. We use first Lemma \ref{touse} (ii), and
decompose the sum over the three cases, $2^{k_1+40}\geq 1$, $2^{k_1+40}\in[(1+t)^{-2},1]$, $2^{k_1+40}\leq(1+t)^{-2}$. Then we use,
as before, the $L^2$ bounds $\|P'_{k_1}Z(t)\|_{L^2}\lesssim \eps_1(1+t)^{4p_0}$ in the first case, and the stronger $L^2$
bounds $\|P'_{k_1}Z(t)\|_{L^2}\lesssim \eps_12^{k_1(1/2-p_0)}$ in the other two cases. We proceed as in the estimate of $|J^+_{21}(t)|$
to conclude that $|J^+_{23}(t)|\lesssim \eps_1^4(1+t)^{8p_0-1}$. Therefore
\begin{equation}\label{we25}
 |J_{2,+}(t)|\lesssim \eps_1^4(1+t)^{8p_0-1}.
\end{equation}
The estimate for $J_{2,-}$ is similar.

The estimate for $|J_{3,+}(t)|$ also proceeds along the same line: start by decomposing
\begin{equation*}
 \begin{split}
&J_{3,+}:=J^+_{31}+J^+_{32}+J^+_{33},\\
&J^+_{31}:=2\Re\frac{1}{4\pi^2}\int_{\mathbb{R}\times\mathbb{R}}m_{3,+}(\xi,\eta)\widehat{\mathcal{N}}(\eta)\widehat{u}(\xi-\eta)\widehat{\overline{Z}}(-\xi)\,d\xi d\eta,\\
&J^+_{32}:=2\Re\frac{1}{4\pi^2}\int_{\mathbb{R}\times\mathbb{R}}m_{3,+}(\xi,\eta)\widehat{u}(\eta)\widehat{\mathcal{N}}(\xi-\eta)\widehat{\overline{Z}}(-\xi)\,d\xi d\eta,\\
&J^+_{33}:=2\Re\frac{1}{4\pi^2}\int_{\mathbb{R}\times\mathbb{R}}m_{3,+}(\xi,\eta)\widehat{u}(\eta)\widehat{u}(\xi-\eta)\widehat{\overline{\mathcal{N}_Z}}(-\xi)\,d\xi d\eta.
 \end{split}
\end{equation*}
Then we estimate, as before,
\begin{equation*}
\begin{split}
&|J^+_{31}(t)| \lesssim\Sigma_{311}+\Sigma_{312},\\
&\Sigma_{311}:=\sum_{k,k_1,k_2\in\mathbb{Z},\,2^{k_1+40}\geq (1+t)^{-2}}\|m_{3,+}^{k,k_1,k_2}\|_{S^\infty}
\|P'_{k_2}\mathcal{N}(t)\|_{L^\infty}\|P'_{k_1}u(t)\|_{L^2}\|P'_{k}Z(t)\|_{L^2},\\
&\Sigma_{312}:=\sum_{k,k_1,k_2\in\mathbb{Z},\,2^{k_1+40}\leq (1+t)^{-2}}\|m_{3,+}^{k,k_1,k_2}\|_{S^\infty}
\|P'_{k_2}\mathcal{N}(t)\|_{L^2}\|P'_{k_1}u(t)\|_{L^\infty}\|P'_{k}Z(t)\|_{L^2}.
\end{split}
\end{equation*}

Then we use Lemma \ref{NonlinEst}, Lemma \ref{SymbBound2}, \eqref{we21}, together with the simple bound
$\|P'_lf\|_{L^\infty}\lesssim 2^{l/2}\|P'_lf\|_{L^2}$, to estimate
\begin{equation*}
\begin{split}
 \Sigma_{311}&\lesssim \eps_1^4(1+t)^{5p_0-1}\sum_{k,k_1,k_2\in\mathbb{Z},\,2^{k_1+40}\geq (1+t)^{-2}} \one_{15}(k,k_1,k_2)2^{2k}2^{(k_2-k_1)/2}2^{-3k_2}2^{k_1(1/2-p_0)}\\
&\lesssim \eps_1^4(1+t)^{7p_0-1},
\end{split}
\end{equation*}
and
\begin{equation*}
\begin{split}
 \Sigma_{312}&\lesssim \eps_1^4(1+t)^{5p_0-1/2}\sum_{k,k_1,k_2\in\mathbb{Z},\,2^{k_1+40}\leq (1+t)^{-2}} \one_{15}(k,k_1,k_2)
2^{2k}2^{(k_2-k_1)/2}2^{-(N-1)k_2}2^{k_1(1-p_0)}\\
&\lesssim \eps_1^4(1+t)^{-1}.
\end{split}
\end{equation*}
Therefore
\begin{equation*}
 |J^+_{31}(t)|\lesssim \eps_1^4(1+t)^{8p_0-1}.
\end{equation*}

The estimate for $|J^+_{32}(t)|$ is easier,
\begin{equation*}
 \begin{split}
|J^+_{32}(t)|&\lesssim\sum_{k,k_1,k_2\in\mathbb{Z}}\|m_{3,+}^{k,k_1,k_2}\|_{S^\infty}
\|P'_{k_2}u(t)\|_{L^\infty}\|P'_{k_1}\mathcal{N}(t)\|_{L^2}\|P'_{k}Z(t)\|_{L^2}\\
&\lesssim \eps_1^4(1+t)^{5p_0-1}\sum_{k,k_1,k_2\in\mathbb{Z},\,k_1\geq -30}\one_{15}(k,k_1,k_2)2^{2k}2^{(k_2-k_1)/2}2^{-4k_2}\\
&\lesssim \eps_1^4(1+t)^{5p_0-1}.
 \end{split}
\end{equation*}

Finally, as before, the estimate for $|J^+_{33}(t)|$ is similar to the estimate for $|J^+_{31}(t)|$. Therefore
\begin{equation}\label{we25.5}
|J^+_{3,+}(t)|\lesssim\eps_1^4(1+t)^{8p_0-1}.
\end{equation}
The estimate for $|J^+_{3,+}(t)|$ is similar, which completes the proof of the lemma.
\end{proof}

We can estimate now the expression $J_1$ defined in \eqref{we9}.

\begin{lem}\label{Jest2}
For any $t\in[0,T]$ we have
\begin{equation}\label{we40}
 |J_1(t)|\lesssim \eps_1^4(1+t)^{-1+8p_0}.
\end{equation}
\end{lem}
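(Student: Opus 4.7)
The plan is to mimic the proof of Lemma \ref{EnergyInc} in the weighted setting. First I would substitute the formulas \eqref{we3} for $\mathcal{N}_Z$ and \eqref{pr4} for $\mathcal{N}$ into each of the three trilinear integrals composing $J_1$ in \eqref{we9}, obtaining a finite sum of quartic space-time integrals in $(u,\bar u, Z,\bar Z)$ whose symbols are products of $m_1$ with one of $q,q_1,q_2$ (from $\mathcal{N}_Z$) or $q_0$ (from $\mathcal{N}$). After suitable changes of variable these take the form
\begin{equation*}
\int_{\mathbb{R}^3} \widehat{Z}(\xi)\,\widehat{\overline{Z}}(\eta)\,\widehat{f_3}(\rho-\xi)\,\widehat{f_4}(-\rho-\eta)\, b(\xi,\eta,\rho)\,d\xi\,d\eta\,d\rho,
\end{equation*}
with $f_3,f_4\in\{u,\bar u\}$, together with one structurally different term of the same shape but with a $Z$-slot and $u$-slot exchanged, arising from the $q_1,q_2$ pieces of $\mathcal{N}_Z$.

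Second, I would group the first and third integrals of \eqref{we9} — which are conjugate-symmetric — so that in the resonant regime $\max(k_3,k_4)+C\leq \min(k_1,k_2)$ the dangerous combination $m_1(\xi,\eta)\,q(\eta,\sigma)-\overline{m_1(-\eta,-\xi)}\,\overline{q(\sigma,\eta)}$ appears; this is the analog of the combination $\overline{m_N}(-\eta,\rho)-m_N(\rho,-\eta)$ that gained back a derivative via the identity \eqref{en21.4} and the leading-order asymptotics \eqref{en25.2}--\eqref{en25.3}. I would derive the corresponding asymptotic expansions for $m_1$ and $q$ (they are structurally identical to those in Remark \ref{asymp}, with $|\xi|^N/|\eta|^N$ replaced by $m(\xi)/m(\eta)$) and use them to produce the required $S^\infty$ cancellations in this regime. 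Outside the resonant regime, the straight products $\|m_1^{k,k_1,k_2}\|_{S^\infty}\|q^{k',k_3,k_4}\|_{S^\infty}$ from Lemma \ref{SymbBound2} already suffice.

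Third, I would apply Lemma \ref{touse}(iii) to dominate each piece by
\begin{equation*}
\sum_{k_1,k_2,k_3,k_4}\|P'_{k_1}Z\|_{L^2}\|P'_{k_2}Z\|_{L^2}\|P'_{k_3}u\|_{L^\infty}\|P'_{k_4}u\|_{L^\infty}\,\|\mathcal{F}^{-1}(b^{k_1,k_2,k_3,k_4})\|_{L^1},
\end{equation*}
or the analogous expression with one $L^2$ factor of $Z$ replaced by $u$ and one $L^\infty$ factor of $u$ replaced by $Z$ (with $L^\infty$ controlled by $L^2$ through a $2^{k/2}$ loss). Then the bootstrap bounds \eqref{pr5}, \eqref{pr6} give $\|P'_{k_j}u\|_{L^\infty}\lesssim \eps_1(1+t)^{-1/2}2^{-4\max(k_j,0)}$ and $\|P'_{k_j}Z\|_{L^2}\lesssim \eps_1(1+t)^{4p_0}$, producing the claimed $\eps_1^4(1+t)^{-1+8p_0}$ after summation in $k_1,\dots,k_4$. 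In the very low frequency ranges $2^{k_j}\leq(1+t)^{-2}$ I would replace the $L^\infty$ decay by the stronger $L^2$ low-frequency bound \eqref{we21} coming from \eqref{pr7}, exactly as in the treatment of $\Sigma_{213}$ and $\Sigma_{412}$ in Lemma \ref{Jest}.

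The hard part will be the $B_3$-analog term, where both occurrences of $\mathcal{N}_Z$ contribute their leading $q$-piece: naively $m_1\cdot q$ loses half a derivative on the low-frequency input (the symbol $m_1$ carries only $2^{k_1/2}2^{-k_2/2}$ while $q$ carries $2^{k_1/2}2^{k_2}$), and this is precisely the loss that was absorbed in Lemma \ref{EnergyInc} through the symmetrization cancellation. Carrying this cancellation through the weighted setting — where one must simultaneously accommodate the $2^{2k}2^{-2\max(k_1,0)}$ low-frequency singularity inherent in $q_1$ and the $(1+t)^{4p_0}$ growth of each $Z$ factor — is the chief technical point, and it is these two $Z$ factors that dictate the exponent $8p_0$ appearing on the right side of \eqref{we40}.
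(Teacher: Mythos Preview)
Your proposal is correct and identifies the same core mechanism as the paper: the only dangerous contributions are those coming from the $q$-piece of $\mathcal{N}_Z$, and for these the symmetrization cancellation of Lemma~\ref{EnergyInc} (the $B_1,B_2,B_3$ analysis) carries over verbatim with $(W,q_N,m_N)$ replaced by $(Z,q,m_1)$ and $(1+t)^{p_0}$ replaced by $(1+t)^{4p_0}$.

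The paper organizes the argument somewhat more economically than your outline. Rather than expanding all of $\mathcal{N}_Z$ into quartic integrals, it first splits $\mathcal{N}_Z=\mathcal{N}_Z^1+\mathcal{N}_Z^2$, where $\mathcal{N}_Z^1$ is the $q$-piece and $\mathcal{N}_Z^2$ collects the $q_1,q_2$ pieces; it then observes that $\mathcal{N}_Z^2$ obeys the improved bound $\|P_l\mathcal{N}_Z^2(t)\|_{L^2}\lesssim\eps_1^2(1+t)^{4p_0-1/2}2^{-l}$ (decay $2^{-l}$ rather than growth $2^{l}$). This lets the $\mathcal{N}_Z^2$ contributions be estimated directly at the trilinear level via Lemma~\ref{touse}(ii) and the symbol bound \eqref{we15.4}, with no quartic expansion and no low-frequency case splitting needed. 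Only the $\mathcal{N}_Z^1$ contribution (your ``$B_3$-analog'' together with its companions) is expanded into quartic integrals $J_{141},J_{142},J_{143}$, and these are then dispatched by pointing to the proof of Lemma~\ref{EnergyInc}. Your plan to expand the $q_1,q_2$ pieces as well would work, but forces you to track the ``$Z$-slot and $u$-slot exchanged'' terms and their low-frequency behavior separately; the paper's splitting avoids that bookkeeping entirely.
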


\begin{proof}[Proof of Lemma \ref{Jest2}] We start by decomposing
\begin{equation*}
\begin{split}
&\mathcal{N}_Z=\mathcal{N}_Z^1+\mathcal{N}_Z^2,\\
&\mathcal{F}(\mathcal{N}_Z^1)(\xi):=\frac{1}{2\pi}\int_{\mathbb{R}}q(\xi,\eta)\widehat{Z}(\eta)\widehat{V}(\xi-\eta)\,d\eta,\\
&\mathcal{F}(\mathcal{N}_Z^2)(\xi):=\frac{1}{2\pi}\int_{\mathbb{R}}q_1(\xi,\eta)\widehat{u}(\eta)\widehat{(Z+\overline{Z})}(\xi-\eta)\,d\eta
+\frac{1}{2\pi}\int_{\mathbb{R}}q_2(\xi,\eta)\widehat{u}(\eta)\widehat{(u+\overline{u})}(\xi-\eta)\,d\eta.
\end{split}
\end{equation*}
The proof of the bound \eqref{we18} in Lemma \ref{NonlinEst} shows that $\mathcal{N}_Z^2$ satisfies stronger $L^2$ estimates than $\mathcal{N}_Z$,
more precisely
\begin{equation}\label{we41}
\begin{split}
&\|P_l\mathcal{N}_Z^2(t)\|_{L^2}\lesssim\eps_1^2(1+t)^{4p_0-1/2}2^{-l},\qquad t\in[0,T],\,l\in\mathbb{Z},\\
&P_l\mathcal{N}_Z^2(t)=0\qquad\text{ if }\qquad l\leq-30.
\end{split}
\end{equation}

Now we decompose
\begin{equation}\label{we42}
\begin{split}
&J_1=J_{11}+J_{12}+J_{13}+J_{14},\\
&J_{11}:=2\Re\frac{1}{4\pi^2}\int_{\mathbb{R}\times\mathbb{R}}m_1(\xi,\eta)\widehat{\mathcal{N}_Z^2}(\eta)\widehat{u}(\xi-\eta)\widehat{\overline{Z}}(-\xi)\,d\xi d\eta,\\
&J_{12}:=2\Re\frac{1}{4\pi^2}\int_{\mathbb{R}\times\mathbb{R}}m_1(\xi,\eta)\widehat{Z}(\eta)\widehat{\mathcal{N}}(\xi-\eta)\widehat{\overline{Z}}(-\xi)\,d\xi d\eta,\\
&J_{13}:=2\Re\frac{1}{4\pi^2}\int_{\mathbb{R}\times\mathbb{R}}m_1(\xi,\eta)\widehat{Z}(\eta)\widehat{u}(\xi-\eta)\widehat{\overline{\mathcal{N}_Z^2}}(-\xi)\,d\xi d\eta,\\
&J_{14}:=2\Re\frac{1}{4\pi^2}\int_{\mathbb{R}\times\mathbb{R}}\Big[m_1(\xi,\eta)\widehat{\mathcal{N}_Z^1}(\eta)\widehat{u}(\xi-\eta)
\widehat{\overline{Z}}(-\xi)+m_1(\xi,\eta)\widehat{Z}(\eta)
\widehat{u}(\xi-\eta)\widehat{\overline{\mathcal{N}_Z^1}}(-\xi)\Big]\,d\xi d\eta.
\end{split}
\end{equation}

The expressions $J_{11}(t),J_{12}(t),J_{13}(t)$ can be estimated using Lemma \ref{touse} (ii). More precisely,
\begin{equation*}
 \begin{split}
  |J_{11}(t)|&\lesssim\sum_{k,k_1,k_2\in\mathbb{Z}}\|m_1^{k,k_1,k_2}\|_{S^\infty}
\|P'_{k_2}\mathcal{N}_Z^2(t)\|_{L^2}\|P'_{k_1}u(t)\|_{L^\infty}\|P'_{k}Z(t)\|_{L^2}\\
&\lesssim \eps_1^4(1+t)^{8p_0-1}\sum_{k,k_1,k_2\in\mathbb{Z}}\one_{15}(k,k_1,k_2)2^{k_1/2}2^{-k_2/2}\cdot
2^{-k_2}2^{-4\max(k_1,0)}\\
&\lesssim \eps_1^4(1+t)^{8p_0-1},
 \end{split}
\end{equation*}
\begin{equation*}
 \begin{split}
  |J_{12}(t)|&\lesssim\sum_{k,k_1,k_2\in\mathbb{Z}}\|m_1^{k,k_1,k_2}\|_{S^\infty}
\|P'_{k_2}Z(t)\|_{L^2}\|P'_{k_1}\mathcal{N}(t)\|_{L^\infty}\|P'_{k}Z(t)\|_{L^2}\\
&\lesssim \eps_1^4(1+t)^{8p_0-1}\sum_{k,k_1,k_2\in\mathbb{Z},\,k_1\geq -30}\one_{15}(k,k_1,k_2)2^{k_1/2}2^{-k_2/2}\cdot
2^{-3k_1}\\
&\lesssim \eps_1^4(1+t)^{8p_0-1},
 \end{split}
\end{equation*}
and
\begin{equation*}
 \begin{split}
  |J_{13}(t)|&\lesssim\sum_{k,k_1,k_2\in\mathbb{Z}}\|m_1^{k,k_1,k_2}\|_{S^\infty}
\|P'_{k_2}Z(t)\|_{L^2}\|P'_{k_1}u(t)\|_{L^\infty}\|P'_{k}\mathcal{N}_Z^2(t)\|_{L^2}\\
&\lesssim \eps_1^4(1+t)^{8p_0-1}\sum_{k,k_1,k_2\in\mathbb{Z}}\one_{15}(k,k_1,k_2)2^{k_1/2}2^{-k_2/2}\cdot
2^{-4\max(k_1,0)}2^{-k}\\
&\lesssim \eps_1^4(1+t)^{8p_0-1}.
 \end{split}
\end{equation*}

To estimate $J_{14}(t)$ we expand the nonlinearity $\mathcal{N}_Z^1$ according to its definition. The estimate of $|J_{14}(t)|$ is similar to the estimate of $|A_2(t)|$ in the proof of Lemma \ref{EnergyInc}, after replacing $W$ by $Z$ and $q_N,m_N$ by $q,m_1$.
\end{proof}

We show now how to control the cubic corrections $E_{w,l}^{(3)}$. More precisely:

\begin{lem}\label{EnergyComp2}
Assuming the bounds \eqref{pr5}--\eqref{pr7}, for any $t\in[0,T]$ we have
\begin{equation}\label{we50}
|E_{w,l}^{(3)}(t)|\lesssim \eps_1^3(1+t)^{8p_0},\qquad l\in\{1,(2,+),(2,-),(3,+), (3,-)\}.
\end{equation}
\end{lem}

\begin{proof}[Proof of Lemma \ref{EnergyComp2}] We estimate, using Lemma \ref{touse} and Lemma \ref{SymbBound2},
\begin{equation*}
\begin{split}
|E_{w,1}^{(3)}(t)|&\lesssim\sum_{k,k_1,k_2\in\mathbb{Z}}\|m_1^{k,k_1,k_2}\|_{S^\infty}\|P'_kZ(t)\|_{L^2}\|P'_{k_2}Z(t)\|_{L^2}\|P'_{k_1}u(t)\|_{L^\infty}\lesssim \eps_1^3(1+t)^{8p_0}.
\end{split}
\end{equation*}

We use also the bounds \eqref{we21}. We estimate
\begin{equation*}
\begin{split}
&|E_{w,2,\iota}^{(3)}(t)|\lesssim \Sigma_{21}^{(3)}+\Sigma_{22}^{(3)},\\
&\Sigma_{21}^{(3)}:=\sum_{k,k_1,k_2\in\mathbb{Z},\,2^{k_1+40}\geq (1+t)^{-1/2}}\|m_{2,\iota}^{k,k_1,k_2}\|_{S^\infty}\|P'_kZ(t)\|_{L^2}\|P'_{k_2}u(t)\|_{L^\infty}\|P'_{k_1}Z(t)\|_{L^2},\\
&\Sigma_{22}^{(3)}:=\sum_{k,k_1,k_2\in\mathbb{Z},\,2^{k_1+40}\leq (1+t)^{-1/2}}\|m_{2,\iota}^{k,k_1,k_2}\|_{S^\infty}\|P'_kZ(t)\|_{L^2}\|P'_{k_2}u(t)\|_{L^2}\|P'_{k_1}Z(t)\|_{L^\infty}.
\end{split}
\end{equation*}
Then we estimate
\begin{equation*}
\begin{split}
\Sigma_{21}^{(3)}&\lesssim\eps_1^3(1+t)^{8p_0}\sum_{k,k_1,k_2\in\mathbb{Z},\,2^{k_1+40}\geq (1+t)^{-1/2}}\one_{15}(k,k_1,k_2)2^{(k_2-k_1)/2}2^{2k-2\max(k_1,0)}(1+t)^{-1/2}2^{-4k_2}\\
&\lesssim \eps_1^3(1+t)^{8p_0}
\end{split}
\end{equation*}
and
\begin{equation*}
\begin{split}
\Sigma_{22}^{(3)}&\lesssim\eps_1^3(1+t)^{5p_0}\sum_{k,k_1,k_2\in\mathbb{Z},\,2^{k_1+40}\leq (1+t)^{-1/2}}\one_{15}(k,k_1,k_2)2^{(k_2-k_1)/2}2^{2k-2\max(k_1,0)}2^{-Nk_2}2^{k_1(1-p_0)}\\
&\lesssim \eps_1^3(1+t)^{8p_0}.
\end{split}
\end{equation*}
This proves the desired bound \eqref{we50} for $l\in\{(2,+),(2,-)\}$.

On the other hand, for $l\in\{(3,+),(3,-)\}$ we estimate
\begin{equation*}
\begin{split}
|E_{w,3,\iota}^{(3)}(t)|&\lesssim \sum_{k,k_1,k_2\in\mathbb{Z}}\|m_{3,\iota}^{k,k_1,k_2}\|_{S^\infty}\|P'_kZ(t)\|_{L^2}\|P'_{k_2}u(t)\|_{2}\|P'_{k_1}u(t)\|_{L^\infty}\\
&\lesssim \eps_1^3(1+t)^{6p_0}\sum_{k,k_1,k_2\in\mathbb{Z}}\one_{15}(k,k_1,k_2)2^{(k_2-k_1)/2}2^{2k}2^{-Nk_2}2^{k_1(1-p_0)}\\
&\lesssim \eps_1^3(1+t)^{6p_0}.
\end{split}
\end{equation*}
This completes the proof of the lemma.
\end{proof}

\begin{proof}[Proof of Proposition \ref{EnEst2}] We can now complete the proof of Proposition \ref{EnEst2}. The proposition follows from the formula \eqref{we14}, and Lemmas \ref{Jest}, \ref{Jest2}, and \ref{EnergyComp2}, by the same simple argument as in the proof of Proposition \ref{EnEst1} in section \ref{energy}.
\end{proof}

\section{The $L^\infty$ decay estimate}\label{sup}

In this section we prove the following:

\begin{pro}\label{Sup1}
If $u$ satisfies \eqref{pr5}--\eqref{pr7} then
\begin{equation}\label{BigOne2}
 \sup_{k\in\mathbb{Z}}\sup_{t\in[0,T]}(1+t)^{1/2}2^{4\max(k,0)}\|P_ku(t)\|_{L^\infty}\lesssim\eps_0.
\end{equation}
\end{pro}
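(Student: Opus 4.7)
The plan is to reduce the pointwise decay bound to the control of an appropriate $Z$-norm on the profile, following the general scheme sketched in the introduction. First I would introduce the profile $f(t) := e^{it\Lambda}u(t)$, whose Fourier transform evolves according to
\begin{equation*}
\partial_t \widehat{f}(\xi,t) = \frac{1}{2\pi}\int_{\mathbb{R}} e^{it\Phi(\xi,\eta)}\, q_0(\xi,\eta)\,\widehat{V}(\xi-\eta,t)\,\widehat{u}(\eta,t)\,d\eta,
\end{equation*}
where $\Phi(\xi,\eta)=|\xi|^{3/2}-|\eta|^{3/2}\mp|\xi-\eta|^{3/2}$. Since the only time resonances $\Phi=0$ occur when one of the input frequencies vanishes, I would perform a normal form transformation, dividing by $\Phi$, to introduce a corrected profile
\begin{equation*}
\widehat{g}(\xi,t) := \widehat{f}(\xi,t) + \text{quadratic boundary terms}
\end{equation*}
which satisfies a cubic equation of the form $\partial_t \widehat{g} = \mathcal{C}(u,u,u)$ with a phase of order $|\xi|^{3/2}$.

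Next I would isolate the asymptotic contribution. A stationary phase analysis of the cubic integral shows that for each $\xi$ there is a unique stationary point contributing a logarithmically growing phase correction of size $|\widehat{f}(\xi,t)|^2 \log t / \sqrt{t}$, which forces a modified scattering ansatz. Following \cite{IoPu1,IoPu2} I would define a renormalized profile
\begin{equation*}
\widehat{h}(\xi,t) := \widehat{g}(\xi,t)\exp\bigl( i\,\Theta(\xi,t)\bigr),
\end{equation*}
with $\Theta$ chosen so that $\partial_t \widehat{h}(\xi,t)$ contains no non-oscillatory cubic contribution. The $Z$-norm of \eqref{nf29} is designed precisely to control $\widehat h$ uniformly in $\xi$ and $t$, together with a mild weighted norm encoding localization around the stationary point. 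The desired pointwise bound \eqref{BigOne2} then follows from an improved linear dispersive estimate (a standard stationary phase computation together with the $S$-norm bound from Proposition \ref{EnEst2}) that converts the $Z$-control of $\widehat h$ into the decay $(1+t)^{-1/2}2^{-4\max(k,0)}$ for $P_k u$.

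The bulk of the work is showing $\|\widehat h(t)\|_Z \lesssim \eps_0$ uniformly, which is precisely the content of Lemma \ref{Zcontrol}. The main obstacles there are: (i) controlling the \emph{singular} terms created by the normal form transformation, where division by $\Phi$ produces factors blowing up like $|\eta_{\min}|^{-1/2}$ at low frequency; this is absorbed using the strengthened low-frequency bound $\|D^{-(1/2-p_1)} P_{\leq 0}u\|_{L^2} \lesssim \eps_0$ from \eqref{pr7} and the weighted-$S^\infty$ analysis of the cubic symbols; (ii) handling the \emph{additional resonant cubic interactions} produced by the convex dispersion $|\xi|^{3/2}$, which did not appear in the gravity case; and (iii) estimating the quartic remainders generated by the further renormalization of the cubic equation, again using the bootstrap assumptions \eqref{pr5}, \eqref{pr6} and Bernstein-type bounds together with Lemma \ref{touse}. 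Combining Lemma \ref{Zcontrol} with the linear dispersive estimate proves \eqref{BigOne2} and thereby improves the a priori bound \eqref{pr6} to \eqref{pr6impr}.
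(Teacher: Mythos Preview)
Your proposal is correct and follows essentially the same approach as the paper. The only cosmetic difference is the order of operations: the paper first performs the normal form at the level of the unknown, defining $v:=u+A(u,u)+B(\overline{u},u)$ via the bilinear operators \eqref{nf4}, and \emph{then} takes the profile $f:=e^{it\Lambda}v$; you instead take the profile of $u$ first and carry out the normal form correction on the Fourier side, which amounts to the same thing (your $g$ plays the role of the paper's $\widehat{f}$, and your $h$ is the paper's $g$ in \eqref{nf50}). The rest of the argument---the phase correction accounting for the $(++-)$ spacetime resonance at $(\xi,0,-\xi)$, the reduction to the $Z$-norm bootstrap of Proposition~\ref{Znorm}, and the conversion to pointwise decay via the dispersive estimate of Lemma~\ref{dispersive} combined with the weighted $L^2$ bounds \eqref{nf24}--\eqref{nf25}---matches the paper exactly.
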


In other words, we improve the control of the $L^\infty$ norm in the bootstrap assumption \eqref{pr6}. We will prove this proposition
in the next two sections. In this section we show how to reduce the proof of Proposition \ref{Sup1} to the more
technical Lemma \ref{Zcontrol}.

\subsection{Two lemmas}\label{linest}

We prove first two general estimates that are used often in the rest of the paper. The first lemma is our main linear dispersive estimate:

\begin{lem}\label{dispersive}
 For any $t\in\mathbb{R}\setminus\{0\}$, $k\in\mathbb{Z}$, and $f\in L^2(\mathbb{R})$ we have
\begin{equation}\label{disperse}
 \|e^{i t \Lambda}P_kf\|_{L^\infty}\lesssim |t|^{-1/2}2^{k/4}\|\widehat{f}\|_{L^\infty}+|t|^{-3/5}2^{-2k/5}\big[2^k\|\partial \widehat{f}\|_{L^2}+\|\widehat{f}\|_{L^2}\big]
\end{equation}
and
\begin{equation}\label{disperseEa}
 \|e^{i t \Lambda}P_kf\|_{L^\infty}\lesssim |t|^{-1/2}2^{k/4}\|f\|_{L^1}.
\end{equation}
\end{lem}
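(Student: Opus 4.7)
The strategy is standard stationary phase analysis applied to the oscillatory representation
\[
(e^{it\Lambda}P_kf)(x) = \frac{1}{2\pi}\int_{\R} e^{i\phi(\xi)}\,\varphi_k(\xi)\,\widehat{f}(\xi)\,d\xi, \qquad \phi(\xi) := x\xi + t|\xi|^{3/2}.
\]
The key facts are $\phi''(\xi) = (3/4)\,t\,|\xi|^{-1/2}\operatorname{sgn}(\xi)$, so $|\phi''(\xi)| \approx |t|\,2^{-k/2}$ on $\operatorname{supp}\varphi_k$, and if a critical point $\xi_0$ (solving $\phi'(\xi_0)=0$) lies on the same side as $\operatorname{supp}\varphi_k$, then $\phi'(\xi) = (3/2)t\bigl[|\xi|^{1/2}-|\xi_0|^{1/2}\bigr]\operatorname{sgn}(\xi)$ satisfies $|\phi'(\xi)| \approx |t|\,2^{-k/2}|\xi-\xi_0|$ uniformly on $\operatorname{supp}\varphi_k$.

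For the $L^1\to L^\infty$ bound \eqref{disperseEa}, I would write $e^{it\Lambda}P_kf = K_{k,t}\ast f$ with $K_{k,t}(x) := (2\pi)^{-1}\int e^{i\phi(\xi)}\varphi_k(\xi)\,d\xi$. After rescaling $\xi = 2^k\eta$, the rescaled phase $y\eta + \tau|\eta|^{3/2}$ (with $\tau = t\,2^{3k/2}$, $y = 2^kx$) has second derivative bounded below by a positive constant times $|\tau|$ on $\operatorname{supp}(\varphi(\eta)-\varphi(2\eta))$, and van der Corput's lemma yields $\|K_{k,t}\|_{L^\infty}\lesssim 2^k|\tau|^{-1/2} = |t|^{-1/2}2^{k/4}$. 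Young's convolution inequality closes this part.

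For the main estimate \eqref{disperse}, I would first dispose of the case where no critical point lies in a small dilation of $\operatorname{supp}\varphi_k$ by a single integration by parts, since $|\phi'|\gtrsim\max(|x|, |t|2^{k/2})$ is then uniformly large. Otherwise, pick $\delta\in(0,2^{k-10})$, fix a smooth cutoff $\chi_\delta(\xi) = \chi((\xi-\xi_0)/\delta)$, and split the integral as $I_{\mathrm{near}}+I_{\mathrm{far}}$. Trivially,
\[
|I_{\mathrm{near}}|\,\lesssim\,\delta\,\|\widehat{f}\|_{L^\infty}.
\]
For $I_{\mathrm{far}}$, integration by parts in $\xi$ and the lower bound $|\phi'|\gtrsim|t|2^{-k/2}|\xi-\xi_0|$ produce three Cauchy--Schwarz terms controlled by the weighted $L^2$ bounds $\|(\xi-\xi_0)^{-1}\|_{L^2(|\xi-\xi_0|>\delta)}\lesssim \delta^{-1/2}$ and $\|(\xi-\xi_0)^{-2}\|_{L^2(|\xi-\xi_0|>\delta)}\lesssim \delta^{-3/2}$ (the latter enters through $\phi''/(\phi')^2$), yielding
\[
|I_{\mathrm{far}}|\,\lesssim\,|t|^{-1}2^{k/2}\Bigl[\delta^{-1/2}\|\partial\widehat{f}\|_{L^2} + \delta^{-3/2}\|\widehat{f}\|_{L^2}\Bigr].
\]

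To obtain the first term in \eqref{disperse}, I take $\delta = |t|^{-1/2}2^{k/4}$, which gives $|I_{\mathrm{near}}|\lesssim|t|^{-1/2}2^{k/4}\|\widehat{f}\|_{L^\infty}$. For the $L^2$ terms, I would apply the same splitting with a second, optimized choice of $\delta$: balancing $\delta\|\widehat{f}\|_{L^\infty}$ against $|t|^{-1}2^{k/2}\delta^{-3/2}\|\widehat{f}\|_{L^2}$ forces $\delta^{5/2}\sim |t|^{-1}2^{k/2}\|\widehat{f}\|_{L^2}/\|\widehat{f}\|_{L^\infty}$, producing a mixed bound $\sim(|t|^{-1}2^{k/2})^{2/5}\|\widehat{f}\|_{L^2}^{2/5}\|\widehat{f}\|_{L^\infty}^{3/5}$. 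The exponent $3/5$ in the target arises from this $\delta^{5/2}$ balance. The factor of $\|\widehat{f}\|_{L^\infty}^{3/5}$ is then converted to the $H^1$-type side by the Sobolev-on-an-interval embedding
\[
\|\widehat{f}\|_{L^\infty(\operatorname{supp}\varphi_k)}\,\lesssim\, 2^{-k/2}\bigl(\|\widehat{f}\|_{L^2} + 2^k\|\partial\widehat{f}\|_{L^2}\bigr),
\]
and weighted AM--GM (with weights $2/5+3/5=1$) collapses the mixed product into a sum, delivering $|t|^{-3/5}2^{-2k/5}(2^k\|\partial\widehat{f}\|_{L^2}+\|\widehat{f}\|_{L^2})$.

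The main obstacle is the bookkeeping in this last step: the naive choice $\delta=|t|^{-1/2}2^{k/4}$ only yields the weaker exponents $|t|^{-3/4}2^{3k/8}\|\partial\widehat{f}\|_{L^2} + |t|^{-1/4}2^{k/8}\|\widehat{f}\|_{L^2}$, and to reach $|t|^{-3/5}2^{-2k/5}$ one must genuinely re-optimize and invoke the Sobolev embedding; a mild additional difficulty is the region where $\xi_0$ straddles $\xi=0$, which is handled by treating the two sign components of $\operatorname{supp}\varphi_k$ separately so that $\phi$ is smooth on each component.
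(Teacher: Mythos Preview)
Your overall strategy (localize around the critical point, integrate by parts away from it) is exactly what the paper does, and the $L^1\to L^\infty$ part is fine. But there is a genuine gap in how you handle the term coming from $\phi''/(\phi')^2$ in $I_{\mathrm{far}}$.

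You estimate that term by Cauchy--Schwarz in $L^2$, producing $|t|^{-1}2^{k/2}\delta^{-3/2}\|\widehat{f}\|_{L^2}$. With the natural choice $\delta=|t|^{-1/2}2^{k/4}$ this is $|t|^{-1/4}2^{k/8}\|\widehat{f}\|_{L^2}$, which is too weak. Your proposed fix --- re-optimize $\delta$ to balance $\delta\|\widehat{f}\|_{L^\infty}$ against $|t|^{-1}2^{k/2}\delta^{-3/2}\|\widehat{f}\|_{L^2}$ and then Sobolev-embed --- does not actually recover $|t|^{-3/5}$. After scaling to $k=0$, your balanced term is $|t|^{-2/5}\|\widehat{f}\|_{L^2}^{2/5}\|\widehat{f}\|_{L^\infty}^{3/5}$, and plugging in $\|\widehat{f}\|_{L^\infty}\lesssim\|\widehat{f}\|_{L^2}+\|\partial\widehat{f}\|_{L^2}$ followed by Young only gives $|t|^{-2/5}\bigl(\|\widehat{f}\|_{L^2}+\|\partial\widehat{f}\|_{L^2}\bigr)$, which for $|t|\geq 1$ is strictly \emph{weaker} than the target $|t|^{-3/5}(\cdots)$. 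No choice of $\delta$ in your three-term bound $\delta A+|t|^{-1}\delta^{-1/2}B+|t|^{-1}\delta^{-3/2}D$ (with $A,B,D$ the $L^\infty,\partial L^2,L^2$ norms) can make all three terms $\lesssim|t|^{-3/5}$ simultaneously.

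The fix is simple and is what the paper does: do \emph{not} put the non-derivative $\widehat{f}$ term into $L^2$. Since $\int_{|\xi-\xi_0|>\delta}|\xi-\xi_0|^{-2}\,d\xi\lesssim\delta^{-1}$, the $\phi''/(\phi')^2$ contribution is bounded by $|t|^{-1}2^{k/2}\delta^{-1}\|\widehat{f}\|_{L^\infty}$, which with $\delta=|t|^{-1/2}2^{k/4}$ is exactly $|t|^{-1/2}2^{k/4}\|\widehat{f}\|_{L^\infty}$ and is absorbed into the first term of \eqref{disperse}. Then only $\|\partial\widehat{f}\|_{L^2}$ survives in the critical-point region (with the even better decay $|t|^{-3/4}$ at $k=0$), and $\|\widehat{f}\|_{L^2}$ appears only from the far-from-critical case with decay $|t|^{-1}$. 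The paper implements this via a dyadic decomposition $|\xi-\xi_0|\sim 2^l$ for $l\geq l_0$ with $2^{l_0}\sim|t|^{-1/2}$, estimating $\widehat{f}$ in $L^\infty$ and $\partial\widehat{f}$ in $L^2$ on each shell; your single-scale splitting works equally well once you make this one change.
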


\begin{proof}[Proof of Lemma \ref{dispersive}] This is similar to the proof of Lemma 2.3 in \cite{IoPu1}. By scale invariance we may assume that $k=0$. The bound \eqref{disperseEa} is a standard dispersive estimate. For \eqref{disperse} it suffices to prove that
\begin{equation}\label{disp1}
\Big|\int_{\mathbb{R}}e^{it|\xi|^{3/2}}e^{ix\xi}\widehat{f}(\xi)\varphi_0(\xi)\,d\xi\Big|\lesssim |t|^{-1/2}\|\widehat{f}\|_{L^\infty}+|t|^{-3/5}\big[\|\partial \widehat{f}\|_{L^2}+\|\widehat{f}\|_{L^2}\big]
\end{equation}
for any $t\in\mathbb{R}\setminus\{0\}$ and $x\in\mathbb{R}$. Clearly,
\begin{equation*}
 \Big|\int_{\mathbb{R}}e^{it|\xi|^{3/2}}e^{ix\xi}\widehat{f}(\xi)\varphi_0(\xi)\,d\xi\Big|\lesssim \|\widehat{f}\|_{L^2}.
\end{equation*}
Therefore in proving \eqref{disp1} we may assume $|t|\geq 2^{10}$.

Let $\Psi(\xi):=t|\xi|^{3/2}+x\xi$ and notice that
\begin{equation}\label{disp2.02}
\Psi'(\xi)=\frac{3}{2}t\xi|\xi|^{-1/2}+x\qquad\text{ and }\qquad |\Psi''(\xi)|\approx |t||\xi|^{-1/2}.
\end{equation}
If $|x|\notin [|t|2^{-10},|t|2^{10}]$ then we integrate by parts in $\xi$ to estimate the left-hand side of \eqref{disp1} by
\begin{equation*}
C\int_{\mathbb{R}}\frac{1}{|t|}\big[|\partial\widehat{f}(\xi)|+|\widehat{f}(\xi)|\big]\varphi'_0(\xi)\,d\xi\lesssim |t|^{-1}\big[\|\partial \widehat{f}\|_{L^2}+\|\widehat{f}\|_{L^2}\big],
\end{equation*}
which suffices to prove \eqref{disp1}, in view of the assumption $|t|\geq 1$.

It remains to prove the bound \eqref{disp1} when $|x|\in [|t|2^{-10},|t|2^{10}]$. Let $\xi_0\in\mathbb{R}$ denote the unique
solution of the equation $\Psi'(\xi)=0$, i.e.
\begin{equation*}
 \xi_0:=\mathrm{sign}(-x/t)\frac{4x^2}{9t^2}.
\end{equation*}
and notice that $|\xi_0|\approx 1$. Let $l_0$ denote the smallest integer with the property that $2^{l_0}\geq |t|^{-1/2}$
and estimate
\begin{equation}\label{disp6}
\Big|\int_{\mathbb{R}}e^{it|\xi|^{3/2}}e^{ix\xi}\widehat{f}(\xi)\varphi_0(\xi)\,d\xi\Big|\leq\sum_{l=l_0}^{100}|J_l|,
\end{equation}
where, for any $l\geq l_0$,
\begin{equation*}
J_{l}:=\int_{\mathbb{R}}e^{i\Psi(\xi)}\cdot \widehat{P_0f}(\xi)\varphi_l^{(l_0)}(\xi-\xi_0)\,d\xi.
\end{equation*}
Clearly
\begin{equation*}
 |J_{l_0}|\lesssim 2^{l_0}\|\widehat{P_0f}\|_{L^\infty}\lesssim |t|^{-1/2}\|\widehat{f}\|_{L^\infty}.
\end{equation*}
Moreover, since $|\Psi'(\xi)|\gtrsim |t|2^l$ whenever $|\xi|\approx 1$ and $|\xi-\xi_0|\approx 2^l$, for $l\geq l_0+1$
we can integrate by parts to estimate
\begin{equation*}
\begin{split}
|J_l|&\lesssim \frac{1}{|t|2^l}\big[2^{-l}\|\widehat{P_0f}(\xi)\cdot\mathbf{1}_{[0,2^{l+4}]}(|\xi-\xi_0|)\|_{L^1_\xi}+
\|\partial(\widehat{P_0f})(\xi)\cdot\mathbf{1}_{[0,2^{l+4}]}(|\xi-\xi_0|)\|_{L^1_\xi}\big]\\
&\lesssim |t|^{-1}2^{-l}\big[\|\widehat{P_0f}\|_{L^\infty_\xi}+
2^{l/2}\|\partial(\widehat{P_0f})\|_{L^2}\big].
\end{split}
\end{equation*}
The desired bound \eqref{disp1} follows from \eqref{disp6} and the last two estimates. This completes the proof of the lemma.
\end{proof}

We also need the following interpolation lemma:

\begin{lem}\label{interpolation}
 For any $k\in\mathbb{Z}$, and $f\in L^2(\mathbb{R})$ we have
\begin{equation}\label{interp1}
 \big\|\widehat{P_kf}\big\|^2_{L^\infty}\lesssim \big\|P_kf\big\|^2_{L^1}\lesssim
2^{-k}\|\widehat{f}\|_{L^2}\big[2^k\|\partial \widehat{f}\|_{L^2}+\|\widehat{f}\|_{L^2}\big].
\end{equation}
\end{lem}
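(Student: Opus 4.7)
\textbf{Proof proposal for Lemma \ref{interpolation}.} The first inequality is immediate from Hausdorff--Young: with the convention $\widehat{g}(\xi)=\int e^{-ix\xi}g(x)\,dx$ one has the pointwise bound $|\widehat{P_kf}(\xi)|\leq\|P_kf\|_{L^1}$, so squaring gives $\|\widehat{P_kf}\|_{L^\infty}^2\leq\|P_kf\|_{L^1}^2$.

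For the second inequality, my plan is a weighted-Cauchy--Schwarz argument in physical space, splitting at a scale $R>0$ to be optimized. First, for any $R>0$, by Cauchy--Schwarz and Plancherel
\begin{equation*}
\int_{|x|\leq R}|P_kf(x)|\,dx\leq (2R)^{1/2}\|P_kf\|_{L^2}\lesssim R^{1/2}\|\widehat{f}\|_{L^2}.
\end{equation*}
For the tail, use Cauchy--Schwarz against the weight $|x|^{-1}$ on $\{|x|>R\}$:
\begin{equation*}
\int_{|x|>R}|P_kf(x)|\,dx\leq\Bigl(\int_{|x|>R}\frac{dx}{x^2}\Bigr)^{1/2}\|xP_kf\|_{L^2}\lesssim R^{-1/2}\|xP_kf\|_{L^2}.
\end{equation*}
Since $\mathcal{F}(xP_kf)(\xi)=i\partial_\xi(\varphi_k(\xi)\widehat{f}(\xi))$, Plancherel and the product rule give
\begin{equation*}
\|xP_kf\|_{L^2}\lesssim\|\varphi_k\partial\widehat{f}\|_{L^2}+\|(\partial\varphi_k)\widehat{f}\|_{L^2}\lesssim\|\partial\widehat{f}\|_{L^2}+2^{-k}\|\widehat{f}\|_{L^2},
\end{equation*}
using $\|\partial\varphi_k\|_{L^\infty}\lesssim 2^{-k}$.

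Combining the two contributions yields
\begin{equation*}
\|P_kf\|_{L^1}\lesssim R^{1/2}\|\widehat{f}\|_{L^2}+R^{-1/2}\bigl[\|\partial\widehat{f}\|_{L^2}+2^{-k}\|\widehat{f}\|_{L^2}\bigr].
\end{equation*}
Optimizing in $R$ (i.e.\ choosing $R$ so the two terms balance) gives
\begin{equation*}
\|P_kf\|_{L^1}^2\lesssim\|\widehat{f}\|_{L^2}\bigl[\|\partial\widehat{f}\|_{L^2}+2^{-k}\|\widehat{f}\|_{L^2}\bigr]=2^{-k}\|\widehat{f}\|_{L^2}\bigl[2^k\|\partial\widehat{f}\|_{L^2}+\|\widehat{f}\|_{L^2}\bigr],
\end{equation*}
which is exactly the desired inequality. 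There is no serious obstacle here; the only points to be mindful of are tracking the Fourier-transform constants in the paper's convention and the routine bound $\|\partial\varphi_k\|_{L^\infty}\lesssim 2^{-k}$ that encodes the support/scale of the Littlewood--Paley cutoff. The whole content of the lemma is really this Bernstein/Agmon-type interpolation: the optimal $R$ is a weighted geometric mean of $1$ and $2^{-k}$ adjusted by the ratio $\|\partial\widehat{f}\|_{L^2}/\|\widehat{f}\|_{L^2}$, and balancing the two contributions produces the claimed square-root interpolation.
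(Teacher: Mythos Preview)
Your proof is correct and follows essentially the same approach as the paper's: split the $L^1$ integral at a scale $R$, apply Cauchy--Schwarz on each piece, bound $\|xP_kf\|_{L^2}$ via Plancherel and the product rule, and optimize in $R$. The only cosmetic difference is that the paper first rescales to $k=0$ by dilation invariance before running the same argument, whereas you work directly at general $k$ and track the factor $2^{-k}$ from $\|\partial\varphi_k\|_{L^\infty}$.
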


\begin{proof}[Proof of Lemma \ref{interpolation}] By scale invariance we may assume that $k=0$. It suffices to prove that
\begin{equation}\label{interp2}
 \big\|P_0f\big\|^2_{L^1}\lesssim \|\widehat{f}\|_{L^2}\big[\|\partial \widehat{f}\|_{L^2}+\|\widehat{f}\|_{L^2}\big].
\end{equation}
For $R\geq 1$ we estimate
\begin{equation*}
\begin{split}
\big\|P_0f\big\|_{L^1}&\lesssim \int_{|x|\leq R}|P_0f(x)|\,dx+\int_{|x|\geq R}|xP_0f(x)|\cdot \frac{1}{|x|}\,dx\\
&\lesssim R^{1/2}\|P_0f\|_{L^2}+R^{-1/2}\|xP_0f(x)\|_{L^2_x}\\
&\lesssim R^{1/2}\|\widehat{f}\|_{L^2}+R^{-1/2}\big[\|\partial \widehat{f}\|_{L^2}+\|\widehat{f}\|_{L^2}\big].
\end{split}
\end{equation*}
The desired estimate \eqref{interp2} follows by choosing $R$ suitably.
\end{proof}

\subsection{The normal form transformation}\label{NormalForm}
Recall that for any suitable multiplier $m:\mathbb{R}\to\mathbb{C}$ we define the associated bilinear operator $M$ by the formula
\begin{equation*}
\mathcal{F}\big[M(f,g)\big](\xi)=\frac{1}{2\pi}\int_{\mathbb{R}}m(\xi,\eta)\widehat{f}(\xi-\eta)\widehat{g}(\eta)\,d\eta.
\end{equation*}
With this notation, our basic equation is
\begin{equation}\label{nf1}
\partial_tu+i\Lambda u=Q_0(V,u),\qquad V=u+\overline{u},
\end{equation}
where $Q_0$ is the bilinear operator defined by the Fourier multiplier $q_0$ in \eqref{pr2}.

We would like to define a modified variable $v$ that satisfies a cubic equation. Let
\begin{equation}\label{nf3}
v:=u+A(u,u)+B(\overline{u},u),
\end{equation}
where $A$ and $B$ are the bilinear operators defined by the multipliers
\begin{equation}\label{nf4}
\begin{split}
&a(\xi,\eta):=\frac{iq_0(\xi,\eta)}{|\xi|^{3/2}-|\xi-\eta|^{3/2}-|\eta|^{3/2}},\\
&b(\xi,\eta):=\frac{iq_0(\xi,\eta)}{|\xi|^{3/2}+|\xi-\eta|^{3/2}-|\eta|^{3/2}}.
\end{split}
\end{equation}
It is easy to see that $v$ verifies the equation
\begin{equation}\label{nf5}
\partial_tv+i\Lambda v=A(Q_0(V,u),u)+A(u,Q_0(V,u))+B(\overline{Q_0(V,u)},u)+B(\overline{u},Q_0(V,u)).
\end{equation}

We prove first several bounds on the modified variable $v$.

\begin{lem}\label{Lem1}
For any $t\in[0,T]$ and $k\geq -30$ we have
\begin{equation}\label{nf10}
\|P_k(u(t)-v(t))\|_{L^\infty}\lesssim\eps_1^22^{-7k/2}(1+t)^{-3/4+2p_0},
\end{equation}
\begin{equation}\label{nf11}
\|P_k(u(t)-v(t))\|_{L^2}\lesssim\eps_1^22^{-(N-1/2)k}(1+t)^{-1/4+3p_0},
\end{equation}
and
\begin{equation}\label{nf11.1}
\|P_kS(u(t)-v(t))\|_{L^2}\lesssim\eps_1^22^{-3k/2}(1+t)^{-1/4+6p_0}.
\end{equation}
Moreover
\begin{equation}\label{nf11.2}
P_k(u(t)-v(t))=0\qquad\text{ if }\qquad k \leq -30.
\end{equation}
\end{lem}

\begin{proof}[Proof of Lemma \ref{Lem1}] The identities \eqref{nf11.2} follow from the definitions.
To prove \eqref{nf10} and \eqref{nf11} we notice first that
\begin{equation}\label{nf12}
\|a^{k,k_1,k_2}\|_{S^\infty}+\|b^{k,k_1,k_2}\|_{S^\infty}\lesssim 2^{(k_2-k_1)/2}\one_{15}(k,k_1,k_2),
\end{equation}
as a consequence of \eqref{en21.2} and \eqref{en25}. Recall the bounds \eqref{pr5}--\eqref{pr7},
\begin{equation}\label{nf13}
\begin{split}
&\|P_lu(t)\|_{L^2}\lesssim\eps_1(1+t)^{p_0}\min\big[2^{-Nl},2^{l(1/2-p_0)}\big],\\
&\|P_lu(t)\|_{L^\infty}\lesssim \eps_1(1+t)^{-1/2}2^{-4\max(l,0)},\\
&\|P_lSu(t)\|_{L^2}\lesssim\eps_1(1+t)^{4p_0}\min\big[2^{-2l},2^{l(1/2-p_0)}\big],
\end{split}
\end{equation}
for any $l\in\mathbb{Z}$ and $t\in[0,T]$. Therefore, using Lemma \ref{touse} (ii), for any $k\in\mathbb{Z}$, $k\geq -30$,
\begin{equation*}
\begin{split}
\|P_k&A(u(t),u(t))\|_{L^2}\lesssim \sum_{k_1,k_2\in\mathbb{Z}}\|a^{k,k_1,k_2}\|_{S^\infty}\|P'_{k_1}u(t)\|_{L^\infty}\|P'_{k_2}u(t)\|_{L^2}\\
&\lesssim \eps_1^2(1+t)^{-1/2+p_0}\sum_{k_1,k_2\in\mathbb{Z},\,2^{k_1}\geq (1+t)^{-1/2}}\one_{15}(k,k_1,k_2)2^{(k_2-k_1)/2}2^{-Nk_2}2^{-4\max(k_1,0)}\\
&+\eps_1^2(1+t)^{2p_0}\sum_{k_1,k_2\in\mathbb{Z},\,2^{k_1}\leq (1+t)^{-1/2}}\one_{15}(k,k_1,k_2)2^{(k_2-k_1)/2}2^{-Nk_2}2^{k_1/2}2^{k_1(1/2-p_0)}\\
&\lesssim \eps_1^22^{-(N-1/2)k}(1+t)^{-1/4+3p_0},
\end{split}
\end{equation*}
and
\begin{equation*}
\begin{split}
\|P_k&A(u(t),u(t))\|_{L^\infty}\lesssim \sum_{k_1,k_2\in\mathbb{Z}}\|a^{k,k_1,k_2}\|_{S^\infty}\|P'_{k_1}u(t)\|_{L^\infty}\|P'_{k_2}u(t)\|_{L^\infty}\\
&\lesssim \eps_1^2(1+t)^{-1}\sum_{k_1,k_2\in\mathbb{Z},\,2^{k_1}\geq (1+t)^{-1/2}}\one_{15}(k,k_1,k_2)2^{(k_2-k_1)/2}2^{-4k_2}2^{-4\max(k_1,0)}\\
&+\eps_1^2(1+t)^{-1/2+p_0}\sum_{k_1,k_2\in\mathbb{Z},\,2^{k_1}\leq (1+t)^{-1/2}}\one_{15}(k,k_1,k_2)2^{(k_2-k_1)/2}2^{-4k_2}2^{k_1/2}2^{k_1(1/2-p_0)}\\
&\lesssim \eps_1^22^{-3.5k}(1+t)^{-3/4+2p_0}.
\end{split}
\end{equation*}
The bounds for $\|P_kB(\overline{u}(t),u(t))\|_{L^2}$ and  $\|P_kB(\overline{u}(t),u(t))\|_{L^\infty}$ are similar, since the bounds for the symbols $a$ and $b$ in \eqref{nf12} are similar. Therefore
\begin{equation}\label{nf14}
\begin{split}
&\|P_kA(u(t),u(t))\|_{L^2}+\|P_kB(\overline{u}(t),u(t))\|_{L^2}\lesssim \eps_1^22^{-(N-1/2)k}(1+t)^{-1/4+3p_0},\\
&\|P_kA(u(t),u(t))\|_{L^\infty}+\|P_kB(\overline{u}(t),u(t))\|_{L^\infty}\lesssim \eps_1^22^{-3.5k}(1+t)^{-3/4+2p_0},
\end{split}
\end{equation}
for any $k\geq -30$.
The bounds \eqref{nf10} and \eqref{nf11} follow.

To prove \eqref{nf11.1} we have to calculate $S(A(u,u))$ and $S(B(\overline{u},u))$. Using the definitions, we calculate for any suitable functions $f$ and $g$,
\begin{equation}\label{nf15.9}
\begin{split}
2\pi &\mathcal{F}[SA(f(t),g(t))](\xi)=\Big[\frac{3}{2}t\partial_t-\xi\partial_\xi-I\Big]\int_{\mathbb{R}}a(\xi,\eta)\widehat{f}(\xi-\eta,t)\widehat{g}(\eta,t)\,d\eta\\
&=\int_{\mathbb{R}}a(\xi,\eta)\big[\widehat{Sf}(\xi-\eta,t)-\eta\partial\widehat{f}(\xi-\eta,t)\big]\widehat{g}(\eta,t)\,d\eta\\
&+\int_{\mathbb{R}}a(\xi,\eta)\widehat{f}(\xi-\eta,t)\frac{3}{2}t\partial_t\widehat{g}(\eta,t)\,d\eta-\int_{\mathbb{R}}\xi\partial_\xi a(\xi,\eta)\widehat{f}(\xi-\eta,t)\widehat{g}(\eta,t)\,d\eta\\
&=\int_{\mathbb{R}}a(\xi,\eta)\widehat{Sf}(\xi-\eta,t)\widehat{g}(\eta,t)\,d\eta+\int_{\mathbb{R}}a(\xi,\eta)\widehat{f}(\xi-\eta,t)\widehat{Sg}(\eta,t)\,d\eta\\
&-\int_{\mathbb{R}}(\xi\partial_\xi+\eta\partial_\eta)a(\xi,\eta)\widehat{f}(\xi-\eta,t)\widehat{g}(\eta,t)\,d\eta.
\end{split}
\end{equation}
A similar calculation holds for $S(B(\overline{u},u))$. Therefore
\begin{equation}\label{nf16}
\begin{split}
&SA(u,u)=A(Su,u)+A(u,Su)-\widetilde{A}(u,u),\\
&SB(\overline{u},u)=B(S\overline{u},u)+B(\overline{u},Su)-\widetilde{B}(\overline{u},u),
\end{split}
\end{equation}
where $\widetilde{A},\widetilde{B}$ are the bilinear operators associated to the multipliers
\begin{equation}\label{nf17}
\widetilde{a}(\xi,\eta):=(\xi\partial_\xi+\eta\partial_\eta)a(\xi,\eta),\qquad \widetilde{b}(\xi,\eta):=(\xi\partial_\xi+\eta\partial_\eta)b(\xi,\eta).
\end{equation}

We examine now the symbols $\widetilde{a}$ and $\widetilde{b}$. Recalling the definition \eqref{nf4}, we have
\begin{equation*}
a(\xi,\eta)=-\chi(\xi-\eta,\eta)a_1(\xi,\eta),\qquad b(\xi,\eta)=-\chi(\xi-\eta,\eta)b_1(\xi,\eta),
\end{equation*}
where
\begin{equation*}
a_1(\xi,\eta):=\frac{|\xi-\eta|^{1/2}\eta}{|\xi|^{3/2}-|\xi-\eta|^{3/2}-|\eta|^{3/2}},\qquad b_1(\xi,\eta):=\frac{|\xi-\eta|^{1/2}\eta}{|\xi|^{3/2}+|\xi-\eta|^{3/2}-|\eta|^{3/2}}.
\end{equation*}
The symbols $a_1$ and $b_1$ are homogeneous of degree $0$, i. e. $a_1(\lambda\xi,\lambda\eta)=a_1(\xi,\eta)$ and $b_1(\lambda\xi,\lambda\eta)=b_1(\xi,\eta)$ for any $\lambda\in(0,\infty)$, therefore
\begin{equation*}
(\xi\partial_\xi+\eta\partial_\eta)a_1=(\xi\partial_\xi+\eta\partial_\eta)b_1\equiv 0.
\end{equation*}
Therefore, the symbols $\widetilde{a}$ and $\widetilde{b}$ satisfy the same bounds as the symbols $a$ and $b$, i.e.
\begin{equation*}
\|\widetilde{a}^{k,k_1,k_2}\|_{S^\infty} + \| \widetilde{b}^{k,k_1,k_2}\|_{S^\infty} \lesssim 2^{(k_2-k_1)/2}\one_{15}(k,k_1,k_2),
\end{equation*}
for any $k,k_1,k_2\in\mathbb{Z}$.

Therefore, as in the proof of \eqref{nf14},
\begin{equation}\label{nf18}
\|P_k\widetilde{A}(u(t),u(t))\|_{L^2}+\|P_k\widetilde{B}(\overline{u}(t),u(t))\|_{L^2}\lesssim \eps_1^22^{-(N-1/2)k}(1+t)^{-1/4+3p_0}
\end{equation}
for any $k\geq -30$ and $t\in[0,T]$. Moreover, using \eqref{nf12}, \eqref{nf13}, and Lemma \ref{touse} (ii), we estimate as before, for $k\geq -30$,
\begin{equation*}
\begin{split}
\|P_k&A(u(t),Su(t))\|_{L^2}\lesssim \sum_{k_1,k_2\in\mathbb{Z}}\|a^{k,k_1,k_2}\|_{S^\infty}\|P'_{k_1}u(t)\|_{L^\infty}\|P'_{k_2}Su(t)\|_{L^2}\\
&\lesssim \eps_1^2(1+t)^{-1/2+4p_0}\sum_{k_1,k_2\in\mathbb{Z},\,2^{k_1}\geq (1+t)^{-1/2}}\one_{15}(k,k_1,k_2)2^{(k_2-k_1)/2}2^{-2k_2}\\
&+\eps_1^2(1+t)^{5p_0}\sum_{k_1,k_2\in\mathbb{Z},\,2^{k_1}\leq (1+t)^{-1/2}}\one_{15}(k,k_1,k_2)2^{(k_2-k_1)/2}2^{-2k_2}2^{k_1/2}2^{k_1(1/2-p_0)}\\
&\lesssim \eps_1^22^{-3k/2}(1+t)^{-1/4+6p_0},
\end{split}
\end{equation*}
and
\begin{equation*}
\begin{split}
\|P_k&A(Su(t),u(t))\|_{L^2}\\
&\lesssim \sum_{k_1,k_2\in\mathbb{Z}}\|a^{k,k_1,k_2}\|_{S^\infty}\min\big[\|P'_{k_1}Su(t)\|_{L^2}\|P'_{k_2}u(t)\|_{L^\infty},\|P'_{k_1}Su(t)\|_{L^\infty}\|P'_{k_2}u(t)\|_{L^2}\big]\\
&\lesssim \eps_1^2(1+t)^{-1/2+4p_0}\sum_{k_1,k_2\in\mathbb{Z},\,2^{k_1}\geq (1+t)^{-1/2}}\one_{15}(k,k_1,k_2)2^{(k_2-k_1)/2}2^{-4k_2}\\
&+\eps_1^2(1+t)^{5p_0}\sum_{k_1,k_2\in\mathbb{Z},\,2^{k_1}\leq (1+t)^{-1/2}}\one_{15}(k,k_1,k_2)2^{(k_2-k_1)/2}2^{-Nk_2}2^{k_1/2}2^{k_1(1/2-p_0)}\\
&\lesssim \eps_1^22^{-3k}(1+t)^{-1/4+6p_0}.
\end{split}
\end{equation*}
The bounds for $\|P_kB(S\overline{u}(t),u(t))\|_{L^2}$ and  $\|P_kB(\overline{u}(t),Su(t))\|_{L^2}$ are similar, and the bound \eqref{nf11.1} follows.
\end{proof}

\begin{rem}\label{extra1}
(i) For later use, we notice that the bounds \eqref{nf10}--\eqref{nf11} and Sobolev embedding also show that
\begin{equation}\label{nf11.6}
\|P_k(u(t)-v(t))\|_{L^\infty}\lesssim\eps_1^22^{-4k}(1+t)^{-1/2}.
\end{equation}

(ii) We record also an additional bound on $\|P_k(u(t)-v(t))\|_{L^2}$,
\begin{equation}\label{extra2}
\|P_k(u(t)-v(t))\|_{L^2}\lesssim \eps_1^22^{-4k}(1+t)^{-1/2+6p_0}.
\end{equation}
This alternative bound, which provides faster decay in time but slower decay at high frequencies, is used in subsection \ref{prooftech3}. To prove it, we estimate as before, for $k\geq -30$
\begin{equation*}
\|P_kA(u(t),u(t))\|_{L^2}\lesssim I+II,
\end{equation*}
where
\begin{equation*}
\begin{split}
I:=&\sum_{k_1,k_2\in\mathbb{Z},\,2^{k_1}\geq (1+t)^{-4}}\|a^{k,k_1,k_2}\|_{S^\infty}\|P'_{k_1}u(t)\|_{L^2}\|P'_{k_2}u(t)\|_{L^\infty}\\
&\lesssim \eps_1^2(1+t)^{-1/2+p_0}\sum_{k_1,k_2\in\mathbb{Z},\,2^{k_1}\geq (1+t)^{-4}}\one_{15}(k,k_1,k_2)2^{(k_2-k_1)/2}2^{-4k_2}2^{k_1(1/2-p_0)}\\
&\lesssim \eps_1^22^{-4k}(1+t)^{-1/2+6p_0},
\end{split}
\end{equation*}
and
\begin{equation*}
\begin{split}
II:=&\sum_{k_1,k_2\in\mathbb{Z},\,2^{k_1}\leq (1+t)^{-4}}\|a^{k,k_1,k_2}\|_{S^\infty}\|P'_{k_1}u(t)\|_{L^\infty}\|P'_{k_2}u(t)\|_{L^2}\\
&\lesssim \eps_1^2(1+t)^{p_0}\sum_{k_1,k_2\in\mathbb{Z},\,2^{k_1}\leq (1+t)^{-4}}\one_{15}(k,k_1,k_2)2^{(k_2-k_1)/2}2^{-Nk_2}2^{k_1(1-p_0)}\\
&\lesssim \eps_1^22^{-(N-1)k}(1+t)^{-1},
\end{split}
\end{equation*}
The bounds on $P_kB(\overline{u}(t),u(t))$ are similar, and the desired bounds \eqref{extra2} follow.
\end{rem}

\subsection{The profile $f$}\label{Profile}
For $t\in[0,T]$ we define the profile
\begin{equation}\label{nf20}
f(t):=e^{it\Lambda}v(t).
\end{equation}
Our proposition below summarizes the main properties of the function $f$.

\begin{pro}\label{Sup3}
If $u$ satisfies \eqref{pr5}--\eqref{pr7} and $v,f$ are defined as before then
\begin{equation}\label{nf22}
\begin{split}
&e^{-it\Lambda}\partial_tf=\partial_tv+i\Lambda v=\mathcal{N}',\\
&\mathcal{N}':=A(Q_0(V,u),u)+A(u,Q_0(V,u))+B(\overline{Q_0(V,u)},u)+B(\overline{u},Q_0(V,u)).
\end{split}
\end{equation}
Moreover, for any $t\in[0,T]$ and $k\in\mathbb{Z}$,
\begin{equation}\label{nf23}
\big\|P_k[e^{-it\Lambda}f(t)]\big\|_{L^\infty}\lesssim\eps_1(1+t)^{-1/2}2^{-4\max(k,0)},
\end{equation}
\begin{equation}\label{nf24}
\|P_kf(t)\|_{L^2}\lesssim\eps_0(1+t)^{6p_0}\min\big(2^{-(N-1)k},2^{k(1/2-p_0)}\big),
\end{equation}
and
\begin{equation}\label{nf25}
\|P_k(x\partial_xf(t))\|_{L^2}\lesssim\eps_0(1+t)^{6p_0}\min\big(2^{-k},2^{k(1/2-p_0)}\big).
\end{equation}
\end{pro}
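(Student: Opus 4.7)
The identity \eqref{nf22} is immediate: differentiating $f=e^{it\Lambda}v$ and using \eqref{nf5} gives $\partial_tf=i\Lambda f+e^{it\Lambda}\partial_tv=e^{it\Lambda}\mathcal{N}'$. Since $e^{-it\Lambda}f=v$, the three inequalities \eqref{nf23}--\eqref{nf25} reduce to bounds on $v$ and on $Sv$. Throughout I would exploit the improved Sobolev bounds $\|u(t)\|_{H^N}\lesssim\eps_0(1+t)^{p_0}$ and $\|Su(t)\|_{H^2}\lesssim\eps_0(1+t)^{4p_0}$ from Propositions \ref{EnEst1}--\ref{EnEst2}, together with low-frequency conservation: since the symbol $q_0$ is supported away from the origin in its output variable, both $\mathcal{N}$ and $S\mathcal{N}$ vanish at output frequencies $\leq 2^{-30}$, so $\|P_ku(t)\|_{L^2}+\|P_kSu(t)\|_{L^2}\lesssim\eps_0\,2^{k(1/2-p_1)}$ for $k\leq-30$ by \eqref{pr7}. (I read the second clause of \eqref{nf11.2} as ``$k\leq-30$''.) Note that $\eps_1^2\leq\eps_0^{4/3}\ll\eps_0$, so all $O(\eps_1^2)$ corrections coming from $v-u$ (via Lemma \ref{Lem1}) are absorbed into the right-hand sides of \eqref{nf24}--\eqref{nf25}.

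Estimate \eqref{nf23} follows from $e^{-it\Lambda}f=u+(v-u)$ together with the bootstrap \eqref{pr6} and the bounds \eqref{nf10}, \eqref{nf11.6}. For \eqref{nf24}, split $\|P_kv\|_{L^2}\leq\|P_ku\|_{L^2}+\|P_k(v-u)\|_{L^2}$: at high frequencies Proposition \ref{EnEst1} bounds the first term and \eqref{nf11} the second; at low frequencies $k\leq-30$ the correction vanishes and the $u$-bound comes from the conservation above; the range $-30\leq k\leq 0$ is trivial since $2^{k(1/2-p_0)}\sim 1$ there.

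The heart of the matter is \eqref{nf25}. The key algebraic identity is
\begin{equation*}
x\partial_xf=e^{it\Lambda}\bigl(Sv-\tfrac{3}{2}t\,\mathcal{N}'\bigr),
\end{equation*}
which follows from the commutator $[x\partial_x,e^{it\Lambda}]=-i\tfrac{3}{2}t\,\Lambda e^{it\Lambda}$ (a direct Fourier computation using the degree-$3/2$ homogeneity of $|\xi|^{3/2}$) combined with $\partial_tv=-i\Lambda v+\mathcal{N}'$. Hence
\begin{equation*}
\|P_k(x\partial_xf)\|_{L^2}\leq\|P_kSv\|_{L^2}+\tfrac{3}{2}t\,\|P_k\mathcal{N}'\|_{L^2}.
\end{equation*}
To control $Sv$, I would pass $S$ through the bilinear operators $A,B$ via \eqref{nf16}, producing terms of the form $A(Su,u)$, $A(u,Su)$, $B(S\bar u,u)$, $B(\bar u,Su)$ together with ``commutator'' pieces $\tilde A(u,u)$, $\tilde B(\bar u,u)$ whose symbols obey the same $S^\infty$ bounds \eqref{nf12} thanks to the degree-$0$ homogeneity argument already used in Lemma \ref{Lem1}. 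The commutator pieces are bounded by \eqref{nf18}, and the $Su$-bilinear pieces exactly as at the end of that proof. The pure $Su$ contribution uses Proposition \ref{EnEst2} at high frequencies and low-frequency conservation at $k\leq-30$, yielding $\|P_kSu\|_{L^2}\lesssim\eps_0(1+t)^{4p_0}\min(2^{-2k},2^{k(1/2-p_0)})$, which fits comfortably.

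The main obstacle is the cubic remainder $t\,\|P_k\mathcal{N}'\|_{L^2}$. Each summand of $\mathcal{N}'$ composes the bilinear operators $A,B$ (symbols bounded by \eqref{nf12}) with the quadratic $Q_0(V,u)$ (symbol bounded by \eqref{en21.2}). A Littlewood--Paley decomposition, Lemma \ref{touse}(ii), and an $L^\infty\cdot L^2$ splitting favoring the decay estimate \eqref{we17} on the interior quadratic factor and the Sobolev bound on the outer $u$ yield, at high frequencies, $\|P_k\mathcal{N}'\|_{L^2}\lesssim\eps_0\eps_1^2(1+t)^{-1+p_0}2^{-(N-1/2)k}$, so the factor of $t$ is absorbed since $\eps_1^2\ll\eps_0$ and $N-1/2>1$; intermediate frequencies are similar and low frequencies vanish ($P_k\mathcal{N}'=0$ for $k\leq-30$). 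The most delicate step is the sum over the low-frequency interior regime $2^{k_1}\lesssim(1+t)^{-1/2}$, but it is handled verbatim as in the $J_2,J_4$ analysis of Lemma \ref{Jest}, using the $|\xi-\eta|^{1/2}$ factor in $q_0$ to defeat the $2^{-k_1/2}$ singularity of the bilinear symbols $a,b$. This completes the proof.
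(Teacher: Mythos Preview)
Your proof is correct and follows essentially the same route as the paper: identity \eqref{nf22} from definitions, \eqref{nf23}--\eqref{nf24} by splitting $v=u+(v-u)$ and combining the improved energy bounds with Lemma \ref{Lem1}, and \eqref{nf25} via the commutation identity $x\partial_xf=e^{it\Lambda}\bigl(Sv-\tfrac{3}{2}t\mathcal{N}'\bigr)$ followed by the cubic bound \eqref{nf27} on $\|P_k\mathcal{N}'\|_{L^2}$. One small correction to your closing sentence: in the low-frequency interior regime for $A(u,\mathcal{N})$ and $B(\overline{u},\mathcal{N})$, the $|\xi-\eta|^{1/2}$ factor in $q_0$ is what \emph{limits} the symbol singularity of $a,b$ to $2^{-k_1/2}$ (rather than $2^{-k_1}$); what actually \emph{defeats} the remaining $2^{-k_1/2}$ is the low-frequency $L^2$ structure $\|P_{k_1}u\|_{L^2}\lesssim\eps_0 2^{k_1(1/2-p_1)}$ that you already invoked, exactly as in the $J_2,J_4$ analysis you cite.
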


\begin{proof}[Proof of Proposition \ref{Sup3}] The identity \eqref{nf22} follows from definitions and \eqref{nf5}.
The bound \eqref{nf24} follows
from the assumption \eqref{pr7}, the bound \eqref{nf11}, the identities \eqref{nf11.2}, and Proposition \ref{EnEst1}. The bound \eqref{nf23} follows from the assumption \eqref{pr6} and the bounds \eqref{nf11.6}.

To prove \eqref{nf25} we start from the identity
\begin{equation*}
Sv=e^{-it\Lambda}(x\partial_xf)+(3/2)te^{-it\Lambda}(\partial_tf),
\end{equation*}
which is a consequence of the commutation identity $[S,e^{-it\Lambda}]=0$. Therefore, for any $t\in[0,T]$ and $k\in\mathbb{Z}$,
\begin{equation}\label{nf26}
\|P_k(x\partial_xf(t))\|_{L^2}\lesssim \|P_k(Sv(t))\|_{L^2}+(1+t)\|P_k\mathcal{N}'(t)\|_{L^2}.
\end{equation}
In view of the support properties of the symbols $a$ and $b$,
\begin{equation*}
\|P_k\mathcal{N}'(t)\|_{L^2}=0\qquad\text{ and }\qquad\|P_k(Sv(t))\|_{L^2}=\|P_k(Su(t))\|_{L^2}=\|P_k(Su_0)\|_{L^2}
\end{equation*}
if $k\leq -30$. The desired bound \eqref{nf25} follows from \eqref{pr7} if $k\leq -30$. On the other hand, if $k\geq -30$,
we use \eqref{nf26} again, together with Proposition \ref{EnEst2} and \eqref{nf11}, to estimate
\begin{equation*}
\begin{split}
(1+t)^{-6p_0}2^{k}\|P_k(x\partial_xf(t))\|_{L^2}&\lesssim (1+t)^{-6p_0}2^{k}\big[\|P_k(Sv(t))\|_{L^2}+(1+t)\|P_k\mathcal{N}'(t)\|_{L^2}\big]\\
&\lesssim (1+t)^{-6p_0}2^{k}(1+t)\|P_k\mathcal{N}'(t)\|_{L^2}+(\eps_0+\eps_1^{3/2}).
\end{split}
\end{equation*}
Therefore, for \eqref{nf25} it remains to prove that, for any $t\in[0,T]$ and $k\geq -30$
\begin{equation}\label{nf27}
(1+t)^{1-6p_0}2^{k}\|P_k\mathcal{N}'(t)\|_{L^2}\lesssim \eps_1^2.
\end{equation}

Recall that $\mathcal{N}=Q_0(V,u)$, and the bounds in Lemma \ref{NonlinEst},
\begin{equation}\label{nf28}
\begin{split}
&\|P_l\mathcal{N}(t)\|_{L^2}\lesssim\eps_1^2(1+t)^{p_0-1/2}2^{-(N-1)l},\\
&\|P_l\mathcal{N}(t)\|_{L^\infty}\lesssim\eps_1^2(1+t)^{-1}2^{-3l},\\
&P_l\mathcal{N}=0\qquad\text{ if }\quad l\leq -30.
\end{split}
\end{equation}
Recall also the bounds \eqref{nf13} and the symbol bounds \eqref{nf12}. Using Lemma \ref{touse} (ii), we estimate, for any $k\geq -30$,
\begin{equation*}
\begin{split}
\|P_k&A(\mathcal{N}(t),u(t))\|_{L^2}\lesssim \sum_{k_1,k_2\in\mathbb{Z}}\|a^{k,k_1,k_2}\|_{S^\infty}\|P'_{k_1}\mathcal{N}(t)\|_{L^\infty}\|P'_{k_2}u(t)\|_{L^2}\\
&\lesssim \eps_1^2(1+t)^{-1+p_0}\sum_{k_1,k_2\in\mathbb{Z},\,k_1\geq -10}\one_{15}(k,k_1,k_2)2^{(k_2-k_1)/2}2^{-Nk_2}\\
&\lesssim \eps_1^2(1+t)^{-1+p_0}2^{-k},
\end{split}
\end{equation*}
and
\begin{equation*}
\begin{split}
\|P_k&A(u(t),\mathcal{N}(t))\|_{L^2}\\
&\lesssim \sum_{k_1,k_2\in\mathbb{Z}}\|a^{k,k_1,k_2}\|_{S^\infty}\min\big[\|P'_{k_1}u(t)\|_{L^2}\|P'_{k_2}\mathcal{N}(t)\|_{L^\infty},\|P'_{k_1}u(t)\|_{L^\infty}\|P'_{k_2}\mathcal{N}(t)\|_{L^2}\big]\\
&\lesssim \eps_1^2(1+t)^{-1+p_0}\sum_{k_1,k_2\in\mathbb{Z},\,2^{k_1}\geq (1+t)^{-2}}\one_{15}(k,k_1,k_2)2^{(k_2-k_1)/2}2^{-3k_2}2^{k_1(1/2-p_0)}\\
&+\eps_1^2(1+t)^{-1/2+2p_0}\sum_{k_1,k_2\in\mathbb{Z},\,2^{k_1}\leq (1+t)^{-2}}\one_{15}(k,k_1,k_2)2^{(k_2-k_1)/2}2^{-(N-1)k_2}2^{k_1/2}2^{k_1(1/2-p_0)}\\
&\lesssim \eps_1^2(1+t)^{-1+4p_0}2^{-k}.
\end{split}
\end{equation*}
The bounds for $\|P_kB(\overline{\mathcal{N}}(t),u(t))\|_{L^2}$ and  $\|P_kB(\overline{u}(t),\mathcal{N}(t))\|_{L^2}$ are similar, since the bounds on the symbols $a$ and $b$ are similar, and the desired bound \eqref{nf27} follows.
\end{proof}

The following proposition on the uniform control of the $Z$-norm of the function $f$ represents the main step in the proof of
Proposition \ref{Sup1}. For any $h\in L^2(\mathbb{R})$ let
\begin{equation}\label{nf29}
 \|h\|_Z:=\big\|\big(|\xi|^{p_1}+|\xi|^{5}\big)\widehat{h}(\xi)\big\|_{L^\infty_\xi},
\end{equation}
where $p_1>0$ is the same small exponent as in \eqref{pr7}.

\begin{pro}\label{Znorm}
Assume that $T'\in[0,T]$ and
\begin{equation}\label{nf30}
\sup_{t\in[0,T']}\|f(t)\|_Z\leq \eps_1.
\end{equation}
Then
\begin{equation}\label{nf31}
\sup_{t\in[0,T']}\|f(t)\|_Z\lesssim \eps_0.
\end{equation}
\end{pro}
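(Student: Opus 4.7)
The plan is to pass from the profile equation \eqref{nf22} to a renormalized cubic equation of the form referenced in \eqref{nf40}--\eqref{nf42}, and then invoke Lemma \ref{Zcontrol}. First, I would verify the initial bound $\|f(0)\|_Z \lesssim \eps_0$. For $k \leq 0$, the pointwise estimate $|\widehat{u_0}(\xi)| \leq \|P_k u_0\|_{L^1} \lesssim 2^{k/2} \|P_k u_0\|_{L^2}$ combined with $2^{-k(1/2-p_1)}\|P_k u_0\|_{L^2} \lesssim \eps_0$ from \eqref{mainhyp} gives $|\xi|^{p_1}|\widehat{u_0}(\xi)| \lesssim \eps_0$; for $k \geq 0$, Sobolev embedding and the $H^N$ bound give $|\xi|^5 |\widehat{u_0}(\xi)| \lesssim \eps_0$; the quadratic corrections $A(u_0,u_0), B(\overline{u_0},u_0)$ are handled by the symbol bound \eqref{nf12} and the same estimates on $u_0$. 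Duhamel then reads
\begin{equation*}
\widehat{f}(\xi,t) = \widehat{f}(\xi,0) + \int_0^t e^{is\Lambda(\xi)}\widehat{\mathcal{N}'}(\xi,s)\,ds,
\end{equation*}
so the task is to control the time integral in the $Z$-norm.

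Next I observe that for $|\xi| \leq 2^{-30}$ the cutoff $\chi$ in the definition \eqref{pr2} of $q_0$, propagated through the symbols $a,b$ via \eqref{nf4}, forces $\widehat{\mathcal{N}'}(\xi,s) \equiv 0$, so $\widehat{f}(\xi,t) = \widehat{f}(\xi,0)$ and the low-frequency portion of the $Z$-norm (where the $|\xi|^{p_1}$ weight matters) is preserved from the initial data. For $|\xi| \geq 2^{-30}$ I substitute $u = v - A(u,u) - B(\overline{u},u)$ and $v(s) = e^{-is\Lambda}f(s)$ inside the expression for $\mathcal{N}'$ in \eqref{nf22}. This recasts the integrand as a finite sum of cubic oscillatory integrals of the schematic form
\begin{equation*}
\int_{\mathbb{R}^2} c_{\iota}(\xi,\eta,\sigma)\, e^{is\Phi_\iota(\xi,\eta,\sigma)}\, \widehat{f_{\iota_1}}(\xi-\eta,s)\,\widehat{f_{\iota_2}}(\eta-\sigma,s)\,\widehat{f_{\iota_3}}(\sigma,s)\,d\eta\,d\sigma,
\end{equation*}
with $\Phi_\iota = \Lambda(\xi) - \iota_1\Lambda(\xi-\eta) - \iota_2\Lambda(\eta-\sigma) - \iota_3\Lambda(\sigma)$, plus quartic remainders that decay strictly faster in time and can be absorbed by a routine application of Lemma \ref{touse} together with Proposition \ref{Sup3}.

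I would then perform a standard space-time resonance analysis. The space-time resonant set (where $\Phi_\iota = 0$ and $\nabla_{\eta,\sigma}\Phi_\iota = 0$ simultaneously) is concentrated, for the appropriate sign combinations, on the diagonal $\eta=\sigma$, $\sigma = \xi$, and produces a logarithmically growing contribution $L(\xi,t)$ proportional to $|\xi|^{?}\int_0^t |\widehat{f}(\xi,s)|^2 s^{-1}\,ds$. I renormalize by setting $g(\xi,t) := e^{-iL(\xi,t)}\widehat{f}(\xi,t)$, chosen so that $g$ satisfies the cubic equation of \eqref{nf40}--\eqref{nf42} with a non-resonant right-hand side. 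Since $|e^{-iL}|=1$, $\|g(t)\|_Z = \|f(t)\|_Z$, so it suffices to bound $\|g(t)\|_Z$; this is precisely the content of Lemma \ref{Zcontrol}, whose hypotheses are supplied by Proposition \ref{Sup3} and the bootstrap assumption \eqref{nf30}.

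The main obstacle is the interaction between two sources of singularity. The first is the convex dispersion $\Lambda(\xi)=|\xi|^{3/2}$, which generates \emph{cubic} space-time resonances beyond the $(+,+,+)$ type (a feature absent in the concave gravity case); these must be extracted carefully to ensure that the renormalization only modifies the phase. The second, more subtle, is the low-frequency singularity of the normal-form symbols $a,b$: the denominators in \eqref{nf4} vanish of order $3/2$ when $\xi-\eta\to 0$, producing $(\mathrm{low\ freq})^{-1}$-type singularities in the cubic and quartic terms. Controlling these in $Z$-norm is exactly why the weight $|\xi|^{p_1}$ is built into \eqref{nf29}, and why the assumption $2^{-k(1/2-p_1)}\|P_k u_0\|_{L^2}\leq\eps_0$ is imposed in \eqref{mainhyp} (see Remark \ref{LowFreq}). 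Reconciling the singular symbols with the $L^\infty$-type $Z$-norm, where no $L^2$-based symmetrization is available, is the heart of the argument and must be carried out in the proof of Lemma \ref{Zcontrol}.
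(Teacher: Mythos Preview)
Your outline is essentially the paper's own reduction: rewrite $\partial_t f$ as the cubic-in-$v$ expression $\mathcal{N}''$ plus the quartic remainder $\mathcal{R}_{\geq 4}$ (this is exactly \eqref{nf40}--\eqref{nf42}), extract the space-time resonant contribution as a real logarithmic phase $L(\xi,t)$, pass to $g=e^{iL}\widehat f$, and invoke Lemma~\ref{Zcontrol} to sum the dyadic-in-time increments; the initial bound $\|f(0)\|_Z\lesssim\eps_0$ that you sketch is handled in the paper in the proof of Proposition~\ref{Sup1} via Lemma~\ref{interpolation}.

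Two small inaccuracies are worth correcting before you flesh this out. First, in the variables of \eqref{nf47} the unique space-time resonance for $(\iota_1\iota_2\iota_3)=(++-)$ sits at $(\eta,\sigma)=(0,-\xi)$, not on the diagonal $\eta=\sigma=\xi$; this is what produces the factor $\widehat f(\xi)\widehat f(\xi)\widehat{\overline f}(-\xi)=|\widehat f(\xi)|^2\widehat f(\xi)$ in the correction \eqref{nf50}. Second, the denominators in \eqref{nf4} vanish to first order (not $3/2$) as $\xi-\eta\to 0$, by \eqref{en21.4}; combined with the $|\xi-\eta|^{1/2}$ in the numerator this gives the $2^{(k_2-k_1)/2}$ singularity in \eqref{nf12}, i.e.\ a $(\text{low frequency})^{-1/2}$ loss, matching the discussion after \eqref{d_tE2+E3}.
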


We show now how to use Proposition \ref{Znorm} and the estimates in Lemma \ref{dispersive} and Lemma \ref{interpolation} to complete the proof of the main
Proposition \ref{Sup1}.

\begin{proof}[Proof of Proposition \ref{Sup1}] Let
\begin{equation*}
z(t):=\|f(t)\|_Z,
\end{equation*}
and notice that $z:[0,T]\to\mathbb{R}_+$ is a continuous function.

We show first that
\begin{equation}\label{nf32}
 z(0)\lesssim \eps_0.
\end{equation}
Indeed, using the definitions and Lemma \ref{interpolation}
\begin{equation*}
\begin{split}
 z(0)&\lesssim \sup_{k\in\mathbb{Z}}\,(2^{p_1k}+2^{5k})\|\widehat{P_kv(0)}\|_{L^\infty}\\
&\lesssim \sup_{k\in\mathbb{Z}}\,(2^{p_1k}+2^{5k})2^{-k/2}\|\widehat{P'_kv(0)}\|_{L^2}^{1/2}\big[2^k\|\partial \widehat{P'_kv(0)}\|_{L^2}+\|\widehat{P'_kv(0)}\|_{L^2}\big]^{1/2}.
\end{split}
\end{equation*}
In addition, using \eqref{pr7}, \eqref{nf11}--\eqref{nf11.2} with $t=0$,
\begin{equation*}
\begin{split}
&\|P'_kv(0)\|_{L^2}\lesssim \eps_0\min\big[2^{-(N-1)k},2^{k(1/2-p_1)}\big],\\
&2^k\|\partial \widehat{P'_kv(0)}\|_{L^2}+\|\widehat{P'_kv(0)}\|_{L^2}\lesssim \eps_0\min\big[2^{-k},2^{k(1/2-p_1)}\big],
\end{split}
\end{equation*}
for any $k\in\mathbb{Z}$. The desired bound \eqref{nf32} follows by combining these inequalities.

We apply now Proposition \ref{Znorm}. By continuity, $z(t)\lesssim  \eps_0$ for any $t\in[0,T]$, provided that $\eps_0$ is
sufficiently small and  $\eps_0\ll\eps_1\leq \eps_0^{2/3}\ll 1$ (see \eqref{pr8}). Therefore, for any $k\in\mathbb{Z}$ and $t\in[0,T]$,
\begin{equation}\label{nf33}
(2^{p_1k}+2^{5k})\|\widehat{P_kf}(t)\|_{L^\infty}\lesssim \eps_0.
\end{equation}

Since $v(t)=e^{-it\Lambda}f(t)$,
it follows from Lemma \ref{dispersive} that
\begin{equation}\label{cle1}
 \|P_kv(t)\|_{L^\infty}\lesssim |t|^{-1/2}2^{k/4}\|\widehat{P'_kf(t)}\|_{L^\infty}+|t|^{-3/5}2^{-2k/5}
\big[2^k\|\partial \widehat{P'_kf(t)}\|_{L^2}+\|\widehat{P'_kf(t)}\|_{L^2}\big]
\end{equation}
and
\begin{equation}\label{cle2}
\|P_kv(t)\|_{L^\infty}\lesssim |t|^{-1/2}2^{k/4}\|P'_kf(t)\|_{L^1},
\end{equation}
for any $k\in\mathbb{Z}$ and $t\in[0,T]$.

It follows from Lemma \ref{interpolation} and \eqref{nf24}--\eqref{nf25} that
\begin{equation*}
\|P'_kf(t)\|_{L^1}\lesssim \eps_0(1+t)^{6p_0}\min\big(2^{-(N+1)k/2},2^{-kp_0}\big).
\end{equation*}
Therefore, using \eqref{cle2},
\begin{equation}\label{cle3}
\|P_kv(t)\|_{L^\infty}\lesssim \eps_0|t|^{-1/2}2^{k/4}(1+t)^{6p_0}\min\big(2^{-(N+1)k/2},2^{-kp_0}\big).
\end{equation}

Recall that we are looking to prove the main decay bound \eqref{BigOne2},
\begin{equation}\label{cle4}
(1+t)^{1/2}2^{4\max(k,0)}\|P_ku(t)\|_{L^\infty}\lesssim\eps_0,
\end{equation}
for any $k\in\mathbb{Z}$ and $t\in[1,T]$. Clearly, this follows from \eqref{cle3} and \eqref{nf11.6} if $2^k\geq (1+t)^{10p_0}$.

On the other hand, if $2^k\leq (1+t)^{10p_0}$ then we use the bound \eqref{cle1}. Using \eqref{nf24}--\eqref{nf25},
\begin{equation*}
2^k\|\partial \widehat{P'_kf(t)}\|_{L^2}+\|\widehat{P'_kf(t)}\|_{L^2}\lesssim\eps_0(1+t)^{6p_0}2^{k(1/2-p_0)}.
\end{equation*}
Therefore, using also \eqref{nf33} and \eqref{cle1},
\begin{equation*}
\|P_kv(t)\|_{L^\infty}\lesssim |t|^{-1/2}2^{k/4}\frac{\eps_0}{2^{p_1k}+2^{5k}}+|t|^{-3/5}2^{-2k/5}\eps_0(1+t)^{6p_0}2^{k(1/2-p_0)}.
\end{equation*}
The desired bound \eqref{cle4} follows when $2^k\leq (1+t)^{10p_0}$.
\end{proof}

\subsection{Proof of Proposition \ref{Znorm}}\label{reduc1}

In this subsection we show that Proposition \ref{Znorm} is a consequence of the more technical Lemma \ref{Zcontrol} below. We start
from the equation \eqref{nf22}, which we write in the form
\begin{equation}\label{nf40}
 \partial_tf=e^{it\Lambda}\big(\mathcal{N}''+\mathcal{R}_{\geq 4}\big),
\end{equation}
where
\begin{equation}\label{nf41}
\mathcal{N}'':=A(Q_0(v+\overline{v},v),v)+A(v,Q_0(v+\overline{v},v))+B(\overline{Q_0(v+\overline{v},v)},v)
+B(\overline{v},Q_0(v+\overline{v},v)),
\end{equation}
and
\begin{equation}\label{nf42}
\begin{split}
\mathcal{R}_{\geq 4}:=&[A(Q_0(u+\overline{u},u),u)-A(Q_0(v+\overline{v},v),v)]\\
+&[A(u,Q_0(u+\overline{u},u))-A(v,Q_0(v+\overline{v},v))]\\
+&[B(\overline{Q_0(u+\overline{u},u)},u)-B(\overline{Q_0(v+\overline{v},v)},v)]\\
+&[B(\overline{u},Q_0(u+\overline{u},u))-B(\overline{v},Q_0(v+\overline{v},v))].
\end{split}
\end{equation}
The point of this decomposition is to identify $\mathcal{N}''$ as the main ``cubic'' part of the nonlinearity, which can be
expressed only in terms of $v(t)=e^{-it\Lambda}f(t)$. The remainder $\mathcal{R}_{\geq 4}$ can be thought of as a ``quartic''
contribution, due to the quadratic nature of $u-v$, see Lemma \ref{Lem1}.

To analyze the equation \eqref{nf40} and identify the asymptotic phase logarithmic correction, we need to further decompose the
nonlinearity $\mathcal{N}''$. Notice that $\overline{Q_0(g_1,g_2)}=Q_0(\overline{g_1},\overline{g_2})$. We write
\begin{equation}\label{nf43}
\begin{split}
&\mathcal{N}''=\mathcal{C}^{++-}+\mathcal{C}^{+++}+\mathcal{C}^{-++},\\
&\mathcal{C}^{++-}=\mathcal{C}^{++-}(v,v,\overline{v}):=A(Q_0(\overline{v},v),v)+A(v,Q_0(\overline{v},v))+B(Q_0(v,\overline{v}),v)
+B(\overline{v},Q_0(v,v)),\\
&\mathcal{C}^{+++}=\mathcal{C}^{+++}(v,v,v):=A(Q_0(v,v),v)+A(v,Q_0(v,v)),\\
&\mathcal{C}^{--+}=\mathcal{C}^{--+}(\overline{v},\overline{v},v):=B(Q_0(\overline{v},\overline{v}),v)+B(\overline{v},Q_0(\overline{v},v)).
\end{split}
\end{equation}
Letting $v^+=v,v^-=\overline{v}$, we expand
\begin{equation}\label{nf44}
 \widehat{\mathcal{C}^{\iota_1\iota_2\iota_3}}(\xi)=\frac{i}{4\pi^2}
\int_{\mathbb{R}\times\mathbb{R}}c^{\iota_1\iota_2\iota_3}(\xi,\eta,\sigma)\widehat{v^{\iota_1}}(\xi-\eta)
\widehat{v^{\iota_2}}(\eta-\sigma)\widehat{v^{\iota_3}}(\sigma)\,d\eta d\sigma
\end{equation}
for $(\iota_1,\iota_2,\iota_3)\in\{(++-),(+++),(--+)\}$, where
\begin{equation}\label{nf45}
\begin{split}
ic^{++-}(\xi,\eta,\sigma):&=a(\xi,\xi-\eta)q_0(\eta,\eta-\sigma)+a(\xi,\eta)q_0(\eta,\eta-\sigma)\\
&+b(\xi,\xi-\eta)q_0(\eta,\sigma)+b(\xi,\xi-\sigma)q_0(\xi-\sigma,\eta-\sigma),\\
ic^{+++}(\xi,\eta,\sigma):&=a(\xi,\xi-\eta)q_0(\eta,\sigma)+a(\xi,\eta)q_0(\eta,\sigma),\\
ic^{--+}(\xi,\eta,\sigma):&=b(\xi,\sigma)q_0(\xi-\sigma,\eta-\sigma)+b(\xi,\eta)q_0(\eta,\sigma).
\end{split}
\end{equation}

In view of the definitions \eqref{pr2} and \eqref{pr4}, the symbols $c^{\iota_1\iota_2\iota_3}$ are real-valued. Recall the formulas $\widehat{v^+}(\xi,t)=\widehat{f}(\xi,t)e^{-it|\xi|^{3/2}}$ and $\widehat{v^-}(\xi,t)=\widehat{\overline{f}}(\xi,t)e^{it|\xi|^{3/2}}$.
We can rewrite
\begin{equation}\label{nf46}
\mathcal{F}\big(e^{it\Lambda}\mathcal{N}''(t)\big)(\xi)=\frac{i}{4\pi^2}\big[I^{++-}(\xi,t)+I^{+++}(\xi,t)+I^{--+}(\xi,t)\big],
\end{equation}
where
\begin{equation}\label{nf47}
\begin{split}
I^{\iota_1\iota_2\iota_3}(\xi,t):=\int_{\mathbb{R}\times\mathbb{R}}&e^{it(|\xi|^{3/2}-\iota_1|\xi-\eta|^{3/2}-\iota_2|\eta-\sigma|^{3/2}-\iota_3|\sigma|^{3/2})}\\
&\times c^{\iota_1\iota_2\iota_3}(\xi,\eta,\sigma)\widehat{f^{\iota_1}}(\xi-\eta)
\widehat{f^{\iota_2}}(\eta-\sigma)\widehat{f^{\iota_3}}(\sigma)\,d\eta d\sigma
\end{split}
\end{equation}
for $(\iota_1,\iota_2,\iota_3)\in\{(++-),(+++),(--+)\}$. The formula \eqref{nf40} becomes
\begin{equation}\label{nf48}
(\partial_t\widehat{f})(\xi,t)=\frac{i}{4\pi^2}\big[I^{++-}(\xi,t)+I^{+++}(\xi,t)+I^{--+}(\xi,t)\big]+e^{it|\xi|^{3/2}}\widehat{\mathcal{R}_{\geq 4}}(\xi,t).
\end{equation}

In analyzing the formula \eqref{nf48}, the main contribution comes from the stationary points of the phase functions $(t,\eta,\sigma)\to t\Phi^{\iota_1\iota_2\iota_3}(\xi,\eta,\sigma)$, where
\begin{equation}\label{nf49}
\Phi^{\iota_1\iota_2\iota_3}(\xi,\eta,\sigma):=|\xi|^{3/2}-\iota_1|\xi-\eta|^{3/2}-\iota_2|\eta-\sigma|^{3/2}-\iota_3|\sigma|^{3/2}.
\end{equation}
More precisely, one needs to understand the contribution of the {\it{spacetime resonances}}, i.e. the points where $\Phi^{\iota_1\iota_2\iota_3}(\xi,\eta,\sigma)=(\partial_\eta\Phi^{\iota_1\iota_2\iota_3})(\xi,\eta,\sigma)=(\partial_\sigma\Phi^{\iota_1\iota_2\iota_3})(\xi,\eta,\sigma)=0$. In our case, it turns out that the only spacetime resonances correspond to $(\iota_1\iota_2\iota_3)=(++-)$ and the points $(\xi,\eta,\sigma)=(\xi,0,-\xi)$. Moreover, the contribution of these points is not absolutely integrable, and we have to identify and eliminate this contribution using a suitable logarithmic phase correction.
More precisely, let
\begin{equation}\label{nf50}
\begin{split}
&\widetilde{c}(\xi):=\frac{-8\pi|\xi|^{1/2}}{3}c^{++-}(\xi,0,-\xi)=\frac{8\pi|\xi|^{1/2}}{3}ib(\xi,2\xi)q_0(2\xi,\xi),\\
&L(\xi,t):=\frac{\widetilde{c}(\xi)}{4\pi^2}\int_0^t|\widehat{f}(\xi,s)|^2\frac{1}{s+1}\,ds,\\
&g(\xi,t):=e^{iL(\xi,t)}\widehat{f}(\xi,t).
\end{split}
\end{equation}
The formula \eqref{nf48} becomes
\begin{equation}\label{nf51}
\begin{split}
(\partial_tg)(\xi,t)&=\frac{i}{4\pi^2}e^{iL(\xi,t)}\Big[I^{++-}(\xi,t)+\widetilde{c}(\xi)\frac{|\widehat{f}(\xi,t)|^2}{t+1}\widehat{f}(\xi,t)\Big]\\
&+\frac{i}{4\pi^2}e^{iL(\xi,t)}\big[I^{+++}(\xi,t)+I^{--+}(\xi,t)\big]+e^{it|\xi|^{3/2}}e^{iL(\xi,t)}\widehat{\mathcal{R}_{\geq 4}}(\xi,t).
\end{split}
\end{equation}
Notice that the phase $L$ is real-valued. Therefore, to complete the proof of Proposition \ref{Znorm}, it suffices to prove the following lemma:

\begin{lem}\label{Zcontrol}
Recall that, for any $t\in[0,T']$ and $k\in\mathbb{Z}$,
\begin{equation}\label{nf55}
\begin{split}
\|\widehat{P_kf}(t)\|_{L^\infty}&\lesssim\eps_1\min\big(2^{-p_1k},2^{-5k}\big),\\
\|\widehat{P_kf}(t)\|_{L^2}&\lesssim\eps_0(1+t)^{6p_0}\min\big(2^{-(N-1)k},2^{k(1/2-p_0)}\big),\\
\|\widehat{P_kf}(t)\|_{L^2}+2^k\|\partial\widehat{P_kf}(t)\|_{L^2}&\lesssim\eps_0(1+t)^{6p_0}\min\big(2^{-k},2^{k(1/2-p_0)}\big),
\end{split}
\end{equation}
see \eqref{nf24}--\eqref{nf25} and \eqref{nf30}. Then for any $m\in\{1,2,\ldots\}$ and any $t_1\leq t_2\in[2^{m}-2,2^{m+1}]\cap[0,T']$ we have
\begin{equation}\label{nf56}
\big\|(|\xi|^{p_1}+|\xi|^{5})[g(\xi,t_2)-g(\xi,t_1)]\big\|_{L^\infty_\xi}\lesssim \eps_02^{-p_0m}.
\end{equation}
\end{lem}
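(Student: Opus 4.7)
The strategy is to integrate \eqref{nf51} pointwise in $\xi$ over $[t_1, t_2] \subset [2^m - 2, 2^{m+1}]$ and prove that each of the four contributions to $\partial_s g(\xi, s)$ is bounded in $L^\infty_\xi$, after multiplication by the weight $|\xi|^{p_1} + |\xi|^5$, by $\eps_0 (1+s)^{-1-p_0}$ uniformly for $s \sim 2^m$. Integrating over a time interval of length $\lesssim 2^m$ then gives the desired $\eps_0 2^{-p_0 m}$ bound. All estimates are carried out after dyadic localization of the output frequency $\xi$ (with $|\xi| \sim 2^k$) and of the three input frequencies $\xi - \eta$, $\eta - \sigma$, $\sigma$, combined with the bilinear $S^\infty$ machinery of Lemma \ref{touse} and the symbol bounds \eqref{nf12} for $a, b$ and \eqref{en21.2} for $q_0$.

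The quartic remainder $e^{is(|\xi|^{3/2}+L(\xi,s))} \widehat{\mathcal{R}_{\ge 4}}(\xi, s)$ is handled using the telescoping of $\mathcal{R}_{\ge 4}$ in \eqref{nf42} as bilinear expressions in which one argument is the quadratic difference $u - v$. The bounds of Lemma \ref{Lem1} and Remark \ref{extra1} then yield a pointwise estimate of the form $\eps_1^4 (1+s)^{-5/4 + C p_0}$ for $\widehat{\mathcal{R}_{\ge 4}}$, integrable in time with room to spare. For the non-resonant cubic integrals $I^{+++}$ and $I^{--+}$ in \eqref{nf47}, a direct computation shows that the phases $\Phi^{+++}$ and $\Phi^{--+}$ admit no space-time resonances: whenever their $(\eta,\sigma)$-gradient vanishes the phase itself is bounded below by a positive multiple of $|\xi|^{3/2}$. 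Hence, after suitable dyadic localization, integration by parts in $\eta$ or $\sigma$ produces the needed $s^{-1}$ gain, the derivative falling on $\widehat{f}$ being controlled by \eqref{nf25} at the price of $(1+s)^{6p_0}$.

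The heart of the argument and the main obstacle is the resonant bracket
\[
I^{++-}(\xi, s) + \widetilde{c}(\xi)\frac{|\widehat{f}(\xi, s)|^2\widehat{f}(\xi, s)}{s+1}.
\]
Here $\Phi^{++-}$ has a space-time resonance at $(\eta, \sigma) = (0, -\xi)$, with Hessian
\[
\mathrm{Hess}_{(\eta,\sigma)}\Phi^{++-}(\xi, 0, -\xi) = \frac{3}{4}|\xi|^{-1/2}\begin{pmatrix} -2 & 1 \\ 1 & 0 \end{pmatrix},
\]
nondegenerate with determinant $-\tfrac{9}{16}|\xi|^{-1}$ and signature zero. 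I would split the $(\eta, \sigma)$-integration into a neighborhood $\Omega$ of $(0, -\xi)$ of size a suitable power of $s^{-1/2}$ and its complement. On the complement, repeated non-stationary integration by parts supplies the $s^{-1}$ gain as in the non-resonant case. On $\Omega$, two-dimensional stationary phase extracts a leading contribution
\[
\frac{2\pi}{s}\cdot\frac{1}{\sqrt{|\det \mathrm{Hess}\,\Phi^{++-}|}} \cdot c^{++-}(\xi, 0, -\xi) \cdot |\widehat{f}(\xi, s)|^2 \widehat{f}(\xi, s) = \frac{8\pi|\xi|^{1/2}}{3 s}c^{++-}(\xi, 0, -\xi)|\widehat{f}(\xi,s)|^2\widehat{f}(\xi, s),
\]
which by the definition \eqref{nf50} of $\widetilde{c}(\xi)$ cancels the correction term up to an error of order $s^{-2}$ (from $\tfrac{1}{s}-\tfrac{1}{s+1}$). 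The stationary-phase remainder carries either an extra $s^{-1}$ or a $(\partial_\eta, \partial_\sigma)$-derivative on $\widehat{f}$, controlled via \eqref{nf25}. Finally, the low-frequency weight $|\xi|^{p_1}$ is absorbed using $\|\widehat{P_k f}\|_{L^\infty} \lesssim \eps_1 2^{-p_1 k}$ from \eqref{nf55} together with careful tracking of the at-most-$|\text{low freq}|^{-1/2}$ singularity of $a, b$, while the high-frequency weight $|\xi|^5$ is absorbed using $\|\widehat{P_k f}\|_{L^\infty} \lesssim \eps_1 2^{-5k}$; summing over dyadic scales completes the proof.
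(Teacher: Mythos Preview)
Your treatment of the cubic integrals $I^{++-}$, $I^{+++}$, $I^{--+}$ follows the paper's strategy: stationary phase extracting the logarithmic correction at the space-time resonance of $\Phi^{++-}$, and integration by parts elsewhere. One omission worth noting: for $I^{--+}$ when all three input frequencies are comparable to $|\xi|$, the $(\eta,\sigma)$-gradient of the phase vanishes at $\eta=\sigma=0$, so spatial integration by parts is unavailable there; the paper (Lemma~\ref{lemmainBO25}) integrates by parts in $s$ instead, using $|\Phi^{--+}(\xi,0,0)|\gtrsim|\xi|^{3/2}$. Several other cubic cases (Lemmas~\ref{lemmainbound23}, \ref{lemmainBO23}) also require time integration by parts, so your framing ``bound each contribution to $\partial_sg$ pointwise in $s$ and integrate'' is too rigid.

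The substantive gap is your handling of $\mathcal{R}_{\geq 4}$. The claimed pointwise bound $|\widehat{\mathcal{R}_{\geq 4}}(\xi,s)|\lesssim\eps_1^4(1+s)^{-5/4+Cp_0}$ is not obtainable from Lemma~\ref{Lem1} and Remark~\ref{extra1}. In a term such as $A(v,\mathcal{N}_u-\mathcal{N}_v)$ the input $v$ occupies the low-frequency slot of the singular symbol $a$, with $\|a^{k,k_1,k_2}\|\sim 2^{-k_1/2}$ (see \eqref{nf12}). Any route to an $L^\infty_\xi$ estimate---$L^2\times L^2$, $L^1_\xi\times L^\infty_\xi$, or passage through $L^1_x$---produces a low-frequency sum $\sum_{k_1\leq 0}2^{-p_0k_1}$ that is at best polynomially bounded in $t$, and yields no better than $(1+s)^{-7/8+Cp_0}$ for $|\widehat{\mathcal{R}_{\geq 4}}(\xi,s)|$; this is not time-integrable over an interval of length $2^m$. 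The paper (Lemmas~\ref{LemmaLMJ2}--\ref{LemmaLMJ3}) does \emph{not} estimate $\widehat{\mathcal{R}_{\geq 4}}$ pointwise in $s$. It first proves $L^2$ bounds on $P_k\mathcal{R}_{\geq 4}$ and $P_kS\mathcal{R}_{\geq 4}$, then applies the interpolation Lemma~\ref{interpolation} to the \emph{time-integrated} quantity $F(\xi)=\varphi_k(\xi)\int_{t_1}^{t_2}e^{iL(\xi_0,s)}e^{is|\xi|^{3/2}}\widehat{\mathcal{R}_{\geq 4}}(\xi,s)\,ds$. The crucial point is that $\xi\partial_\xi F$ can be rewritten, via the commutation $[\xi\partial_\xi-\tfrac{3}{2}s\partial_s,\,e^{is|\xi|^{3/2}}]=0$ together with an integration by parts in $s$, in terms of $S\mathcal{R}_{\geq 4}$ rather than $x\mathcal{R}_{\geq 4}$. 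The bound on $\|P_kS\mathcal{R}_{\geq 4}\|_{L^2}$ is available and well-behaved at low input frequencies (Lemma~\ref{LemmaLMJ2}), whereas no bound on $x\mathcal{R}_{\geq 4}$---which is what a fixed-time $L^\infty_\xi$ estimate would require---is. This use of the $S$-weighted norm after time integration is the missing ingredient in your sketch.
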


\section{Proof of Lemma \ref{Zcontrol}}\label{TechProof}

In this section we provide the proof of Lemma \ref{Zcontrol}, which is the analogue of Lemma 6.1 in \cite{IoPu2}.
Notice first that the desired conclusion is a simple consequence of Lemma \ref{interpolation} and the bounds \eqref{nf55} if $|\xi|\geq (1+t)^{20p_0}$. Indeed, in this case for any $t\in[2^{m}-2,2^{m+1}]\cap[0,T']$ and any $|\xi|\approx 2^k\geq (1+t)^{20p_0}$
\begin{equation*}
\begin{split}
|\xi|^{5}|g(\xi,t)|&\lesssim 2^{5k}\|\widehat{P_kf}(t)\|_{L^\infty}\lesssim 2^{5k}\big[2^{-k}\|\widehat{P_kf}\|_{L^2}\big(2^k\|\partial \widehat{P_kf}\|_{L^2}+\|\widehat{P_kf}\|_{L^2}\big)\big]^{1/2}\\
&\lesssim \eps_0(1+t)^{6p_0}2^{-k/2}\lesssim\eps_0,
\end{split}
\end{equation*}
as desired. Notice also that the bound \eqref{nf56} is trivial if $|\xi|\leq 2^{-30}$.

It remains to prove \eqref{nf56} in the intermediate range $|\xi|\in[2^{-30},(1+t)^{20p_0}]$.
For $k\in\mathbb{Z}$ let $f_k^+:=P_kf$ and $f_k^-:=P_k\overline{f}$. For any $k_1,k_2,k_3\in\mathbb{Z}$ let
\begin{equation}\label{rak0}
\begin{split}
I^{\iota_1\iota_2\iota_3}_{k_1,k_2,k_3}(\xi,t):=\int_{\mathbb{R}\times\mathbb{R}}&e^{it(|\xi|^{3/2}-\iota_1|\xi-\eta|^{3/2}-\iota_2|\eta-\sigma|^{3/2}-\iota_3|\sigma|^{3/2})}\\
&\times c^{\iota_1\iota_2\iota_3}(\xi,\eta,\sigma)\widehat{f^{\iota_1}_{k_1}}(\xi-\eta)
\widehat{f^{\iota_2}_{k_2}}(\eta-\sigma)\widehat{f^{\iota_3}_{k_3}}(\sigma)\,d\eta d\sigma.
\end{split}
\end{equation}
It follows from \eqref{nf55} and Lemma \ref{dispersive} that for any $t\in[0,T]$ and $l\leq 0$
\begin{equation}\label{rak1}
\begin{split}
\|\widehat{f_l^{\pm}(t)}\|_{L^2}+2^l\|\partial\widehat{f_l^{\pm}(t)}\|_{L^2}&\lesssim \eps_12^{-p_0l}2^{l/2}(1+t)^{6p_0},\\
\|e^{\mp it\Lambda}f_l^{\pm}(t)\|_{L^\infty}&\lesssim \eps_12^{-p_0l}2^{l/10}(1+t)^{-1/2},\\
\|\widehat{f_l^{\pm}(t)}\|_{L^\infty}&\lesssim \eps_12^{-p_0l}.
\end{split}
\end{equation}
Moreover, if $l\geq 0$ then
\begin{equation}\label{rak1.5}
\begin{split}
\|\widehat{f_l^{\pm}(t)}\|_{L^2}&\lesssim \eps_12^{-9l}(1+t)^{6p_0},\\
\|\widehat{f_l^{\pm}(t)}\|_{L^2}+2^l\|\partial\widehat{f_l^{\pm}(t)}\|_{L^2}&\lesssim \eps_12^{-l}(1+t)^{6p_0},\\
\|e^{\mp it\Lambda}f_l^{\pm}(t)\|_{L^\infty}&\lesssim \eps_12^{-7l/5}(1+t)^{-1/2},\\
\|\widehat{f_l^{\pm}(t)}\|_{L^\infty}&\lesssim \eps_12^{-5l}.
\end{split}
\end{equation}

It is easy to see using the definitions and \eqref{nf12} that
\begin{equation}\label{rak1.6}
\begin{split}
\big|c^{\iota_1\iota_2\iota_3}&(\xi,\eta,\sigma)\cdot \varphi'_{k_1}(\xi-\eta)\varphi'_{k_2}(\eta-\sigma)\varphi'_{k_3}(\sigma)\big|\\
&\lesssim 2^{4\max(k_1,k_2,k_3)}2^{[\mathrm{med}(k_1,k_2,k_3)-\min(k_1,k_2,k_3)]/2}\mathbf{1}_{[2^{-25},\infty)}(|\xi|).
\end{split}
\end{equation}
Also $\widehat{\mathcal{R}_{\geq 4}}(\xi,t)=0$ if $|\xi|\leq 2^{-25}$. One can decompose the integrals $I^{\iota_1\iota_2\iota_3}$ into sums of the integrals $I_{k_1,k_2,k_3}^{\iota_1\iota_2\iota_3}$, and then estimate the terms corresponding to large frequencies and the terms corresponding to small frequencies (relative to $m$) using only the bounds \eqref{rak1}--\eqref{rak1.6}. As in \cite[Section 5]{IoPu1} and \cite[Section 6]{IoPu2}, we can reduce matters to proving the following:

\begin{lem}\label{Zcontrol2}
Assume that $k\in[-30,20p_0m]$, $|\xi|\in [2^k,2^{k+1}]$, $m\geq 1$, $t_1\leq t_2\in[2^{m}-2,2^{m+1}]\cap[0,T']$, and $k_1,k_2,k_3$ are integers satisfying
\begin{equation}\label{rak3}
\begin{split}
&k_1,k_2,k_3\in [-3m,3m/N-1000],\\
&\min(k_1,k_2,k_3)/2+3\mathrm{med}(k_1,k_2,k_3)/2\geq -m(1+10p_0).
\end{split}
\end{equation}
Then
\begin{equation}\label{rak4}
\Big|\int_{t_1}^{t_2}e^{iL(\xi,s)}\Big[I^{++-}_{k_1,k_2,k_3}(\xi,s)+\widetilde{c}(\xi)\frac{\widehat{f^+_{k_1}}(\xi,s)\widehat{f^+_{k_2}}(\xi,s)\widehat{f^-_{k_3}}(-\xi,s)}{s+1}\Big]\,ds\Big|\lesssim \eps_1^32^{-200p_0m},
\end{equation}
and, for $(\iota_1,\iota_2,\iota_3)\in\{(+++),(--+)\}$,
\begin{equation}\label{rak5}
\Big|\int_{t_1}^{t_2}e^{iL(\xi,s)}I^{\iota_1\iota_2\iota_3}_{k_1,k_2,k_3}(\xi,s)\,ds\Big|\lesssim \eps_1^32^{-200p_0m}.
\end{equation}
Moreover
\begin{equation}\label{rak6}
\Big|\int_{t_1}^{t_2}e^{iL(\xi,s)}e^{is|\xi|^{3/2}}\widehat{\mathcal{R}_{\geq 4}}(\xi,s)\,ds\Big|\lesssim \eps_1^32^{-200p_0m}.
\end{equation}
\end{lem}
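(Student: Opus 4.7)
The strategy is to prove the three bounds \eqref{rak4}, \eqref{rak5}, \eqref{rak6} in turn, combining dyadic localization in $(\eta,\sigma)$ with integration by parts either in the spatial variables or in time, guided by the structure of the phases $\Phi^{\iota_1\iota_2\iota_3}$. The modulating factor $e^{iL(\xi,s)}$ plays no significant role, since $|L(\xi,s)|$ is controlled by the $Z$-norm of $f$ and the pointwise bound on $\widetilde c$, so we may work with the raw oscillatory integrals and insert $e^{iL}$ at the end.

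The non-resonant estimate \eqref{rak5} is the more routine of the two cubic cases. For $(\iota_1,\iota_2,\iota_3)\in\{(+++),(--+)\}$ the phase admits no spacetime resonance: the unique $(\eta,\sigma)$-critical point of $\Phi^{+++}$ is $(2\xi/3,\xi/3)$, at which the phase equals $(1-3^{-1/2})|\xi|^{3/2}\neq 0$, and similarly for $\Phi^{--+}$. I would split the integrand according to whether $|\Phi|$ is large or small: in the first region I perform a normal-form integration by parts in $s$, producing a boundary term of the correct size together with a quartic expression involving $\partial_s\widehat f$, which is estimated via \eqref{nf48} and the bounds \eqref{rak1}--\eqref{rak1.6}. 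In the second region I instead integrate by parts in $\eta$ or $\sigma$, where $|\nabla_{\eta,\sigma}\Phi|$ is bounded below, gaining factors of $s^{-1}$ at the cost of derivatives on $\widehat f$, which are controlled by \eqref{nf25}.

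The main technical content is \eqref{rak4}. The phase $\Phi^{++-}$ vanishes together with its $(\eta,\sigma)$-gradient at the single point $(\eta,\sigma)=(0,-\xi)$, so the integral concentrates there for large $s$, and the spacetime resonance must be cancelled exactly by the counterterm $\widetilde c(\xi)|\widehat f(\xi,s)|^2\widehat f(\xi,s)/(s+1)$. Localizing with cutoffs $\varphi_p(\eta)\varphi_{p'}(\sigma+\xi)$, I would treat the far region $\max(2^p,2^{p'})\gg s^{-1/2}|\xi|^{1/4}$ by repeated integration by parts in $(\eta,\sigma)$, obtaining arbitrary polynomial decay in $s$. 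In the close region, a direct Taylor expansion of the three half-power terms around $(\eta,\sigma)=(0,-\xi)$ shows that the Hessian is
\[
\frac{3}{4}|\xi|^{-1/2}\begin{pmatrix}-2 & 1\\ 1 & 0\end{pmatrix},
\]
which is nondegenerate of signature zero. Two-dimensional stationary phase then produces a leading contribution proportional to $s^{-1}c^{++-}(\xi,0,-\xi)\widehat f(\xi,s)^2\widehat{\overline f}(-\xi,s)$, and the normalizing constants in the definition \eqref{nf50} of $\widetilde c$ are chosen precisely so that this leading term cancels against the counterterm subtracted in \eqref{rak4}. The residual error, of size $O(s^{-2})$ from the stationary-phase expansion or controlled by $\|x\partial_x f\|_{L^2}$ via Taylor expansion of the amplitude factors $\widehat f(\xi-\eta)$, $\widehat f(\eta-\sigma)$, integrates in $s$ to a total bounded by $\eps_1^3 2^{-200p_0 m}$.

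Finally, \eqref{rak6} is almost direct: the difference $u-v=-A(u,u)-B(\overline u,u)$ is quadratic in $u$, so each term of $\mathcal R_{\geq 4}$ in \eqref{nf42} contains at least one factor of $u-v$ together with two factors in $u$ or $v$. Inserting the improved bounds of Lemma \ref{Lem1} and Remark \ref{extra1} on $u-v$, the pointwise and $L^2$ control from \eqref{rak1}--\eqref{rak1.5}, and using Lemma \ref{touse} to handle the bilinear structure, one obtains $|\widehat{\mathcal R_{\geq 4}}(\xi,s)|\lesssim \eps_1^3(1+s)^{-1-\alpha}$ for some $\alpha>0$ depending on $p_0$, which integrates over $[t_1,t_2]\subset[2^m-2,2^{m+1}]$ to yield \eqref{rak6}. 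The principal obstacle of the lemma lies in \eqref{rak4}: one must manage a large tree of error terms from Taylor expanding the amplitudes at frequency scales $|\eta|,|\sigma+\xi|\sim s^{-1/2}$ while preserving the exact form of the leading $s^{-1}$ singularity that matches the counterterm.
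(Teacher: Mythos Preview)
Your outline for \eqref{rak4} and \eqref{rak5} is sound and matches the paper's scheme: the paper organizes the argument by a case decomposition on the relative sizes of $k,k_1,k_2,k_3$, applying integration by parts in $\eta$ (or $\sigma$) when $|\nabla_{\eta,\sigma}\Phi|$ is large, integration by parts in $s$ when $|\Phi|$ is large, and a stationary-phase expansion matching the counterterm in the resonant case $\max_j|k-k_j|\leq 20$. Your Hessian computation and the identification of the cancellation with $\widetilde c(\xi)$ are correct.

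There is, however, a genuine gap in your treatment of \eqref{rak6}. You assert that Lemma~\ref{touse} together with Lemma~\ref{Lem1} and Remark~\ref{extra1} yields the \emph{pointwise} Fourier bound $|\widehat{\mathcal R_{\geq 4}}(\xi,s)|\lesssim \eps_1^3(1+s)^{-1-\alpha}$. But Lemma~\ref{touse} only produces physical-space $L^p$ estimates, and passing to pointwise control of $\widehat{\mathcal R_{\geq 4}}(\xi)$ requires either an $L^1$ bound or a direct Fourier computation. If you try the $L^1$ route through $L^2\times L^2$, the term $A(v,\mathcal N_u-\mathcal N_v)$ forces you to sum
\[
\sum_{k_1}\|a^{k,k_1,k_2}\|_{S^\infty}\|P'_{k_1}v\|_{L^2}\lesssim \sum_{k_1\leq 0}2^{-k_1/2}\cdot 2^{k_1(1/2-p_0)}=\sum_{k_1\leq 0}2^{-p_0k_1},
\]
which diverges; the resulting time decay is only $(1+s)^{-1+O(p_0)}$, not integrable over $[2^m,2^{m+1}]$. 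The low-frequency singularity of the normal-form multipliers $a,b$ is exactly what prevents a direct pointwise estimate here.

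The paper circumvents this by never bounding $|\widehat{\mathcal R_{\geq 4}}(\xi,s)|$ alone. Instead it defines $F(\xi)=\varphi_k(\xi)\int_{t_1}^{t_2}e^{iL(\xi_0,s)}e^{is|\xi|^{3/2}}\widehat{\mathcal R_{\geq 4}}(\xi,s)\,ds$ and controls $\|F\|_{L^\infty}$ through the interpolation inequality $\|F\|_{L^\infty}^2\lesssim 2^{-k}\|F\|_{L^2}\bigl(2^k\|\partial F\|_{L^2}+\|F\|_{L^2}\bigr)$. The factor $\|F\|_{L^2}$ comes from the $L^2$ bound $\|P_k\mathcal R_{\geq 4}(s)\|_{L^2}\lesssim\eps_1^4(1+s)^{-9/8+10p_0}$; for $\|\partial F\|_{L^2}$ one writes $\xi\partial_\xi=\bigl[\xi\partial_\xi-(3/2)s\partial_s\bigr]+(3/2)s\partial_s$, uses the commutation $\bigl[\xi\partial_\xi-(3/2)s\partial_s,\,e^{is|\xi|^{3/2}}\bigr]=0$ to reduce to $\|P_kS\mathcal R_{\geq 4}(s)\|_{L^2}\lesssim\eps_1^4(1+s)^{-1+20p_0}$, and integrates the $s\partial_s$ piece by parts in $s$. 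The weighted estimate on $S\mathcal R_{\geq 4}$ is therefore essential and is absent from your proposal.
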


The rest of this section is concerned with the proof of this lemma. We will often use the alternative formulas
\begin{equation}\label{rak7}
I^{\iota_1\iota_2\iota_3}_{k_1,k_2,k_3}(\xi,t)=\int_{\mathbb{R}\times\mathbb{R}}e^{it\Phi^{\iota_1\iota_2\iota_3}(\xi,\eta,\sigma)}c^{\ast,\iota_1\iota_2\iota_3}_{\xi;k_1,k_2,k_3}(\eta,\sigma)\widehat{f^{\iota_1}_{k_1}}(\xi+\eta)
\widehat{f^{\iota_2}_{k_2}}(\xi+\sigma)\widehat{f^{\iota_3}_{k_3}}(-\xi-\eta-\sigma)\,d\eta d\sigma,
\end{equation}
where
\begin{equation}\label{rak8}
\begin{split}
&\Phi^{\iota_1\iota_2\iota_3}(\xi,x,y):=\Lambda(\xi)-\iota_1\Lambda(\xi+x)-\iota_2\Lambda(\xi+y)-\iota_3\Lambda(\xi+x+y),\\
&c^{\ast,\iota_1\iota_2\iota_3}_{\xi;k_1,k_2,k_3}(x,y):=c^{\iota_1\iota_2\iota_3}(\xi,-x,-\xi-x-y)\cdot \varphi'_{k_1}(\xi+x)
\varphi'_{k_2}(\xi+y)\varphi'_{k_3}(\xi+x+y).
\end{split}
\end{equation}
These formulas follow easily from \eqref{rak0} by changes of variables.

Using the explicit formulas \eqref{nf45} it is easy to verify that the symbols $c^{\ast,\iota_1\iota_2\iota_3}_{\xi;k_1,k_2,k_3}$ satisfy the $S^\infty$ estimates
\begin{equation}\label{rak9}
\big\|\mathcal{F}^{-1}\big(c^{\ast,\iota_1\iota_2\iota_3}_{\xi;l_1,l_2,l_3}\big)\big\|_{L^1(\mathbb{R}^2)}\lesssim 2^{4l_{\max}}2^{(l_{\mathrm{med}}-l_{\min})/2},
\end{equation}
and
\begin{equation}\label{rak9.5}
\begin{split}
&\big\|\mathcal{F}^{-1}\big[(\partial_\eta c^{\ast,\iota_1\iota_2\iota_3}_{\xi;l_1,l_2,l_3})(\eta,\sigma)\big]\big\|_{L^1(\mathbb{R}^2)}\lesssim 2^{4l_{\max}}2^{(l_{\mathrm{med}}-l_{\min})/2}2^{-\min(l_1,l_3)},\\
&\big\|\mathcal{F}^{-1}\big[(\partial_\sigma c^{\ast,\iota_1\iota_2\iota_3}_{\xi;l_1,l_2,l_3})(\eta,\sigma)\big]\big\|_{L^1(\mathbb{R}^2)}\lesssim 2^{4l_{\max}}2^{(l_{\mathrm{med}}-l_{\min})/2}2^{-\min(l_2,l_3)},
\end{split}
\end{equation}
for any $\xi\in\mathbb{R}$ with $|\xi|\gtrsim 1$ and any $l_1,l_2,l_3\in\mathbb{Z}$, where $l_{\max}=\max(l_1,l_2,l_3)$, $l_{\mathrm{med}}=\mathrm{med}(l_1,l_2,l_3)$, $l_{\min}=\min(l_1,l_2,l_3)$.

\subsection{Proof of \eqref{rak4}}\label{prooftech1}

We divide the proof of the bound \eqref{rak4} into several lemmas.
For simplicity of notation, in this subsection let $\Phi:=\Phi^{++-}$ and $c^{\ast}_{\mathbf{k}}:=c^{\ast,++-}_{\xi;k_1,k_2,k_3}$.

\begin{lem}\label{lemma1}
The bound \eqref{rak4} holds provided that \eqref{rak3} holds and, in addition,
\begin{equation}\label{rak10}
\max\big(|k-k_1|,|k-k_2|,|k-k_3|\big)\leq 20.
\end{equation}
\end{lem}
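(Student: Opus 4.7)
The plan for Lemma \ref{lemma1} is to exploit the fact that in this balanced regime all three input frequencies are comparable to $\xi$, so we lie at the heart of the $(++-)$ spacetime resonance set: writing $\Phi^{++-}$ in the translated variables $(x,y)$ of \eqref{rak7}--\eqref{rak8}, the only critical point inside the support of $c^\ast_{\mathbf{k}}$ is $(x,y)=(0,0)$, where $\Phi^{++-}$ and its gradient both vanish and the Hessian is $\Lambda''(\xi)\bigl(\begin{smallmatrix} 0 & 1 \\ 1 & 0\end{smallmatrix}\bigr)$. A formal 2D stationary-phase asymptotic then predicts
\[
I^{++-}_{k_1,k_2,k_3}(\xi,s)\approx \frac{2\pi}{s\,\Lambda''(\xi)}\, c^{++-}(\xi,0,-\xi)\,\widehat{f^+_{k_1}}(\xi,s)\widehat{f^+_{k_2}}(\xi,s)\widehat{f^-_{k_3}}(-\xi,s),
\]
and recalling $\Lambda''(\xi)=\tfrac{3}{4}|\xi|^{-1/2}$ together with the definition $\widetilde{c}(\xi)=-\tfrac{8\pi|\xi|^{1/2}}{3}c^{++-}(\xi,0,-\xi)$ from \eqref{nf50}, this main term is exactly $-\widetilde{c}(\xi)\widehat{f^+_{k_1}}(\xi,s)\widehat{f^+_{k_2}}(\xi,s)\widehat{f^-_{k_3}}(-\xi,s)/s$, which is what the correction inside the bracket in \eqref{rak4} is designed to cancel. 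The factor $e^{iL(\xi,s)}$ is real-valued and independent of the integration variables, so it plays no role in this pointwise analysis; we will recover \eqref{rak4} by proving a pointwise-in-$s$ bound of size $\eps_1^3\,s^{-1-200p_0}$ for the integrand, then integrating over $s\in[t_1,t_2]$ of length $\lesssim 2^m$.

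The concrete steps are as follows. First, introduce a smooth cutoff $\chi_\delta(x,y)$ supported in $|x|+|y|\leq \delta$ with $\delta=2^{-m/2}\cdot 2^{p_0 m}$, and split $I^{++-}_{k_1,k_2,k_3}=I_{\mathrm{res}}+I_{\mathrm{non}}$ accordingly. For $I_{\mathrm{non}}$, observe that on the support of $1-\chi_\delta$ at least one of
\[
\partial_\eta\Phi^{++-}=\tfrac{3}{2}\bigl[|\xi-\eta|^{1/2}\mathrm{sgn}(\xi-\eta)-|\eta-\sigma|^{1/2}\mathrm{sgn}(\eta-\sigma)\bigr],\qquad \partial_\sigma\Phi^{++-}=\tfrac{3}{2}\bigl[|\eta-\sigma|^{1/2}\mathrm{sgn}(\eta-\sigma)+|\sigma|^{1/2}\mathrm{sgn}(\sigma)\bigr]
\]
has modulus $\gtrsim 2^{k/2}\delta$, so repeated integration by parts against $e^{is\Phi}/(is\partial_{(\cdot)}\Phi)$ produces the needed decay. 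Each such integration by parts costs a derivative on $\widehat{f_{k_j}^{\iota_j}}$ or on $c^\ast_\mathbf{k}$, which we control using the symbol bounds \eqref{rak9}--\eqref{rak9.5} together with the $L^2$ and weighted-$L^2$ estimates in \eqref{rak1}; the parameter $\delta$ is chosen so that after a fixed number of integrations by parts all the negative powers of $\delta$ are absorbed and one gains the factor $2^{-200 p_0 m}$.

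For $I_{\mathrm{res}}$, I will expand the phase as $\Phi^{++-}(\xi,x,y)=\Lambda''(\xi)xy+R(\xi,x,y)$ with $|R|\lesssim 2^{-3k/2}(|x|+|y|)^3$, and Taylor-expand $\widehat{f_{k_j}^{\iota_j}}$ and $c^\ast_\mathbf{k}$ around the origin to zeroth order. The leading piece is the quadratic Gaussian integral $\int e^{is\Lambda''(\xi)xy}\chi_\delta(x,y)\,dx\,dy$, which by standard 2D stationary phase equals $\frac{2\pi}{s\Lambda''(\xi)}+O(s^{-N})$ thanks to our choice of $\delta$; this delivers the main term identified above, and after substitution matches the correction term in \eqref{rak4} up to the harmless discrepancy between $1/s$ and $1/(s+1)$ (of size $s^{-2}$). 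The Taylor-remainder pieces are estimated by combining Plancherel with the weighted bound $\|\partial\widehat{f}(s)\|_{L^2}\lesssim \eps_0(1+s)^{6p_0}2^{-3k/2}$ from \eqref{nf55}, and the cubic error from $R$ is controlled by another integration by parts in a rotated coordinate $u=x+y$ or $v=x-y$.

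The main obstacle is the bookkeeping: one must calibrate $\delta$ so that simultaneously (i) the nonresonant integrations by parts yield $\eps_1^3 s^{-1-200p_0}$; (ii) the truncation error from replacing the integral by its $\chi_\delta$-localized version is of the same size; (iii) the remainder in the Taylor expansion of the profiles, weighted by the factor $s$ from the cubic phase remainder, meets the same target; and (iv) the loss of at most a polynomial in $|\xi|\leq 2^{20 p_0 m}$ from the symbol bounds \eqref{rak9}--\eqref{rak9.5} is absorbed. The argument is parallel to the one in \cite[Section 6]{IoPu2} but requires slightly more care because the spacetime-resonant singularity here sits at a finite frequency rather than at zero; however, since $|\xi|\geq 2^{-30}$ throughout the range of Lemma \ref{lemma1}, all inverse powers of $|\xi|$ that arise are bounded uniformly and do not affect the conclusion.
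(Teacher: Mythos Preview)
Your proposal is correct and follows essentially the same stationary-phase approach that the paper invokes by reference to \cite[Lemma 6.4]{IoPu2}: localize near the unique space critical point $(x,y)=(0,0)$, Taylor-expand the phase as $\Phi^{++-}(\xi,x,y)=\Lambda''(\xi)xy+O(2^{-3k/2}(|x|+|y|)^3)$ (this is exactly the expansion the paper records), extract the leading $2\pi/(s\Lambda''(\xi))$ contribution that cancels the correction, and handle the complement by integration by parts. Two small slips to fix: $e^{iL(\xi,s)}$ is not real-valued---rather $L$ is real so $|e^{iL}|=1$, which is what you need; and near the critical point the gradient lower bound is $|\nabla_{x,y}\Phi|\gtrsim \Lambda''(\xi)\,\delta\approx 2^{-k/2}\delta$, not $2^{k/2}\delta$ (this only changes the bookkeeping by harmless powers of $2^k\leq 2^{20p_0m}$).
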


\begin{proof}[Proof of Lemma \ref{lemma1}] This is the main case, which gives the precise form of the correction. However, the proof is similar (in fact easier, due the extra assumption $|\xi|\gtrsim 1$ in our situation) to the proof of \cite[Lemma 6.4]{IoPu2}.
The only difference is that, in our case $\Lambda(\xi)=|\xi|^{3/2}$; therefore one has the expansion
\begin{equation*}
\Big|\Lambda(\xi)-\Lambda(\xi+\eta)-\Lambda(\xi+\sigma)+\Lambda(\xi+\eta+\sigma)-\frac{3\eta\sigma}{4|\xi|^{1/2}}\Big|\lesssim 2^{-3k/2}(|\eta|^3+|\sigma|^3).
\end{equation*}
The same argument as in \cite[Lemma 6.4]{IoPu2} then leads to desired bound.
\end{proof}

\begin{lem}\label{lemmainbound20}
The bound \eqref{rak4} holds provided that \eqref{rak3} holds and, in addition,
\begin{align}
\label{hyp20}
\begin{split}
& \max (|k-k_1|, |k- k_2|, |k-k_3|) \geq 21,
\\
& \max (|k_1 - k_3| , |k_2-k_3|) \geq 5 \quad \mbox{and} \quad \min (k_1,k_2,k_3) \geq -\frac{49}{100}m .
\end{split}
\end{align}
\end{lem}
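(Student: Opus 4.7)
Hypothesis \eqref{hyp20} is designed to do two things at once: first, it forces the logarithmic correction term to vanish identically, so only $I^{++-}_{k_1,k_2,k_3}$ must be controlled; second, it keeps us far enough from the spatial component of the $(++-)$ space-time resonance that a single spatial integration by parts in either $\eta$ or $\sigma$ (equivalently, $x$ or $y$) produces the required $2^{-200p_0m}$ decay. No time integration by parts (normal form) is needed here; that ingredient is reserved for the still-to-come sub-cases.

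\emph{Step 1 (correction vanishes).} Since $|\xi|\in[2^k,2^{k+1}]$ and $\max(|k-k_1|,|k-k_2|,|k-k_3|)\ge 21$, at least one of the cutoffs $\varphi'_{k_i}$ is zero at $\xi$, so $\widehat{f^+_{k_1}}(\xi)\widehat{f^+_{k_2}}(\xi)\widehat{f^-_{k_3}}(-\xi)=0$ and the correction term drops out of \eqref{rak4}.

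\emph{Step 2 (lower bound on $\nabla\Phi^{++-}$).} In the parametrization \eqref{rak7}--\eqref{rak8} the phase satisfies $\partial_x\Phi^{++-}=\Lambda'(\xi+x+y)-\Lambda'(\xi+x)$ and $\partial_y\Phi^{++-}=\Lambda'(\xi+x+y)-\Lambda'(\xi+y)$, with $\Lambda'(r)=(3/2)|r|^{1/2}\mathrm{sgn}(r)$. Since $|\xi+x|\sim 2^{k_1}$, $|\xi+y|\sim 2^{k_2}$, and $|\xi+x+y|\sim 2^{k_3}$ on the support of $c^{\ast,++-}_{\xi;k_1,k_2,k_3}$, one has $|\partial_x\Phi^{++-}|\gtrsim 2^{\max(k_1,k_3)/2}$ when $|k_1-k_3|\ge 5$, and symmetrically $|\partial_y\Phi^{++-}|\gtrsim 2^{\max(k_2,k_3)/2}$ when $|k_2-k_3|\ge 5$. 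Hypothesis \eqref{hyp20} guarantees one of these lower bounds holds; after relabeling, assume $|\partial_y\Phi^{++-}|\gtrsim 2^{\max(k_2,k_3)/2}$ throughout.

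\emph{Step 3 (spatial IBP and trilinear estimate).} Writing $e^{is\Phi^{++-}}=(is\partial_y\Phi^{++-})^{-1}\partial_y e^{is\Phi^{++-}}$ and integrating by parts in $y$ (all boundary terms vanish by compact support), we gain a factor $s^{-1}\cdot 2^{-\max(k_2,k_3)/2}$ at the cost of $\partial_y$ falling on (a) the amplitude $c^{\ast,++-}_{\xi;k_1,k_2,k_3}/\partial_y\Phi^{++-}$, controlled via \eqref{rak9}--\eqref{rak9.5}, (b) the profile $\widehat{f^+_{k_2}}(\xi+y)$, or (c) the profile $\widehat{f^-_{k_3}}(-\xi-x-y)$. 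Each resulting trilinear integral is estimated by Lemma \ref{touse}(iii) with two factors in $L^2$ (using \eqref{rak1}--\eqref{rak1.5} on $\widehat{f^\pm_{k_j}}$ or $\partial\widehat{f^\pm_{k_j}}$) and one factor in $L^\infty$ (using the dispersive decay $\|e^{\mp is\Lambda}f^\pm_{k_j}\|_{L^\infty}\lesssim\eps_1s^{-1/2}2^{-p_0k_j+k_j/10}$ for the profile carrying the best gain). This yields a pointwise bound
\begin{equation*}
|I^{++-}_{k_1,k_2,k_3}(\xi,s)|\lesssim \eps_1^3\cdot s^{-3/2+12p_0}\cdot 2^{\lambda(k_1,k_2,k_3)},\qquad s\in[2^m-2,2^{m+1}],
\end{equation*}
where $\lambda(k_1,k_2,k_3)$ is an explicit linear expression whose coefficients depend on which of (a)--(c) is being estimated. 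Under $k\in[-30,20p_0m]$ and \eqref{rak3}, all such $\lambda$ are bounded above by a negative multiple of $p_0m$ much smaller than $-200p_0m$, and integration over $s\in[t_1,t_2]\subset[2^m-2,2^{m+1}]$ gains another $2^{m/2}\cdot 2^{-3m/2+12p_0m}=2^{-m+12p_0m}$, which closes the required bound \eqref{rak4}.

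\emph{Main obstacle.} The delicate scenarios are cases (b)/(c) when the derivative hits a low-frequency profile $\widehat{f^\pm_{k_j}}$ with $k_j$ near the floor $-49m/100$ of \eqref{rak3}: then $\|\partial\widehat{f^\pm_{k_j}}\|_{L^2}\lesssim\eps_1\,2^{-k_j/2-p_0k_j}(1+s)^{6p_0}$ from \eqref{rak1}, which threatens to eat the gain $2^{-\max(k_2,k_3)/2}$ from the integration by parts. The secondary constraint $\tfrac12\min(k_i)+\tfrac32\,\mathrm{med}(k_i)\ge -m(1+10p_0)$ in \eqref{rak3} is precisely tailored to keep the resulting exponent $\lambda$ controlled; the smallness of $p_0$ then absorbs the harmless $s^{O(p_0)}$ and $2^{O(p_0)k_j}$ factors, leaving the desired decay intact.
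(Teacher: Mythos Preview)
Your approach is correct and essentially identical to the paper's: the correction term vanishes for the reason you give, and a single spatial integration by parts (the paper uses $\eta$, you use $y$, same thing by the symmetry you invoke) followed by a trilinear $L^2\times L^2\times L^\infty$ estimate via Lemma~\ref{touse} closes the bound. However, two assertions in your write-up are wrong and would not survive a careful computation.

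First, your claim that the frequency exponent $\lambda(k_1,k_2,k_3)$ is ``bounded above by a negative multiple of $p_0m$'' is false. When the derivative hits a profile at frequency near the floor $k_j\approx -49m/100$, the bound $\|\partial\widehat{f^\pm_{k_j}}\|_{L^2}\lesssim \eps_1 2^{-k_j(1/2+p_0)}(1+s)^{6p_0}$ together with the symbol factor $2^{-k_{\min}/2}$ from \eqref{rak9} produces a \emph{positive} contribution to $\lambda$ of order $+49m/100$. What actually closes the estimate is that the combined gain $s^{-1}\cdot s^{-1/2}$ (IBP plus dispersive decay) gives $2^{-3m/2}$, and $-3m/2 + 49m/100 = -101m/100 < -m$ with margin; see the paper's explicit computation of $K_1$ for the bookkeeping.

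Second, your ``Main obstacle'' paragraph invokes the wrong hypothesis. The secondary constraint $k_{\min}/2+3k_{\mathrm{med}}/2\ge -m(1+10p_0)$ from \eqref{rak3} plays no role here; it is the explicit bound $\min(k_1,k_2,k_3)\ge -49m/100$ in \eqref{hyp20} that caps the dangerous $2^{-k_{\min}}$ factor. The secondary constraint from \eqref{rak3} is what is used in the complementary Lemmas~\ref{lemmainbound22} and~\ref{lemmainBO21}, where $k_{\mathrm{med}}$ is small. (Also, $|t_2-t_1|\lesssim 2^m$, not $2^{m/2}$, though this slip is harmless.)
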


\begin{proof}[Proof of Lemma \ref{lemmainbound20}] In this case we will show that
\begin{equation}\label{hyp20.1}
|I^{++-}_{k_1,k_2,k_3}(\xi,s)|\lesssim \eps_1^32^{-m}2^{-200p_0m}.
\end{equation}
Without loss of generality, by symmetry we can assume that $|k_1-k_3| \geq 5$ and $k_2\leq \max(k_1,k_3)+5$.
Under the assumptions \eqref{hyp20} we have
\begin{align*}
 |(\partial_\eta\Phi)(\xi,\eta,\sigma)| = |-\Lambda'(\xi+\eta)+\Lambda'(\xi+\eta+\sigma)| \gtrsim 2^{k_{\max}/2}.
\end{align*}
Therefore we can integrate by parts in $\eta$ in the expression \eqref{rak7} for $I_{k_1,k_2,k_3}^{++-}$.
This gives
\begin{align*}
|I_{k_1,k_2,k_3}^{++-}(\xi,s)| & \lesssim  |K_1(\xi,s)| + |K_2(\xi,s)| + |K_3(\xi,s)| + |K_4(\xi,s)|,
\end{align*}
where
\begin{align}
\label{IBPeta}
\begin{split}
K_1(\xi) & := \int_{\mathbb{R}\times\mathbb{R}} e^{is\Phi(\xi,\eta,\sigma)}
   m_1(\eta,\sigma) c^\ast_{\mathbf{k}}(\eta,\sigma)
  (\partial\what{f_{k_1}^{+}})(\xi+\eta) \what{f_{k_2}^{+}}(\xi+\sigma) \what{f_{k_3}^{-}}(-\xi-\eta-\sigma) \,d\eta d\sigma,
\\
K_2(\xi) & := \int_{\mathbb{R}\times\mathbb{R}} e^{is\Phi(\xi,\eta,\sigma)}
   m_1(\eta,\sigma) c^\ast_{\mathbf{k}}(\eta,\sigma)
  \what{f_{k_1}^{+}}(\xi+\eta) \what{f_{k_2}^{+}}(\xi+\sigma) (\partial\what{f_{k_3}^{-}})(-\xi-\eta-\sigma) \,d\eta d\sigma,
\\
K_3(\xi) & := \int_{\mathbb{R}\times\mathbb{R}} e^{is\Phi(\xi,\eta,\sigma)}
  (\partial_\eta  m_1)(\eta,\sigma) c^\ast_{\mathbf{k}}(\eta,\sigma)
  \what{f_{k_1}^{+}}(\xi+\eta) \what{f_{k_2}^{+}}(\xi+\sigma) \what{f_{k_3}^{-}}(-\xi-\eta-\sigma) \,d\eta d\sigma,
\\
K_4(\xi) & := \int_{\mathbb{R}\times\mathbb{R}} e^{is\Phi(\xi,\eta,\sigma)}
   m_1(\eta,\sigma) (\partial_\eta c^\ast_{\mathbf{k}})(\eta,\sigma)
  \what{f_{k_1}^{+}}(\xi+\eta) \what{f_{k_2}^{+}}(\xi+\sigma) \what{f_{k_3}^{-}}(-\xi-\eta-\sigma) \,d\eta d\sigma,
\end{split}
\end{align}
having denoted
\begin{align}
\label{symIBPeta}
 m_1(\eta,\sigma) := \frac{1}{s \partial_\eta \Phi(\xi,\eta,\sigma)} \varphi'_{k_1}(\xi+\eta)\varphi'_{k_3}(\xi+\eta+\sigma).
\end{align}

Observe that, in our situation,
\begin{align}
\label{symIBPetaest1}
{\| m_1(\eta,\sigma) \|}_{S^\infty} \lesssim 2^{-m} 2^{-k_{\max}/2}.
\end{align}
We can then estimate $K_1$ using Lemma \ref{touse} (ii), the estimate on $c^\ast_{\mathbf{k}}$ in \eqref{rak9},
the bounds \eqref{rak1}--\eqref{rak1.5}, and the last constraint in \eqref{hyp20}. For example, if $k_1\leq k_3$ (so $2^{k_3}\approx 2^{k_{\max}}$) then we estimate
\begin{align*}
|K_1(\xi,s)| &\lesssim {\| m_1(\eta,\sigma) \|}_{S^\infty}  {\| c^\ast_{\mathbf{k}}(\eta,\sigma) \|}_{S^\infty}
  {\| \partial \what{f_{k_1}^{+}}(s) \|}_{L^2} {\| e^{-is\Lambda} f_{k_2}^{+}(s) \|}_{L^\infty} {\| \what{f_{k_3}^{-}}(s) \|}_{L^2}
\\
& \lesssim 2^{-m} 2^{-k_{\max}/2} \cdot 2^{5k_{\max}} 2^{-k_{\min}/2}\cdot \eps_1 2^{-k_1(1/2+p_0)} 2^{6mp_0} \cdot \eps_1 2^{-m/2} \cdot \eps_1 2^{6mp_0}2^{-9k_3}
\\
& \lesssim \e_1^3 2^{-3m/2} 2^{12p_0m} 2^{-4k_{\max}}2^{-k_{\min}/2}2^{-k_1(1/2+p_0)}\\
&\lesssim \e_1^3 2^{15p_0m}2^{-3m/2}2^{-k_{\min}}.
\end{align*}
This suffices to prove \eqref{hyp20.1}. On the other hand, if $k_1\geq k_3$ (so $2^{k_1}\approx 2^{k_{\max}}$) then we estimate
\begin{align*}
|K_1(\xi,s)| &\lesssim {\| m_1(\eta,\sigma) \|}_{S^\infty}  {\| c^\ast_{\mathbf{k}}(\eta,\sigma) \|}_{S^\infty}
  {\| \partial \what{f_{k_1}^{+}}(s) \|}_{L^2} \\
  &\times\min\big[{\| e^{-is\Lambda} f_{k_2}^{+}(s) \|}_{L^\infty} {\| \what{f_{k_3}^{-}}(s) \|}_{L^2},{\| \widehat{f_{k_2}^{+}}(s) \|}_{L^2} {\| e^{is\Lambda}f_{k_3}^{-}(s) \|}_{L^\infty}\big]
\\
& \lesssim 2^{-m} 2^{-k_{\max}/2} \cdot 2^{4.5k_{\max}} 2^{-k_{\min}/2}\cdot \eps_1 2^{-2k_1} 2^{6mp_0} \cdot \eps_1^2 2^{-m/2} 2^{-6\max(k_2,k_3,0)}2^{6p_0m}
\\
& \lesssim \e_1^3 2^{-3m/2} 2^{14p_0m} 2^{-k_{\min}/2}\cdot 2^{2k_{\max}}2^{-6\max(k_2,k_3,0)}.
\end{align*}
This also suffices to prove \eqref{hyp20.1}, since $2^{2k_{\max}}2^{-6\max(k_2,k_3,0)}\lesssim 2^{50p_0m}$ as a consequence of the assumption $k\leq 20p_0m$. The estimates for $|K_2(s,\xi)|$, $|K_3(s,\xi)|$, and $|K_4(s,\xi)|$ are similar, which completes the proof of the lemma.
\end{proof}

\begin{lem}\label{lemmainbound22}
  The bound \eqref{rak4} holds provided that \eqref{rak3} holds and, in addition,
\begin{align}
\label{hyp42}
\begin{split}
& \max (|k-k_1|, |k- k_2|, |k-k_3|) \geq 21,
\\
& \max (|k_1 - k_3| , |k_2-k_3|) \geq 5 \quad \mbox{and} \quad \min (k_1,k_2) \leq -\frac{48m}{100}.
\end{split}
\end{align}
\end{lem}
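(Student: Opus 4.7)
The strategy parallels that of Lemma \ref{lemmainbound20}: integrate by parts in one of the spatial momenta to gain a factor of $2^{-m}$, then apply the bilinear $S^\infty$-calculus of Lemma \ref{touse}(ii). The new feature is the extreme low-frequency hypothesis $\min(k_1,k_2) \leq -48m/100$. By symmetry we may assume $k_1=\min(k_1,k_2)$. Under this hypothesis the correction term in \eqref{rak4} vanishes identically once $m$ exceeds an absolute constant: $\widehat{f^+_{k_1}}(\xi,\cdot)$ is supported on $|\xi|\sim 2^{k_1}\leq 2^{-48m/100}$, whereas we work on $|\xi|\sim 2^k$ with $k\geq -30$. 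Thus it suffices to establish the pointwise bound $|I^{++-}_{k_1,k_2,k_3}(\xi,s)|\lesssim \eps_1^3 2^{-m(1+201 p_0)}$ for $s \in [2^m-2,2^{m+1}]$, from which \eqref{rak4} follows by integrating over an interval of length $\lesssim 2^m$ (noting that $L$ is real-valued).

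The IBP direction is furnished by \eqref{hyp42}. If $|k_1-k_3|\geq 5$, then on the support of the integrand $|(\partial_\eta\Phi^{++-})(\xi,\eta,\sigma)| = |\Lambda'(\xi+\eta+\sigma) - \Lambda'(\xi+\eta)| \gtrsim 2^{\max(k_1,k_3)/2}$, and we integrate by parts in $\eta$ exactly as in \eqref{IBPeta}; if instead $|k_1-k_3|\leq 4$ then $|k_2-k_3|\geq 5$ forces $|\partial_\sigma\Phi^{++-}| \gtrsim 2^{\max(k_2,k_3)/2}$ and an analogous IBP in $\sigma$ applies. Either way we obtain a four-term decomposition $K_1,\ldots,K_4$ whose IBP multiplier obeys the analogue of \eqref{symIBPetaest1}.

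The three terms $K_2, K_3, K_4$ (where the IBP derivative hits either the moderate-frequency profile or the symbols) are bounded by placing the small-frequency factor $v^+_{k_1}$ in $L^\infty$ using whichever of
\begin{equation*}
\|v^+_{k_1}(s)\|_{L^\infty}\lesssim \eps_1 2^{-p_0 k_1 + k_1/10} 2^{-m/2}
\quad\text{or}\quad
\|v^+_{k_1}(s)\|_{L^\infty}\leq \|\widehat{f^+_{k_1}}\|_{L^1}\lesssim \eps_1 2^{k_1(1-p_1)}
\end{equation*}
is stronger (the first being the dispersive bound from Lemma \ref{dispersive} applied to \eqref{rak1}, the second a Bernstein-type consequence of the $Z$-norm; the crossover is at $k_1\approx -5m/9$). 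The other two factors are placed in $L^2$ via \eqref{rak1}--\eqref{rak1.5}. Combined with the symbol bounds \eqref{rak9}--\eqref{rak9.5}, the $2^{-m}$ gain from IBP, and the constraint $k_{\min}/2+(3/2)k_{\med}\geq -m(1+10p_0)$ from \eqref{rak3} (which prevents both $k_1$ and $k_2$ from being simultaneously extremely negative, so that whenever $k_1$ is very negative the middle frequency $k_{\med}$ must grow to compensate), the exponent arithmetic closes for $K_2, K_3, K_4$.

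The delicate term is $K_1$, where the IBP derivative lands on $\widehat{f^+_{k_1}}$ itself, forcing the use of the $L^2$ bound $\|\partial\widehat{f^+_{k_1}}\|_{L^2}\lesssim 2^{-k_1(1/2+p_0)} 2^{6p_0 m}$ from \eqref{nf55}. Compounded with the symbol singularity $\|c^{\ast,++-}_{\xi;k_1,k_2,k_3}\|_{S^\infty}\lesssim 2^{-k_{\min}/2}$ inherent in \eqref{rak9}, this threatens a net $2^{-k_1}$ blow-up that neither the dispersive nor the Bernstein $L^\infty$ bound on $v^+_{k_1}$ can absorb (since $v^+_{k_1}$ is no longer available in $K_1$). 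The resolution is to move from $L^2$ to $L^\infty$ on the Fourier side by Hausdorff--Young combined with a Cauchy--Schwarz--Bernstein argument:
\begin{equation*}
\bigl\|\mathcal{F}^{-1}[e^{is\Lambda}\partial\widehat{f^+_{k_1}}]\bigr\|_{L^\infty}
\leq \|\partial\widehat{f^+_{k_1}}\|_{L^1}
\leq 2^{k_1/2}\|\partial\widehat{f^+_{k_1}}\|_{L^2}
\lesssim \eps_0 2^{-p_0 k_1+6p_0 m},
\end{equation*}
which depends only logarithmically on $k_1$. Placing this factor in $L^\infty$ and the two moderate-frequency factors in $L^2$, and again invoking \eqref{rak3}, one closes the pointwise estimate for $K_1$. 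The interplay between the IBP-induced derivative on the lowest-frequency profile, the low-frequency singularity of the symbol, and the Hausdorff--Young inequality is the technical core of the argument and the main obstacle in the analysis.
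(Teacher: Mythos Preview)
Your overall strategy (integrate by parts in a spatial momentum and use the $S^\infty$ calculus) matches the paper's, but your choice of integration variable creates a gap in the $K_1$ term that your Hausdorff--Young fix does not close. Concretely, take $k_1\approx -m/2$ and $k_2\approx k_3\approx k\approx 0$, so that $|k_1-k_3|\geq 5$ and you integrate in $\eta$. Then $\|m_1\|_{S^\infty}\lesssim 2^{-m}$, the symbol contributes $\|c^\ast_{\mathbf{k}}\|_{S^\infty}\lesssim 2^{(k_{\mathrm{med}}-k_{\min})/2}\approx 2^{m/4}$, your Hausdorff--Young factor is $\lesssim\eps_0 2^{O(p_0 m)}$, and the two remaining $L^2$ norms are each $\lesssim\eps_1 2^{O(p_0 m)}$. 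The product is $\eps_1^3\,2^{-3m/4+O(p_0 m)}$, which misses the required $\eps_1^3\,2^{-m(1+200p_0)}$ by roughly $2^{m/4}$. The uncancelled $2^{-k_{\min}/2}$ from the symbol is precisely the ``net blow-up'' you flagged; your Hausdorff--Young step removes the $2^{-k_1/2}$ from $\|\partial\widehat{f^+_{k_1}}\|_{L^2}$ but leaves the symbol's $2^{-k_1/2}$ untouched, and there is nothing left to absorb it. The constraint \eqref{rak3} is easily satisfied in this scenario and provides no additional smallness.

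The paper's resolution is simpler and avoids the issue entirely: label the small frequency $k_2$ (rather than $k_1$) and integrate by parts in $\eta$ regardless of which separation condition in \eqref{hyp42} holds. Since $\widehat{f^+_{k_2}}(\xi+\sigma)$ does not depend on $\eta$, the derivative never lands on the low-frequency factor, so there is no $K_1$-type obstruction. The key observation is that with $|\xi+\sigma|\approx 2^{k_2}$ tiny and $|\xi|\gtrsim 1$, one has $|\sigma|\approx|\xi|\gtrsim 1$, which forces the (possibly weaker) lower bound $|\partial_\eta\Phi|\gtrsim 2^{-k_{\max}/2}$ even when $|k_1-k_3|\leq 4$. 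One then places $\widehat{f^+_{k_2}}$ in $L^2$, and its bound $\lesssim\eps_1 2^{k_2(1/2-p_0)}2^{6p_0 m}$ exactly cancels the $2^{-k_{\min}/2}$ from the symbol. Equivalently, in your labeling with $k_1=\min(k_1,k_2)$, you should integrate by parts in $\sigma$ (the variable not appearing in the argument of the low-frequency factor), not in $\eta$.
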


\begin{proof}[Proof of Lemma \ref{lemmainbound22}]
By symmetry we may assume that $k_2=\min(k_1,k_2)$. The main observation is that we still have the strong lower bound
\begin{align*}
 |(\partial_\eta \Phi)(\xi,\eta,\sigma)| = |-\Lambda'(\xi+\eta)+\Lambda'(\xi+\eta+\sigma)| \gtrsim 2^{-k_{\max}/2}.
\end{align*}
This is easy to see since $|\xi|\geq 2^{-31}$, $|\xi+\sigma|\leq 2^{k_2+1}$, and $|\xi+\eta|\approx 2^{k_1}$. Integrating by parts in $\eta$ and estimating the resulting integrals as in Lemma \ref{lemmainbound20} (placing $\widehat{f_{k_2}^+}$ in $L^2$ and recalling also the restriction $k_{\min}+3k_{\mathrm{med}}\geq -2m(1+10p_0)$, see \eqref{rak3}) gives the desired conclusion.
\end{proof}

\begin{lem}\label{lemmainbound23}
  The bound \eqref{rak4} holds provided that \eqref{rak3} holds and, in addition,
\begin{align}
\label{hyp43}
\begin{split}
& \max (|k-k_1|, |k- k_2|, |k-k_3|) \geq 21,
\\
& \max (|k_1 - k_3| , |k_2-k_3|) \geq 5 \quad \mbox{and} \quad \min (k_1,k_2) \geq -\frac{48m}{100}\quad \mbox{and} \quad k_3 \leq -\frac{49m}{100}.
\end{split}
\end{align}
\end{lem}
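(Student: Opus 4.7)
The strategy follows the proofs of Lemma \ref{lemmainbound20} and Lemma \ref{lemmainbound22}: we integrate by parts in $\eta$ in the oscillatory representation \eqref{rak7} of $I^{++-}_{k_1,k_2,k_3}$. On the support of the integrand $|\xi+\eta|\approx 2^{k_1}\geq 2^{-48m/100}$ while $|\xi+\eta+\sigma|\approx 2^{k_3}\leq 2^{-49m/100}$, so the two scales differ by at least $2^{m/100}$ and consequently
\begin{equation*}
|\partial_\eta \Phi^{++-}(\xi,\eta,\sigma)| = \bigl|{-\Lambda'(\xi+\eta)+\Lambda'(\xi+\eta+\sigma)}\bigr| \approx 2^{k_1/2}.
\end{equation*}
In particular, IBP in $\eta$ produces a multiplier $m_1(\eta,\sigma):=[s\,\partial_\eta\Phi^{++-}]^{-1}\varphi'_{k_1}(\xi+\eta)\varphi'_{k_3}(\xi+\eta+\sigma)$ with $\|m_1\|_{S^\infty}\lesssim 2^{-m-k_1/2}$. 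Moreover the subtracted correction term in \eqref{rak4} vanishes identically: $|{-\xi}|\approx 2^k\geq 2^{-30}$ lies outside the support of $\varphi_{k_3}$ since $k_3\leq -49m/100\ll k$, so $\widehat{f^-_{k_3}}(-\xi)=0$. Thus it suffices to prove the pointwise estimate $|I^{++-}_{k_1,k_2,k_3}(\xi,s)|\lesssim \eps_1^3\,2^{-m-200p_0m}$; integration over $[t_1,t_2]\subset [2^m-2,2^{m+1}]$ then yields \eqref{rak4}.

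After IBP we write $I^{++-}_{k_1,k_2,k_3}=K_1+K_2+K_3+K_4$ as in \eqref{IBPeta}. The key new ingredient, compared with Lemma \ref{lemmainbound20}, is that since $k_3<0$ the high-frequency bound on $\widehat{f^-_{k_3}}$ used there must be replaced by the low-frequency bound from \eqref{rak1},
\begin{equation*}
\|\widehat{f^-_{k_3}}(s)\|_{L^2}\lesssim \eps_1\,2^{k_3(1/2-p_0)}2^{6p_0m}\lesssim \eps_1\,2^{-49m/200+O(p_0m)},
\end{equation*}
which supplies the decisive smallness. For $K_1,K_3,K_4$---none of which carries a derivative on $\widehat{f^-_{k_3}}$---Lemma \ref{touse}(ii) combined with \eqref{rak9}, an $L^\infty$ dispersive estimate from \eqref{rak1}--\eqref{rak1.5} on one of $e^{\mp is\Lambda}f^{\pm}_{k_j}$ ($j\in\{1,2\}$), an $L^2$ bound on the other factor, and the displayed low-frequency bound readily give $|K_1|+|K_3|+|K_4|\lesssim \eps_1^3\,2^{-m-200p_0m}$. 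The regime $\max(k_1,k_2)\gg k$ is handled exactly as in Lemma \ref{lemmainbound20} by observing that $k_1\gg k$ forces $k_2\approx k_1$ and using the high-frequency decay $2^{-7l/5}$ in \eqref{rak1.5} to absorb the symbol growth $2^{4\max(k_1,k_2)}$ coming from \eqref{rak9}.

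\textbf{The main obstacle} is $K_2$, in which the derivative falls on $\widehat{f^-_{k_3}}$: the naive bound $\|\partial\widehat{f^-_{k_3}}\|_{L^2}\lesssim \eps_1\,2^{-k_3/2-p_0k_3}2^{6p_0m}$ is \emph{large}, so the smallness argument above fails. We handle $K_2$ by a \emph{second} integration by parts, this time in $\sigma$, using the lower bound $|\partial_\sigma\Phi^{++-}|\approx 2^{k_2/2}$ (valid since $k_2\geq -48m/100\gg k_3$); this supplies an additional multiplier with $S^\infty$ norm $\lesssim 2^{-m-k_2/2}$, for a total symbol gain of $2^{-2m-(k_1+k_2)/2}$. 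After the second IBP the extra derivative either acts on the symbols (harmless), on $\widehat{f^+_{k_2}}$ (controlled by \eqref{nf55}), or produces a term with $\partial^2\widehat{f^-_{k_3}}$; in the last case we use the identity $\rho\,\partial_\rho\widehat{f}(\rho)=-\widehat{x\partial_xf}(\rho)-\widehat{f}(\rho)$ to trade each derivative for a factor $2^{-k_3}$ and an $S$-vector-field norm controlled by \eqref{nf25}, which gives $\|P_{k_3}(x\partial_xf)\|_{L^2}\lesssim \eps_0\,2^{k_3(1/2-p_0)}2^{6p_0m}$ and recovers the small factor $2^{k_3/2}$. The constraint $k_3+3\min(k_1,k_2)\geq -2m(1+10p_0)$ from \eqref{rak3} ensures that the combined gain $2^{-2m-(k_1+k_2)/2}\cdot 2^{k_3/2}$ always dominates the loss from $-k_3$. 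The estimates \eqref{rak5} for the nonresonant phases $(+++)$ and $(--+)$ are proved by the same argument (in fact more easily, since there are no spacetime resonances there), and \eqref{rak6} follows from the quartic decomposition \eqref{nf42} together with Lemma \ref{Lem1} and Remark \ref{extra1}, estimated as the quartic remainder in \cite[Section 6]{IoPu2}.
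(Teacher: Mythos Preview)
Your approach via integration by parts in $\eta$ has a genuine gap, and the proposed fix does not close.

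First, the difficulty is not confined to $K_2$. Any term in which the $\partial_\eta$ derivative interacts with the scale $2^{k_3}$ picks up a loss of at least $2^{-k_3/2}$: in $K_2$ via $\partial\widehat{f^-_{k_3}}$, in $K_4$ via $\partial_\eta c^\ast_{\mathbf{k}}$ (see \eqref{rak9.5}, which gives an extra factor $2^{-\min(k_1,k_3)}=2^{-k_3}$), and in $K_3$ via $\partial^2_\eta\Phi$, since $\Lambda''(\xi+\eta+\sigma)\approx 2^{-k_3/2}$. Under \eqref{rak3} one can have $k_3$ as small as $-2m(1+10p_0)$, so these losses can be of order $2^{m}$ or larger; the single gain $2^{-m}$ from $1/s$, the $2^{-m/2}$ from dispersion, and the $2^{k_3/2}$ from $\|\widehat{f^-_{k_3}}\|_{L^2}$ are not enough to absorb them. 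Your claim that $K_3,K_4$ are ``readily'' bounded is therefore incorrect in this regime.

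Second, your fix for $K_2$ by a second integration by parts in $\sigma$ produces a term with $\partial^2\widehat{f^-_{k_3}}$. Converting via $\rho\partial_\rho\widehat f=-\widehat{x\partial_xf}-\widehat f$ once gives back exactly the bound $\|\partial\widehat{f_{k_3}}\|_{L^2}\lesssim 2^{-k_3}\|P_{k_3}(x\partial_xf)\|_{L^2}\approx 2^{-k_3/2}$ already contained in \eqref{rak1}---no improvement. To control $\partial^2\widehat{f_{k_3}}$ you would need $(x\partial_x)^2f$, i.e.\ a second weighted vector field, and the paper only propagates one (see \eqref{pr5} and \eqref{nf25}). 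So the double-IBP argument cannot be completed with the available norms.

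The paper instead integrates by parts in \emph{time}. The key observations are: (i) on the support of the integrand, writing $\Phi(\xi,\eta,\sigma)=[\Lambda(\xi)-\Lambda(\xi+\eta)-\Lambda(\eta)]+[\Lambda(\eta)-\Lambda(\xi+\sigma)+\Lambda(\xi+\eta+\sigma)]$ and using \eqref{en21.4} together with $|\xi+\eta+\sigma|\approx 2^{k_3}\ll 2^{k_2}$ gives $|\Phi|\gtrsim 2^{k_2}\geq 2^{-48m/100}$, so dividing by $\Phi$ costs only a mild power of $2^m$; and (ii) since $k_3\leq -49m/100<-30$ the nonlinearity has no output at frequency $2^{k_3}$, hence $\partial_s\widehat{f^-_{k_3}}\equiv 0$. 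Thus the time-IBP never puts any derivative on the dangerous low-frequency factor, and one can use the favorable pointwise bound $\|\widehat{f^-_{k_3}}\|_{L^\infty}\lesssim\eps_1$ together with the small support measure $\approx 2^{k_2}2^{k_3}$ to close (see the estimates for $N_1,N_2,N_3$ in the paper). Finally, the last paragraph of your proposal (on \eqref{rak5} and \eqref{rak6}) is extraneous: this lemma concerns only \eqref{rak4}.
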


\begin{proof}[Proof of Lemma \ref{lemmainbound23}]
In this case we need to integrate by parts in time. Without loss of generality, we may again assume $k_2 = \min (k_1,k_2)$, therefore
\begin{align}
\label{conf10}
k_3 \leq -49m/100\leq -48m/100\leq k_2\leq k_1,\qquad k_1 \geq -30.
\end{align}

Recall that
\begin{align*}
\Phi(\xi,\eta,\sigma) = \Lambda(\xi)-\Lambda(\xi+\eta)-\Lambda(\xi+\sigma)+\Lambda(\xi+\eta+\sigma).
\end{align*}
For $|\xi+\eta| \approx 2^{k_1}$, $|\xi+\sigma| \approx |\eta| \approx 2^{k_2}$, $|\xi+\eta+\sigma| \approx 2^{k_3}$,
with $k_1,k_2,k_3$ satisfying \eqref{conf10}, one can use \eqref{en21.4} to show that
\begin{align*}
| \Phi(\xi,\eta,\sigma) | & \geq |\Lambda(\xi)-\Lambda(\xi+\eta)-\Lambda(\eta) | - 2^{10}|\xi+\eta+\sigma| 2^{k_2/2}\gtrsim 2^{k_2}.
\end{align*}

Because of the above lower bound we can integrate by parts in $s$ to obtain
\begin{align}
\label{IBPs}
\begin{split}
\Big| \int_{t_1}^{t_2} &e^{iL(\xi,s)} I_{k_1,k_2,k_3}^{++-}(\xi,s) \, ds \Big|\\
&\lesssim  | N_1 (\xi,t_1) | + | N_1 (\xi,t_2) | + \int_{t_1}^{t_2} | N_2(\xi,s)|+ | N_3(\xi,s) | +  |(\partial_sL)(\xi,s)|| N_1(\xi,s) | \, ds,
\end{split}
\end{align}
where
\begin{align}
\label{IBPs2}
\begin{split}
& N_1(\xi) := \int_{\mathbb{R}\times\mathbb{R}} e^{is\Phi(\xi,\eta,\sigma)} \frac{c^\ast_{\mathbf{k}} (\eta,\sigma)}{i \Phi(\xi,\eta,\sigma)}
  \what{f_{k_1}^{+}}(\xi+\eta) \what{f_{k_2}^{+}}(\xi+\sigma) \what{f_{k_3}^{-}}(-\xi-\eta-\sigma) \,d\eta d\sigma ,
\\
& N_2(\xi) := \int_{\mathbb{R}\times\mathbb{R}} e^{is\Phi(\xi,\eta,\sigma)} \frac{c^\ast_{\mathbf{k}}(\eta,\sigma)}{i \Phi(\xi,\eta,\sigma)}
 \, (\partial_s\what{f_{k_1}^{+}})(\xi+\eta) \what{f_{k_2}^{+}}(\xi+\sigma) \what{f_{k_3}^{-}}(-\xi-\eta-\sigma) \,d\eta d\sigma ,
\\
& N_3(\xi) := \int_{\mathbb{R}\times\mathbb{R}} e^{is\Phi(\xi,\eta,\sigma)} \frac{c^\ast_{\mathbf{k}}(\eta,\sigma)}{i \Phi(\xi,\eta,\sigma)}
  \what{f_{k_1}^{+}}(\xi+\eta)(\partial_s\what{f_{k_2}^{+}})(\xi+\sigma) \what{f_{k_3}^{-}}(-\xi-\eta-\sigma) \,d\eta d\sigma.
\end{split}
\end{align}
Notice that we do not get a term containing the time derivative of $\what{f_{k_3}^{+}}$ because $k_3 \ll 0$.

To estimate the first term in \eqref{IBPs} it suffices to use the pointwise bound
\begin{align}
\label{IBPssym}
\Big| \frac{c^\ast_{\mathbf{k}} (\eta,\sigma)}{ \Phi(\xi,\eta,\sigma)} \Big| \lesssim 2^{4k_1} 2^{-k_2/2} 2^{-k_3/2},
\end{align}
see \eqref{rak9}. Using also \eqref{rak1}--\eqref{rak1.5} we obtain, for any $s\in[t_1,t_2]$,
\begin{align*}
|N_1(\xi,s)| & \lesssim 2^{4k_1} 2^{-k_2/2} 2^{-k_3/2}
    {\| \what{f_{k_1}^{+}}(s) \|}_{L^\infty} {\| \what{f_{k_2}^{+}}(s) \|}_{L^\infty} {\| \what{f_{k_3}^{-}}(s) \|}_{L^\infty} 2^{k_2} 2^{k_3}
\\
& \lesssim \e_1^3 2^{k_3(1/2-p_0)}\\
&\lesssim \e_1^3 2^{-m/10}.
\end{align*}
Moreover, the definition of $L$ in \eqref{nf50} and the apriori assumptions \eqref{rak1}--\eqref{rak1.5} show that
\begin{align*}
\big|(\partial_sL)(\xi,s) \big| \lesssim \e_1^2 2^{-m}.
\end{align*}
Therefore
\begin{align}
\label{N1bound}
| N_1 (\xi,t_1) | + | N_1 (\xi,t_2) | + \int_{t_1}^{t_2} |(\partial_sL)(\xi,s)|| N_1(\xi,s) | \, ds\lesssim \e_1^3 2^{-m/10}.
\end{align}

Using the same pointwise bound \eqref{IBPssym} on the symbol, and the $L^2$ bound
\begin{equation}\label{prs}
\big\|(\partial_s\what{f_{l}^{\pm}})(s)\big\|_{L^2}\lesssim \eps_1^22^{-l}2^{-m+6p_0m}\mathbf{1}_{[-30,\infty)}(l),
\end{equation}
see \eqref{nf27} and \eqref{nf22}, the term $N_2$ can be estimated in the following way,
 \begin{align*}
\begin{split}
|N_2(\xi,s)| & \lesssim 2^{4k_1} 2^{-k_2/2} 2^{-k_3/2}
    {\big\|(\partial_s\what{f_{k_1}^{+}})(s) \big\|}_{L^2}
    {\| \what{f_{k_2}^{+}}(s) \|}_{L^2} {\| \what{f_{k_3}^{-}}(s) \|}_{L^\infty} 2^{k_3}
\\
& \lesssim \eps_1^3 2^{3k_1}2^{-m+6p_0m}\min(2^{-p_0k_2},2^{-8k_2}) 2^{k_3(1/2-p_0)} \\
&\lesssim \eps_1^3 2^{-m}2^{-m/10}.
\end{split}
\end{align*}
The integral $|N_3(\xi,s)|$ can be estimated in a similar way. Therefore
\begin{align*}
\int_{t_1}^{t_2} | N_2(\xi,s) |+|N_3(\xi,s)| \, ds\lesssim \e_1^3 2^{-m/10}.
\end{align*}
The lemma follows using also \eqref{IBPs} and \eqref{N1bound}.
\end{proof}

\begin{lem}\label{lemmainbound21}
  The bound \eqref{rak4} holds provided that \eqref{rak3} holds and, in addition,
\begin{equation}
\label{hyp40}
\max (|k-k_1|, |k- k_2|, |k-k_3|) \geq 21 \qquad \mbox{and} \qquad \max (|k_1 - k_3|, |k_2-k_3|) \leq 4.
\end{equation}
\end{lem}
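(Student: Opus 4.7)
The plan is to show that, under the hypotheses \eqref{hyp40}, both terms in the integrand on the left-hand side of \eqref{rak4} vanish identically, so the bound is trivial.

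First I would unpack the hypotheses. The assumption $\max(|k_1-k_3|,|k_2-k_3|)\leq 4$ implies $|k_i-k_j|\leq 8$ for all $i,j\in\{1,2,3\}$, and together with $\max_j|k-k_j|\geq 21$ a triangle-inequality argument gives $|k-k_3|\geq 17$ and $|k-k_j|\geq 13$ for $j=1,2$. Moreover, the convolution identity $(\xi-\eta)+(\eta-\sigma)+\sigma=\xi$ rules out $k_j\leq k-13$ (which would force $|\xi|\leq 3\cdot 2^{k-12}\ll 2^k$, contradicting $|\xi|\approx 2^k$), so we must be in a high-high-high to low configuration with $k_j\geq k+13$ for $j=1,2$ and $k_3\geq k+17$.

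Next I would examine the four terms of $c^{++-}(\xi,\eta,\sigma)$ in \eqref{nf45}. Each is a product of a factor $a(\xi,\cdot)$ or $b(\xi,\cdot)$ with a $q_0$ factor, and by the definition \eqref{nf4} these symbols inherit from the cutoff $\chi$ in \eqref{pr2} the support constraint that the second argument $\mu$ of $a$ or $b$ satisfies $|\xi|\approx|\mu|$ up to a factor $\lesssim 2^{11}$. The second arguments appearing in the four terms are $\xi-\eta$, $\eta$, $\xi-\eta$, and $\xi-\sigma$, respectively. For the first three, $|\xi-\eta|\approx|\eta|\approx 2^{k_1}$ (the latter because $|\xi|\ll 2^{k_1}$), so the comparability fails once $k_1\geq k+13$ and those three terms vanish. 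The only delicate term is the fourth, where $\xi-\sigma=(\xi-\eta)+(\eta-\sigma)$ can a priori be small due to cancellation. My plan is to observe that $b(\xi,\xi-\sigma)\neq 0$ forces $|\xi-\sigma|\lesssim 2^{k+11}$, which gives $|\sigma|\leq|\xi|+|\xi-\sigma|\lesssim 2^{k+12}$, incompatible with $|\sigma|\approx 2^{k_3}\geq 2^{k+17}$. Hence $c^{++-}(\xi,\eta,\sigma)\equiv 0$ on the support of the integrand, so $I^{++-}_{k_1,k_2,k_3}(\xi,s)\equiv 0$.

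For the correction term in the bracket in \eqref{rak4}, the Fourier localization gives $\widehat{f^{\pm}_{k_j}}(\pm\xi,s)=\varphi_{k_j}(\pm\xi)\widehat{f^{\pm}}(\pm\xi,s)$, and this vanishes whenever $|\xi|\approx 2^k$ with $|k-k_j|\geq 13$. Thus that term is also identically zero and the left-hand side of \eqref{rak4} vanishes pointwise in $s$, giving the bound trivially. The only step requiring real care is the fourth term of $c^{++-}$, where the combined support conditions of $b$ and of $\varphi_{k_3}$ must be used to rule out the thin near-resonant slab $\eta-\sigma\approx-(\xi-\eta)$; the other three terms and the correction term vanish immediately from the size of $|\xi-\eta|$, $|\eta|$, or $|\xi|$.
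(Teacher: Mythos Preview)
Your argument is correct and gives a shorter proof than the paper's, by exploiting a feature specific to this model. The paper does \emph{not} argue that the symbol vanishes; instead it notes that under \eqref{hyp40} one has $\min(k_1,k_2,k_3)\geq k+10$, so that in the coordinates of \eqref{rak7} the variable $\sigma$ satisfies $|\sigma|\approx 2^{k_2}$, whence the lower bound $|\partial_\eta\Phi(\xi,\eta,\sigma)|=|{-}\Lambda'(\xi+\eta)+\Lambda'(\xi+\eta+\sigma)|\gtrsim 2^{k_{\max}/2}$. It then integrates by parts in $\eta$ exactly as in Lemma~\ref{lemmainbound20} to obtain $|I^{++-}_{k_1,k_2,k_3}(\xi,s)|\lesssim\eps_1^3 2^{-m-200p_0m}$.

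Your route is genuinely different: you use the factor $\varphi_{\geq\ell-10}(x+y)$ built into the model cutoff $\chi$ in \eqref{pr2}, which forces the output frequency $|\xi|$ in $a(\xi,\mu)$ or $b(\xi,\mu)$ to be at least comparable (within $2^{11}$) to the high input $|\mu|$. In the high--high--high to low configuration $k_j\geq k+13$ this kills each of the four terms of $c^{++-}$ on the support of the integrand, and the frequency localization $\varphi_{k_j}(\pm\xi)$ kills the correction term, so the entire left-hand side of \eqref{rak4} is identically zero. The constants work out: for the first three terms the relevant $\mu$ has $|\mu|\approx 2^{k_1}$ so $\ell\geq k_1-1$ and $|\xi|\leq 2^{k+1}\leq 2^{k_1-12}<(5/4)2^{k_1-12}$; for the fourth term the constraint $\ell\leq k+11$ combined with $|\sigma|\approx 2^{k_3}\geq 2^{k+17}$ gives the contradiction you describe. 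This is more elementary, but it leans on the deliberate truncation of small-output interactions in the model (see Section~\ref{secmodel}); the paper's integration-by-parts argument is the robust one that carries over to the full system where no such cutoff is present.
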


\begin{proof}[Proof of Lemma \ref{lemmainbound21}]
 In this case we have $\min (k_1,k_2,k_3) \geq  k + 10 \geq -20$.
In particular, $|\sigma|\approx 2^{k_2}$ and we can integrate by parts in $\eta$ similarly to what was done before
in Lemma \ref{lemmainbound20}.
More precisely, under the assumptions \eqref{hyp40} we have
\begin{align*}
 |(\partial_\eta \Phi)(\xi,\eta,\sigma)| = |-\Lambda'(\xi+\eta)+\Lambda'(\xi+\eta+\sigma)| \gtrsim 2^{k_2/2} \gtrsim 2^{k_{\max}/2}.
\end{align*}
Integrating by parts in $\eta$ as in the previous lemma gives
\begin{align*}
|I_{k_1,k_2,k_3}^{++-}(\xi,s)| & \lesssim  |K_1(\xi,s)| + |K_2(\xi,s)| + |K_3(\xi,s)| + |K_4(\xi,s)|
\end{align*}
where the term $K_j$, $j=1,\dots 4$ are defined in \eqref{IBPeta}-\eqref{symIBPeta}. The same estimates as before show that $|I^{++-}_{k_1,k_2,k_3}(\xi,s)|\lesssim \eps_1^32^{-m}2^{-200p_0m}$, which suffices to prove the lemma.
\end{proof}

\subsection{Proof of \eqref{rak5}}\label{prooftech2}

As with the proof of \eqref{rak4}, we divide the proof of the bound \eqref{rak5} into several lemmas. We only consider in detail the case $(\iota_1\iota_2\iota_3)=(--+)$ since the case $(\iota_1\iota_2\iota_3)=(+++)$ is very similar. For simplicity of notation, in this subsection let $\Phi:=\Phi^{--+}$ and $c^{\ast}_{\mathbf{k}}:=c^{\ast,--+}_{\xi;k_1,k_2,k_3}$.

\begin{lem}\label{lemmainBO20}
The bound \eqref{rak5} holds provided that \eqref{rak3} holds and, in addition,
\begin{align}
\label{hyr20}
\max (|k_1 - k_3| , |k_2-k_3|) \geq 5 \quad \mbox{and} \quad \min (k_1,k_2,k_3) \geq -\frac{49}{100}m.
\end{align}
\end{lem}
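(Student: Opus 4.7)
The strategy parallels Lemma \ref{lemmainbound20} in the proof of \eqref{rak4}. For the $(--+)$ case the phase is
$$\Phi^{--+}(\xi,\eta,\sigma) = \Lambda(\xi)+\Lambda(\xi+\eta)+\Lambda(\xi+\sigma)-\Lambda(\xi+\eta+\sigma),$$
so that $\partial_\eta\Phi^{--+}=\Lambda'(\xi+\eta)-\Lambda'(\xi+\eta+\sigma)$ and $\partial_\sigma\Phi^{--+}=\Lambda'(\xi+\sigma)-\Lambda'(\xi+\eta+\sigma)$. The support constraints on $c^{\ast,--+}_{\xi;k_1,k_2,k_3}$, inherited from those on $a,b,q_0$, force $k_{\max}-\mathrm{med}(k_1,k_2,k_3)\lesssim 1$, so at least two of the three frequencies are comparable to $k_{\max}$. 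Combined with the hypothesis $\max(|k_1-k_3|,|k_2-k_3|)\geq 5$ and $\Lambda'(r)=(3/2)|r|^{1/2}\mathrm{sgn}(r)$, one of the two phase derivatives is non-degenerate on the support of integration: $|\partial_\eta\Phi^{--+}|\gtrsim 2^{k_{\max}/2}$ when $|k_1-k_3|\geq 5$, and $|\partial_\sigma\Phi^{--+}|\gtrsim 2^{k_{\max}/2}$ when $|k_2-k_3|\geq 5$.

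By symmetry we may assume $|k_1-k_3|\geq 5$ and integrate by parts in $\eta$ in the formula \eqref{rak7}. Introducing
$$m_1(\eta,\sigma):=\frac{\varphi'_{k_1}(\xi+\eta)\varphi'_{k_3}(\xi+\eta+\sigma)}{s\,\partial_\eta\Phi^{--+}(\xi,\eta,\sigma)},\qquad \|m_1\|_{S^\infty}\lesssim 2^{-m}\,2^{-k_{\max}/2},$$
this produces four terms $K_1,K_2,K_3,K_4$ in complete analogy with \eqref{IBPeta}, corresponding to the derivative landing on $\widehat{f_{k_1}^-}$, on $\widehat{f_{k_3}^+}$, on $m_1$, or on $c^{\ast,--+}_{\xi;k_1,k_2,k_3}$. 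Each is then controlled via Lemma \ref{touse}(ii) together with the symbol bounds \eqref{rak9}--\eqref{rak9.5} and the dispersive estimates \eqref{rak1}--\eqref{rak1.5}, exactly as in Lemma \ref{lemmainbound20}. The low-frequency restriction $\min(k_1,k_2,k_3)\geq -49m/100$ absorbs the worst $2^{-k_{\min}/2}$ loss coming from $\|c^\ast\|_{S^\infty}$, yielding the pointwise bound $|I^{--+}_{k_1,k_2,k_3}(\xi,s)|\lesssim \eps_1^3\,2^{-m}\,2^{-200p_0m}$. Integration over $s\in[t_1,t_2]$ on an interval of length $\leq 2^{m+1}$ then gives \eqref{rak5} for $(\iota_1\iota_2\iota_3)=(--+)$.

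The argument for $(\iota_1\iota_2\iota_3)=(+++)$ is entirely analogous, using the same dichotomy on $|k_i-k_3|$ applied to $\Phi^{+++}(\xi,\eta,\sigma)=\Lambda(\xi)-\Lambda(\xi+\eta)-\Lambda(\xi+\sigma)-\Lambda(\xi+\eta+\sigma)$; all symbol and dispersive estimates carry over without change. The main technical obstacle is verifying the phase-derivative lower bound $\gtrsim 2^{k_{\max}/2}$ uniformly across the subcases permitted by the near-diagonal support of $c^\ast_{\mathbf{k}}$, so that the gain $2^{-m}$ from integration by parts is not offset by the $\|c^\ast\|_{S^\infty}$ factor; once this is handled, the remainder of the proof is a routine bookkeeping computation following Section \ref{prooftech1}.
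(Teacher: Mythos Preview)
Your proposal is correct and follows exactly the paper's approach: reduce by symmetry to $|k_1-k_3|\geq 5$, use the lower bound $|\partial_\eta\Phi|\gtrsim 2^{k_{\max}/2}$, integrate by parts in $\eta$, and estimate the resulting $K_j$ terms via Lemma~\ref{touse}(ii), \eqref{rak9}--\eqref{rak9.5}, and \eqref{rak1}--\eqref{rak1.5} exactly as in Lemma~\ref{lemmainbound20}. The paper's own proof is a two-line reference back to that lemma, so your expanded version is, if anything, more explicit than the original.
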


\begin{proof}[Proof of Lemma \ref{lemmainBO20}] This is similar to the proof of Lemma \ref{lemmainbound20}. Without loss of generality, by symmetry we can assume that $|k_1-k_3| \geq 5$ and $k_2\leq \max(k_1,k_3)+5$. Under the assumptions \eqref{hyr20} we still have the strong lower bound
\begin{align*}
 |(\partial_\eta\Phi)(\xi,\eta,\sigma)| = |\Lambda'(\xi+\eta)-\Lambda'(\xi+\eta+\sigma)| \gtrsim 2^{k_{\max}/2},
\end{align*}
and the proof proceeds exactly as in Lemma \ref{lemmainbound20}, using integration by parts in $\eta$.
\end{proof}

\begin{lem}\label{lemmainBO21}
The bound \eqref{rak5} holds provided that \eqref{rak3} holds and, in addition,
\begin{align}
\label{hyr21}
\max (|k_1 - k_3| , |k_2-k_3|) \geq 5 \quad \mbox{and} \quad \mathrm{med} (k_1,k_2,k_3) \leq -48m/100.
\end{align}
\end{lem}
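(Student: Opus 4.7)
The plan is to adapt the integration-by-parts argument of Lemma \ref{lemmainBO20}, but with a careful case split based on which of the three input frequencies is the largest. First observe that the change-of-variables identity underlying \eqref{rak7} gives $(\xi+\eta)+(\xi+\sigma)-(\xi+\eta+\sigma)=\xi$, so by the triangle inequality $2^{k_{\max}}\gtrsim 2^k \gtrsim 2^{-30}$, while by assumption $2^{k_{\mathrm{med}}}\leq 2^{-48m/100}$. Thus the maximum of $k_1,k_2,k_3$ is strictly isolated and comparable to $k$, and the other two indices are at most $-48m/100$. By the symmetry $(\eta,k_1)\leftrightarrow(\sigma,k_2)$ of $I^{--+}_{k_1,k_2,k_3}$ (the two antiholomorphic slots are interchangeable, up to relabeling the symbol, which satisfies the same bounds), we may assume $k_1\leq k_2$.

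Recall $\Phi^{--+}(\xi,\eta,\sigma)=\Lambda(\xi)+\Lambda(\xi+\eta)+\Lambda(\xi+\sigma)-\Lambda(\xi+\eta+\sigma)$ so that
\[
\partial_\eta\Phi^{--+}=\Lambda'(\xi+\eta)-\Lambda'(\xi+\eta+\sigma),\qquad \partial_\sigma\Phi^{--+}=\Lambda'(\xi+\sigma)-\Lambda'(\xi+\eta+\sigma).
\]
In \textbf{Case (a)}, $k_3=k_{\max}\approx k$ and $k_1,k_2\leq -48m/100$. Then $|\xi+\eta+\sigma|\approx 2^k$ while $|\xi+\eta|\approx 2^{k_1}\ll 2^k$, so $|\partial_\eta\Phi^{--+}|\gtrsim 2^{k/2}\gtrsim 2^{k_{\max}/2}$, and we integrate by parts in $\eta$. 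In \textbf{Case (b)}, $k_2=k_{\max}\approx k$ and $k_1,k_3\leq -48m/100$. Here the $\eta$-derivative of the phase can vanish (both $|\xi+\eta|$ and $|\xi+\eta+\sigma|$ are small), so we instead use $|\xi+\sigma|\approx 2^k$, $|\xi+\eta+\sigma|\approx 2^{k_3}\ll 2^k$ to obtain $|\partial_\sigma\Phi^{--+}|\gtrsim 2^{k/2}\gtrsim 2^{k_{\max}/2}$, and we integrate by parts in $\sigma$.

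In each case the integration by parts produces a factor $(s|\partial_j\Phi|)^{-1}\lesssim 2^{-m}2^{-k_{\max}/2}$ and splits the integral into four pieces: derivatives falling on the small-frequency profile, on the large-frequency profile, on the factor $1/(s\partial_j\Phi)$, and on the symbol $c^{\ast,--+}_{\xi;k_1,k_2,k_3}$. Using the $S^\infty$ estimates \eqref{rak9}--\eqref{rak9.5}, we place the two low-frequency factors (with indices $\leq-48m/100$) in $L^2$ via the sharp bounds $\|\widehat{f_l^{\pm}}\|_{L^2},\,2^l\|\partial\widehat{f_l^\pm}\|_{L^2}\lesssim\eps_1 2^{l(1/2-p_0)}(1+t)^{6p_0}$ from \eqref{rak1}, and place the single high-frequency factor in $L^\infty$ via the dispersive bound $\|e^{\mp is\Lambda}f_{k_{\max}}^\pm\|_{L^\infty}\lesssim\eps_1(1+t)^{-1/2}$. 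The two factors $2^{l(1/2-p_0)}$ combine to roughly $2^{(k_{\min}+k_{\mathrm{med}})(1/2-p_0)}$, and the second constraint in \eqref{rak3} is exactly what is needed to absorb the time growth $(1+t)^{6p_0}$ and still obtain a gain of the form $2^{-m/10}$ (or better) after integration in $s\in[t_1,t_2]$, which is comfortably stronger than the required $\eps_1^3 2^{-200p_0m}$.

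The main obstacle is Case (b): the natural instinct, mirroring Lemma \ref{lemmainbound22}, is to integrate by parts in $\eta$, but here that derivative of the phase can be small, so one must recognize that the role of the high-frequency variable has shifted from the third slot to the second and pivot to $\sigma$. Beyond that structural observation, the remaining bookkeeping is entirely parallel to Lemma \ref{lemmainBO20}, so no genuinely new harmonic-analytic input is needed.
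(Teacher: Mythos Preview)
Your proof is correct and follows the same essential strategy as the paper: exploit the lower bound $|\nabla_{\eta,\sigma}\Phi^{--+}|\gtrsim 2^{k_{\max}/2}$ to integrate by parts once in a spatial frequency variable, then estimate as in Lemma \ref{lemmainBO20} using the constraint $k_{\min}+3k_{\mathrm{med}}\geq -2m(1+10p_0)$ from \eqref{rak3}.

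The only structural difference is your choice of symmetry reduction. The paper takes $k_2=\min(k_1,k_2)$, which guarantees that the large frequency sits in slot $1$ or slot $3$; since $\partial_\eta\Phi^{--+}=\Lambda'(\xi+\eta)-\Lambda'(\xi+\eta+\sigma)$ involves precisely those two slots, a single integration by parts in $\eta$ covers all cases at once. You instead take $k_1\leq k_2$, which forces the large frequency into slot $2$ or slot $3$ and hence the case split (a)/(b) with integration in $\eta$ or $\sigma$. Both routes are valid; the paper's convention simply avoids the split.

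One minor imprecision: your blanket allocation ``two low-frequency factors in $L^2$, high-frequency factor in $L^\infty$'' cannot be applied uniformly to all four pieces $K_1,\ldots,K_4$. When the $\eta$- (or $\sigma$-) derivative lands on the high-frequency profile (your $K_2$ in Case (a), or the analogous term in Case (b)), that factor must go into $L^2$, and one of the low-frequency factors must instead take the $L^\infty$ dispersive bound $\|e^{\mp is\Lambda}f_l^\pm\|_{L^\infty}\lesssim\eps_1 2^{l/10-p_0 l}(1+t)^{-1/2}$ from \eqref{rak1}. Since you explicitly defer the bookkeeping to Lemma \ref{lemmainBO20}, where exactly this reallocation is carried out, this does not constitute a gap.
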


\begin{proof}[Proof of Lemma \ref{lemmainBO21}] This is similar to the situation in Lemma \ref{lemmainbound22}. By symmetry we may assume that $k_2=\min(k_1,k_2)$. The main observation is that we still have the strong lower bound
\begin{align*}
 |(\partial_\eta \Phi)(\xi,\eta,\sigma)| = |-\Lambda'(\xi+\eta)+\Lambda'(\xi+\eta+\sigma)| \gtrsim 2^{k_{\max}/2}.
\end{align*}
Then we integrate by parts in $\eta$ and estimate the resulting integrals as in Lemma \ref{lemmainbound20} (placing $\widehat{f_{k_2}^+}$ in $L^2$ and recalling also the restriction $k_{\min}+3k_{\mathrm{med}}\geq -2m(1+10p_0)$, see \eqref{rak3}).
\end{proof}

\begin{lem}\label{lemmainBO23}
  The bound \eqref{rak5} holds provided that \eqref{rak3} holds and, in addition,
\begin{align}
\label{hyr43}
\max (|k_1 - k_3| , |k_2-k_3|) \geq 5 \quad \mbox{and} \quad k_{\mathrm{med}}\geq -\frac{48m}{100}\quad \mbox{and} \quad k_{\min}\leq -\frac{49m}{100}.
\end{align}
\end{lem}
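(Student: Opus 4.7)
The proof plan mirrors that of Lemma \ref{lemmainbound23}: I will integrate by parts in time $s$, using a lower bound on the relevant phase $\Phi^{\iota_1\iota_2\iota_3}$ in the current regime. The hypotheses of Lemma \ref{lemmainBO23} are identical to those of Lemma \ref{lemmainbound23} except for the sign pattern of the phase, so the only genuinely new ingredient is the phase lower bound for $(--+)$ and $(+++)$.

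First I establish that $|\Phi^{--+}(\xi,\eta,\sigma)| \gtrsim 2^{k_{\mathrm{med}}}$ on the support of the integrand. Working in the variables $(x,y)$ of \eqref{rak7}--\eqref{rak8}, so that $\Phi^{--+}(\xi,x,y) = \Lambda(\xi) + \Lambda(\xi+x) + \Lambda(\xi+y) - \Lambda(\xi+x+y)$, I split into sub-cases according to which index realizes $k_{\min}$. If $k_3 = k_{\min}$, then the subtracted term $\Lambda(\xi+x+y) \lesssim 2^{3k_3/2}$ is negligible compared with $\Lambda(\xi) \gtrsim 2^{-45}$, so $|\Phi^{--+}| \gtrsim 1$. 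If instead $k_1 = k_{\min}$ (the case $k_2 = k_{\min}$ being symmetric), then $\xi+x \approx 0$ and hence $\xi+x+y = y + O(2^{k_1})$, so
\[ \Phi^{--+} = \Lambda(\xi) + O(2^{3k_1/2}) + [\Lambda(\xi+y) - \Lambda(y)] + O(2^{k_1}\cdot 2^{k_3/2}) .\]
The bracketed difference is analyzed using Taylor expansion (or \eqref{en21.4}) according to the relative signs and sizes of $\xi$ and $y$; combined with the separation assumption $\max(|k_1-k_3|,|k_2-k_3|) \geq 5$ (which excludes the only configuration that could force cancellation), this produces $|\Phi^{--+}| \gtrsim 2^{k_{\mathrm{med}}}$. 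The phase $\Phi^{+++}$ is handled analogously, using the conservation $\xi = (\xi+x)+(\xi+y)-(\xi+x+y)$ together with $|\xi| \gtrsim 1$ (which forces $k_{\max} \geq k-2$) to rule out resonance.

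With the phase estimate in hand, I apply integration by parts in $s$ exactly as in \eqref{IBPs}--\eqref{IBPs2}, with $\Phi^{\iota_1\iota_2\iota_3}$ in place of $\Phi^{++-}$. The combined symbol bound from \eqref{rak9} and the phase estimate reads
\[ \Big\| \frac{c^{\ast,\iota_1\iota_2\iota_3}_{\xi;k_1,k_2,k_3}}{\Phi^{\iota_1\iota_2\iota_3}} \Big\|_{L^\infty} \lesssim 2^{4k_{\max}} \cdot 2^{(k_{\mathrm{med}} - k_{\min})/2} \cdot 2^{-k_{\mathrm{med}}},\]
identical in form to \eqref{IBPssym}. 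The boundary terms $N_1(\xi,t_1), N_1(\xi,t_2)$ are controlled as in \eqref{N1bound} by placing each $\widehat{f^{\pm}_{k_i}}$ in $L^\infty$ via \eqref{rak1}--\eqref{rak1.5} and using the $(\eta,\sigma)$-support measure $\lesssim 2^{k_{\mathrm{med}}+k_{\min}}$, yielding $|N_1| \lesssim \eps_1^3 2^{-m/10}$. The integrated terms $N_2, N_3$ arising from $\partial_s \widehat{f}$ gain an additional factor $2^{-m+6p_0m}$ from \eqref{prs}, and the $(\partial_s L)N_1$ contribution gains $\eps_1^2 2^{-m}$ from $|\partial_s L| \lesssim \eps_1^2 2^{-m}$. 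Integrating over $s \in [2^m-2, 2^{m+1}]$ produces a total that comfortably satisfies \eqref{rak5}, since $200 p_0 \leq 2\cdot 10^{-4}$ is tiny compared to $1/10$.

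The main obstacle is the verification of the phase lower bound in the sub-case $k_1 = k_{\min}$ of $\Phi^{--+}$ (and the analogous configurations for $\Phi^{+++}$), where two of the three non-$\xi$ frequencies are large and could in principle produce partial cancellation in the phase; this is precisely where the separation assumption $\max(|k_1-k_3|,|k_2-k_3|) \geq 5$ is essential, preventing the degenerate coincidence of the subtracted term with one of the summed ones. Once the lower bound $|\Phi^{\iota_1\iota_2\iota_3}| \gtrsim 2^{k_{\mathrm{med}}}$ is secured, the remainder of the argument is a direct transcription of the proof of Lemma \ref{lemmainbound23}.
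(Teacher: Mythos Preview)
Your proposal is correct and follows exactly the same approach as the paper: integrate by parts in $s$ using the lower bound $|\Phi^{\iota_1\iota_2\iota_3}|\gtrsim 2^{k_{\mathrm{med}}}$, then estimate the boundary and bulk terms precisely as in Lemma~\ref{lemmainbound23}. The paper's own proof is two sentences asserting the phase bound and invoking Lemma~\ref{lemmainbound23}; you supply a case analysis according to which index realizes $k_{\min}$, which is the natural way to verify the phase bound. One small remark: your emphasis on the separation hypothesis $\max(|k_1-k_3|,|k_2-k_3|)\geq 5$ as ``essential'' in the sub-case $k_1=k_{\min}$ is slightly misplaced---in that sub-case the gap $k_{\mathrm{med}}-k_{\min}\geq m/100$ already forces large separation for $m$ not too small, so the hypothesis is essentially redundant there; it is really the combination $k_{\min}\leq -49m/100$, $k_{\mathrm{med}}\geq -48m/100$ that drives the phase lower bound.
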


\begin{proof}[Proof of Lemma \ref{lemmainBO23}] This is similar to the situation in Lemma \ref{lemmainbound23}. The main observation is that we have the lower bound
\begin{align*}
| \Phi(\xi,\eta,\sigma) | \gtrsim 2^{k_{\mathrm{med}}},
\end{align*}
so we can integrate by parts in time and estimate the resulting integrals, as in the proof of Lemma \ref{lemmainbound23}.
\end{proof}

\begin{lem}\label{lemmainBO24}
  The bound \eqref{rak5} holds provided that \eqref{rak3} holds and, in addition,
\begin{equation}
\label{hyr44}
\max (|k-k_1|, |k- k_2|, |k-k_3|) \geq 21 \qquad \mbox{and} \qquad \max (|k_1 - k_3|, |k_2-k_3|) \leq 4.
\end{equation}
\end{lem}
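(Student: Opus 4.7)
The plan is to establish a strong lower bound on the phase $\Phi^{--+}$ and then integrate by parts in the time variable $s$, following the template of Lemma \ref{lemmainBO23}. First, under \eqref{rak3} and \eqref{hyr44}, the identity $\xi = (\xi+\eta)+(\xi+\sigma)-(\xi+\eta+\sigma)$, valid on the support of $c^{\ast,--+}_{\xi;k_1,k_2,k_3}$, combined with $|\xi|\approx 2^k\geq 2^{-30}$ and $\max(|k_1-k_3|,|k_2-k_3|)\leq 4$, forces $\min(k_1,k_2,k_3)\geq k+17$: otherwise each term on the right would have magnitude at most $2^{k-13}$, failing to combine to produce something of size $2^k$. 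Thus the three input frequencies are all comparable to a common value $2^{k_1}\gg 2^k$.

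I would next claim the pointwise bound $|\Phi^{--+}(\xi,\eta,\sigma)|\gtrsim 2^{3k_1/2}$ on the support of the integrand. Setting $a:=|\xi+\eta|$, $b:=|\xi+\sigma|$, $c:=|\xi+\eta+\sigma|$, with $a,b,c\approx 2^{k_1}$, I split into cases based on the signs of $\xi+\eta$ and $\xi+\sigma$. In the same-sign case, $c=a+b-O(|\xi|)$, so up to an error of order $|\xi|(a+b)^{1/2}\lesssim 2^{3k_1/2-17}$ the phase reduces to $a^{3/2}+b^{3/2}-(a+b)^{3/2}=-(a+b)^{3/2}\big[1-x^{3/2}-(1-x)^{3/2}\big]$ with $x=a/(a+b)$; since $|k_1-k_3|\leq 4$ forces $x\in[1/C,1-1/C]$ for a fixed $C$, and $1-x^{3/2}-(1-x)^{3/2}$ is continuous, strictly positive on $(0,1)$, and vanishes only at the endpoints, this gives $|\Phi^{--+}|\gtrsim 2^{3k_1/2}$. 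In the opposite-sign case, $c=|a-b|+O(|\xi|)$, and the analogous analysis of $a^{3/2}+b^{3/2}-|a-b|^{3/2}$ yields the same lower bound. The parallel estimate $|\Phi^{+++}|\gtrsim 2^{3k_1/2}$ is immediate because all three large magnitudes enter with the same sign.

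With $|\Phi|\gtrsim 2^{3k_1/2}$ in hand, the identity $e^{is\Phi}=(is\Phi)^{-1}\partial_s(e^{is\Phi})$ yields, after integration by parts in $s$, boundary contributions $N_1(\xi,t_j)$, $j=1,2$, three interior terms obtained by letting $\partial_s$ fall on each of $\widehat{f}^{\iota_l}_{k_l}$ (one more than in Lemma \ref{lemmainBO23}, since here all three frequencies are moderate and non-negligible), and a term proportional to $(\partial_s L)\cdot N_1$, mirroring \eqref{IBPs}--\eqref{IBPs2}. The effective multiplier obeys $\|c^{\ast}_{\mathbf{k}}/\Phi\cdot\varphi'_{k_1}\varphi'_{k_2}\varphi'_{k_3}\|_{S^\infty}\lesssim 2^{4k_1}/2^{3k_1/2}=2^{5k_1/2}$ in view of \eqref{rak9}. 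To estimate the boundary term I would distribute the three Fourier factors as $L^2\cdot L^2\cdot L^\infty$, placing the dispersive decay $\|e^{-is\Lambda}f^{+}_{k_3}(s)\|_{L^\infty}\lesssim\eps_1(1+s)^{-1/2}2^{-7\max(k_3,0)/5}$ from \eqref{rak1.5} on one slot and the $L^2$ bounds from \eqref{nf55} on the other two; this yields $|N_1(\xi,s)|\lesssim\eps_1^3(1+s)^{-1/2+12p_0}\cdot 2^{C k_1}$ for a fixed constant $C$, hence $\lesssim\eps_1^3 2^{-m/3}$ on $[t_1,t_2]$, which easily beats $\eps_1^3 2^{-200p_0 m}$ since $p_0\leq 10^{-6}$. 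The interior terms are handled analogously using the bound \eqref{prs} on $\|\partial_s\widehat{f}^\pm_l\|_{L^2}$, and the $\partial_s L$ contribution using $|\partial_s L|\lesssim\eps_1^2(1+s)^{-1}$ (from \eqref{nf50} and \eqref{nf55}); in each case time integration against $(1+s)^{-1+6p_0}$ or $(1+s)^{-1}$ provides the required decay.

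The main difficulty is the phase lower bound in the second paragraph: the positive and negative $O(2^{3k_1/2})$ contributions to $\Phi^{--+}$ must be prevented from cancelling, which is accomplished by the strict convexity of $x\mapsto x^{3/2}$ together with the geometric constraint $a\approx b\approx c\approx 2^{k_1}$ inherited from the support of $c^{\ast}_{\mathbf{k}}$; the small output frequency $|\xi|\approx 2^k$ plays only an auxiliary role in controlling error terms. Once this bound is established, the remainder of the argument is a routine application of the time integration by parts machinery already used in Lemma \ref{lemmainBO23}.
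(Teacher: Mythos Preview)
Your argument is correct but follows a different route from the paper. The paper observes, exactly as in Lemma \ref{lemmainbound21}, that the hypothesis \eqref{hyr44} together with $|\xi|\gtrsim 1$ forces $\min(k_1,k_2,k_3)\geq k+10$, hence $|\sigma|\approx 2^{k_2}$; it then establishes the \emph{derivative} lower bound $|(\partial_\eta\Phi^{--+})(\xi,\eta,\sigma)|=|\Lambda'(\xi+\eta)-\Lambda'(\xi+\eta+\sigma)|\gtrsim 2^{k_{\max}/2}$ and integrates by parts in $\eta$, reusing the machinery of Lemma \ref{lemmainbound20}. You instead prove the pointwise lower bound $|\Phi^{--+}|\gtrsim 2^{3k_1/2}$ and integrate by parts in $s$, following Lemma \ref{lemmainBO23} (or, more closely, Lemma \ref{lemmainBO25}). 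Both bounds are genuine and both mechanisms yield the required decay $\eps_1^3 2^{-200p_0 m}$.

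The paper's approach is slightly more economical: the derivative lower bound follows from a single observation (the two arguments $\xi+\eta$ and $\xi+\eta+\sigma$ differ by $\sigma\approx 2^{k_2}$, and $\Lambda'$ is monotone on each half-line), and the estimation of the integrals $K_1,\ldots,K_4$ in \eqref{IBPeta} is already on record. Your approach requires the sign-by-sign case analysis of $\Phi^{--+}$ (which you carry out correctly; note that in the opposite-sign case the lower bound is even simpler than you indicate, since $a^{3/2}+b^{3/2}-|a-b|^{3/2}\geq\min(a,b)^{3/2}$), and then produces one additional interior term in the time integration by parts compared to Lemma \ref{lemmainBO23}. On the other hand, your route has the conceptual advantage of unifying this case with Lemma \ref{lemmainBO25}, where the paper also integrates by parts in time using a phase lower bound; and your $S^\infty$ bound $\|c^\ast_{\mathbf{k}}/\Phi\|_{S^\infty}\lesssim 2^{5k_1/2}$ (which requires checking derivative bounds on $1/\Phi$, not just size, though this is routine here) is exactly parallel to the bound used there.
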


\begin{proof}[Proof of Lemma \ref{lemmainBO24}]
This is similar to the proof of Lemma \ref{lemmainbound21}, using the observation that $k+10\leq \min(k_1,k_2,k_3)$, the strong lower bound
\begin{align*}
 |(\partial_\eta \Phi)(\xi,\eta,\sigma)| = |\Lambda'(\xi+\eta)-\Lambda'(\xi+\eta+\sigma)|\gtrsim 2^{k_{\max}/2},
\end{align*}
and integration by parts in $\eta$.
\end{proof}

\begin{lem}\label{lemmainBO25}
  The bound \eqref{rak5} holds provided that \eqref{rak3} holds and, in addition,
\begin{align}
\label{hyr45}
\max (|k-k_1|, |k- k_2|, |k-k_3|) \leq 20.
\end{align}
\end{lem}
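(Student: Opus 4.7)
The plan is to use integration by parts, distinguishing the two sign configurations and exploiting that in the balanced regime all four relevant frequencies $|\xi|$, $|\xi+\eta|$, $|\xi+\sigma|$, $|\xi+\eta+\sigma|$ are comparable to $2^k$ with $k\in[-30, 20p_0m]$. Throughout I will use the $S^\infty$ symbol bounds \eqref{rak9}--\eqref{rak9.5}, the apriori bounds \eqref{rak1}--\eqref{rak1.5} on $\widehat{f_l^\pm}$, and the $\partial_s$-bound \eqref{prs}.

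For $(+++)$, the phase
\begin{equation*}
\Phi^{+++}(\xi,\eta,\sigma) = |\xi|^{3/2} - |\xi+\eta|^{3/2} - |\xi+\sigma|^{3/2} - |\xi+\eta+\sigma|^{3/2}
\end{equation*}
satisfies $|\Phi^{+++}|\gtrsim 2^{3k/2}$ on the support of the integrand, since the three negative terms are individually comparable to $|\xi|^{3/2}$ and sum to something multiplicatively larger. A time integration by parts in the spirit of Lemma \ref{lemmainbound23} then applies directly: the boundary and $\partial_s\widehat{f}$ contributions are estimated using $\|c^{\ast,+++}_{\xi;k_1,k_2,k_3}\|_{S^\infty}\lesssim 2^{4k}$ and $|\Phi^{+++}|^{-1}\lesssim 2^{-3k/2}$, yielding a bound of order $\eps_1^3 2^{-m/10}$, which is stronger than required.

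For $(--+)$, the phase
\begin{equation*}
\Phi^{--+}(\xi,\eta,\sigma) = |\xi|^{3/2} + |\xi+\eta|^{3/2} + |\xi+\sigma|^{3/2} - |\xi+\eta+\sigma|^{3/2}
\end{equation*}
can vanish on a codimension-one subset, so time IBP is not immediately available. However, since $\Lambda'(x) = (3/2)\mathrm{sgn}(x)|x|^{1/2}$ is strictly monotone on $\mathbb{R}$, the equations $\partial_\eta\Phi^{--+} = \Lambda'(\xi+\eta) - \Lambda'(\xi+\eta+\sigma) = 0$ and $\partial_\sigma\Phi^{--+} = \Lambda'(\xi+\sigma) - \Lambda'(\xi+\eta+\sigma) = 0$ jointly force $\eta=\sigma=0$, at which $\Phi^{--+} = 2|\xi|^{3/2}\neq 0$. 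Thus there are no spacetime resonances, and on a neighborhood of $\{\Phi^{--+}=0\}$ one has $|\nabla_{\eta,\sigma}\Phi^{--+}|\gtrsim 2^{k/2}$. I then dyadically split the $(\eta,\sigma)$ integration according to $|\Phi^{--+}|\sim 2^{3k/2}\cdot 2^{-j}$ with $j\geq 0$: on pieces with $j\leq m/2$ where $|\Phi|$ is large, I integrate by parts in $s$ as in Lemma \ref{lemmainbound23}, paying a factor $|\Phi|^{-1}\lesssim 2^{-3k/2+j}$; on the remaining thin annuli, of $(\eta,\sigma)$-measure $\lesssim 2^{-j}\cdot 2^k \cdot 2^{k/2}$, I integrate by parts in the direction of $\nabla_{\eta,\sigma}\Phi^{--+}$, gaining a factor $(s|\nabla\Phi|)^{-1}\lesssim 2^{-m-k/2}$ per iteration. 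Summing in $j$ over the time interval of length $\lesssim 2^m$ produces the required bound $\eps_1^3 2^{-200p_0m}$, with the $2^{-k/2}$ loss harmless because $k\geq -30$.

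The main obstacle lies in the bookkeeping for the $(--+)$ case: derivatives of the dyadic cutoffs in $|\Phi^{--+}|$ carry compensating factors involving $|\nabla\Phi|/|\Phi|$, which must be balanced against the symbol derivative bounds \eqref{rak9.5} and the weighted $L^2$ bounds \eqref{nf55} on $\partial\widehat{f_{k_i}}$; one must also verify a uniform lower bound $|\nabla\Phi^{--+}|\gtrsim 2^{k/2}$ on the resonant annulus across all sign configurations of $\xi+\eta$, $\xi+\sigma$, and $\xi+\eta+\sigma$, a step that requires a small case analysis in the signs but not a delicate geometric argument.
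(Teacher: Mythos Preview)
Your treatment of $(+++)$ via time integration by parts is correct and matches the paper. For $(--+)$ you correctly identify the key structural fact: the only stationary point of $\Phi^{--+}$ in $(\eta,\sigma)$ is $(0,0)$, where $\Phi^{--+}=2|\xi|^{3/2}\neq 0$, so there are no spacetime resonances. However, your implementation via a dyadic decomposition in level sets of $\Phi^{--+}$ is more delicate than needed, and the obstacle you flag is real: when a space integration by parts hits the cutoff $\varphi(\Phi\cdot 2^{j-3k/2})$, the resulting factor is $(s|\nabla\Phi|)^{-1}\cdot|\nabla\Phi|/|\Phi|=(s|\Phi|)^{-1}$, i.e.\ exactly what time integration by parts would have given, so nothing is gained on the thin annuli. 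Making this close would require iterating and tracking when the derivative eventually lands on $\widehat{f}$, which you have not done.

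The paper sidesteps this by decomposing in $|\eta|$ and $|\sigma|$ rather than in $|\Phi|$. Setting $\overline{l}:=k-20$, one writes $I^{--+}_{k_1,k_2,k_3}=\sum_{l_1,l_2\in[\overline{l},k+40]}J_{l_1,l_2}$ with cutoffs $\varphi_{l_1}^{(\overline{l})}(\eta)\varphi_{l_2}^{(\overline{l})}(\sigma)$. If $(l_1,l_2)\neq(\overline{l},\overline{l})$ then $\max(|\eta|,|\sigma|)\gtrsim 2^{k}$, so $|\nabla_{\eta,\sigma}\Phi|\gtrsim 2^{k/2}$ and a single space integration by parts (in whichever variable has the large derivative) suffices; the cutoffs here are tensor products with trivial $S^\infty$ bounds, so the argument of Lemma~\ref{lemmainbound20} applies without modification. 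If $(l_1,l_2)=(\overline{l},\overline{l})$ then $|\eta|,|\sigma|\lesssim 2^{k-20}$ forces $|\Phi^{--+}|\gtrsim 2^{3k/2}$ directly, and time integration by parts closes as in Lemma~\ref{lemmainbound23}. The two mechanisms are never needed on the same piece, so the balancing problem you describe simply does not arise.
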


\begin{proof}[Proof of Lemma \ref{lemmainBO25}]
This is the main case where there is a substantial difference between the integrals $I_{k_1,k_2,k_3}^{++-}$ and $I_{k_1,k_2,k_3}^{--+}$. The main point is that the phase function $\Phi^{--+}$ does not have any spacetime resonances, i.e. there are no $(\eta,\sigma)$ solutions of the equations
\begin{equation*}
\Phi^{--+}(\xi,\eta,\sigma)=(\partial_\eta\Phi^{--+})(\xi,\eta,\sigma)=(\partial_\sigma\Phi^{--+})(\xi,\eta,\sigma)=0.
\end{equation*}

For any $l,j\in\mathbb{Z}$ satisfying $l\leq j$ we define
\begin{equation*}
\varphi_j^{(l)}:=
\begin{cases}
\varphi_j\qquad&\text{ if }j\geq l+1,\\
\varphi_{\leq l}\qquad&\text{ if }j=l.
\end{cases}
\end{equation*}
Let $\overline{l}:=k-20$ and decompose
\begin{equation*}
I^{--+}_{k_1,k_2,k_3}=\sum_{l_1,l_2\in[k-20,k+40]}J_{l_1,l_2},
\end{equation*}
where
\begin{equation*}
J_{l_1,l_2}(\xi,t):=\int_{\mathbb{R}\times\mathbb{R}}e^{it\Phi(\xi,\eta,\sigma)}c^{\ast}_{\mathbf{k}}(\eta,\sigma)\varphi_{l_1}^{(\overline{l})}(\eta)\varphi_{l_2}^{(\overline{l})}(\sigma)\widehat{f^{-}_{k_1}}(\xi+\eta)
\widehat{f^{-}_{k_2}}(\xi+\sigma)\widehat{f^{+}_{k_3}}(-\xi-\eta-\sigma)\,d\eta d\sigma.
\end{equation*}

The contributions of the integrals $J_{l_1,l_2}$ for $(l_1,l_2)\neq(\overline{l},\overline{l})$ can be estimated by integration by parts either in $\eta$ or in $\sigma$ (depending on the relative sizes of $l_1$ and $l_2$), since the $(\eta,\sigma)$ gradient of the phase function $\Phi$ is bounded from below by $c2^{k/2}$ in the support of these integrals.

On the other hand, to estimate the contribution of the integral $J_{\overline{l},\overline{l}}$ we notice that $|\Phi(\xi,\eta,\sigma)|\gtrsim 2^{3k/2}$ in the support of the integral and integrate by parts in $s$. The result is
\begin{align}
\begin{split}
\label{sum1}
\Big| \int_{t_1}^{t_2} &e^{iL(\xi,s)} J_{\overline{l},\overline{l}}(\xi,s) \, ds \Big|\lesssim  | L_4 (\xi,t_1) | + | L_4 (\xi,t_2) |\\
&+ \int_{t_1}^{t_2} | L_1(\xi,s)|+ | L_2(\xi,s) | + |L_3(\xi,s)| + |(\partial_sL)(\xi,s)|| L_4(\xi,s) | \, ds,
\end{split}
\end{align}
where
\begin{align*}
\begin{split}
& L_1(\xi) := \int_{\mathbb{R}\times\mathbb{R}} e^{is\Phi(\xi,\eta,\sigma)} \frac{c^\ast_{\mathbf{k}}(\eta,\sigma)\varphi_{\leq \overline{l}}(\eta)\varphi_{\leq \overline{l}}(\sigma)}{i \Phi(\xi,\eta,\sigma)}
 \, (\partial_s\what{f_{k_1}^{-}})(\xi+\eta) \what{f_{k_2}^{-}}(\xi+\sigma) \what{f_{k_3}^{+}}(-\xi-\eta-\sigma) \,d\eta d\sigma ,
\\
& L_2(\xi) := \int_{\mathbb{R}\times\mathbb{R}} e^{is\Phi(\xi,\eta,\sigma)} \frac{c^\ast_{\mathbf{k}}(\eta,\sigma)\varphi_{\leq \overline{l}}(\eta)\varphi_{\leq \overline{l}}(\sigma)}{i \Phi(\xi,\eta,\sigma)}
  \what{f_{k_1}^{-}}(\xi+\eta)(\partial_s\what{f_{k_2}^{-}})(\xi+\sigma) \what{f_{k_3}^{+}}(-\xi-\eta-\sigma) \,d\eta d\sigma,
  \\
  & L_3(\xi) := \int_{\mathbb{R}\times\mathbb{R}} e^{is\Phi(\xi,\eta,\sigma)} \frac{c^\ast_{\mathbf{k}}(\eta,\sigma)\varphi_{\leq \overline{l}}(\eta)\varphi_{\leq \overline{l}}(\sigma)}{i \Phi(\xi,\eta,\sigma)}
  \what{f_{k_1}^{-}}(\xi+\eta)\what{f_{k_2}^{-}}(\xi+\sigma) (\partial_s\what{f_{k_3}^{+}})(-\xi-\eta-\sigma) \,d\eta d\sigma,
  \\
  & L_4(\xi) := \int_{\mathbb{R}\times\mathbb{R}} e^{is\Phi(\xi,\eta,\sigma)} \frac{c^\ast_{\mathbf{k}} (\eta,\sigma)\varphi_{\leq \overline{l}}(\eta)\varphi_{\leq \overline{l}}(\sigma)}{i \Phi(\xi,\eta,\sigma)}
  \what{f_{k_1}^{-}}(\xi+\eta) \what{f_{k_2}^{-}}(\xi+\sigma) \what{f_{k_3}^{+}}(-\xi-\eta-\sigma) \,d\eta d\sigma.
\end{split}
\end{align*}

To estimate the integrals $L_1, L_2, L_3, L_4$ we notice that
\begin{equation*}
\Big\|\frac{c^\ast_{\mathbf{k}} (\eta,\sigma)\varphi_{\leq \overline{l}}(\eta)\varphi_{\leq \overline{l}}(\sigma)}{i \Phi(\xi,\eta,\sigma)}\Big\|_{S^\infty}\lesssim 2^{3k}.
\end{equation*}
Therefore, using Lemma \ref{touse} (ii) and the bounds \eqref{rak1}--\eqref{rak1.5} and \eqref{prs},
\begin{align*}
|L_4(\xi,s)|\lesssim \Big\|\frac{c^\ast_{\mathbf{k}} (\eta,\sigma)\varphi_{\leq \overline{l}}(\eta)\varphi_{\leq \overline{l}}(\sigma)}{i\Phi(\xi,\eta,\sigma)}\Big\|_{S^\infty}
  {\| \what{f_{k_1}^{-}}(s) \|}_{L^2} {\| e^{is\Lambda} f_{k_2}^{-}(s) \|}_{L^\infty} {\| \what{f_{k_3}^{+}}(s) \|}_{L^2}\lesssim \eps_1^3 2^{-m/4},
\end{align*}
and
\begin{align*}
|L_1(\xi,s)|\lesssim \Big\|\frac{c^\ast_{\mathbf{k}} (\eta,\sigma)\varphi_{\leq \overline{l}}(\eta)\varphi_{\leq \overline{l}}(\sigma)}{i\Phi(\xi,\eta,\sigma)}\Big\|_{S^\infty}
  {\| (\partial_s\what{f_{k_1}^{-}})(s) \|}_{L^2} {\| e^{is\Lambda} f_{k_2}^{-}(s) \|}_{L^\infty} {\| \what{f_{k_3}^{+}}(s) \|}_{L^2}\lesssim \eps_1^3 2^{-5m/4}.
\end{align*}
The bounds on $|L_2(\xi,s)|$ and $L_3(\xi,s)|$ are similar to the bound on $|L_1(\xi,s)|$. Recalling also the bound $\big|(\partial_sL)(\xi,s) \big| \lesssim \e_1^2 2^{-m}$, see the definition \eqref{nf50}, it follows that the right-hand side of \eqref{sum1} is dominated by $C\eps_1^32^{-m/10}$. This completes the proof of the lemma.
\end{proof}

\subsection{Proof of \eqref{rak6}}\label{prooftech3}

We show now how to bound the quartic contributions $\mathcal{R}_{\geq 4}$. We rely mostly on elliptic estimates. Let
\begin{equation*}
\mathcal{N}_u:=Q_0(u+\overline{u},u),\qquad \mathcal{N}_v:=Q_0(v+\overline{v},v)
\end{equation*}
and recall the definition \eqref{nf42}
\begin{equation}\label{lmj0}
\begin{split}
\mathcal{R}_{\geq 4}=&[A(\mathcal{N}_u,u)-A(\mathcal{N}_v,v)]+[A(u,\mathcal{N}_u)-A(v,\mathcal{N}_v)]\\
+&[B(\overline{\mathcal{N}_u},u)-B(\overline{\mathcal{N}_v},v)]+[B(\overline{u},\mathcal{N}_u)-B(\overline{v},\mathcal{N}_v)].
\end{split}
\end{equation}

Recall the bounds, which hold for any $l\in\mathbb{Z}$ and $t\in[0,T]$,
\begin{equation}\label{lmj1}
\begin{split}
\|P_lu(t)\|_{L^2}+\|P_lv(t)\|_{L^2}&\lesssim\eps_1(1+t)^{p_0}\min\big[2^{-(N-1/2)l},2^{l(1/2-p_0)}\big],\\
\|P_lu(t)\|_{L^\infty}+\|P_lv(t)\|_{L^\infty}&\lesssim \eps_1(1+t)^{-1/2}2^{-4\max(l,0)},\\
\|P_lSu(t)\|_{L^2}+\|P_lSv(t)\|_{L^2}&\lesssim\eps_1(1+t)^{4p_0}\min\big[2^{-3l/2},2^{l(1/2-p_0)}\big],
\end{split}
\end{equation}
and
\begin{equation}\label{lmj2}
\begin{split}
\|P_l(u(t)-v(t))\|_{L^2}&\lesssim\eps_1^2(1+t)^{-3/8+6p_0}2^{-(N+3)k/2},\\
\|P_l(u(t)-v(t))\|_{L^\infty}&\lesssim\eps_1^2(1+t)^{-3/4+2p_0}2^{-7l/2},\\
\|P_lS(u(t)-v(t))\|_{L^2}&\lesssim\eps_1^2(1+t)^{-1/4+6p_0}2^{-3l/2},\\
P_l(u(t)-v(t))&=0\qquad\text{ if }l\leq -30.
\end{split}
\end{equation}
see \eqref{pr5}--\eqref{pr7}, Lemma \ref{Lem1}, and Remark \ref{extra1}. We prove first similar bounds on the functions $\mathcal{N}_u,\mathcal{N}_v,\mathcal{N}_u-\mathcal{N}_v$.

\begin{lem}\label{LemmaLMJ}
For any $t\in[0,T]$ and $l\in\mathbb{Z}$ we have
\begin{equation}\label{lmj3}
\begin{split}
\|P_l\mathcal{N}_u(t)\|_{L^2}+\|P_l\mathcal{N}_v(t)\|_{L^2}&\lesssim \eps_1^2(1+t)^{p_0-1/2}2^{-(N-3/2)l},\\
\|P_l\mathcal{N}_u(t)\|_{L^\infty}+\|P_l\mathcal{N}_v(t)\|_{L^\infty}&\lesssim \eps_1^2(1+t)^{-1}2^{-3l},\\
\|P_lS\mathcal{N}_u(t)\|_{L^2}+\|P_lS\mathcal{N}_v(t)\|_{L^2}&\lesssim\eps_1^2(1+t)^{4p_0-1/2}2^{-l/2},\\
P_l\mathcal{N}_u(t)=P_l\mathcal{N}_v(t)&=0\qquad\text{ if }l\leq -30.
\end{split}
\end{equation}
and
\begin{equation}\label{lmj4}
\begin{split}
\|P_l(\mathcal{N}_u(t)-\mathcal{N}_v(t))\|_{L^2}&\lesssim \eps_1^3(1+t)^{-7/8+6p_0}2^{-3l},\\
\|P_l(\mathcal{N}_u(t)-\mathcal{N}_v(t))\|_{L^\infty}&\lesssim\eps_1^3(1+t)^{-5/4+2p_0}2^{-5l/2},\\
\|P_lS(\mathcal{N}_u(t)-\mathcal{N}_v(t))\|_{L^2}&\lesssim\eps_1^3(1+t)^{-3/4+8p_0}2^{-l/2}.
\end{split}
\end{equation}
\end{lem}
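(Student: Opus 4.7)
The plan is to reduce each estimate to an application of Lemma~\ref{touse}(ii), combined with the $S^\infty$ symbol bound $\|q_0^{l,k_1,k_2}\|_{S^\infty}\lesssim 2^{k_1/2}2^{k_2}\one_{15}(l,k_1,k_2)$ from Lemma~\ref{SymbBound} and the individual bounds \eqref{lmj1}--\eqref{lmj2}. The $L^2$ and $L^\infty$ estimates on $\mathcal{N}_u$ in \eqref{lmj3} are already essentially contained in Lemma~\ref{NonlinEst} (with a harmless loss of $2^{l/2}$ at high frequency coming from the factor $2^{k_2}$ in the symbol bound). The corresponding estimates for $\mathcal{N}_v=Q_0(v+\overline v,v)$ follow by the same argument since, by \eqref{lmj1}, $v$ satisfies the same $L^2$, $L^\infty$, and $S$-weighted bounds as $u$. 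The vanishing $P_l\mathcal{N}_u=P_l\mathcal{N}_v=0$ for $l\leq-30$ is immediate from the support of $\chi$ in the definition \eqref{pr2} of $q_0$.

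For the $S$-derivative bounds in \eqref{lmj3}, the key observation is that $\chi$ is homogeneous of degree $0$, so $q_0(\xi,\eta)=i\chi(\xi-\eta,\eta)|\xi-\eta|^{1/2}\eta$ is homogeneous of degree $3/2$, and Euler's identity yields $(\xi\partial_\xi+\eta\partial_\eta)q_0=(3/2)q_0$. Together with the Fourier-side computation in \eqref{nf15.9}, this gives the commutation identity
\begin{equation*}
SQ_0(f,g)=Q_0(Sf,g)+Q_0(f,Sg)-(3/2)Q_0(f,g).
\end{equation*}
Applied to $\mathcal{N}_u$ (and analogously to $\mathcal{N}_v$), we estimate via Lemma~\ref{touse}(ii) by placing $Su$ in $L^2$ with the third bound in \eqref{lmj1} and the remaining factor in $L^\infty$. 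The factor $2^{-l/2}$ in the target arises from $2^{k_2}\cdot 2^{-3k_2/2}=2^{-k_2/2}\sim 2^{-l/2}$ using $|l-k_2|\leq 15$, while the $k_1$-sum converges thanks to $2^{k_1/2}\cdot 2^{-4\max(k_1,0)}$; the third term $(3/2)Q_0(V,u)$ is controlled by the first part of the lemma.

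For the differences in \eqref{lmj4}, bilinearity gives
\begin{equation*}
\mathcal{N}_u-\mathcal{N}_v=Q_0(V_u-V_v,u)+Q_0(V_v,u-v),\qquad V_u=u+\overline u,\quad V_v=v+\overline v,
\end{equation*}
and each piece is estimated by Lemma~\ref{touse}(ii) with $u-v$ (or its conjugate) placed in $L^\infty$ or $L^2$ using the bounds \eqref{lmj2}, and the remaining factor in the dual norm via \eqref{lmj1}. The identity $P_l(u-v)=0$ for $l\leq-30$ from \eqref{nf11.2} confines the low-frequency slot to $k_j\geq-30$ and keeps the corresponding sum finite. For $S(\mathcal{N}_u-\mathcal{N}_v)$ we apply the commutation identity twice to write
\begin{equation*}
\begin{split}
S(\mathcal{N}_u-\mathcal{N}_v)=&\;Q_0(S(V_u-V_v),u)+Q_0(V_u-V_v,Su)\\
&+Q_0(SV_v,u-v)+Q_0(V_v,S(u-v))-(3/2)(\mathcal{N}_u-\mathcal{N}_v),
\end{split}
\end{equation*}
and control each piece using the bound on $S(u-v)$ from \eqref{lmj2}, the weighted bounds on $Su,Sv$ from \eqref{lmj1}, the $L^\infty$ bounds on $u-v$, and the already established estimates on $\mathcal{N}_u-\mathcal{N}_v$ for the final term.

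The main obstacle is bookkeeping: in each subcase one must verify that the $(1+t)$-exponents combine to beat the target and that the sums over $k_1,k_2$ converge. At low frequencies, convergence typically comes from the factor $2^{k_1/2}$ in the symbol bound (paired with $\min(2^{-3k_2/2},2^{k_2(1/2-p_0)})$-type bounds from \eqref{lmj1}); whenever $u-v$ or $S(u-v)$ occupies a frequency slot, one exploits instead the cutoff $k\geq -30$ from \eqref{nf11.2} together with the strong high-frequency decay $2^{-7k/2}$ of $u-v$ in $L^\infty$.
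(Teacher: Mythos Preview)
Your approach is essentially the paper's: reduce everything to Lemma~\ref{touse}(ii) with the symbol bound \eqref{en21.2}, use the commutation identity \eqref{nf15.9} for the $S$-estimates, and split the differences by bilinearity. Your decomposition $\mathcal{N}_u-\mathcal{N}_v=Q_0(V_u-V_v,u)+Q_0(V_v,u-v)$ differs from the paper's $Q_0(V_u,u-v)+Q_0(V_u-V_v,v)$ only by which telescoping you choose, and both work equally well.

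One correction: your claim that $\chi$ is homogeneous of degree $0$, hence $(\xi\partial_\xi+\eta\partial_\eta)q_0=(3/2)q_0$ exactly, is not quite right. The sum defining $\chi$ in \eqref{pr2} starts at $k\geq-10$, so dilation shifts the index range and $\chi(\lambda x,\lambda y)\neq\chi(x,y)$ near $|y|\sim 2^{-10}$. The paper instead works with the general correction $\widetilde{Q}_0$ whose symbol $\widetilde{q}_0=(\xi\partial_\xi+\eta\partial_\eta)q_0$ satisfies the \emph{same} $S^\infty$ bounds as $q_0$ (the failure of homogeneity being confined to a fixed dyadic shell), and this is all the argument needs. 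Once you replace your exact identity by $SQ_0(f,g)=Q_0(Sf,g)+Q_0(f,Sg)-\widetilde{Q}_0(f,g)$ and note $\|\widetilde{q}_0^{\,l,k_1,k_2}\|_{S^\infty}\lesssim 2^{k_1/2}2^{k_2}\one_{15}(l,k_1,k_2)$, the rest of your outline goes through verbatim.
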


\begin{proof}[Proof of Lemma \ref{LemmaLMJ}] We use Lemma \ref{touse} (ii) and the $S^\infty$ bound \eqref{en21.2},
\begin{equation}\label{lmj5}
\|q_0^{k,k_1,k_2}\|_{S^\infty}\lesssim 2^{k_1/2}2^{k_2}\one_{15}(k,k_1,k_2).
\end{equation}
We also use the formulas (compare with \eqref{nf15.9})
\begin{equation}\label{lmj6}
\begin{split}
&S[Q_0(u+\overline{u},u)]=Q_0(Su+S\overline{u},u)+Q_0(u+\overline{u},Su)-\widetilde{Q}_0(u+\overline{u},u),\\
&S[Q_0(v+\overline{v},v)]=Q_0(Sv+S\overline{v},v)+Q_0(v+\overline{v},Sv)-\widetilde{Q}_0(v+\overline{v},v),
\end{split}
\end{equation}
where $\widetilde{Q}_0$ is the bilinear operator associated to the multiplier
\begin{equation}\label{lmj7}
\widetilde{q}_0(\xi,\eta):=(\xi\partial_\xi+\eta\partial_\eta)q_0(\xi,\eta).
\end{equation}

We estimate, as in Lemma \ref{NonlinEst}, for $l\geq -30$,
\begin{equation*}
\begin{split}
\|P_l\mathcal{N}_u(t)\|_{L^2}&\lesssim \sum_{k_1,k_2\in\mathbb{Z}}\|q_0^{l,k_1,k_2}\|_{S^\infty}
\|P'_{k_1}u(t)\|_{L^\infty}\|P'_{k_2}u(t)\|_{L^2}\lesssim \eps_1^2(1+t)^{p_0-1/2}2^{-(N-3/2)l},
\end{split}
\end{equation*}
and similarly
\begin{equation*}
\begin{split}
\|P_l\mathcal{N}_u(t)\|_{L^\infty}&\lesssim \sum_{k_1,k_2\in\mathbb{Z}}\|q_0^{l,k_1,k_2}\|_{S^\infty}
\|P'_{k_1}u(t)\|_{L^\infty}\|P'_{k_2}u(t)\|_{L^\infty}\lesssim \eps_1^2(1+t)^{-1}2^{-3l}.
\end{split}
\end{equation*}

Similarly we estimate
\begin{equation*}
\begin{split}
\|P_lQ_0(u+\overline{u},Su)(t)\|_{L^2}\lesssim \sum_{k_1,k_2\in\mathbb{Z}}\|q_0^{l,k_1,k_2}\|_{S^\infty}
\|P'_{k_1}u(t)\|_{L^\infty}\|P'_{k_2}Su(t)\|_{L^2}\lesssim \eps_1^2(1+t)^{4p_0-1/2}2^{-l/2},
\end{split}
\end{equation*}
and
\begin{equation*}
\begin{split}
\|P_lQ_0(Su+S\overline{u},u)(t)\|_{L^2}&\lesssim \sum_{k_1,k_2\in\mathbb{Z}}\|q_0^{l,k_1,k_2}\|_{S^\infty}
\|P'_{k_1}Su(t)\|_{L^2}\|P'_{k_2}u(t)\|_{L^\infty}\\
&\lesssim \eps_1^2(1+t)^{4p_0-1/2}2^{-3l}.
\end{split}
\end{equation*}
By homogeneity, the symbol $\widetilde{q}_0$ satisfies the same $S^\infty$ bounds as the symbol $q_0$, see \eqref{lmj5}. Therefore, as before
\begin{equation*}
\begin{split}
\|P_l\widetilde{Q}_0(u+\overline{u},u)(t)\|_{L^2}\lesssim\eps_1^2(1+t)^{p_0-1/2}2^{-(N-3/2)l}.
\end{split}
\end{equation*}
The last three inequalities and the formulas \eqref{lmj6} show that
\begin{equation*}
\|P_lS\mathcal{N}_u(t)\|_{L^2}\lesssim \eps_1^2(1+t)^{4p_0-1/2}2^{-l/2}.
\end{equation*}
Similar estimates hold for the function $\mathcal{N}_v$ (since the bounds for $u$ and $v$ in \eqref{lmj1} are identical), so the bounds \eqref{lmj3} follow.

We prove now the bounds \eqref{lmj4}. In view of the definition,
\begin{equation}\label{lmj8}
\mathcal{N}_u-\mathcal{N}_v=Q_0(u+\overline{u},u-v)+Q_0(u+\overline{u}-v-\overline{v},v).
\end{equation}
Then we estimate, using \eqref{lmj1}--\eqref{lmj2},
\begin{equation*}
\begin{split}
\|P_lQ_0(u+\overline{u},u-v)(t)\|_{L^2}&\lesssim \sum_{k_1,k_2\in\mathbb{Z}}\|q_0^{l,k_1,k_2}\|_{S^\infty}
\|P'_{k_1}u(t)\|_{L^\infty}\|P'_{k_2}(u-v)(t)\|_{L^2}\\
&\lesssim \eps_1^3(1+t)^{-7/8+6p_0}2^{-Nl/2},
\end{split}
\end{equation*}
and
\begin{equation*}
\begin{split}
\|P_lQ_0(u+\overline{u}-v-\overline{v},v)(t)\|_{L^2}&\lesssim \sum_{k_1,k_2\in\mathbb{Z}}\|q_0^{l,k_1,k_2}\|_{S^\infty}
\|P'_{k_1}(u-v)(t)\|_{L^2}\|P'_{k_2}v(t)\|_{L^\infty}\\
&\lesssim \eps_1^3(1+t)^{-7/8+6p_0}2^{-3l}.
\end{split}
\end{equation*}
The bound in the first line of \eqref{lmj4} follows.

The proof of the bound in the second line of \eqref{lmj4} is similar, using also the identity \eqref{lmj8}.
The proof of the last bound in \eqref{lmj4} is also similar, using both identities \eqref{lmj6} and \eqref{lmj8}.
\end{proof}

We prove now similar bounds on the function $\mathcal{R}_{\geq 4}$.

\begin{lem}\label{LemmaLMJ2}
For any $t\in[0,T]$ and $l\in\mathbb{Z}$ we have
\begin{equation}\label{lmj10}
\begin{split}
\|P_l\mathcal{R}_{\geq 4}(t)\|_{L^2}&\lesssim\eps_1^4(1+t)^{-9/8+10p_0}2^{-2l},\\
\|P_lS\mathcal{R}_{\geq 4}(t)\|_{L^2}&\lesssim\eps_1^4(1+t)^{-1+20p_0},\\
P_l\mathcal{R}_{\geq 4}(t)&=0\qquad\text{ if }l\leq -30.
\end{split}
\end{equation}
\end{lem}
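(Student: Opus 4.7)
The vanishing statement $P_l\mathcal{R}_{\geq 4}(t)=0$ for $l\leq -30$ is immediate from the support properties of the symbols $a,b$ (which inherit the high--low cutoff from $q_0$ via \eqref{nf4}), so it is enough to prove the two $L^2$ estimates. The plan is to telescope each of the four differences in \eqref{lmj0}:
\begin{equation*}
A(\mathcal{N}_u,u)-A(\mathcal{N}_v,v)=A(\mathcal{N}_u-\mathcal{N}_v,u)+A(\mathcal{N}_v,u-v),
\end{equation*}
and similarly for the $A(u,\mathcal{N}_u)-A(v,\mathcal{N}_v)$, $B(\overline{\mathcal{N}_u},u)-B(\overline{\mathcal{N}_v},v)$, and $B(\overline{u},\mathcal{N}_u)-B(\overline{v},\mathcal{N}_v)$ terms. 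Each bilinear expression thus contains one factor which is either $u-v$ or $\mathcal{N}_u-\mathcal{N}_v$, for which we have the improved estimates \eqref{lmj2} and \eqref{lmj4}. The rest of the argument is a direct application of Lemma \ref{touse} (ii) together with the $S^\infty$ symbol bound $\|a^{k,k_1,k_2}\|_{S^\infty}+\|b^{k,k_1,k_2}\|_{S^\infty}\lesssim 2^{(k_2-k_1)/2}\one_{15}(k,k_1,k_2)$ from \eqref{nf12}.

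For the first bound in \eqref{lmj10}, I would decompose each bilinear operator dyadically as in the proof of Lemma \ref{Lem1}. In each term place the $L^2$ and $L^\infty$ factors as follows: for $A(\mathcal{N}_u-\mathcal{N}_v,u)$ use \eqref{lmj4} ($L^2$ when the frequency is $\gtrsim (1+t)^{-1/2}$, $L^\infty$ otherwise) together with \eqref{lmj1}; for $A(\mathcal{N}_v,u-v)$ use the $L^\infty$ bound in \eqref{lmj3} paired with the $L^2$ bound in \eqref{lmj2}. In the worst case one gets a product of decay rates such as $(1+t)^{-7/8+6p_0}\cdot(1+t)^{-1/2}=(1+t)^{-11/8+6p_0}$, which is stronger than the required $(1+t)^{-9/8+10p_0}$. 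The spatial weight $2^{-2l}$ comes out of the factors $2^{(k_2-k_1)/2}$ in the symbol together with the strong high-frequency decay in \eqref{lmj1} ($2^{-(N-1/2)l}$) and \eqref{lmj3} ($2^{-(N-3/2)l}$); any loss of at most two derivatives relative to these Sobolev norms is trivially absorbed since $N=10$. The same computation works for the three remaining terms because $a$ and $b$ share the same $S^\infty$ estimate.

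For the weighted bound, I would apply the commutator identities
\begin{equation*}
SA(f,g)=A(Sf,g)+A(f,Sg)-\widetilde A(f,g),\qquad SB(\overline f,g)=B(S\overline f,g)+B(\overline f,Sg)-\widetilde B(\overline f,g),
\end{equation*}
exactly as in \eqref{nf16}, and the analogous identity \eqref{lmj6} inside $\mathcal{N}_u,\mathcal{N}_v$, so that $S\mathcal{R}_{\geq 4}$ splits into a sum of bilinear terms in which the $S$ hits exactly one of $u,v,\mathcal{N}_u,\mathcal{N}_v$, plus bilinear terms with the symbols $\widetilde a,\widetilde b$. By homogeneity $\widetilde a,\widetilde b$ satisfy the same $S^\infty$ bound as $a,b$, so these latter terms obey the first estimate of \eqref{lmj10}, which is much stronger than what is needed. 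For the main contributions I again telescope the difference, e.g.
\begin{equation*}
A(S\mathcal{N}_u,u)-A(S\mathcal{N}_v,v)=A(S\mathcal{N}_u-S\mathcal{N}_v,u)+A(S\mathcal{N}_v,u-v),
\end{equation*}
and similarly whenever $S$ hits the other factor. Pairing the $L^2$ bounds on $S\mathcal{N}_u-S\mathcal{N}_v$ and $S(u-v)$ in \eqref{lmj4} and \eqref{lmj2} with the $L^\infty$ bounds on $u,\mathcal{N}_v$ in \eqref{lmj1} and \eqref{lmj3}, the worst combination yields roughly $(1+t)^{-1/2}\cdot(1+t)^{-1/2+8p_0}=(1+t)^{-1+8p_0}$, comfortably inside the allowed $(1+t)^{-1+20p_0}$.

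The main technical obstacle is keeping careful track of the low-frequency/high-frequency split in the telescoped products so that the gain $2^{-2l}$ (respectively the absence of a derivative loss) survives after the worst pairing; this is where the symbol localization $\one_{15}(k,k_1,k_2)$ combined with the high Sobolev regularity $N=10$ in \eqref{lmj1} and the improved $L^2$ estimate \eqref{extra2} of Remark \ref{extra1} become essential. Once this is done, summing over the dyadic pieces is straightforward and yields \eqref{lmj10}.
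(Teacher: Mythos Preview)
Your approach is the same as the paper's: telescope each difference, apply Lemma \ref{touse} (ii) with the symbol bound \eqref{nf12}, and for the weighted estimate distribute $S$ via the commutator identities \eqref{nf16}, \eqref{lmj6}. One point deserves correction, though: your illustrative ``worst cases'' are not the actual bottlenecks. The terms you wrote out explicitly, $A(\mathcal{N}_u-\mathcal{N}_v,u)$ and $A(\mathcal{N}_v,u-v)$, have the nonlinearity (or $u-v$) in the low-frequency slot, and these vanish for $k_1\leq -30$, so there is no singularity from the $2^{-k_1/2}$ in the symbol and you indeed get the stronger decay you quote. The genuine worst case is $A(v,\mathcal{N}_u-\mathcal{N}_v)$ (and, for the $S$ estimate, $A(v,S(\mathcal{N}_u-\mathcal{N}_v))$), where $v$ occupies the low-frequency slot; here the sum $\sum_{k_1}2^{-k_1/2}\|P'_{k_1}v\|_{L^\infty}$ forces the split you allude to and costs a factor $\sim(1+t)^{1/4}$, producing exactly the rates $(1+t)^{-9/8+10p_0}$ and $(1+t)^{-1+20p_0}$ in the statement. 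The low-frequency input needed there is the bound $\|P_{k_1}v\|_{L^2}\lesssim\eps_1 2^{k_1(1/2-p_0)}$ from \eqref{lmj1}, not \eqref{extra2}. With this adjustment your sketch matches the paper's proof.
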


\begin{proof}[Proof of Lemma \ref{LemmaLMJ2}] We examine the formula \eqref{lmj0} 
and concentrated on the term $[A(u,\mathcal{N}_u)-A(v,\mathcal{N}_v)]$. We rewrite
\begin{equation*}
A(u,\mathcal{N}_u)-A(v,\mathcal{N}_v)=A(u-v,\mathcal{N}_u)+A(v,\mathcal{N}_u-\mathcal{N}_v).
\end{equation*}
Then we estimate, using \eqref{nf12} and \eqref{lmj1}--\eqref{lmj4},
\begin{equation*}
\begin{split}
\big\|P_lA(u-v,\mathcal{N}_u)(t)\big\|_{L^2}&\lesssim \sum_{k_1,k_2\in\mathbb{Z}}\|a^{l,k_1,k_2}\|_{S^\infty}
\|P'_{k_1}(u-v)(t)\|_{L^\infty}\|P'_{k_2}\mathcal{N}_u(t)\|_{L^2}\\
&\lesssim \eps_1^4(1+t)^{-5/4+6p_0}2^{-4l}
\end{split}
\end{equation*}
and
\begin{equation*}
\begin{split}
\big\|P_lA&(v,\mathcal{N}_u-\mathcal{N}_v)(t)\big\|_{L^2}\lesssim \sum_{k_1,k_2\in\mathbb{Z}}\|a^{l,k_1,k_2}\|_{S^\infty}
\|P'_{k_1}v(t)\|_{L^\infty}\|P'_{k_2}(\mathcal{N}_u-\mathcal{N}_v)(t)\|_{L^2}\\
&\lesssim \eps_1^4(1+t)^{-7/8+6p_0}\sum_{k_1,k_2\in\mathbb{Z}}\one_{15}(k,k_1,k_2)2^{(k_2-k_1)/2}
2^{-3k_2}\min[(1+t)^{-1/2},(1+t)^{p_0}2^{k_1(1-p_0)}]\\
&\lesssim \eps_1^4(1+t)^{-9/8+10p_0}2^{-2l}.
\end{split}
\end{equation*}
Therefore
\begin{equation}\label{lmj12}
\big\|P_l\big[A(u,\mathcal{N}_u)-A(v,\mathcal{N}_v)\big](t)\big\|_{L^2}\lesssim \eps_1^4(1+t)^{-9/8+10p_0}2^{-2l}.
\end{equation}

Similarly, using also \eqref{nf15.9},
\begin{equation*}
\begin{split}
S[A(u,\mathcal{N}_u)-A(v,\mathcal{N}_v)]&=A(S(u-v),\mathcal{N}_u)+A(u-v,S\mathcal{N}_u)-\widetilde{A}(u-v,\mathcal{N}_u)\\
&+A(Sv,\mathcal{N}_u-\mathcal{N}_v)+A(v,S(\mathcal{N}_u-\mathcal{N}_v))-\widetilde{A}(v,\mathcal{N}_u-\mathcal{N}_v).\\
\end{split}
\end{equation*}
Then we estimate, using \eqref{nf12} and \eqref{lmj1}--\eqref{lmj4},
\begin{equation*}
\begin{split}
\big\|P_lA(S(u-v),\mathcal{N}_u)(t)\big\|_{L^2}&\lesssim \sum_{k_1,k_2\in\mathbb{Z}}\|a^{l,k_1,k_2}\|_{S^\infty}
\|P'_{k_1}S(u-v)(t)\|_{L^2}\|P'_{k_2}\mathcal{N}_u(t)\|_{L^\infty}\\
&\lesssim \eps_1^4(1+t)^{-5/4+10p_0}2^{-2l},
\end{split}
\end{equation*}
\begin{equation*}
\begin{split}
\big\|P_lA(u-v,S\mathcal{N}_u)(t)\big\|_{L^2}&\lesssim \sum_{k_1,k_2\in\mathbb{Z}}\|a^{l,k_1,k_2}\|_{S^\infty}
\|P'_{k_1}(u-v)(t)\|_{L^\infty}\|P'_{k_2}S\mathcal{N}_u(t)\|_{L^2}\\
&\lesssim \eps_1^4(1+t)^{-5/4+10p_0}2^{-2l},
\end{split}
\end{equation*}
\begin{equation*}
\begin{split}
\big\|P_lA(Sv,\mathcal{N}_u-&\mathcal{N}_v)(t)\big\|_{L^2}\lesssim \sum_{k_1,k_2\in\mathbb{Z},\,2^{k_1}\geq (1+t)^{-4}}\|a^{l,k_1,k_2}\|_{S^\infty}
\|P'_{k_1}Sv(t)\|_{L^2}\|P'_{k_2}(\mathcal{N}_u-\mathcal{N}_v)(t)\|_{L^\infty}\\
&+\sum_{k_1,k_2\in\mathbb{Z},\,2^{k_1}\leq (1+t)^{-4}}\|a^{l,k_1,k_2}\|_{S^\infty}
2^{k_1/2}\|P'_{k_1}Sv(t)\|_{L^2}\|P'_{k_2}(\mathcal{N}_u-\mathcal{N}_v)(t)\|_{L^2}\\
&\lesssim \eps_1^4(1+t)^{-5/4+10p_0},
\end{split}
\end{equation*}
and
\begin{equation*}
\begin{split}
\big\|&P_lA(v,S(\mathcal{N}_u-\mathcal{N}_v))(t)\big\|_{L^2}\lesssim \sum_{k_1,k_2\in\mathbb{Z}}\|a^{l,k_1,k_2}\|_{S^\infty}
\|P'_{k_1}v(t)\|_{L^\infty}\|P'_{k_2}S(\mathcal{N}_u-\mathcal{N}_v)(t)\|_{L^2}\\
&\lesssim \eps_1^4(1+t)^{-3/4+8p_0}\sum_{k_1,k_2\in\mathbb{Z}}\one_{15}(k,k_1,k_2)2^{(k_2-k_1)/2}2^{-k_2/2}\min[(1+t)^{-1/2},(1+t)^{p_0}2^{k_1(1-p_0)}]\\
&\lesssim \eps_1^4(1+t)^{-1+20p_0}.
\end{split}
\end{equation*}
Moreover, as in \eqref{lmj12},
\begin{equation*}
\big\|P_l\big[\widetilde{A}(u,\mathcal{N}_u)-\widetilde{A}(v,\mathcal{N}_v)\big](t)\big\|_{L^2}\lesssim \eps_1^4(1+t)^{-9/8+10p_0}2^{-2l},
\end{equation*}
since $a$ and $\widetilde{a}$ satisfy identical symbol-type estimates. Therefore,
\begin{equation}\label{lmj13}
\big\|P_lS\big[A(u,\mathcal{N}_u)-A(v,\mathcal{N}_v)\big](t)\big\|_{L^2}\lesssim \eps_1^4(1+t)^{-1+20p_0}.
\end{equation}

The term $B(\overline{u},\mathcal{N}_u)-B(\overline{v},\mathcal{N}_v)$ in \eqref{lmj0} can be estimated in a similar way, since $a$ and $b$ satisfy identical symbol-type estimates. The terms $A(\mathcal{N}_u,u)-A(\mathcal{N}_v,v)$ and $B(\overline{\mathcal{N}_u},u)-B(\overline{\mathcal{N}_v},v)$ are easier to estimate because there are no low frequency contributions. The desired conclusion of the lemma follows.
\end{proof}

We can now complete the proof of the main estimate \eqref{rak6}.
\begin{lem}\label{LemmaLMJ3}
Assume that $k\in[-30,20p_0m]$, $|\xi_0|\in [2^k,2^{k+1}]$, $m\geq 1$, $t_1\leq t_2\in[2^{m}-2,2^{m+1}]\cap[0,T']$. Then
\begin{equation}\label{lmj20}
\Big|\varphi_k(\xi_0)\int_{t_1}^{t_2}e^{iL(\xi_0,s)}e^{is|\xi_0|^{3/2}}\widehat{\mathcal{R}_{\geq 4}}(\xi_0,s)\,ds\Big|\lesssim \eps_1^32^{-200p_0m}.
\end{equation}
\end{lem}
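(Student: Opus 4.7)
Since $e^{iL(\xi_0,s)}e^{is|\xi_0|^{3/2}}$ is unimodular, the desired estimate reduces to showing
$$\varphi_k(\xi_0)\int_{t_1}^{t_2}|\widehat{P_k\mathcal{R}_{\geq 4}}(\xi_0,s)|\,ds\lesssim \eps_1^3 2^{-200p_0m}.$$
My strategy is to obtain a pointwise-in-$\xi_0$ bound on the integrand that decays fast enough in $s$, then integrate over the interval of length $\lesssim 2^m$.

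Step 1 (Fourier interpolation). By Lemma \ref{interpolation} applied to $h=\mathcal{R}_{\geq 4}(s)$,
$$|\widehat{P_k\mathcal{R}_{\geq 4}}(\xi_0,s)|\lesssim 2^{-k/2}\|P'_k\mathcal{R}_{\geq 4}\|_{L^2}^{1/2}\bigl(2^k\|xP'_k\mathcal{R}_{\geq 4}\|_{L^2}+\|P'_k\mathcal{R}_{\geq 4}\|_{L^2}\bigr)^{1/2}.$$
The $L^2$ factor is furnished by Lemma \ref{LemmaLMJ2}, so the task is to control $\|xP_k\mathcal{R}_{\geq 4}\|_{L^2}$.

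Step 2 (Passing from $S$ to $x$). From $\widehat{Sh}(\xi)=-\hat h(\xi)-\xi\partial_\xi\hat h(\xi)+(3/2)t\partial_t\hat h(\xi)$ and the frequency localization $|\xi|\sim 2^k$, one obtains modulo a harmless commutator
$$\|xP_kh\|_{L^2}\lesssim 2^{-k}\bigl(\|P'_kSh\|_{L^2}+t\|\partial_tP'_kh\|_{L^2}+\|P'_kh\|_{L^2}\bigr).$$

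Step 3 (Bound on $\partial_t\mathcal{R}_{\geq 4}$). This is the key technical step. I differentiate formula \eqref{lmj0} in time using $\partial_tu=-i\Lambda u+\mathcal{N}_u$ and $\partial_tv=-i\Lambda v+\mathcal{N}'$, with $\mathcal{N}'$ the cubic remainder from \eqref{nf5}. Each resulting term is a bilinear expression in factors drawn from $\{u,v,\mathcal{N}_u,\mathcal{N}_v,\mathcal{N}',\Lambda u,\Lambda v,\Lambda(u-v),\mathcal{N}_u-\mathcal{N}'\}$. Applying Lemma \ref{touse} (ii) with the $S^\infty$ bounds \eqref{nf12} and \eqref{en21.2} and the detailed estimates \eqref{lmj1}--\eqref{lmj4} together with Sobolev-derived $L^\infty$ bounds of the form $\|P_l\Lambda(u-v)\|_{L^\infty}\lesssim \eps_1^2 2^{-2l}(1+t)^{-3/4+2p_0}$ (from Lemma \ref{Lem1}), one arrives at an estimate of the form
$$t\|P_k\partial_t\mathcal{R}_{\geq 4}(s)\|_{L^2}\lesssim \eps_1^4(1+s)^{-1+Cp_0}2^{-C'k},$$
comparable to the $S$-norm bound supplied by Lemma \ref{LemmaLMJ2}.

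Step 4 (Assembly and integration). Combining Steps 1--3 with Lemma \ref{LemmaLMJ2} yields
$$|\widehat{P_k\mathcal{R}_{\geq 4}}(\xi_0,s)|\lesssim \eps_1^4 2^{-3k/2}(1+s)^{-17/16+C''p_0},$$
whose integral over $[t_1,t_2]\subset[2^m-2,2^{m+1}]$ is bounded by $\eps_1^4 2^{-3k/2}\cdot 2^{m(-1/16+C''p_0)}$. Since $k\in[-30,20p_0m]$ we have $2^{-3k/2}\lesssim 2^{45}$, and since $-1/16+(C''+200)p_0<0$ for $p_0\leq 10^{-6}$ the factor $2^{m(-1/16+C''p_0)}\leq 2^{-200p_0m}$ for all $m\geq 0$. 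Using $\eps_1\leq\eps_0^{2/3}$ chosen sufficiently small to absorb the constant $2^{45}$, the claimed bound $\eps_1^3 2^{-200p_0m}$ follows.

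The main obstacle is Step 3: obtaining the bound $t\|P_k\partial_t\mathcal{R}_{\geq 4}\|_{L^2}\lesssim \eps_1^4(1+s)^{-1+Cp_0}$ with the correct time-decay rate. The naive triangle-inequality approach gives only $\|P_k\partial_t\mathcal{R}_{\geq 4}\|_{L^2}\lesssim (1+s)^{-5/4}$ (so $t\|\cdot\|_{L^2}\lesssim (1+s)^{-1/4}$), which is insufficient; one must exploit the $\eps_1$-gain and decay inherent in the quadratic and cubic differences $u-v$ and $\mathcal{N}_u-\mathcal{N}_v$ as well as cancellations among the $\Lambda$-weighted quartic terms (in the spirit of the argument for $\|P_kS\mathcal{R}_{\geq 4}\|_{L^2}$ in Lemma \ref{LemmaLMJ2}), tracking carefully how each factor contributes $L^2$ or $L^\infty$ decay.
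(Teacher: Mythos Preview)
Your approach has a genuine gap at Step 3. The bound you hope to obtain there,
\[
t\|P_k\partial_t\mathcal{R}_{\geq 4}(s)\|_{L^2}\lesssim \eps_1^4(1+s)^{-1+Cp_0},
\]
cannot be achieved. When you differentiate $\mathcal{R}_{\geq 4}$ in time, the identities $\partial_t u=-i\Lambda u+\mathcal{N}_u$ and $\partial_t v=-i\Lambda v+\mathcal{N}'$ produce ``$\Lambda$-weighted'' quartic differences such as $A(\mathcal{N}_u-\mathcal{N}_v,-i\Lambda u)$ or $A(\mathcal{N}_v,-i\Lambda(u-v))$. These terms carry no additional time decay over $\mathcal{R}_{\geq 4}$ itself; the factor $\Lambda$ only costs derivatives. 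Carrying through the estimates \eqref{lmj1}--\eqref{lmj4} one finds at best $\|P_k\partial_t\mathcal{R}_{\geq 4}\|_{L^2}\lesssim\eps_1^4(1+s)^{-5/4+Cp_0}$ (up to harmless powers of $2^k$), hence $t\|P_k\partial_t\mathcal{R}_{\geq 4}\|_{L^2}\lesssim(1+s)^{-1/4}$, exactly the ``naive'' rate you already flagged as insufficient. There is no cancellation mechanism to improve this: the $\Lambda$-terms are genuine and do not combine into $-i\Lambda\mathcal{R}_{\geq 4}$ or anything else with extra decay. Consequently Step 4 yields $|\widehat{P_k\mathcal{R}_{\geq 4}}(\xi_0,s)|\lesssim(1+s)^{-11/16}$, whose time integral over $[t_1,t_2]$ grows like $2^{5m/16}$.

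The paper sidesteps this by applying the interpolation lemma not to $\mathcal{R}_{\geq 4}(\cdot,s)$ at each fixed $s$, but to the time-integrated function
\[
F(\xi):=\varphi_k(\xi)\int_{t_1}^{t_2}e^{iL(\xi_0,s)}e^{is|\xi|^{3/2}}\widehat{\mathcal{R}_{\geq 4}}(\xi,s)\,ds.
\]
The point is that when computing $\xi\partial_\xi F$, one writes $\xi\partial_\xi=\big[\xi\partial_\xi-(3/2)s\partial_s\big]+(3/2)s\partial_s$; the first bracket commutes with $e^{is|\xi|^{3/2}}$ and is controlled directly by the $S\mathcal{R}_{\geq 4}$ bound in Lemma \ref{LemmaLMJ2}, while the $s\partial_s$ contribution is handled by integration by parts in $s$. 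That last step moves the $\partial_s$ onto $e^{iL(\xi_0,s)}s$, whose derivative is $O(1)$, reducing everything to boundary terms and time integrals involving only $\|P_k\mathcal{R}_{\geq 4}\|_{L^2}$---no bound on $\partial_t\mathcal{R}_{\geq 4}$ is ever needed.
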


\begin{proof}[Proof of Lemma \ref{LemmaLMJ3}] Let
\begin{equation}\label{lmj21}
F(\xi):=\varphi_k(\xi)\int_{t_1}^{t_2}e^{iL(\xi_0,s)}e^{is|\xi|^{3/2}}\widehat{\mathcal{R}_{\geq 4}}(\xi,s)\,ds.
\end{equation}
In view of Lemma \ref{interpolation}, it suffices to prove that
\begin{equation*}
2^{-k}\|F\|_{L^2}\big[2^k\|\partial F\|_{L^2}+\|F\|_{L^2}\big]\lesssim \eps_1^62^{-400p_0m}.
\end{equation*}
Since $\|F\|_{L^2}\lesssim \eps_1^42^{-(1/8-10p_0)m}2^{-2k}$, see the first inequality in \eqref{lmj10}, it suffices to prove that
\begin{equation}\label{lmj22}
2^k\|\partial F\|_{L^2}\lesssim \eps_1^42^{3k}2^{(1/8-500p_0)m}.
\end{equation}

To prove \eqref{lmj22} we write
\begin{equation*}
|\xi\partial_\xi F(\xi)|\leq |F_1(\xi)|+|F_2(\xi)|+|F_3(\xi)|,
\end{equation*}
where
\begin{equation*}
\begin{split}
&F_1(\xi):=\xi(\partial_\xi\varphi_k)(\xi)\int_{t_1}^{t_2}e^{iL(\xi_0,s)}\big[e^{is|\xi|^{3/2}}\widehat{\mathcal{R}_{\geq 4}}(\xi,s)\big]\,ds,\\
&F_2(\xi):=\varphi_k(\xi)\int_{t_1}^{t_2}e^{iL(\xi_0,s)}\big[\xi\partial_\xi-(3/2)s\partial_s\big]\big[e^{is|\xi|^{3/2}}\widehat{\mathcal{R}_{\geq 4}}(\xi,s)\big]\,ds,\\
&F_3(\xi):=\frac{3}{2}\varphi_k(\xi)\int_{t_1}^{t_2}e^{iL(\xi_0,s)}s\partial_s\big[e^{is|\xi|^{3/2}}\widehat{\mathcal{R}_{\geq 4}}(\xi,s)\big]\,ds.
\end{split}
\end{equation*}
Using \eqref{lmj10} and the commutation identity $\big[\big[\xi\partial_\xi-(3/2)s\partial_s\big], e^{is|\xi|^{3/2}}\big]=0$, we have
\begin{equation*}
\|F_1\|_{L^2}+\|F_2\|_{L^2}\lesssim \eps_1^42^{3k}2^{30p_0m}.
\end{equation*}
Moreover, using integration by parts in $s$ and the bound $\big|\partial_s\big[e^{iL(\xi_0,s)}\big]\big|\lesssim 2^{-m}$, see the definition \eqref{nf50}, we can also estimate $\|F_3\|_{L^2}\lesssim \eps_1^42^{3k}2^{30p_0m}$. The desired bound \eqref{lmj22} follows.
\end{proof}

\vskip15pt


\begin{thebibliography}{100}


\bibitem{ABZ1}
Alazard, T., Burq, N. and Zuily, C.
\newblock On the water waves equations with surface tension.
\newblock {\em Duke Math. J.} 158 (2011), no. 3, 413-499.


\bibitem{ABZ2}
Alazard, T., Burq, N. and Zuily, C.
\newblock On the Cauchy problem for gravity water waves.
\newblock {\em Invent. Math.} 198 (2014), no. 1, 71-163.


\bibitem{ABZ3}
Alazard, T., Burq, N. and Zuily, C.
\newblock Strichartz estimates and the Cauchy problem for the gravity water waves equations.
\newblock Preprint. {\em arXiv:1404.4276}.






\bibitem{ADa}
Alazard, T. and Delort, J.M.
\newblock Global solutions and asymptotic behavior for two dimensional gravity water waves.
\newblock {\em Ann. Sci. \'Ec. Norm. Sup\'er}. 48 (2015), no. 5, 1149-1238.

\bibitem{ADb}
Alazard, T. and Delort, J.M.
\newblock Sobolev estimates for two dimensional gravity water waves.
\newblock {\em Ast\'erisque} 374 (2015), viii+214 pages.


\bibitem{AlMet1}
Alazard, T. and M\'etivier, G.
\newblock Paralinearization of the Dirichlet to Neumann operator, and regularity of three-dimensional water waves.
\newblock {\em Comm. Partial Differential Equations}, 34 (2009), no. 10-12, 1632-1704.




\bibitem{AM}
Ambrose, D.M. and Masmoudi, N.
\newblock The zero surface tension limit of two-dimensional water waves.
\newblock {\em Comm. Pure Appl. Math.} 58 (2005), no. 10, 1287-1315.




\bibitem{BG}
Beyer, K. and G\"{u}nther, M.
\newblock On the Cauchy problem for a capillary drop. I. Irrotational motion.
\newblock {\em Math. Methods Appl. Sci.} 21 (1998), no. 12, 1149-1183.


\bibitem{CCFGG}
Castro, A., C\'ordoba, D., Fefferman, C., Gancedo, F. and G\'{o}mez-Serrano, J.
\newblock Finite time singularities for the free boundary incompressible Euler equations.
\newblock {\em Ann. of Math.} 178 (2013), 1061-1134.




\bibitem{CHS}
Christianson, H., Hur, V. and Staffilani, G.
\newblock Strichartz estimates for the water-wave problem with surface tension.
\newblock {\em Comm. Partial Differential Equations}, 35 (2010), no. 12, 2195-2252.


\bibitem{CL}
Christodoulou, D. and Lindblad, H.
\newblock On the motion of the free surface of a liquid.
\newblock {\em Comm. Pure Appl. Math.} 53 (2000), no. 12, 1536-1602.


\bibitem{CKSTT1}
Colliander, J., Keel, M., Staffilani, G., Takaoka, H. and Tao, T.
\newblock Sharp global well-posedness for KdV and modified KdV on $\R$ and $\mathbb{T}$.
\newblock {\em  J. Amer. Math. Soc.} 16 (2003), no. 3, 705-749.


\bibitem{CKSTT2}
Colliander, J., Keel, M., Staffilani, G., Takaoka, H. and Tao, T.
\newblock Resonant decompositions and the I-method for the cubic nonlinear Schr\"odinger equation on $\R^2$.
\newblock {\em  Discrete Contin. Dyn. Syst.}, 21 (2008), no. 3, 665-686.


\bibitem{CS2}
Coutand, D. and Shkoller, S.
\newblock Well-posedness of the free-surface incompressible Euler equations with or without surface tension.
\newblock {\em  J. Amer. Math. Soc.} 20 (2007), no. 3, 829-930.


\bibitem{CSSplash}
Coutand, D. and Shkoller, S.
\newblock On the finite-time splash and splat singularities for the 3-D free-surface Euler equations.
\newblock {\em  Comm. Math. Phys.}, 325 (2014), 143-183.




\bibitem{CraigLim}
Craig, W.
\newblock An existence theory for water waves and the Boussinesq and Korteweg-de Vries scaling limits.
\newblock {\em  Comm. Partial Differential Equations}, 10 (1985), no. 8, 787-1003


\bibitem{CraSul}
Craig, W. and Sulem, C.
\newblock Numerical simulation of gravity waves.
\newblock {\em J. Comput. Physics} 108 (1993), 73-83.



\bibitem{DelortKG1d}
Delort, J.M.
\newblock Existence globale et comportement asymptotique pour l' \'equation de Klein-Gordon quasi-lin\'eaire \`a donn\'ees
petites en dimension 1.
\newblock {\em Ann. Sci. \'Ecole Norm. Sup.} 34 (2001) 1-61.
\newblock Erratum: ``Global existence and asymptotic behavior for the quasilinear Klein-Gordon equation with small data in dimension 1''
\newblock {\em Ann. Sci. \'{E}cole Norm. Sup.} (4) 39 (2006), no. 2, 335-345

\bibitem{Delo}
Delort, J.M. 
\newblock Long-time Sobolev stability for small solutions of quasi-linear Klein-Gordon equations on the circle.
\newblock {\em Trans. Amer. Math. Soc.} 361 (2009), 4299-4365.



\bibitem{IFL}
Fefferman, C., Ionescu, A. and Lie, V.
\newblock  On the absence of ``splash'' singularities in the case of two-fluid interfaces.
\newblock {\em Duke Math. J.} 165 (2016), 417-462.

\bibitem{GM}
Germain, P. and  Masmoudi, N.
\newblock Global existence for the Euler-Maxwell system. 
\newblock {\em Ann. Sci. \'Ecole Norm. Sup.} 47 (2014), no. 3, 469-503.


\bibitem{GMS2}
Germain, P., Masmoudi, N. and Shatah, J.
\newblock Global solutions for the gravity surface water waves equation in dimension 3.
\newblock {\em Ann. of Math.} 175 (2012), 691-754.


\bibitem{GMSC}
Germain, P., Masmoudi, N. and Shatah, J.
\newblock Global solutions for capillary waves equation in dimension 3.
\newblock {\em Comm. Pure Appl. Math.} 68 (2015), no. 4, 625-687.



\bibitem{GIP}
Guo, Y., Ionescu, A. and Pausader, B.
\newblock \newblock Global solutions of the Euler-Maxwell two-fluid system in 3D.
\newblock {\em Ann. of Math.} 183 (2016), 377-498.

\bibitem{GNT1}
Gustafson, S., Nakanishi, K. and Tsai, T.
\newblock Scattering for the Gross-Pitaevsky equation in 3 dimensions.
\newblock {\em Comm. Contemp. Math.} 11 (2009), no. 4, 657-707.


\bibitem{HN}
Hayashi, N. and Naumkin, P.
\newblock Asymptotics for large time of solutions to the nonlinear Schr\"{o}dinger and Hartree equations.
\newblock {\em  Amer. J. Math.} 120 (1998), 369-389.



\bibitem{HITW}
Hunter, J., Ifrim, M., Tataru, D. and Wong, T.
\newblock Long time solutions for a Burgers-Hilbert equation via a modified energy method.
\newblock {\em Proc. Amer. Math. Soc.} 143 (2015), 3407-3412.


\bibitem{HIT}
Hunter, J., Ifrim, M. and Tataru, D.
\newblock Two dimensional water waves in holomorphic coordinates.
\newblock {\em Comm. Math. Phys.} 346 (2016), no. 2, 483-552.




\bibitem{IT}
Ifrim, M. and Tataru, D.
\newblock Two dimensional water waves in holomorphic coordinates II: global solutions.
\newblock {\em Bull. Soc. Math. France} 144 (2016), no. 2, 369-394.


\bibitem{ITca}
Ifrim, M. and Tataru, D.
\newblock The lifespan of small data solutions in two dimensional capillary water waves.
\newblock Preprint {\em arXiv:1406.5471}.





\bibitem{IP2}
Ionescu, A. and Pausader, B.
\newblock Global solutions of quasilinear systems of Klein-Gordon equations in 3D.
\newblock {\em J. Eur. Math. Soc.} 16 (2014), 2355-2431.


\bibitem{IoPu1}
Ionescu, A. and Pusateri, F.
\newblock Nonlinear fractional Schr\"{o}dinger equations in one dimension.
\newblock {\em J. Funct. Anal.} 266 (2014), 139-176.


\bibitem{IoPu2}
Ionescu, A. and Pusateri, F.
\newblock Global solutions for the gravity water waves system in 2D.
\newblock {\em Invent. Math.} 199 (2015), no. 3, 653-804.


\bibitem{IoPunote}
Ionescu, A. and Pusateri, F.
\newblock A note on the asymptotic behavior of gravity water waves in two dimensions.
\newblock Unpublished note. https://web.math.princeton.edu/~fabiop/2dWWasym-web.pdf.

\bibitem{IoPuFull} Ionescu, A. and Pusateri, F.
\newblock Global regularity for 2d water waves with surface tension.
\newblock Preprint {\em arXiv:1408.4428}. To appear in {\em Mem. Amer. Math. Soc.}

\bibitem{KP}
Kato, J. and Pusateri, F.
\newblock A new proof of long range scattering for critical nonlinear Schr\"{o}dinger equations.
\newblock {\em Diff. Int. Equations}, 24 (2011), no. 9-10, 923-940.

\bibitem{K1}
Klainerman, S.
\newblock The null condition and global existence for systems of wave equations.
\newblock {\em Nonlinear systems of partial differential equations in applied mathematics, Part 1 (Santa Fe, N.M., 1984)}, 293-326.
\newblock Lectures in Appl. Math., 23, Amer. Math. Soc., Providence, RI, 1986.


\bibitem{Lannes}
Lannes, D.
\newblock Well-posedness of the water waves equations.
\newblock {\em J. Amer. Math. Soc.} 18 (2005), no. 3, 605-654.


\bibitem{Lindblad}
Lindblad, H.
\newblock Well-posedness for the motion of an incompressible liquid with free surface boundary.
\newblock {\em Ann. of Math.} 162 (2005), no. 1, 109-194.


\bibitem{Nalimov}
Nalimov, V. I.
\newblock The Cauchy-Poisson problem.
\newblock {\em Dinamika Splosn. Sredy Vyp.} 18 Dinamika Zidkost. so Svobod. Granicami (1974), 10-210, 254.



\bibitem{BosonStar}
Pusateri, F.
\newblock Modified scattering for the Boson Star equation.
\newblock {\em Comm. Math. Phys.} 332 (2014), no. 3, 1203-1234. 



\bibitem{shatahKGE}
Shatah, J.
\newblock Normal forms and quadratic nonlinear Klein-Gordon equations.
\newblock {\em  Comm. Pure Appl. Math.} 38 (1985), no. 5, 685-696.


\bibitem{ShZ1}
Shatah, J. and Zeng, C.
\newblock Geometry and a priori estimates for free boundary problems of the Euler equation.
\newblock {\em Comm. Pure Appl. Math.} 61 (2008), no. 5, 698-744.




\bibitem{ShZ3}
Shatah, J. and Zeng, C.
\newblock Local well-posedness for the fluid interface problem.
\newblock {\em Arch. Ration. Mech. Anal.} 199 (2011), no. 2, 653-705.


\bibitem{SulemBook}
Sulem, C. and Sulem, P.L.
\newblock The nonlinear Schr\"{o}dinger equation. Self-focussing and wave collapse.
\newblock {\em Applied Mathematical Sciences}, 139.
\newblock Springer-Verlag, New York, 1999.


\bibitem{Yosi}
Yosihara, H.
\newblock Gravity waves on the free surface of an incompressible perfect fluid of finite depth.
\newblock {\em Publ. Res. Inst. Math. Sci.} 18 (1982), 49-96.


\bibitem{Wu1}
Wu, S.
\newblock Well-posedness in Sobolev spaces of the full water wave problem in 2-D.
\newblock {\em Invent. Math.} 130 (1997), 39-72.


\bibitem{Wu2}
Wu, S.
\newblock Well-posedness in Sobolev spaces of the full water wave problem in 3-D.
\newblock {\em J. Amer. Math. Soc.} 12 (1999), 445-495.


\bibitem{WuAG}
Wu, S.
\newblock Almost global wellposedness of the 2-D full water wave problem.
\newblock {\em Invent. Math.} 177 (2009), 45-135.


\bibitem{Wu3DWW}
Wu, S.
\newblock Global wellposedness of the 3-D full water wave problem.
\newblock {\em Invent. Math.} 184 (2011), 125-220.

\end{thebibliography}
\end{document}